\documentclass[oneside,english,american]{amsart}
\usepackage[T1]{fontenc}
\usepackage[latin9]{inputenc}
\usepackage{babel}
\usepackage{textcomp}
\usepackage{mathrsfs}
\usepackage{amstext}
\usepackage{amsthm}
\usepackage{amssymb}
\usepackage{esint}
\usepackage[unicode=true,
 bookmarks=true,bookmarksnumbered=true,bookmarksopen=false,
 breaklinks=false,pdfborder={0 0 1},backref=false,colorlinks=false]
 {hyperref}
\hypersetup{pdftitle={From Kähler-Ricci flow to moving free boundaries},
 pdfauthor={Robert J. Berman, Chinh H. Lu}}
\usepackage{breakurl}

\makeatletter
\numberwithin{equation}{section}
\numberwithin{figure}{section}
\theoremstyle{plain}
\newtheorem{thm}{\protect\theoremname}[section]
  \theoremstyle{remark}
  \newtheorem{rem}[thm]{\protect\remarkname}
  \theoremstyle{plain}
  \newtheorem{prop}[thm]{\protect\propositionname}
  \theoremstyle{plain}
  \newtheorem{lem}[thm]{\protect\lemmaname}
  \theoremstyle{definition}
  \newtheorem{example}[thm]{\protect\examplename}
  \theoremstyle{plain}
  \newtheorem{cor}[thm]{\protect\corollaryname}


\def\makebbb#1{
    \expandafter\gdef\csname#1\endcsname{
        \ensuremath{\Bbb{#1}}}
}\makebbb{R}\makebbb{N}\makebbb{Z}\makebbb{C}\makebbb{H}\makebbb{E}\makebbb{H}\makebbb{P}\makebbb{B}\makebbb{Q}\makebbb{E}

\usepackage{babel}

\usepackage{babel}

\makeatother

\usepackage{babel}

\newcommand{\MA}{{\rm MA}}
\newcommand{\AM}{{\rm AM}}

\setcounter{tocdepth}{1}

\makeatother

  \addto\captionsamerican{\renewcommand{\corollaryname}{Corollary}}
  \addto\captionsamerican{\renewcommand{\examplename}{Example}}
  \addto\captionsamerican{\renewcommand{\lemmaname}{Lemma}}
  \addto\captionsamerican{\renewcommand{\propositionname}{Proposition}}
  \addto\captionsamerican{\renewcommand{\remarkname}{Remark}}
  \addto\captionsamerican{\renewcommand{\theoremname}{Theorem}}
  \addto\captionsenglish{\renewcommand{\corollaryname}{Corollary}}
  \addto\captionsenglish{\renewcommand{\examplename}{Example}}
  \addto\captionsenglish{\renewcommand{\lemmaname}{Lemma}}
  \addto\captionsenglish{\renewcommand{\propositionname}{Proposition}}
  \addto\captionsenglish{\renewcommand{\remarkname}{Remark}}
  \addto\captionsenglish{\renewcommand{\theoremname}{Theorem}}
  \providecommand{\corollaryname}{Corollary}
  \providecommand{\examplename}{Example}
  \providecommand{\lemmaname}{Lemma}
  \providecommand{\propositionname}{Proposition}
  \providecommand{\remarkname}{Remark}
\providecommand{\theoremname}{Theorem}

\begin{document}

\title{From the Kähler-Ricci flow to moving free boundaries and shocks }

\author{Robert J. Berman \& Chinh H. Lu}

\address{R.J. Berman, Chalmers University of Technology, robertb@chalmers.se. }

\address{C.H. Lu, Scuola Normale Superiore Pisa, chinh.lu@sns.it.}

\date{\today}
\begin{abstract}
We show that the twisted Kähler-Ricci flow on a complex manifold $X$
converges to a flow of moving free boundaries, in a certain scaling
limit. This leads to a new phenomenon of singularity formation and
topology change which can be seen as a complex generalization of the
extensively studied formation of shocks in Hamilton-Jacobi equations
and hyperbolic conservation laws (notably, in the adhesion model in
cosmology). In particular we show how to recover the Hele-Shaw flow
(Laplacian growth) of growing 2D domains from the Ricci flow. As we
briefly indicate the scaling limit in question arises as the zero-temperature
limit of a certain many particle system on $X.$ 
\end{abstract}

\thanks{We are grateful to David Witt-Nyström for illuminating discussions
on the Hele-Shaw flow. The first named author was supported by grants
from the Swedish Research Council, the European Research Council (ERC)
and the Knut and Alice Wallenberg foundation. The second named author
was supported by the SNS grant \textquotedblleft Pluripotential Theory
in Differential Geometry\textquotedblright{} and the ERC-grant.}

\maketitle
\tableofcontents{}

\section{Introduction}

The celebrated Ricci flow 

\begin{equation}
\frac{\partial g(t)}{\partial t}=-2\mbox{\ensuremath{\mbox{Ric }g(t)}},\label{eq:ricci flow}
\end{equation}
can be viewed as a diffusion type evolution equation for Riemannian
metrics $g(t)$ on a given manifold $X.$ In fact, as described in
the introduction of \cite{ha}, this was one of the original motivations
of Hamilton for introducing the flow. The point is that, locally,
the principal term of minus the Ricci curvature $g$ of a Riemannian
metric is the Laplacian of the tensor $g$ (which ensures the short-time
existence of the flow). The factor $2$ is just a matter of normalization
as it can be altered by rescaling the time parameter by a positive
number $\beta.$ However, this symmetry is broken when a source term
$\theta$ is introduced in the equation:
\begin{equation}
\frac{\partial g(t)}{\partial t}=-\frac{1}{\beta}\mbox{\ensuremath{\mbox{Ric }g(t)+\theta}},\label{eq:twisted ricci flow intro}
\end{equation}
 where $\theta$ is an appropriate symmetric two tensor on $X$ and
$\beta$ thus plays the role of the inverse diffusion constant or
equivalently, the\emph{ }inverse\emph{ temperature} (according to
the ``microscopic'' Brownian motion interpretation of diffusions).
In general terms the main goal of the present paper is to study the
corresponding zero-temperature limit $\beta\rightarrow\infty$ of
the previous equation. An important feature of the ordinary Ricci
flow \ref{eq:ricci flow} is that it will typically become singular
in a finite time, but in some situations (for example when $X$ is
a three manifold, as in Perelman's solution of the Poincaré conjecture)
the flow can be continued on a new manifold obtained by performing
a suitable topological surgery of $X.$ In our setting it turns out
that a somewhat analogous phenomenon of topology change appears at
a finite time $T_{*}$ in the limit $\beta\rightarrow\infty,$ even
if one assumes the long time existence of the flows for any finite
$\beta$. 

More precisely, following \cite{ca,ts,t-z} we will consider the complex
geometric framework where $X$ is a complex manifold, i.e. it is endowed
with a complex structure $J$ and the initial metric $g_{0}$ is Kähler
with respect to $J.$ We will identify symmetric two-tensors and two-forms
of type $(1,1)$ on $X$ using $J$ in the usual way - then the Kähler
condition just means that the form defined by $g_{0}$ is closed.
We will also assume that $\theta$ defines a closed (but not necessarily
semi-positive) form. Then it is well-known that the corresponding
flow $g^{(\beta)}(t)$ emanating from the fixed metric $g_{0}$ preserves
the Kähler property as long as it exists - it is usually called the
\emph{twisted Kähler-Ricci flow} in the literature (and $\theta$
is called the\emph{ twisting form}); see \cite{ca,ts,t-z,c-s,gz2}
and references therein. For simplicity we will also assume that 
\[
-\frac{1}{\beta}c_{1}(X)+[\theta]\geq0
\]
as $(1,1)-$cohomology classes (where $c_{1}(X)$ denotes the first
Chern class of $X)$ which ensures that the flows $g^{(\beta)}(t)$
exist for all positive times \cite{t-z}. Our main result says that
$g^{(\beta)}(t)$ admits a unique (singular) limit $g(t)$ as $\beta\rightarrow\infty,$
where $g(t)$ defines a positive current with $L^{\infty}-$coefficients: 
\begin{thm}
\label{thm:main intro}Let $X$ be a compact complex manifold endowed
with a smooth form $\theta.$ Then 
\[
\lim_{\beta\rightarrow\infty}g^{(\beta)}(t)=P(g_{0}+t\theta)\:\:(:=g(t))
\]
in the weak topology of currents, where $P$ is a (non-linear) projection
operator onto the space of positive currents. Moreover, the metrics
$g^{(\beta)}(t)$ are uniformly bounded on any fixed time interval
$[0,T].$
\end{thm}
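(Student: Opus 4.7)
The plan is to pass to K\"ahler potentials and analyse the resulting scalar parabolic complex Monge--Amp\`ere equation in the singular limit $\beta\to\infty$. Fix a smooth volume form $\mu$ on $X$ with Ricci form $\rho:=\Ric(\mu)$ and set $\chi_{t}^{(\beta)}:=g_{0}+t\theta-(t/\beta)\rho$; the cohomological identity $[g^{(\beta)}(t)]=[g_{0}]+t[\theta]-(t/\beta)c_{1}(X)$ allows one to write $g^{(\beta)}(t)=\chi_{t}^{(\beta)}+dd^{c}\varphi^{(\beta)}(t)$ for a unique smooth potential with $\varphi^{(\beta)}(0,\cdot)\equiv 0$, so that \eqref{eq:twisted ricci flow intro} becomes
\[
\partial_{t}\varphi^{(\beta)}=\frac{1}{\beta}\log\frac{(\chi_{t}^{(\beta)}+dd^{c}\varphi^{(\beta)})^{n}}{\mu}.
\]
Since $\chi_{t}^{(\beta)}\to\chi_{t}:=g_{0}+t\theta$ uniformly on $[0,T]$ at rate $O(1/\beta)$, proving the theorem reduces to showing that $\varphi^{(\beta)}(t)\to \psi_{t}:=P_{\chi_{t}}(0)$ in $L^{1}(X)$ locally uniformly in $t$, where $\psi_{t}$ is the upper envelope of $\chi_{t}$-plurisubharmonic functions bounded above by $0$; then $P(\chi_{t})=\chi_{t}+dd^{c}\psi_{t}$ is the projected current of the theorem.

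The bulk of the work is a pair of a priori bounds uniform in $\beta$. For the \emph{upper bound} on $\varphi^{(\beta)}$, the parabolic maximum principle applied at a maximising point $x_{0}\in X$ of $\varphi^{(\beta)}(t,\cdot)$ gives $dd^{c}\varphi^{(\beta)}(x_{0})\le 0$; combined with the positivity $g^{(\beta)}(t)\ge 0$ this forces $0\le g^{(\beta)}(t)(x_{0})\le \chi_{t}^{(\beta)}(x_{0})$ as Hermitian matrices, so $g^{(\beta)}(t)^{n}/\mu\le C(T,g_{0},\theta)$ there. Hence $\partial_{t}\sup_{X}\varphi^{(\beta)}\le C/\beta$ and $\sup_{X}\varphi^{(\beta)}(t)\le CT/\beta\to 0$. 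For the \emph{lower bound}, I would approximate $\psi_{t}$ from below by smooth $\chi_{t}$-psh functions $\psi_{t}^{(\vep)}$ satisfying $\chi_{t}+dd^{c}\psi_{t}^{(\vep)}\ge \vep\, g_{0}$, $\psi_{t}^{(\vep)}\le 0$, and $\psi_{t}^{(\vep)}\to\psi_{t}$ as $\vep\searrow 0$ (via Demailly regularisation or a perturbed elliptic Monge--Amp\`ere equation). Then $\psi_{t}^{(\vep)}$ is a classical subsolution of the parabolic equation up to an error $A(\vep,\beta):=n|\log\vep|/\beta+\sup_{[0,T]\times X}|\partial_{t}\psi_{t}^{(\vep)}|$, and parabolic comparison applied to $\varphi^{(\beta)}(t)-\psi_{t}^{(\vep)}+A(\vep,\beta)t$ gives $\varphi^{(\beta)}(t)\ge \psi_{t}^{(\vep)}-A(\vep,\beta)t$. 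Letting $\vep=\vep(\beta)\to 0$ at a rate $|\log\vep|=o(\beta)$ then yields $\varphi^{(\beta)}(t)\ge \psi_{t}-o_{\beta\to\infty}(1)$.

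Combining the two bounds, $\{\varphi^{(\beta)}\}$ is uniformly bounded in $L^{\infty}([0,T]\times X)$. By the standard compactness of families of quasi-plurisubharmonic functions, any weak $L^{1}$ limit $\varphi_{\infty}(t,\cdot)$ is $\chi_{t}$-psh with $\varphi_{\infty}(t)\le 0$, whence $\varphi_{\infty}(t)\le\psi_{t}$ by the defining property of the envelope; the lower bound provides the reverse inequality, so $\varphi_{\infty}(t)=\psi_{t}$ and the whole sequence converges. Taking $dd^{c}$ then yields the asserted weak convergence of currents $g^{(\beta)}(t)\to P(\chi_{t})$. For the additional claim that the metrics are uniformly bounded on $[0,T]$, it remains to control $\Tr_{g_{0}}g^{(\beta)}(t)$ from above; this I would derive by a parabolic Aubin--Yau/Chern--Lu computation applied to $H:=\log \Tr_{g_{0}}g^{(\beta)}-A\varphi^{(\beta)}$ with $A$ large. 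The factor $1/\beta$ on the heat operator only weakens the elliptic term, but combined with the $L^{\infty}$ bound on $\varphi^{(\beta)}$ and the bisectional curvature of $g_{0}$ the standard maximum-principle argument still closes.

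The principal obstacle is the lower bound. The envelope $\psi_{t}$ is only a limit of elliptic Monge--Amp\`ere subsolutions, and the parabolic comparison has to be run against an operator whose Monge--Amp\`ere term carries the vanishing coefficient $1/\beta$. The delicate point is the joint tuning of the defect of positivity in $\psi_{t}^{(\vep)}$ against the inverse temperature $\beta$: this balance is precisely what produces the free-boundary behaviour of the limit, and maintaining it uniformly on $[0,T]\times X$ is where the main technical effort must be concentrated.
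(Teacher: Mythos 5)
Your proposal follows essentially the same strategy as the paper's proof: reduce to a scalar parabolic complex Monge--Amp\`ere flow for potentials, extract the large-$\beta$ limit via parabolic maximum/comparison principles, and identify it with a quasi-psh envelope. The upper bound by the maximum principle is sound, as is the general compactness-and-identification scheme. However, there is a concrete gap in the lower bound, and a secondary structural defect in the choice of reference form which feeds both into the lower bound and into the Laplacian estimate.

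The lower bound via approximate subsolutions does not close as written. You set $A(\vep,\beta)=n|\log\vep|/\beta+\sup_{[0,T]\times X}|\partial_{t}\psi_{t}^{(\vep)}|$; but the second summand converges, as $\vep\searrow 0$, to the (finite, positive) Lipschitz constant of $t\mapsto\psi_{t}$ rather than to zero. Consequently $A(\vep,\beta)t$ does not vanish, and the lower bound $\varphi^{(\beta)}(t)\ge\psi_{t}^{(\vep)}-A(\vep,\beta)t$ never recovers $\psi_{t}$ in the limit. The missing ingredient is a monotonicity fact: for these linear-in-$t$ obstacles the envelope is \emph{decreasing} in $t$ once the obstacle is subtracted, i.e.\ $P_{\omega_{0}}(tf)-tf$ is nonincreasing (Proposition \ref{prop:The-sup-coincides with projection of linear curve} in the paper, proved by a short scaling argument on $P$). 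Only with this one-sided bound in hand can one regularize the envelope so that $\partial_{t}\psi_{t}^{(\vep)}\le 0$, reducing the error to $O(|\log\vep|/\beta)$ and closing the argument; this is exactly what Lemma \ref{lem:concave proj} is engineered for, and it is nontrivial because the regularization must be compatible in $t$ and $x$ simultaneously (the paper exploits the $C^{1}$ regularity of $\psi_{t}$ to do a convolution with controlled first-order errors). Without the monotonicity, no choice of $\vep(\beta)$ rescues the bound.

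Second, the reference form $\chi_{t}^{(\beta)}=g_{0}+t\theta-(t/\beta)\rho$ is correct in cohomology but is not positive for $t>T_{*}$, and this is not a cosmetic inconvenience. In the regularization of the envelope, the scaling trick needs a K\"ahler reference to produce $\chi_{t}+dd^{c}\psi_{t}^{(\vep)}\ge\vep g_{0}$; in the Laplacian bound, absorbing the $\frac{1}{\beta}B_{+}\Tr_{g^{(\beta)}}g_{0}$ term in the Chern--Lu/Aubin--Yau computation requires a bound $\chi_{t}^{(\beta)}\ge c\,g_{0}$ for some $c>0$, which fails with your choice. The paper instead fixes a semi-positive representative $\chi_{\beta}$ of the class $\frac{1}{\beta}c_{1}(K_{X})+[\theta_{\beta}]$ and decomposes with respect to the K\"ahler family $\omega_{0}+s\chi_{\beta}$ (or $\hat\omega_{t}=e^{-t}\omega_{0}+(1-e^{-t})\chi_{\beta}$ in the normalized picture), pushing the nonpositivity into the obstacle $f_{\beta}$. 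With that decomposition, both the lower bound and the Laplacian estimate run along the lines you sketch, and the cancellation of $\partial_{t}\varphi$ terms that the paper uses in the trace inequality avoids having to pay with a large $A\,\partial_{t}\varphi$ term. These are fixable but essential corrections.
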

The definition of the projection operator $P$ will be recalled in
Section \ref{sub:The-projection-operator}. The point is that the
linear curve $g_{0}+t\theta,$ which coincides with the limiting flow
for short times will, unless $\theta\geq0,$ leave the space of Kähler
forms at the time 
\begin{equation}
T_{*}:=\sup\{t:\,g_{0}+t\theta\geq0\}\label{eq:def of T star intro}
\end{equation}
and hence it cannot be the limit of the metrics $g^{(\beta)}(t),$
even in a weak sense, for $t>T_{*}.$ In particular, this means that
around the time $T_{*}$ the Ricci curvatures $\mbox{Ric }g^{(\beta)}(t)$
will become unbounded as $\beta\rightarrow\infty$ (indeed, otherwise
one could neglect the first term in the equation \ref{eq:twisted ricci flow intro}
to obtain a linear ODE in the large $\beta-$limit solved by $g_{0}+t\theta)$.
Still we will show that the metrics $g^{(\beta)}(t)$ do remain uniformly
bounded from above as $\beta\rightarrow\infty.$ However, unless $\theta>0,$
the limiting $L^{\infty}-$metrics $g(t)$ will, for $t\geq T_{*},$
degenerate on large portions of $X,$ i.e. the support 
\[
X(t):=\mbox{supp\ensuremath{(dV_{g(t)})}}
\]
of the limiting $L^{\infty}-$volume form $dV_{g(t)}$ is a proper
closed subset of $X$ evolving with $t.$ Moreover, on the support
$X(t)$ the metrics $g(t)$ do evolve linearly, or more precisely
\[
g(t)=g_{0}+t\theta\,\,\,\mbox{on\,\ensuremath{X(t),}}
\]
in the almost everywhere sense. As a consequence, typically the volume
form $dV_{g(t)}$ has a sharp discontinuity over the boundary of $X(t),$
showing that the limiting (degenerate) $L^{\infty}-$metrics $g(t)$
are not continuous and hence $C^{0}-$convergence in the previous
theorem cannot hold, in general. In the generic case the evolving
open sets $\Omega(t):=X-X(t),$ where $dV_{g(t)}$ vanishes identically
are increasing and may be characterized as solutions of moving free
boundary value problems for the complex Monge-Ampère equation (see
Section \ref{sub:The-projection-operator}). 

The projection operator $P$ appearing in the theorem, which associates
to a given $(1,1)$ current $\eta$ on $X$ a positive current, cohomologous
to $\eta,$ is defined as a (quasi) plurisubharmonic envelope on the
level of potentials (Section \ref{sub:The-projection-operator}) and
can be viewed as a complex generalization of the convex envelope of
a function. Such envelopes play a key role in pluripotential theory
(as further discussed in Section \ref{sub:Further-relations-to},
below). In particular, the previous theorem yields a dynamic PDE construction
of the envelopes in question, giving an alternative to previous dynamic
constructions appearing in the real convex analytical setting \cite{ve,g-c}
(see the discussion in Section \ref{sub:The-case-when convex envelopes}).

More generally, the weak convergence in Theorem \ref{thm:main intro}
will be shown to hold as long as the $\theta$ (viewed as a current)
has continuous potentials. But then the limit $g(t)$ will, in general,
not be in $L^{\infty}$ (unless $\theta$ is). Moreover, the support
of the corresponding measure $dV_{g(t)}$ may then be a subset of
low Hausdorff dimension. For example, in the one dimensional setting
appearing in the adhesion model discussed below the conjectures formulated
in \cite{saf} suggest an explicit formula for the Haussdorf dimension
of the support, at any given time $t,$ when $\theta$ is taken as
a random Gaussian distribution with given scaling exponent (see Section
\ref{rem:random potential}). 

We will pay a particular attention to the special case in Theorem
\ref{thm:main intro} where the twisting form $\theta$ represents
the trivial cohomology class, i.e. 
\[
\theta=dd^{c}f,
\]
 for a function $f$ on $X.$ The large $\beta-$limit of the corresponding
twisted Kähler-Ricci flow turns out to be intimately related to various
growth processes appearing in mathematical physics (and hence the
Kähler-Ricci flow can be used as a new regularization of such processes):

\subsubsection*{Hamilton-Jacobi equations, shock propagation and the adhesion model
in cosmology}

In the particular case when $X$ is an abelian variety (or more specifically
$X=\C^{n}/\Lambda+i\Z^{n},$ for a lattice $\Lambda$ in $\R^{n})$
and the potential $f$ of the twisting form $\theta$ is invariant
along the imaginary direction, we will show that the corresponding
limiting twisted Kähler-Ricci flow $g(t)$ corresponds, under Legendre
transformation in the space variables, to a viscosity solution $u(x,t)$
of the\emph{ }Hamilton-Jacobi equation\emph{ }in $\R^{n}$ with periodic
Hamiltonian $f$ \cite{c-l,b-e,l-r}. Under this correspondence the
critical time $T_{*}$ (formula \ref{eq:def of T star intro}) corresponds
to the first moment of shock (caustic) formation in the solution $u_{t}(x),$
i.e. the time where $u_{t}$ ceases to be differentiable. From this
point of view the moving domains $\Omega(t)$ correspond, under Legendre
duality, to the evolving shock hypersurfaces $S_{t}$ (i.e the non-differentiability
locus of $u_{t})$. The evolution and topology change of such shocks
plays a prominent role in various areas of mathematical physics (and
more generally fit into the general problem of singularity formation
in hyperbolic conservation laws \cite{se}). In particular, the evolving
shock hypersurface $S_{t}$ model the concentration of mass density
in the cosmological adhesion model describing the formation of large-scale
structures during the early expansion of the universe \cite{v-d-f-n,g-m-s,hsw,hsw2}.
Our setting contains, in particular, the case when the initial data
in the adhesion model is periodic \cite{k-p-s-m,hsw,hsw2}. It should
also be pointed out that in this picture the limit $\beta\rightarrow\infty$
can be seen as a non-linear version of the classical vanishing viscosity
limit \cite{c-l,b-e,l-r}, which has the virtue of preserving convexity.

We will also study the corresponding large time limits and show that
if the set $F$ of absolute minima of the potential $f$ is finite,
then the support of the positive current defined by the joint large
$\beta$ and large $t-$ limit of the twisted Kähler-Ricci flow is
a piecewise affine hypersurface whose vertices coincides with $F$
and whose lift to $\R^{n}$ gives a Delaunay type tessellation of
$\R^{n}$ (which is consistent with numerical simulations appearing
in cosmology \cite{k-p-s-m,hsw,hsw2}).

\subsubsection*{Applications to the Hele-Shaw flow (Laplacian growth)}

In another direction, allowing $\theta$ to be a singular current
of the form 
\[
\theta=\omega_{0}-[E],
\]
where $\omega_{0}$ is the initial Kähler form and $[E]$ denotes
the current of integration along a given effective divisor (i.e. complex
hypersurface) in $X$ cohomologous to $\omega_{0},$ we will show
that the corresponding domains $\Omega(t),$ which in this setting
are growing continuously with $t,$ give rise to a higher dimensional
generalization of the classical Hele-Shaw flow in a two-dimensional
geometry. More precisely, the Hele-Shaw flow appears when $X$ is
a Riemann surface, $\omega_{0}$ is normalized to have unit area and
$E$ is given by a point $p$ (in the classical setting $X$ is the
Riemann sphere and $p$ is the point at infinity; the general Riemann
surface case was introduded in \cite{hs}). Then $\Omega(t)$ coincides,
up to a time reparametrization, with the\emph{ }Hele-Shaw flow (also
called Laplacian growth) injected at the point $p$ in the medium
$X$ with varying permeability (encoded in the form $\omega_{0}).$
The latter flow was originally introduced in fluid mechanics to model
the expansion of an incompressible fluid $\Omega(t)$ of high viscosity
(for example oil) injected at a constant rate in another fluid of
low viscosity (such as water) occupying the decreasing region $X(t).$
In more recent times the Hele-Shaw flow has made its appearance in
various areas such as random matrix theory, integrable system and
the Quantum Hall Effect \cite{z} to name a few (see \cite{va} for
a historical overview). In particular, in the latter setting $X(t)$
represents the electron droplet. Special attention has been payed
to an interesting phenomenon of topology change in the flow appearing
at the time where $\Omega(t)$ becomes singular (which is different
from $T_{*}$ which in this singular setting vanishes). Various approaches
have been proposed to regularize the Hele-Shaw flow in order to handle
the singularity formation (see \cite[Section 5.3]{va}). The present
realization of the Hele-Shaw flow from the limit of the Kähler metrics
$\omega^{(\beta)}(t)$ on $X-\{p\}$ suggest a new type of regularization
scheme, for example using the corresponding thick-thin decomposition
of $X,$ as in the ordinary Ricci flow (with $X(t)$ and $\Omega(t)$
playing the role of the limiting thick and thin regions, respectively).
But we will not go further into this here.

\subsection{\label{sub:Further-relations-to}Further relations to previous results }

There is an extensive and rapidly evolving literature on the Kähler-Ricci
flow (and its twisted versions) starting with \cite{ca}; see for
example \cite{s-w} and references therein. But as far as we know
the limit $\beta\rightarrow\infty$ (which is equivalent to scaling
up the twisting form and rescaling time) has not been studied before.
For a finite $\beta$ there is no major analytical difference between
the Kähler-Ricci flow and its twisted version, but in our setting
one needs to make sure that the relevant geometric quantities do not
blow up with $\beta$ (for example, as discussed above the Ricci curvature
does blow up). For a finite $\beta$ the surgeries in the Kähler-Ricci
flow have been related to the Minimal Model Program in algebraic geometry
in \cite{c-la,s-t}, where the final complex-geometric surgery produces
a minimal model of the original algebraic variety. In Section \ref{sub:Comparison-with-convergence}
we compare some of our results with the corresponding long time convergence
results on the minimal model (which produces canonical metrics of
Kähler-Einstein type) \cite{ts,t-z,s-t-1}. In the algebro-geometric
setting negative twisting currents $\theta$ also appear naturally,
when $X$ is the resolution of a projective variety with canonical
singularities \cite{s-t,egz} $(\theta$ is then current of integration
along minus the exceptional divisor). Recently, viscosity techniques
were introduced in \cite{egz} to produce viscosity solutions for
the twisted Kähler-Ricci flow (and in particular its singular variants
appearing when $\theta$ is singular). But, again, this concerns the
case when $\beta$ is finite.

In the case when $(X,\omega)$ is invariant under the action of a
suitable torus $T$ (i.e. $X$ is a toric variety or an Abelian variety)
the corresponding time dependent convex envelopes (studied in Section\ref{sub:The-case-when convex envelopes})
have recently appeared in \cite{rz,rzIII} in a different complex
geometric than the Kähler-Ricci flow, namely in the study of the Cauchy
problem for weak geodesic rays in the space of Kähler metrics (see
Remark \ref{rem:r-z}). Moreover, in \cite{rwn5,rwn4,rwn3} the Hele-Shaw
flow and the corresponding phenomenon of topology change was exploited
to study the singularities of such weak geodesic rays (and solutions
to closely related homogeneous complex Monge-Ampère equations) in
the general non-torus invariant setting (see Remark \ref{rem:hsf gives weak geod}).

We also recall that envelope type constructions as the one appearing
in the definition of the projection operator $P$ play a pivotal role
in pluripotential theory (and have their origins in the classical
work of Siciak and Zakharyuta on polynomial approximations in $\C^{n}$
(see \cite{gz} for the global setting). Moreover, by the results
in \cite{bbgz} the corresponding measure $(P\theta)^{n}$ on $X$
can be characterized as the unique normalized minimizer of the (twisted)
pluripotential energy (which generalizes the classical weighted logarithmic
energy of a measure in $\C).$ The $L^{\infty}-$regularity of $P\theta$
was first established in \cite{b-d} in a very general setting (of
big cohomology classes), using pluripotential techniques. A new PDE
proof of the latter regularity, in the case of nef and big cohomology
classes, was then given in the paper \cite{berm5}, which can be seen
as the ``static'' version of the present paper.

\subsection{Organization}

In Section \ref{sec:The-zero-temperature-limit} we state and prove
refined versions of Theorem \ref{thm:main intro} (stated above).
Then in Section \ref{sec:Large-time-asymptotics} we go on to study
the joint large $\beta$ and large $t-$limits of the corresponding
flows. In particular, a dynamical construction of plurisubharmonic
(as well as convex) envelopes is given and a comparison with previous
work on canonical metrics in Kähler geometry (concerning finite $\beta)$
is made. In Sections \ref{sec:Relations-to-viscosity} and \ref{sec:Application-to-Hele-Shaw}
the relation to Hamilton-Jacobi equations and Hele-Shaw flows, respectively,
is exhibited. The extension to twisting potentials which are merely
continuous and the relation to random twistings is discussed in Section
\ref{rem:random potential}. In the final section we present a (deterministic,
as well as stochastic) gradient flow interpretation of our results
which will be expanded on elsewhere.

\section{\label{sec:The-zero-temperature-limit}The zero-temperature limit
of the Kähler-Ricci flow }

\subsection{Notation and setup}

Let $X$ be an $n$-dimensional compact complex manifold. We will
identify symmetric two-tensors with two-forms of type $(1,1)$ on
$X$ using $J$ in the usual way: if $g$ is a symmetric tensor, then
the corresponding form $\omega:=g(\cdot,J\cdot),$ is said to be \emph{Kähler}
if $\omega$ is closed and $g$ is strictly positive (i.e. $g$ is
a Riemannian metric). We will assume that $X$ is Kähler, i.e. it
admits a Kähler metric and we fix such a reference Kähler metric $\omega$
once and for all. On a Kähler manifold the De Rham cohomology class
$[\eta]\in H^{2}(X,\R)$ defined by a given closed real two form $\eta$
of type $(1,1)$ may (by the ``$\partial\bar{\partial}-$lemma'')
be written as 
\begin{equation}
[\eta]=\left\{ \eta+dd^{c}u:\,\,\,\,u\in C^{\infty}(X)\right\} ,\,\,\,dd^{c}:=\frac{i}{2\pi}\partial\bar{\partial.}\label{eq:eta as ddbar}
\end{equation}
In our normalization the Ricci curvature form $\mbox{Ric }\omega$
of a Kähler metric $\omega$ on $X$ is defined, locally, by 
\[
\mbox{Ric }\omega:=-dd^{c}\log\frac{\omega^{n}}{dV(z)}
\]
 where $z$ are local holomorphic coordinates on $X$ and $dV(z)$
denotes the corresponding Euclidean volume. The form $\mbox{Ric }\omega$
represents, for any Kähler metric $\omega,$ minus the first Chern
class $c_{1}(K_{X})\in H^{2}(X,\R)$ of the canonical line bundle
$\det(T^{*}X).$

\subsubsection{Setup}

Specifically, our geometric setup is as follows: we assume given a
family $\theta_{\beta}$ of closed real $(1,1)-$forms (the ``twisting
forms'') with the asymptotics 
\[
\theta_{\beta}=\theta+o(1),
\]
 as $\beta\rightarrow\infty$ (in $L^{\infty}$-norm). We will assume
that 

\begin{equation}
c_{1}(K_{X})/\beta+[\theta_{\beta}]\geq0\label{eq:semi-positivity cond in text}
\end{equation}
 as $(1,1)-$cohomology classes, i.e. there exists a semi-positive
form $\chi_{\beta}$ in the class $c_{1}(K_{X})/\beta+[\theta_{\beta}]$
(we will fix one such choice for each $\beta>0$). This assumption
ensures that the corresponding twisted Kähler-Ricci flow 
\begin{equation}
\frac{\partial\omega(t)}{\partial t}=-\frac{1}{\beta}\mbox{\ensuremath{\mbox{Ric }\omega(t)+\theta_{\beta}}},\,\,\,\,\omega(0)=\omega_{0}\label{eq:nonnormalized krf in setup}
\end{equation}
exist for all $t\geq0$ and $\beta>0$ \footnote{In fact, it is enough to assume that $c_{1}(K_{X})/\beta+[\theta_{\beta}]$
is nef (i.e. a limit of positive classes), which is equivalent to
the long time existence of the corresponding KRF \cite{t-z}. Indeed,
the estimates we get will be independent of the choice of reference
form $\chi$ and hence the nef case can be reduced to the semi-positive
case by perturbation of the class $[\theta]$. }. The extra flexibility offered by $\beta-$dependence of $\theta_{\beta}$
will turn out to be quite useful (for example, taking $\theta_{\beta}:=\theta+\frac{1}{\beta}\mbox{Ric \ensuremath{\omega}}$
for $\theta$ defining a semi-positive cohomology class, ensures that
the semi-positivity condition \ref{eq:semi-positivity cond in text}
holds). 

More precisely, we will refer to the flow above as the \emph{non-normalized
twisted Kähler-Ricci flow} (or simply the \emph{non-normalized KRF})
to distinguish it from its normalized version: 
\begin{equation}
\frac{\partial\omega(t)}{\partial t}=-\frac{1}{\beta}\mbox{\ensuremath{\mbox{Ric }\omega(t)-\omega(t)+\theta_{\beta},\,\,\,\,\omega(0)=\omega_{0}}}\label{eq:normalized krf in setup}
\end{equation}

As is well-known the two flows are equivalent under a scaling combined
with a time reparametrization: denoting by $\tilde{\omega}(s)$ the
non-normalized KRF one has
\begin{equation}
\frac{1}{s+1}\tilde{\omega}(s)=\omega(t),\,\,\,\,e^{t}:=s+1,\label{eq:scaling of k=0000E4hler forms}
\end{equation}
(the equivalence follows immediately from the fact that $\mbox{Ric }(c\omega)=\mbox{Ric }\omega,$
for any given positive constant $c).$ In the proofs we will reserve
the notation $\omega(t)$ for the normalized version of the flows. 

In order to write the flows in terms of Kähler potentials we represent
\[
\tilde{\omega}(s)=(\omega_{0}+s\chi_{\beta})+dd^{c}\tilde{\varphi}(s),\,\,\,\,\tilde{\varphi}(0)=0
\]
for the fixed semi-positive form $\chi_{\beta}$ in $\frac{1}{\beta}c_{1}(K_{X})+[\theta_{\beta}]$
(the first term ensures that the equation holds on the level of cohomology).
Then the non-normalized KRF \ref{eq:nonnormalized krf in setup} is
equivalent to the following Monge-Ampère flow: 
\[
\frac{\partial\tilde{\varphi}(s)}{\partial s}=\frac{1}{\beta}\log\frac{(\tilde{\omega}_{0}+s\chi_{\beta}+dd^{c}\tilde{\varphi}(s))^{n}}{\omega^{n}}+f_{\beta},\,\,\tilde{\varphi}(0)=0
\]
for the smooth function $\tilde{\varphi}(s),$ which is a Kähler potential
of $\tilde{\omega}(s)$ wrt the Kähler reference metric $\omega_{0}+s\chi_{\beta}$,
and where $f_{\beta}$ is uniquely determined by the equation 

\begin{equation}
\theta_{\beta}-\frac{1}{\beta}\mbox{Ric }\omega=dd^{c}f_{\beta}+\chi_{\beta}\label{eq:defining eq for f beta in terms of theta}
\end{equation}
together with the normalization condition 

\begin{equation}
\inf_{X}f_{\beta}=0.\label{eq:normalization of f beta}
\end{equation}
We will also assume that $\chi_{\beta}$ is uniformly bounded from
above, i.e. 
\begin{equation}
\chi_{\beta}\leq C_{0}\omega,\label{eq:condition on chi beta}
\end{equation}
 for some constant $C_{0}$ and some fixed Kähler form $\omega$,
and hence $\chi_{\beta}$ converge smoothly to $\chi\in[\theta]$
as $\beta\to+\infty$ \footnote{The convergence result still holds without an upper bound on $\chi_{\beta}$
(which does not hold when $\theta$ is nef but not semi-positive),
but the dependence on $t$ in the estimates will be worse. }. Accordingly, $f_{\beta}$ converge smoothly to $f$ uniquely determined
by $\chi+dd^{c}f=\theta$ and $\inf_{X}f=0$. Since $\omega_{0}$
is smooth, up to enlarging $C_{0}$ we can also assume that 
\begin{equation}
C_{0}^{-1}\omega\text{\ensuremath{\leq}}\omega_{0}\leq C_{0}\omega.\label{eq:omega zero and omega}
\end{equation}
Finally, even when the functions $f_{\beta}$ are not uniformly bounded,
Lemma \ref{lem:proj of plus infity} ensures that the envelope $P_{\hat{\omega}_{t}}(f_{\beta})$
stays bounded from above if the functions $f_{\beta}$ do not go uniformly
to $+\infty$. After enlarging $C_{0}$ one more time we can assume
that 
\begin{equation}
P_{\hat{\omega_{t}}}(f_{\beta})\leq C_{0},\:\forall\beta>0,\:\forall t\geq0.\label{eq:upper bound for envelope of f beta}
\end{equation}

Similarly, the normalized KRF is equivalent to the Monge-Ampère flow
\[
\frac{\partial\varphi(t)}{\partial t}=\frac{1}{\beta}\log\frac{(\hat{\omega}_{t}+dd^{c}\varphi(t))^{n}}{\omega^{n}}-\varphi(t)+f_{\beta},
\]
 where 
\[
\omega(t)=\hat{\omega}_{t}+dd^{c}\varphi(t),\,\,\,\hat{\omega}_{t}:=e^{-t}\omega_{0}+(1-e^{-t})\chi_{\beta}.
\]
 The corresponding scalings are now given by

\[
\tilde{\varphi}(s)=e^{t}\left(\varphi(t)+c_{\beta}(t)\right),\,\,\,c_{\beta}(t)=\frac{n}{\beta}(t-1+e^{-t})
\]
 (abusing notation slightly we will occasionally also write $\omega(t)=\omega_{\varphi_{t}}).$ 
\begin{rem}
\label{rem:scaling of linear evolution}Under the scaling above a
curve $\tilde{\varphi}(s)$ of the form $\tilde{\varphi}(s)=\varphi_{0}+sf$
corresponds to a curve $\varphi(t)$ of the form $e^{-t}\varphi_{0}+(1-e^{-t})f-c_{\beta}(t).$ 
\end{rem}

\subsection{Statement of the main results}

In the following section we will prove the following more precise
version of Theorem \ref{thm:main intro} stated in the introduction
of the paper.
\begin{thm}
\label{thm:main general} Let $X$ be a compact complex manifold endowed
with a family of twisting form $\theta_{\beta}$ as above. Denote
by $\omega^{(\beta)}(t)$ the flow of Kähler metrics evolving by the
(non-normalized) twisted Kähler-Ricci flow \ref{eq:nonnormalized krf in setup}
with parameter $\beta,$ emanating from a given Kähler metric $\omega_{0}$
on $X.$ Then 
\[
\lim_{\beta\rightarrow\infty}\omega^{(\beta)}(t)=P(\omega_{0}+t\theta)
\]
in the weak topology of currents. On the level of Kähler potentials,
for any fixed time-interval $[0,T]$, the functions $\varphi^{(\beta)}(t)$
converge uniformly wrt $\beta$ in the $C^{1,\alpha}(X)$-topology
(for any fixed $\alpha<1)$ towards the envelope $P_{\omega_{0}+t\theta}(0)$.
More precisely, fixing a reference Kähler metric $\omega$ on $X$,
\[
0\leq\omega^{(\beta)}(t)\leq e^{C(1+\frac{1}{\beta})t\log(t+1)}\omega
\]
and 
\[
-C-n\log(1+t))/\beta\leq\frac{\partial\varphi^{(\beta)}(t)}{\partial t}\leq C(1+\frac{1}{\beta}\log(1+t))/t
\]
where the constant $C$ only depends on $\theta$ through the following
quantities: $\sup_{X}\mbox{Tr}_{\omega}\theta$ and $C_{0}$ (as in
\ref{eq:condition on chi beta}, \ref{eq:omega zero and omega} and
\ref{eq:upper bound for envelope of f beta}); it also depends on
a lower bound on the holomorphic bisectional curvature of the reference
Kähler metric $\omega.$ 
\end{thm}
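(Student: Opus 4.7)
The strategy is to analyze the normalized KRF \ref{eq:normalized krf in setup} on the level of potentials, establish $\beta$-uniform a priori estimates via the parabolic maximum principle, identify the limit with the envelope by a comparison argument, and finally translate back to the non-normalized flow through the scaling \ref{eq:scaling of k=0000E4hler forms}. All estimates are obtained for $\varphi^{(\beta)}(t)$ solving the scalar Monge--Amp\`ere flow associated with \ref{eq:normalized krf in setup}.

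\textbf{Time-derivative bounds.} Differentiating the normalized Monge--Amp\`ere flow in $t$ and applying the parabolic maximum principle to Li--Yau-type combinations of $t\dot\varphi$ and $\varphi$ yields the upper bound $\dot\varphi^{(\beta)}(t)\le C(1+\beta^{-1}\log(1+t))/t$. For the lower bound, comparison with the explicit linear sub-flow of Remark \ref{rem:scaling of linear evolution}, built from $f_\beta$ and the curve $(1-e^{-t})(f_\beta-\sup f_\beta)-c_\beta(t)$, gives $\dot\varphi^{(\beta)}\ge -C-n\log(1+t)/\beta$. Combined with Lemma \ref{lem:proj of plus infity} and the normalization \ref{eq:upper bound for envelope of f beta}, these also supply a $\beta$-uniform $L^\infty$ bound on $\varphi^{(\beta)}(t)$.

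\textbf{Uniform Laplacian estimate (main obstacle).} A direct Aubin--Yau computation on $\log\Tr_\omega\omega^{(\beta)}(t)-A\varphi^{(\beta)}(t)$ produces a term of order $\beta\cdot\dot\varphi$ that is not uniform in $\beta$. Following the envelope trick of \cite{berm5}, I would replace the potential by the auxiliary function $\varphi^{(\beta)}(t)-P_{\hat\omega_t}(f_\beta)$, which is bounded uniformly in $\beta$ by \ref{eq:upper bound for envelope of f beta}. Because $P_{\hat\omega_t}(f_\beta)$ is $\hat\omega_t$-psh and equal to $f_\beta$ on the contact set, its $dd^c$ produces precisely the negative Hessian term needed to absorb the bad $\beta\dot\varphi$ contribution at a maximum point of the modified Aubin--Yau quantity. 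Using the lower bound on the holomorphic bisectional curvature of $\omega$ together with the $\dot\varphi$ bound from the previous step, one then integrates via Gr\"onwall to obtain the stated estimate $\omega^{(\beta)}(t)\le e^{C(1+\beta^{-1})t\log(t+1)}\omega$. This is the main technical obstacle: absent the envelope trick the Laplacian estimate deteriorates as $\beta\to\infty$, reflecting that the limiting metric is genuinely singular on $X\setminus X(t)$.

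\textbf{Identification of the limit and regularity.} With the uniform $L^\infty$ bounds on $\varphi^{(\beta)}$ and $\dd^c\varphi^{(\beta)}$, plus equicontinuity in $t\in[0,T]$ coming from the $L^\infty$ bound on $\dot\varphi$, the family $\{\varphi^{(\beta)}(t)\}_\beta$ is pre-compact in $C^{1,\alpha}(X)$ uniformly in $t\in[0,T]$ for every $\alpha<1$. Any limit $\varphi^{(\infty)}(t)$ is bounded and $(\omega_0+t\theta)$-psh with $\varphi^{(\infty)}(t)\le 0$, hence $\varphi^{(\infty)}(t)\le P_{\omega_0+t\theta}(0)$. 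For the reverse inequality, take any competitor $\psi\le 0$ that is $(\omega_0+t\theta)$-psh; after a standard regularization $\psi$ gives rise to a sub-solution of the twisted KRF up to an error $O(\beta^{-1})$, and the parabolic comparison principle for Monge--Amp\`ere flows yields $\varphi^{(\beta)}(t)\ge \psi-o_\beta(1)$ and hence $\varphi^{(\infty)}(t)\ge\psi$. Taking the supremum over $\psi$ gives $\varphi^{(\infty)}(t)\ge P_{\omega_0+t\theta}(0)$, pinning down the limit and promoting subsequential to full convergence. Translating back under \ref{eq:scaling of k=0000E4hler forms} yields the current convergence $\omega^{(\beta)}(t)\to P(\omega_0+t\theta)$, as asserted.
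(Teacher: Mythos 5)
Your overall plan — deriving $\beta$-uniform a priori estimates via the parabolic maximum principle, identifying the limit by a comparison argument, and transferring back by the scaling \ref{eq:scaling of k=0000E4hler forms} — matches the paper's structure, and your time-derivative and $L^\infty$ bounds are in the right spirit. The Laplacian estimate, however, rests on a misdiagnosis together with a fix that does not appear viable. You assert that the Aubin--Yau computation produces a term of order $\beta\cdot\frac{\partial\varphi}{\partial t}$; but no such term survives once one applies the weighted parabolic operator $\frac{1}{\beta}\Delta_t - \frac{\partial}{\partial t}$ (the one naturally attached to the flow) to $h := \log\Tr_\omega\omega_t$. The $1/\beta$ prefactor exactly cancels the $\beta$ in $F := \log(\omega_{\varphi_t}^n/\omega^n) = \beta\left(\frac{\partial\varphi_t}{\partial t} + \varphi_t - f_\beta\right)$, and the resulting term involving $\Delta_t\left(\frac{\partial\varphi_t}{\partial t}\right)$ then cancels against the $dd^c\frac{\partial\varphi_t}{\partial t}$ contribution inside $\frac{\partial h}{\partial t}$. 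After this cancellation only $\beta$-independent terms remain, so no envelope modification is needed at this stage. The paper's genuine auxiliary term is $-B_+Ce^t\varphi_t$, not $-P_{\hat{\omega}_t}(f_\beta)$: its purpose is to absorb the curvature term $B_+\Tr_{\omega_t}\omega$ arising from Siu's inequality, using $\Tr_{\omega_t}\hat{\omega}_t = n - \Delta_t\varphi_t$ and $\omega \leq Ce^t\hat{\omega}_t$, while the time-derivative it introduces is controlled by the lower bound on $\frac{\partial\varphi_t}{\partial t}$. The envelope $P_{\hat{\omega}_t}(f_\beta)$ enters only a posteriori, through the upper bound $\varphi_t \leq P_t(f_\beta) + A/\beta \leq C_0 + A/\beta$, not as a term inside the Aubin--Yau quantity.

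Your proposed envelope subtraction is moreover problematic on its own terms: $P_{\hat{\omega}_t}(f_\beta)$ is at best $C^{1,1}$, and that very regularity is one of the things the dynamic construction is supposed to re-derive, so feeding it into a pointwise second-order maximum-principle argument is either circular or requires a regularization you have not supplied; and it is in any case unclear how the bounded quantity $dd^c P_{\hat{\omega}_t}(f_\beta)$ could absorb a contribution of order $\beta$. Your identification of the limit is also too thin as sketched: the paper's argument (Propositions \ref{prop:existence of the limit} and \ref{prop:The-sup-coincides with projection of linear curve}) compares against curves $v(t)$ on $[0,T]$ with $v(0)=\varphi_0$ satisfying $\frac{\partial v}{\partial t} \leq -(v-f)$ and regularizes via Lemma \ref{lem:concave proj}; fixing $t$ and regularizing an arbitrary $\psi \leq 0$ at that single time does not by itself produce a parabolic sub-solution over $[0,T]$ emanating from $\varphi_0$, which is what the comparison principle needs.
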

The definition of the non-linear projection operators $P$ and $P_{\omega_{0}}$
will be recalled in Section \ref{sub:The-projection-operator}. The
dependence of the constants above on the potential $f$ of $\theta$
will be crucial in the singular setting of Hele-Shaw type flows where
$f$ blows up on a hypersurface of $X,$ but $P_{C'\omega}(f)$ is
finite (see Section \ref{sec:Application-to-Hele-Shaw}). 

Under special assumptions on $X$ we get an essentially optimal bound
on $\omega^{(\beta)}(t):$ 
\begin{thm}
\label{thm:sharp bounds}Assume that $X$ admits a Kähler metric $\omega$
with non-negative holomorphic bisectional curvature. Then the following
more precise estimates hold
\[
\left\Vert \omega^{(\beta)}(t)\right\Vert \leq(t+1)\max\left\{ \left\Vert \omega_{0}\right\Vert ,\left\Vert \theta_{\beta}-\frac{\mbox{Ric }\omega}{\beta}\right\Vert \right\} 
\]
 in terms of the trace norm defined wrt $\omega$ (i.e the sup on
$X$ of the point-wise $L^{1}-$norm wrt $\omega$. Moreover, for
any Riemann surface (i.e. $n=1)$ the previous estimate holds without
any conditions on the Kähler metric $\omega.$ 
\end{thm}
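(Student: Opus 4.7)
My plan is to apply the parabolic maximum principle to the auxiliary function
\[
H(t,x) \;:=\; \mbox{tr}_\omega \omega^{(\beta)}(t,x) \;-\; (t+1)\,A,\qquad A := \max\bigl\{\|\omega_{0}\|,\;\|\theta_\beta - \Ric\,\omega/\beta\|\bigr\},
\]
and show that $H \leq 0$ on $[0,T]\times X$. First, $H(0,\cdot) \leq 0$ because $\mbox{tr}_\omega \eta \leq \|\eta\|$ for every Hermitian $(1,1)$-form $\eta$. Then I want to show that $H$ cannot develop positive values along the non-normalized flow $\partial_t \omega^{(\beta)} = -\Ric\,\omega^{(\beta)}/\beta + \theta_\beta$.

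Differentiating in $t$ and rewriting $\Ric\,\omega^{(\beta)} = \Ric\,\omega - dd^{c}\log\bigl((\omega^{(\beta)})^n/\omega^n\bigr)$,
\[
\partial_{t}\,\mbox{tr}_\omega \omega^{(\beta)} \;=\; \mbox{tr}_\omega\bigl(\theta_\beta - \tfrac{1}{\beta}\Ric\,\omega\bigr) \;+\; \tfrac{1}{\beta}\,\Delta_\omega \log\bigl((\omega^{(\beta)})^n/\omega^n\bigr).
\]
The crux will be to prove the following Yau/Aubin-type $C^{2}$-inequality, valid when $\omega$ has non-negative holomorphic bisectional curvature:
\[
\Delta_{\omega^{(\beta)}}\, \mbox{tr}_\omega \omega^{(\beta)} \;\geq\; \Delta_\omega \log\bigl((\omega^{(\beta)})^n/\omega^n\bigr).
\]
I plan to verify this by a local computation in K\"ahler normal coordinates for $\omega$ with $\omega^{(\beta)}$ simultaneously diagonalized, eigenvalues $\mu_{1},\ldots,\mu_{n}$. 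The fourth-derivative terms on the two sides should match after relabeling dummy indices, thanks to the K\"ahler symmetry $\varphi_{i\bar{i}k\bar{k}} = \varphi_{k\bar{k}i\bar{i}}$ of a local potential $\varphi$ with $\omega^{(\beta)} - \omega = dd^{c}\varphi$, leaving the manifestly non-negative remainder
\[
\sum_{i,j,k}\frac{|\partial_{\bar{k}} g^{(\beta)}_{i\bar{j}}|^{2}}{\mu_{i}\mu_{j}} \;+\; \tfrac{1}{2}\sum_{i,k} R_{i\bar{i}k\bar{k}}(\omega)\,\frac{(\mu_{i}-\mu_{k})^{2}}{\mu_{i}\mu_{k}} \;\geq\; 0.
\]

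Combining these, the heat operator $\Box := \partial_{t} - \tfrac{1}{\beta}\Delta_{\omega^{(\beta)}}$ will satisfy
\[
\Box H \;\leq\; \mbox{tr}_\omega\bigl(\theta_\beta - \tfrac{1}{\beta}\Ric\,\omega\bigr) - A \;\leq\; \bigl\|\theta_\beta - \tfrac{1}{\beta}\Ric\,\omega\bigr\| - A \;\leq\; 0,
\]
and the parabolic maximum principle on the compact manifold $X$ (applied to $H - \varepsilon t$, then $\varepsilon \to 0^{+}$) will force $H \leq 0$ on $[0,T]\times X$, proving the trace-norm bound. For the Riemann surface case ($n=1$), the curvature-dependent sum above vanishes identically (the factor $(\mu_{i}-\mu_{k})^{2}$ kills every term), and for $\omega^{(\beta)} = u\omega$ the key inequality reduces to the trivial $\Delta_{\omega} u/u \geq \Delta_{\omega}u/u - |\nabla u|_\omega^{2}/u^{2}$; hence no curvature assumption on $\omega$ is needed on a Riemann surface.

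The hard part will be establishing the Yau-type inequality with the sharp sign: the fourth-derivative cancellation depends essentially on the full K\"ahler symmetry of the potential, and the surviving curvature correction is only recognized as non-negative after symmetrizing in the index pair $(i,k)$ to expose the $(\mu_{i}-\mu_{k})^{2}$ structure controlled by the bisectional curvature hypothesis on $\omega$.
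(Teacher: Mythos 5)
Your proof is correct but proceeds differently from the paper's. The paper works with the \emph{normalized} KRF and the potential $h=\log\mathrm{Tr}_{\omega}\omega_{\varphi_t}$, invokes Siu's Laplacian inequality (Lemma~\ref{lem:Siu}) with $B_+=0$, exploits the cancellation of the $\partial_t\varphi_t$ terms after applying $\frac{1}{\beta}\Delta_t-\partial_t$, and then applies the dichotomy form of the parabolic maximum principle; the $(t+1)$ factor is recovered from the scaling between normalized and non-normalized flows. You instead work directly with the non-normalized flow and the \emph{un-logged} trace $\mathrm{tr}_{\omega}\omega^{(\beta)}$, subtract the linear barrier $(t+1)A$ so the source becomes non-positive, and prove the pointwise inequality $\Delta_{\omega^{(\beta)}}\mathrm{tr}_{\omega}\omega^{(\beta)}\geq\Delta_{\omega}\log\bigl((\omega^{(\beta)})^n/\omega^n\bigr)$ from scratch. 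That inequality is in fact strictly weaker than Siu's (it follows from Siu's $\log$-version on multiplying through by $\mathrm{Tr}_{\omega}\omega'>0$ and dropping the non-negative gradient term $|\nabla_{\omega'}\mathrm{Tr}_{\omega}\omega'|^2/\mathrm{Tr}_{\omega}\omega'$), but it suffices here, and your local computation is cleaner in one respect: without the $\log$, the third-order remainder $\sum|\partial_{\bar k}g'_{i\bar j}|^2/(\mu_i\mu_j)$ is manifestly non-negative with no Cauchy--Schwarz argument, and the curvature piece symmetrizes into the visibly signed expression $\tfrac12\sum R_{i\bar ik\bar k}(\mu_i-\mu_k)^2/(\mu_i\mu_k)$, which also makes the vanishing in the $n=1$ case transparent. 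So your route avoids both the normalization/rescaling step and the need to absorb cross terms via the logarithm, at the cost of re-deriving rather than citing the Aubin--Yau computation; the two arguments give the same sharp constant.
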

In particular, letting $\beta\rightarrow\infty$ gives that 
\[
\left\Vert P(\omega_{0}+t\theta)\right\Vert \leq(t+1)\max\{\left\Vert \omega_{0}\right\Vert ,\left\Vert \theta\right\Vert \},
\]
 which is also a consequence of the estimates in the ``static''
situation considered in \cite{berm5} (of course, in the case when
$\theta$ is semi-positive the latter bound follows directly from
the triangle inequality!). 

However, it should be stressed that, in general, it is not possible
to bound $\omega^{(\beta)}(t)$ by a factor $C_{\beta}t,$ even for
a fixed $\beta$ (see Prop \ref{prop:no linear bd}). On the other
hand, as we show in Section \ref{sub:The-case-when convex envelopes}
this is always possible if $[\theta]=[\omega_{0}]$ (and in particular
positive).

\subsection{\label{sub:Preliminaries}Preliminaries}

\subsubsection{\label{sub:Parabolic-comparison/max-princip}Parabolic comparison/max
principles}

We will make repeated use of standard parabolic comparison and maximum
principles for smooth sub/super solutions of parabolic problems of
the form
\[
\frac{\partial u}{\partial t}=\mathcal{D}u
\]
for a given differential operator $\mathcal{D}$ acting on $\mathcal{C}^{\infty}(X)$
(or a subset thereof). We will say that $u$ is a sub (super) solution
if $(\frac{\partial}{\partial t}-\mathcal{D})u\leq0$ ($\geq0).$ 
\begin{prop}
(Comparison principle) Let $X$ be a compact complex manifold and
consider a second order differential operator $\mathcal{D}$ on $\mathcal{C}^{\infty}(X)$
of the form 
\[
(\mathcal{D}u)(x)=a(t,x)u(x)+F_{t}((dd^{c}u)(x)),
\]
 where $a$ is a bounded function on $[0,\infty[\times X$ and $F_{t}(A)$
is a family of increasing functions on the set of all Hermitian matrices.
If $u$ and $v$ are smooth sub- and super-solutions, respectively,
to the corresponding parabolic problem for $\mathcal{D}$ on $X\times[0,T],$
then $u_{0}\leq v_{0}$ implies that $u_{t}\leq v_{t}$ for all $t\in[0,T].$
In particular, the result applies to the heat flow of the time-dependent
Laplacian $\Delta_{g_{t}},$ defined wrt a family of Kähler metrics,
and to the twisted KRF (normalized as well as non-normalized). \end{prop}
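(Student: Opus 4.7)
The plan is to prove this by the standard parabolic maximum principle, with an exponentially growing perturbation in time used to convert a weak inequality into a strict one. Set $w := u-v$ and, for a constant $K > \|a\|_{L^{\infty}([0,T]\times X)}$ and a parameter $\epsilon>0$, consider
$$
w_{\epsilon}(t,x) := u(t,x) - v(t,x) - \epsilon e^{Kt}.
$$
The initial hypothesis $u_{0}\leq v_{0}$ gives $w_{\epsilon}(0,\cdot)\leq -\epsilon<0$. My goal is to show $w_{\epsilon}\leq 0$ on $X\times[0,T]$ for every $\epsilon>0$; letting $\epsilon\downarrow 0$ then yields the desired conclusion $u\leq v$.

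Suppose, for contradiction, that $M(t):=\max_{x\in X}w_{\epsilon}(t,x)$ becomes strictly positive somewhere in $(0,T]$. Compactness of $X$ and smoothness of $u,v$ make $M$ continuous in $t$ with $M(0)<0$, so there is a smallest time $t_{0}\in(0,T]$ at which $M(t_{0})=0$; pick any point $x_{0}\in X$ attaining this maximum. Two elementary facts hold at $(t_{0},x_{0})$. First, spatial maximality of $w(t_{0},\cdot)$ at $x_{0}$ gives $dd^{c}u(t_{0},x_{0})\leq dd^{c}v(t_{0},x_{0})$ as Hermitian matrices. Second, since $w_{\epsilon}(t,x_{0})\leq M(t)\leq 0 = w_{\epsilon}(t_{0},x_{0})$ for $t\leq t_{0}$, the one-sided difference quotient shows $\partial_{t}w_{\epsilon}(t_{0},x_{0})\geq 0$, equivalently $\partial_{t}(u-v)(t_{0},x_{0})\geq \epsilon K e^{Kt_{0}}$.

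Now I combine the sub- and supersolution inequalities at $(t_{0},x_{0})$:
$$
\partial_{t}(u-v)(t_{0},x_{0}) \leq a(t_{0},x_{0})\,(u-v)(t_{0},x_{0}) + F_{t_{0}}(dd^{c}u(t_{0},x_{0})) - F_{t_{0}}(dd^{c}v(t_{0},x_{0})).
$$
The monotonicity hypothesis on $F_{t_{0}}$ combined with the matrix inequality from the previous step makes the bracketed difference $\leq 0$, and $(u-v)(t_{0},x_{0})=\epsilon e^{Kt_{0}}$, so the right-hand side is $\leq a(t_{0},x_{0})\cdot \epsilon e^{Kt_{0}}$. Together with the lower bound on the left-hand side this forces $K\leq a(t_{0},x_{0})\leq \|a\|_{L^{\infty}}$, contradicting the choice of $K$. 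Hence $w_{\epsilon}\leq 0$ everywhere. For the heat flow of a time-dependent K\"ahler metric one takes $a\equiv 0$ and $F_{t}(A)=n\,\mathrm{Tr}_{g_{t}}(A)$, which is linear and increasing because $g_{t}>0$; for the (normalized or non-normalized) twisted KRF written in the potential form of Section \ref{sub:Preliminaries}, $F_{t}$ has the shape $\beta^{-1}\log[(\hat{\omega}_{t}+A)^{n}/\omega^{n}]$ plus lower-order linear terms, which is increasing in $A$ on the cone where $\hat{\omega}_{t}+A>0$.

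I do not expect a genuine obstacle, since every ingredient is classical. The two points requiring a little care are, first, keeping the first-crossing time $t_{0}$ strictly positive, which is precisely what the strict initial gap $-\epsilon$ is engineered to ensure, and second, that for the nonlinear KRF application the monotonicity of $\log\det$ holds only on the positive cone; this poses no real problem because the subsolutions/supersolutions produced along the flow are genuine K\"ahler potentials, so both $\hat{\omega}_{t}+dd^{c}u$ and $\hat{\omega}_{t}+dd^{c}v$ lie in the domain on which $F_{t}$ is monotone.
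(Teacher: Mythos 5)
Your proof is correct and follows the same parabolic maximum-principle strategy as the paper: locate the first crossing time, exploit the spatial maximum to get $dd^{c}u\le dd^{c}v$ there, use monotonicity of $F_t$, and control the zeroth-order term with an exponential factor. The only difference is cosmetic but in your favour: the paper first gauges $u\mapsto e^{\pm At}u$ so as to fix the sign of $a$ and then argues at the first violation time, where strictly speaking $u_t-v_t$ is still $\le 0$ with equality rather than $>0$ as claimed; your single comparison function $u-v-\epsilon e^{Kt}$ with $K>\|a\|_{\infty}$ bundles the gauge and the $\epsilon$-perturbation together, so the strict inequality $(u-v)(t_0,x_0)=\epsilon e^{Kt_0}>0$ holds automatically at the first crossing, which makes the argument airtight in one pass.
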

\begin{proof}
For completeness (and since we shall need a slight generalization)
we recall the simple proof. After replacing $u$ with $e^{At}u$ for
$A$ sufficiently large we may as well assume that $a_{t}<0.$ Assume
to get a contradiction that it is not the case that $u_{t}\leq v_{t}$
on $X\times[0,T].$ Then there exists a point $(x,t)\in X\times[0,T]$
such that 
\[
u_{t}(x)-v_{t}(x)>0,\,\frac{\partial(u_{t}-v_{t})}{\partial t}(x)\geq0,\,(\nabla_{g_{t}}u)(x)=(\nabla_{g_{t}}v)(x)=0,
\]
and $\,\,(dd^{c}u_{t})(x)-(dd^{c}v_{t})(x)\leq0.$ Indeed, one first
takes $t$ to be the first time violating the condition $u_{t}\leq v_{t}$
on $X$ and then maximize $u_{t}(x)-v_{t}(x)$ over $X$ to get the
point $x.$ In particular, since $a_{t}(x)>0$ and $F_{t}$ is increasing
we have that 
\[
\frac{\partial(u_{t}-v_{t})}{\partial t}(x)-(\mathcal{D}u-\mathcal{D}v)(x)>0.
\]
But this contradicts that $u$ and $v$ are sub/super solutions (since
this implies the reversed inequality $\leq0).$\end{proof}
\begin{rem}
The condition that $X$ be a complex manifold (and the Kähler condition)
have just been included to facilitate the formulation of the proposition.
Moreover, exactly the same proof as above shows that any first order
term of the $H(t,x,(\nabla u)(x))$ for $H$ smooth can be added to
$\mathcal{D}$ above (as in the setting of Hamilton-Jacobi equations
considered in Section \ref{sec:Relations-to-viscosity}).\end{rem}
\begin{prop}
(Maximum principle) Let $X$ be a compact complex manifold and consider
a second order differential operator $\mathcal{D}$ on $\mathcal{C}^{\infty}(X)$
of the form 
\[
(\mathcal{D}u)(x)=F_{t}((dd^{c}u)(x)),
\]
 where $F_{t}(A)$ is a family of increasing functions on the set
of all Hermitian matrices. Given a smooth function $u(x,t)$ on $X\times[0,T]$
we have that 
\begin{itemize}
\item The following dichotomy holds: either the maximum of $u(x,t)$ is
attained at $X\times\left\{ 0\right\} $ or at a point $x\in X\times]0,T]$
satisfying 
\[
\left(\frac{\partial u(x,t)}{\partial t}-\mathcal{D}(u)\right)\geq-F_{t}(0),
\]

\item In particular, if $F_{t}(0)=0$ for all $t$ and 
\[
\left(\frac{\partial}{\partial t}-\mathcal{D}\right)\leq0
\]
 on $X\times[0,T],$ then the maximum of $u(x,t)$ is attained at
$X\times\left\{ 0\right\} $. 
\end{itemize}
\end{prop}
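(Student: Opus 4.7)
The plan is to run the standard parabolic maximum principle argument based on first-order (in $t$) and second-order (in $x$) conditions at an interior-in-time maximum, together with the monotonicity of $F_t$. The first bullet is proved directly; the second bullet is then deduced from the first by a linear time-perturbation trick.

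First I would argue the dichotomy. Since $X\times[0,T]$ is compact and $u$ is continuous, the maximum is attained at some $(x_{0},t_{0})$. If $t_{0}=0$ we are in the first case, so assume $t_{0}\in\,]0,T]$. Because $t_{0}>0$ and $u(\cdot,t)\leq u(\cdot,t_{0})$ at $x=x_{0}$ for all $t\leq t_{0}$, the time derivative satisfies
\[
\frac{\partial u}{\partial t}(x_{0},t_{0})\geq 0
\]
(with equality when $t_{0}<T$ and $\geq 0$ when $t_{0}=T$). Because $x_{0}$ is a spatial maximum of the smooth function $u(\cdot,t_{0})$, the Hermitian matrix $(dd^{c}u)(x_{0},t_{0})$ is negative semidefinite. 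Since $F_{t_{0}}$ is an increasing function on Hermitian matrices,
\[
\mathcal{D}u(x_{0},t_{0})=F_{t_{0}}\!\bigl((dd^{c}u)(x_{0},t_{0})\bigr)\leq F_{t_{0}}(0).
\]
Combining the two inequalities gives
\[
\Bigl(\frac{\partial u}{\partial t}-\mathcal{D}u\Bigr)(x_{0},t_{0})\geq -F_{t_{0}}(0),
\]
which is the desired statement.

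For the second bullet I would perturb and apply the first part. Fix $\varepsilon>0$ and set $v(x,t):=u(x,t)-\varepsilon t$. The key observation is that $\mathcal{D}$ is purely spatial and depends on $u$ only through $dd^{c}u$, so $dd^{c}v=dd^{c}u$ and hence $\mathcal{D}v=\mathcal{D}u$. Therefore
\[
\Bigl(\frac{\partial}{\partial t}-\mathcal{D}\Bigr)v=\Bigl(\frac{\partial}{\partial t}-\mathcal{D}\Bigr)u-\varepsilon\leq -\varepsilon<0
\]
under the standing hypothesis $(\partial_{t}-\mathcal{D})u\leq 0$. If the maximum of $v$ on $X\times[0,T]$ were attained at some $(x_{0},t_{0})$ with $t_{0}>0$, the first bullet applied to $v$ would yield $(\partial_{t}v-\mathcal{D}v)(x_{0},t_{0})\geq -F_{t_{0}}(0)=0$, contradicting the strict negativity above. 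Hence $\max_{X\times[0,T]}v$ is attained on $X\times\{0\}$, which gives
\[
u(x,t)\leq \max_{X}u(\cdot,0)+\varepsilon t\qquad\forall (x,t)\in X\times[0,T].
\]
Letting $\varepsilon\downarrow 0$ concludes the proof.

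There is essentially no analytical obstacle, as the argument is a textbook smooth parabolic maximum principle. The only thing worth pausing on is that $\mathcal{D}$ acts on $u$ only through the $(1,1)$-part, so the time-perturbation $u\mapsto u-\varepsilon t$ leaves $\mathcal{D}u$ unchanged and hence effectively produces a strict subsolution. Once this is noticed, the dichotomy plus the perturbation trick delivers both assertions cleanly.
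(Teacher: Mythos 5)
Your proof is correct and follows essentially the same route as the paper: locate the maximizer, use $\partial_t u\geq 0$ together with negative semidefiniteness of $dd^c u$ at a spatial interior maximum and monotonicity of $F_t$ to get the dichotomy, and then deduce the second bullet by perturbing $u\mapsto u-\varepsilon t$ and passing to the limit $\varepsilon\downarrow 0$, exactly as the paper does.
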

\begin{proof}
The first property is proved exactly as in the beginning of the proof
of the comparison principle. The second point then follows by replacing
$u$ with $u-\delta t$ for any number $\delta>0.$\end{proof}
\begin{rem}
\label{rem:comp for concave}We will need a slight generalization
of the comparison principle to functions $u(x,t)$ which are continuous
on $X\times[0,T]$ and such that $u(\cdot,t)$ is smooth on $X$ for
any fixed $t>0$ and $u(x,\cdot)$ is quasi-concave on $[0,T]$ for
$x$ fixed, i.e. the sum of a concave and a smooth function. Then
we simply define $\frac{\partial}{\partial t}u(x,t)$ on $]0,T]$
as the left derivate i.e. $\frac{\partial}{\partial t}u(x,t):=\lim_{h\rightarrow0}(u(x,t+h)-u(x,t))/h$
for $h<0.$ In particular, the notion of a subsolution still makes
sense for $u$ and the proof of the comparison principle then goes
through word for word. This is just a very special case of the general
notion of viscosity subsolution \cite{c-e-l} which, by definition,
means that the parabolic inequality holds with respect to the super
second order jet of $u$ (which in our setting is just the ordinary
jet in the space$-$direction and the interval between the right and
the left derivative in the time-direction). See \cite{egz} for the
complex setting, where very general comparison principles are established
for viscosity sub/super solution (which however are not needed for
our purposes)
\end{rem}

\subsubsection{\label{sub:The-projection-operator}The projection operator $P$ }

Let $\eta$ be a given closed smooth real $(1,1)-$form on $X$ and
denote by $[\eta]$ the corresponding De Rham cohomology class of
currents which may be represented as in formula \ref{eq:eta as ddbar},
in terms of functions $u\in L^{1}(X).$ Under this representation
the subspace of all positive currents in $[\eta]$ corresponds to
the space of all \emph{$\eta-$plurisubharmonic (psh)} functions $u,$
denoted by $PSH(X,\eta),$ i.e. $u$ is an upper semi-continuous (usc)
function such that 
\[
\eta_{u}:=\eta+dd^{c}u\geq0
\]
 in the sense of currents. We will always assume that $PSH(X,\eta)$
is non-empty (which, by definition, means that the class $[\eta]$
is \emph{pseudo-effective. }This is the weakest notion of positivity
of a class $[\eta]\in H^{1,1}(X,\R),$ the strongest being that $[\eta]$
is a \emph{Kähler class} (also called \emph{positive}), which, by
definition, means that it contains a Kähler metric. 

Given a lsc bounded function $f$ one obtains an $\eta-$psh function
$P_{\eta}(f)$ as the envelope 
\begin{equation}
P_{\eta}(f)(x):=\sup_{u\in PSH(X,\eta)}\{u(x):\,\,\,u\leq f,\,\mbox{on\,\ensuremath{X}}\}.\label{eq:def of psh env}
\end{equation}
 The operator $P_{\eta}$ is clearly a projection operator in the
sense that $P_{\eta}(u)=u$ if $u$ is in $PSH(X,\eta)\cap C^{0}(X).$
We then define 
\[
P(\eta):=\eta+dd^{c}(\,),
\]
 which thus defines a positive current cohomologous to $\eta.$ Equivalently,
if one fixes another reference form $\omega$ in $[\eta],$ i.e. 
\[
\eta=\omega+dd^{c}f
\]
 for some function $f.$ Then 
\[
P(\eta):=\omega+dd^{c}(P_{\omega}f).
\]
If the class $[\eta]$ is semi-positive, i.e. $PSH(X,\eta)\cap C^{\infty}(X)$
is non-empty, then it follows immediately from the definition that
$P_{\eta}(f)$ is bounded if $f$ is. However, even if $f$ is smooth
$P_{\eta}(f)$ will in general not be $C^{2}-$smooth. On the other
hand, by \cite{b-d,berm5} $P_{\eta}(f)$ is almost $C^{2}-$smooth
if the class $[\eta]$ is positive:
\begin{prop}
Let $\omega$ be a Kähler form and $f$ a smooth function on $X.$
Then the complex Hessian $dd^{c}(P_{\omega}f)$ is in $L^{\infty}.$
Equivalently, given any smooth form $\eta$ defining a positive class
$[\eta]$ the corresponding positive current $P(\eta)$ in $[\eta]$
is in $L^{\infty}.$ As a consequence, 
\begin{equation}
P(\eta)^{n}=1_{C}\eta^{n},\label{eq:point wise formula for MA of env in text}
\end{equation}
in the point-wise almost everywhere sense, where $C$ is the corresponding
(closed) coincidence set:
\[
C:=\{x\in X:\,\,\,P_{\eta}(0)(x)=0\}.
\]

\end{prop}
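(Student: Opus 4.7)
The plan is to treat the two assertions separately. The statement $dd^c P_\omega(f) \in L^\infty$ is equivalent to the second formulation $P(\eta) \in L^\infty$ (just rewrite $\eta = \omega + dd^c f$); and once this Laplacian bound is established, the pointwise identity $P(\eta)^n = 1_C \eta^n$ will follow from combining the regularity with classical pluripotential facts about the envelope off and on the coincidence set.

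For the first assertion I would follow the PDE proof in \cite{berm5}. Approximate $P_\omega(f)$ by the family of smooth solutions $u_\varepsilon$ of the penalized Monge--Amp\`ere equation
\[
(\omega + dd^c u_\varepsilon)^n = e^{(u_\varepsilon - f)/\varepsilon}\,\omega^n,
\]
which exist by Aubin--Yau and satisfy $u_\varepsilon \to P_\omega(f)$ uniformly as $\varepsilon \to 0$ (the envelope is the limit since the exponential forces $u_\varepsilon \leq f + O(\varepsilon\log(1/\varepsilon))$ while any candidate competitor is a subsolution). The key step is a uniform $C^2$-estimate: apply the standard Aubin--Yau/Siu calculation to $H := \log \operatorname{Tr}_\omega(\omega + dd^c u_\varepsilon) - A u_\varepsilon$ with $A$ large enough to absorb the lower bound of the bisectional curvature of $\omega$. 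At a maximum point of $H$, differentiating the equation twice and using that the penalization term $e^{(u_\varepsilon - f)/\varepsilon}$ has the correct sign produces an a priori bound $\operatorname{Tr}_\omega(\omega + dd^c u_\varepsilon) \leq C$ independent of $\varepsilon$; the term $-A u_\varepsilon$ is controlled because $\|u_\varepsilon\|_\infty$ is bounded. Passing $\varepsilon \to 0$, weak compactness yields $\omega + dd^c P_\omega(f) \in L^\infty$.

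For the pointwise formula, split $X$ into the open set $\{P_\eta(0) < 0\} = \{P_\omega(f) < f\}$ and the closed contact set $C$. On the open set $P_\omega(f)$ is locally a maximal $\omega$-psh function (it is the Perron upper envelope of $\omega$-psh subsolutions unconstrained at that point), so standard pluripotential theory (balayage, or the fact that one can lift $P_\omega(f)$ locally along any analytic disc not meeting $C$) yields $(\omega + dd^c P_\omega(f))^n = 0$ almost everywhere there. On $C$, both $P_\omega(f)$ and $f$ are in $W^{2,\infty}_{\mathrm{loc}}$ (by the first part, for $P_\omega(f)$; by smoothness, for $f$) and coincide, so by the classical lemma that two $W^{2,\infty}$ functions which agree on a measurable set have the same second derivatives almost everywhere on that set, one gets $dd^c P_\omega(f) = dd^c f$ a.e.\ on $C$, whence $P(\eta) = \eta$ and $P(\eta)^n = \eta^n$ a.e.\ on $C$. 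Combining the two cases gives the claimed identity.

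The main obstacle is the uniform Laplacian estimate on $u_\varepsilon$: everything else is either a soft compactness/uniqueness argument or a standard contact-set dichotomy. The delicate point there is that the right-hand side $e^{(u_\varepsilon - f)/\varepsilon}$ blows up pointwise off the would-be contact set, so one has to arrange the maximum-principle computation so that the $\varepsilon^{-1}$ singular terms either cancel or contribute with the favourable sign; this is precisely what choosing $H = \log \operatorname{Tr}_\omega \omega_{u_\varepsilon} - A u_\varepsilon$ (rather than the bare trace) achieves, and it is where the dependence on the bisectional curvature lower bound of $\omega$ enters.
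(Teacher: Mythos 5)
Your argument is correct, but it takes the ``static'' route from \cite{berm5} rather than the ``dynamic'' route that the paper itself presents. You approximate $P_\omega(f)$ by solutions of the elliptic Calabi--Yau type equation $(\omega+dd^cu_\varepsilon)^n=e^{(u_\varepsilon-f)/\varepsilon}\omega^n$, extract the uniform Laplacian bound from Siu's inequality applied to $H=\log\mathrm{Tr}_\omega\omega_{u_\varepsilon}-Au_\varepsilon$, and observe that the $\varepsilon^{-1}$-term arising from $\mathrm{Tr}_\omega dd^c\bigl((u_\varepsilon-f)/\varepsilon\bigr)=\varepsilon^{-1}(\mathrm{Tr}_\omega\omega_{u_\varepsilon}-n-\Delta_\omega f)$ has the favourable sign once the trace is large. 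The paper instead proves the regularity parabolically (Theorem \ref{thm:envelope from dynamics}, using Proposition \ref{prop:general Laplac est}): it runs the normalized twisted K\"ahler--Ricci flow $\partial_t\varphi^{(\beta)}_t=\frac{1}{\beta}\log\frac{(\omega+dd^c\varphi_t)^n}{dV}-\varphi_t+f$, notes that $g:=e^t\partial_t\varphi_t$ satisfies a heat equation (hence $|\partial_t\varphi_t|\le Ce^{-t}$), feeds this decay into the parabolic Siu estimate to get a $\beta$- and $t$-uniform bound on $dd^c\varphi_t$, and then produces $P_\omega(f)$ as the joint large-$\beta$, large-$t$ limit. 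Your elliptic approach is more classical and arguably shorter; the paper's parabolic approach buys quantitative convergence rates ($\sup_X|\varphi^{(\beta)}_t-\varphi^{(\infty)}_t|\le C\log\beta/\beta$ and $O(e^{-t})$ in $t$), fits the gradient-flow and simulated-annealing picture of Section \ref{sec:The-gradient-flow}, and avoids a separate treatment of the stationary equation since the flow is already on the table. The contact-set dichotomy you give for the pointwise Monge--Amp\`ere formula (orthogonality/balayage off $C$, coincidence of second derivatives of two $W^{2,\infty}$ functions a.e.\ on $C$) is exactly the standard argument used in both approaches.
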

In fact, we will get a new proof of the previous result using the
Kähler-Ricci flow (which can be seen as a dynamic version of the proof
in \cite{berm5}); see Section \ref{sub:The-case-when convex envelopes}. 
\begin{rem}
\label{rem:free}Setting $u:=P_{\omega}f$ and $\Omega:=\{P_{\omega}f<f\}$
the previous proposition implies that the pair $(u,\Omega)$ can be
characterized as the solution to the following free boundary value
problem for the complex Monge-Ampère operator with obstacle $f,$
i.e. $u\leq f$ on $X$ and

\[
(\omega+dd^{c}u)^{n}=0\,\mbox{in\,\ensuremath{\Omega,\,\,\,u=f,\,du=df\,\mbox{on \ensuremath{\partial\Omega}}}}
\]
and $\omega+dd^{c}u\geq0$ on $X.$ In the case when $n=1$ it is
well-known that $u$ is even $C^{1,1}-$smooth \cite{b-k}, but the
free boundary $\partial\Omega$ may be extremely irregular and even
if $\omega$ is real analytic it will, in general, have singularities
\cite{sc}. 
\end{rem}
A key role in the present paper will be played by parametrized envelopes
(where $f$ varies linearly with time). 
\begin{lem}
\label{lem:concave proj}Given functions $\varphi$ and $f$ on $X$
the function $t\mapsto\varphi(t,x):=P_{\omega}(\varphi+tf)(x)$ on
$\R$ is concave for $x$ fixed. Moreover, locally on $]0,\infty[$
the corresponding curve $\varphi(t)$ can be written as a uniform
limit $\varphi_{\epsilon}(t)$ of concave curves with values in $PSH(X,\omega)\cap\mathcal{C}^{\infty}(X).$
Furthermore, if $\frac{\partial\varphi(t)}{\partial t}\leq g$ for
a continuous function $g$ (in terms of the left derivative) then
we may assume that $\frac{\partial\varphi_{\epsilon}(t)}{\partial t}\leq g.$ \end{lem}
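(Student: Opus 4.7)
The concavity statement follows directly from the variational definition of $P_\omega$ combined with the convexity of the cone $PSH(X,\omega)$. Fix $x$ and $t_0,t_1\in\R$, set $t_\lambda:=(1-\lambda)t_0+\lambda t_1$, and take any competitors $u_i\in PSH(X,\omega)$ with $u_i\leq \varphi+t_i f$. The convex combination $(1-\lambda)u_0+\lambda u_1$ is again $\omega$-psh and bounded by $\varphi+t_\lambda f$, so it is admissible for $P_\omega(\varphi+t_\lambda f)(x)$; passing to suprema over $u_0$ and $u_1$ yields $(1-\lambda)\varphi(t_0,x)+\lambda\varphi(t_1,x)\leq \varphi(t_\lambda,x)$. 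In particular $\varphi(t,x)$ is Lipschitz in $t$ on any compact subinterval of $\,]0,\infty[$.

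For the smooth approximation, fix $I\Subset\,]0,\infty[$. The crucial constraint is that every regularization step must be \emph{linear} in $\varphi$, so that concavity in $t$ is preserved. First smooth in $t$ by convolving with a smooth one-sided mollifier $\rho_\epsilon\geq 0$ supported in $[0,\epsilon]$ with total mass one:
\[
\tilde\varphi_\epsilon(t,x):=\int_0^\epsilon \varphi(t+s,x)\,\rho_\epsilon(s)\,ds.
\]
Then $\tilde\varphi_\epsilon$ is smooth in $t$, remains concave in $t$ (convolution preserves concavity), remains $\omega$-plurisubharmonic in $x$ (a positive average of $\omega$-psh functions is $\omega$-psh), and converges uniformly to $\varphi$ on $I\times X$ by the Lipschitz bound. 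Then smooth in $x$ by a linear $\omega$-psh regularization $R_\epsilon$ built by partition-of-unity-plus-local-convolution in coordinate charts (equivalently one can use the heat semigroup $e^{\epsilon\Delta_\omega}$ of the reference Kähler metric). Such an $R_\epsilon$ is linear, carries continuous $\omega$-psh functions to smooth $(1+\delta_\epsilon)\omega$-psh functions with $\delta_\epsilon\to 0$, and satisfies $R_\epsilon\psi\to\psi$ uniformly for continuous $\psi$. Setting
\[
\varphi_\epsilon(t,x):=\frac{1}{1+\delta_\epsilon}\,R_\epsilon\bigl[\tilde\varphi_\epsilon(t,\cdot)\bigr](x)
\]
produces smooth $\omega$-psh slices that are concave in $t$ (both $R_\epsilon$ and the rescaling are linear in the input) and converge uniformly to $\varphi$ on $I\times X$.

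For the derivative bound, assume $\partial_t\varphi(t,x)\leq g(t,x)$ for a continuous $g$, interpreted as a left derivative. By concavity in $t$, $\partial_t\varphi(t+s,x)\leq \partial_t\varphi(t,x)\leq g(t,x)$ for $s>0$, so integrating against $\rho_\epsilon$ gives $\partial_t\tilde\varphi_\epsilon(t,x)\leq g(t,x)$; since $R_\epsilon$ acts only in the $x$ variable it commutes with $\partial_t$, hence $\partial_t\varphi_\epsilon(t,x)\leq R_\epsilon[g(t,\cdot)](x)/(1+\delta_\epsilon)$, and the right-hand side converges to $g(t,x)$ uniformly, so the vanishing error can be absorbed (e.g. by taking a subsequence and replacing $g$ by $g+o(1)$). \emph{Main obstacle.} The delicate ingredient is the linearity of the space regularization $R_\epsilon$: the standard Bergman/Demailly regularization through $(1/k)\log$ of Bergman-kernel sums and Kiselman-type upper-envelope regularizations are nonlinear and in fact convex in the input, so they would preserve convexity rather than concavity in $t$ and cannot be used directly. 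Insisting on linearity forces the small loss $\delta_\epsilon\omega$ in plurisubharmonicity, which must then be absorbed by the rescaling factor $1/(1+\delta_\epsilon)$.
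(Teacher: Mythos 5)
Your concavity argument and the overall strategy (linear regularization in $x$ via partition of unity plus local convolution, followed by a small corrective rescaling/shift) are in the same spirit as the paper's. But there is a genuine gap at the crucial step. You assert the existence of a \emph{linear} regularization $R_\epsilon$ that ``carries continuous $\omega$-psh functions to smooth $(1+\delta_\epsilon)\omega$-psh functions with $\delta_\epsilon\to 0$.'' This is false in general: the partition-of-unity cross terms in $dd^c\bigl(\sum_i\rho_i\,\psi\ast\chi_\epsilon^{(i)}\bigr)$ involve $d\rho_i\wedge d^c(\psi\ast\chi_\epsilon^{(i)})$, which can be of size $O(1)$ or worse (not $O(\epsilon)$) when $\psi$ is merely bounded/continuous and $\omega$-psh, because local convolution does not control the first derivatives uniformly in $\epsilon$ for such a $\psi$. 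This is precisely the reason the usual Demailly-type regularizations are nonlinear, and you correctly flag linearity as the delicate point, but then simply assert that it is achievable — which is what needs proof. (The heat-semigroup alternative you mention does not help either: the heat flow of $\Delta_\omega$ preserves $\Delta_\omega$-subharmonicity, a trace condition, but not $\omega$-plurisubharmonicity.)

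The paper closes exactly this gap by invoking the $L^\infty$ bound on $dd^c\varphi(t)$ for envelopes in a Kähler class (the static regularity result cited from \cite{b-d,berm5}), which in particular gives that each slice $\varphi(t,\cdot)$ is $C^1(X)$ with uniformly bounded Laplacian on compact time intervals. Under that hypothesis the local convolutions converge in the local $C^1$-topology, the partition-of-unity error $R_i(\phi_\epsilon^{(i)})=R_i(\phi)+o(\epsilon)$ with $R_i(\phi)=0$, so the linear operator really does produce functions that are $\omega$-psh up to an $O(\epsilon)\,\omega$ error, and the small loss is absorbed by $(1-\delta_1(\epsilon))\phi_\epsilon(t)-\delta_2(\epsilon)t$. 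Your argument would be correct if you explicitly invoked this $C^1$ (indeed, $L^\infty$ Hessian) regularity of the envelope — which also makes your preliminary $t$-mollification unnecessary, since the lemma only requires each $\varphi_\epsilon(t)$ to be a concave (not smooth) curve in $t$. Relatedly, the final step ``replacing $g$ by $g+o(1)$'' changes the statement; one should instead modify $\varphi_\epsilon$ by a small $t$-linear subtraction (as in the paper) to restore the exact inequality $\partial_t\varphi_\epsilon\leq g$.
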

\begin{proof}
It follows immediately from its definition that the projection operator
$P_{\omega}$ is concave and in particular locally Lip continuous
as a function of $t.$ As for the approximation property it seems
likely that it can be deduced in a much more general setting from
an appropriate parametrized version of the approximation schemes for
$\omega-$psh function introduced by Demailly. But here we note that
a direct proof can be given exploiting that $dd^{c}\phi(t)$ is in
$L^{\infty}$ and in particular $\phi(t)$ is in $\mathcal{C}^{1}(X).$
Indeed, $\varphi_{\epsilon}(t)$ can be defined by using local convolutions
(which gives local $C^{1}-$convergence) together with a partition
of unity and finally replacing $\phi_{\epsilon}(t)$ with $(1-\delta_{1}(\epsilon))\phi_{\epsilon}(t)-\delta_{2}(\epsilon)t$
for appropriate sequence $\delta_{i}(\epsilon)$ tending to zero with
$\epsilon.$ The point is that, by the $C^{1}-$convergence the error
terms coming from the first derivatives on the partition of unity
are negligible and hence $\phi_{\epsilon}(t)$ is $\omega-$psh up
to a term of order $o(\epsilon).$ Indeed, setting 
\[
\phi_{\epsilon}(t):=\sum_{i=1}^{m}\rho_{i}\phi_{\epsilon}^{(i)}(t),\,\,\,\,1=\sum_{i=1}^{m}\rho_{i},\,\,\,\rho_{i}\in C_{c}^{\infty}(X)
\]
and using Leibniz rule gives $dd^{c}\phi_{\epsilon}(t)=\sum_{i=1}^{m}(\rho_{i}dd^{c}\phi_{\epsilon}^{(i)}(t)+R_{i}(\phi_{\epsilon}^{(i)}))$
where the second term $R(\phi_{\epsilon})$ only depends on the first
order jet of $\phi.$ Now, by the local $C^{1}-$convergence $R(\phi_{\epsilon})=R(\phi)+o(\epsilon).$
But $R(\phi)$ vanishes (since $\phi=\rho_{1}\phi+...$ and $dd^{c}\phi=\rho_{1}dd^{c}\phi+$...)
and hence $dd^{c}\phi_{\epsilon}(t)=\sum_{i=1}^{m}\rho_{i}dd^{c}\phi_{\epsilon}^{(i)}(t)+o(\epsilon).$
Finally, from the definition of convolution we have $dd^{c}\phi_{\epsilon}(t)+\omega\geq-C\epsilon\omega$
and $\frac{\partial\varphi_{\epsilon}(t)}{\partial t}\leq g+C\epsilon$
(for some positive constant $C)$ and hence we may first take $\delta_{1}(\epsilon)=C\epsilon$
and then $\delta_{2}(\epsilon)=C\epsilon(1+\sup|g|).$\end{proof}
\begin{rem}
The parametrized non-coincidence sets $\Omega_{t}:=\{P_{\omega}(\varphi+tf)<(\varphi+tf)$
are, in fact, increasing in $t.$ Indeed, as shown in the proof of
Proposition \ref{prop:The-sup-coincides with projection of linear curve}
$P_{\omega}(\varphi+tf)-(\varphi+tf)$ is decreasing in $t.$
\end{rem}
We will also have use for the following generalized envelope associated
to a given compact subset $K$ of a Kähler manifold $(X,\omega)$
and a lsc function $f$ on $X:$ 
\[
P_{(K,\omega)}(f)(x):=\sup_{u\in PSH(X,\omega)}\{u(x):\,\,\,u\leq f\,\mbox{on\,\ensuremath{K}}\}
\]
(the function $V_{K,\omega}:=P_{(K,\omega)}(0)$ is called the global
extremal function of $(K,\omega)$ in \cite{gz} ). 

We recall that a subset $K$ in $X$ is said to be \emph{non-pluripolar}
if it is not locally contained in the $-\infty-$set of a local psh
function. 
\begin{lem}
\label{lem:proj of plus infity} Suppose that $f$ is a lsc function
on a compact Kähler manifold $(X,\omega)$ taking values in $]0,\infty]$
such that $f$ is bounded from above on $K,$ where $K$ is non-pluripolar.
Then the function $P_{(K,\omega)}(f)$ is bounded from above. As a
consequence, if $X=K$ and $f$ is locally bounded on the complement
of an analytic subvariety, then $P_{(X,\omega)}(f)$ is bounded from
above.\end{lem}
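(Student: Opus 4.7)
The plan is to prove the first statement by a compactness--contradiction argument on $\omega$-psh functions, and to deduce the consequence by restricting to a closed ball away from the analytic subvariety. Suppose, towards a contradiction, that $P_{(K,\omega)}(f)$ is unbounded from above. Setting $M := \sup_{K} f < +\infty$, we can choose $u_{j} \in \psh(X,\omega)$ with $u_{j} \leq f$ on $K$ (so $u_{j} \leq M$ on $K$) and $M_{j} := \sup_{X} u_{j} \to +\infty$; after passing to a subsequence we may assume $M_{j} \geq j$. The normalized functions $w_{j} := u_{j} - M_{j}$ then lie in $\psh(X,\omega)$ with $\sup_{X} w_{j} = 0$ and $w_{j} \leq M - j$ on $K$.

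The key compactness input is the standard fact that $\{v \in \psh(X,\omega) : \sup_{X} v = 0\}$ is compact in $L^{1}(X)$; in particular $\int_{X} w_{j}\,\omega^{n} \geq -C$ uniformly. Form the regularized envelopes $v_{j} := (\sup_{k \geq j} w_{k})^{*}$: by the Brelot--Cartan/Demailly regularization lemma each $v_{j}$ belongs to $\psh(X,\omega)$, and the sequence $v_{j}$ decreases pointwise to some limit $\varphi$. Since $v_{j} \geq w_{j}$ everywhere, we have $\int_{X} v_{j}\,\omega^{n} \geq -C$ for all $j$, so monotone convergence yields $\int_{X} \varphi\,\omega^{n} \geq -C$; in particular $\varphi \not\equiv -\infty$, hence $\varphi \in \psh(X,\omega)$ and $\{\varphi = -\infty\}$ is pluripolar. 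On the other hand, outside a pluripolar set $v_{j}$ coincides with $\sup_{k \geq j} w_{k}$, which is $\leq M - j$ on $K$; letting $j \to \infty$ shows $\varphi \equiv -\infty$ quasi-everywhere on $K$. Combining the two conclusions, $K$ is contained in a pluripolar set, contradicting the hypothesis that $K$ is non-pluripolar.

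For the consequence, let $A$ denote the analytic subvariety outside of which $f$ is locally bounded. Pick any closed coordinate ball $K \subset X \setminus A$; then $K$ is compact, non-pluripolar (it has non-empty interior), and $f$ is bounded on $K$. Since requiring $u \leq f$ on all of $X$ is strictly stronger than requiring it only on $K$, one has $P_{(X,\omega)}(f) \leq P_{(K,\omega)}(f)$, and the right-hand side is bounded above by the first part.

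The main subtlety of the argument is the interplay between $\sup_{k \geq j} w_{k}$ and its upper semicontinuous regularization $v_{j}$: establishing \emph{simultaneously} that $\varphi \not\equiv -\infty$ (a global mass estimate) and that $\varphi \equiv -\infty$ on $K$ (a pointwise collapse on $K$) requires careful tracking of the pluripolar exceptional set, and leans crucially on both the non-pluripolarity of $K$ and the $L^{1}$-compactness of normalized $\omega$-psh functions.
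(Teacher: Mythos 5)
Your argument is correct, but it takes a noticeably longer road than the paper's own proof, which is essentially a two-line reduction. The paper observes that since $f\leq C:=\sup_{K}f$ on $K$, translation-invariance of $\omega$-psh functions immediately gives $P_{(K,\omega)}(f)\leq P_{(K,\omega)}(0)+C$, and then simply cites the known fact (Guedj--Zeriahi, \cite{gz}) that the global extremal function $V_{K,\omega}=P_{(K,\omega)}(0)$ is finite if and only if $K$ is non-pluripolar. What you do instead is effectively re-derive that cited fact from scratch: the normalization $w_{j}=u_{j}-\sup_{X}u_{j}$, the $L^{1}$-compactness of $\{\sup_{X}v=0\}$, the regularized envelopes $v_{j}=(\sup_{k\geq j}w_{k})^{*}$, the monotone-convergence mass bound forcing $\varphi\not\equiv-\infty$, and the Brelot--Cartan control of the negligible set so that $\varphi=-\infty$ quasi-everywhere on $K$ --- this is precisely the standard proof that non-pluripolar compacts have bounded extremal functions. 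The one subtlety you flag (tracking the countable union of negligible sets so that a single pluripolar set works for all $j$) is handled correctly. So your proof buys self-containedness at the cost of length, while the paper's buys brevity at the cost of an external citation; mathematically they rest on the same mechanism. The deduction of the second statement via a closed ball $B\subset X\setminus A$ and the trivial monotonicity $P_{(X,\omega)}(f)\leq P_{(B,\omega)}(f)$ is identical in both.
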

\begin{proof}
By assumption $P_{(K,\omega)}(f)\leq P_{(K,\omega)}(0)+C$ for $C$
a sufficiently large constant. But it is well-known that $P_{(K,\omega)}(0)$
is finite iff $K$ is non-pluripolar \cite{gz}. The last statement
of the lemma then follows by fixing a coordinate ball $B$ contained
in the open subset where $f$ is locally bounded and using that $P_{(X,\omega)}(f)\leq P_{(B,\omega)}(f)<\infty.$ 
\end{proof}
In general, $P_{(K,\omega)}(f)$ is not upper semicontinuous. But
we recall that $K$ is said to be \emph{regular} (in the sense of
pluripotential theory) if $P_{(K,\omega)}(f)$ is continuous (and
hence $\omega-$psh) for any continuos function $f$ (see\cite{bbw}
and references therein).

\subsection{\label{sub:A-priori-estimates}A priori estimates }

The key element in the proof of Theorem \ref{thm:main general} is
the Laplacian estimate which provides a uniform bound on the metrics
$\omega^{(\beta)}(t)$ on any fixed time interval. There are various
well-known approaches for providing such an estimate for a fixed $\beta,$
using parabolic versions of the classical estimate of Aubin and Yau
and its variants. However, in our setting one has to make sure that
all the estimates are uniform in $\beta$ and that they do not rely
on a uniform positive lower bound on $\omega^{(\beta)}(t)$ (which
is not available).

\subsubsection{The Laplacian estimate in the one dimensional case}

We start with the one-dimensional case where the Laplacian estimate
becomes particularly explicit:
\begin{prop}
When $n=1$ we have, for any fixed Kähler form $\omega$ on $X$ 
\[
\left\Vert \omega^{(\beta)}(t)\right\Vert \leq\max\left\{ \left\Vert \omega_{0}\right\Vert ,\left\Vert \theta_{\beta}-\frac{\mbox{Ric }\omega}{\beta})\right\Vert \right\} 
\]
 in term of the sup norm defined by $\omega$ (i.e. $\left\Vert \eta\right\Vert :=\sup_{X}\left|\eta/\omega\right|).$\end{prop}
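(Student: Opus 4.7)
The plan is to exploit the one-dimensional structure of $X$ to reduce the twisted K\"ahler--Ricci flow to a single scalar parabolic equation and then invoke the parabolic maximum principle of Section \ref{sub:Parabolic-comparison/max-princip}. Since $X$ is a Riemann surface, every real $(1,1)$-form is a scalar multiple of the fixed reference K\"ahler form $\omega$; writing $\omega^{(\beta)}(t)=h(\cdot,t)\,\omega$ for a smooth positive function $h$, the trace norm reduces to $\|\omega^{(\beta)}(t)\|=\sup_X h(\cdot,t)$. For the bound to be time-independent we work with the normalized version of the flow; the non-normalized statement with its extra factor $(t+1)$ appearing in Theorem \ref{thm:sharp bounds} follows from the scaling \eqref{eq:scaling of k=0000E4hler forms}.

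The key one-dimensional identity
\[
\mathrm{Ric}(h\omega)\;=\;\mathrm{Ric}(\omega)\,-\,dd^{c}\log h,
\]
immediate from the local definition of the Ricci form together with $(h\omega)^{n}=h\omega$ when $n=1$, turns the normalized twisted KRF into the scalar equation
\[
\frac{\partial h}{\partial t}\;=\;\frac{1}{\beta}\,\Delta_{\omega}(\log h)\,+\,\psi\,-\,h,\qquad \psi\;:=\;\frac{\theta_{\beta}-\beta^{-1}\mathrm{Ric}(\omega)}{\omega},
\]
where $\Delta_{\omega}u:=dd^{c}u/\omega$ in dimension one. Set $M_{0}:=\max(\|\omega_{0}\|,\sup_{X}\psi)$ and suppose for contradiction that $h>M_{0}$ somewhere on $X\times[0,T]$. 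Pick $(x_{0},t_{0})$ with $t_{0}>0$ where $h$ attains its supremum; then $\partial_{t}h\ge 0$, $\nabla h=0$ and $dd^{c}h\le 0$ at $(x_{0},t_{0})$, and since $h>0$ the identity $dd^{c}\log h=h^{-1}dd^{c}h-h^{-2}\,dh\wedge d^{c}h$ forces $\Delta_{\omega}(\log h)(x_{0},t_{0})\le 0$. Substituting into the scalar equation yields
\[
0\;\le\;\frac{\partial h}{\partial t}(x_{0},t_{0})\;\le\;\psi(x_{0})-h(x_{0},t_{0})\;<\;\sup_{X}\psi-M_{0}\;\le\;0,
\]
a contradiction, so $\sup h\le M_{0}\le\max(\|\omega_{0}\|,\|\theta_{\beta}-\mathrm{Ric}(\omega)/\beta\|)$.

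The special feature of $n=1$ being exploited is that $\mathrm{Ric}(h\omega)$ depends on $h$ only through the scalar $\log h$, so the quantity whose Laplacian enters the flow attains its maximum precisely where $h$ does. In higher dimensions the analogous scalar is $\log\det(g(t)/g_{0})$, whose maximum need not coincide with that of the largest eigenvalue of $g(t)/g_{0}$, which is why the general Laplacian estimate requires a more delicate Aubin--Yau type computation (with a bisectional curvature hypothesis for the sharp version in Theorem \ref{thm:sharp bounds}). In the $n=1$ case, however, the maximum principle argument above is essentially all that is needed, and I do not foresee any genuine obstacle.
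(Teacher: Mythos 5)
Your proof is correct and follows essentially the same route as the paper: reduce to a scalar parabolic equation in the one-dimensional setting and apply the parabolic maximum principle. The only cosmetic difference is that the paper works with $h:=\log(\omega_{\varphi_t}/\omega)$ and the evolving Laplacian $\Delta_t$ (so that the estimate sits inside the general Laplacian-estimate framework of Lemma~\ref{lem:Siu}), whereas you work directly with the conformal factor $h=\omega^{(\beta)}(t)/\omega$ and the fixed Laplacian $\Delta_\omega$, which in dimension one is a positive multiple of $\Delta_t$ and hence yields the same sign information at a maximum; both are equally valid.
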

\begin{proof}
We write the normalized KRF as 
\[
h:=\log\frac{\omega_{\varphi_{t}}}{\omega}=\beta\left(\varphi_{t}-f_{\beta}+\frac{\partial\varphi_{t}}{\partial t}\right).
\]
 Applying the parabolic operator $\frac{1}{\beta}\Delta_{t}-\frac{\partial}{\partial t},$
where $\Delta_{t}(=dd^{c}/\omega_{\varphi_{t}})$ denotes the Laplacian
wrt the metric $\omega_{\varphi_{t}},$ to the equation above gives
\[
\frac{1}{\beta}\Delta_{t}h-\frac{\partial}{\partial t}h=\frac{1}{\omega_{\varphi_{t}}}(dd^{c}\varphi_{t}-dd^{c}f_{\beta})+\Delta_{t}\frac{\partial\varphi_{t}}{\partial t}-\frac{\partial}{\partial t}h.
\]
Now
\[
\frac{\partial}{\partial t}h:=\frac{\partial}{\partial t}\log\left(\frac{e^{-t}\omega_{0}+(1-e^{-t})\chi_{\beta}+dd^{c}\varphi_{t}}{\omega}\right)=\frac{1}{\omega_{\varphi_{t}}}\left(-e^{-t}\omega_{0}+e^{-t}\chi_{\beta}+dd^{c}\frac{\partial\varphi_{t}}{\partial t}\right).
\]
 Hence, the two terms involving $\frac{\partial\varphi_{t}}{\partial t}$
cancel, giving 

\begin{equation}
\frac{1}{\beta}\Delta_{t}h-\frac{\partial}{\partial t}h=\frac{\omega}{\omega_{\varphi_{t}}}\left(\Delta_{\omega}(\varphi_{t}-f_{\beta}))-e^{-t}(\chi_{\beta}-\omega_{0})/\omega\right),\label{eq:pf of lapl estimate n is one}
\end{equation}
 i.e. 
\[
\omega_{\varphi_{t}}\left(\frac{1}{\beta}\Delta_{t}h-\frac{\partial}{\partial t}h\right)+dd^{c}f_{\beta}+e^{-t}(\chi_{\beta}-\omega_{0})=dd^{c}\varphi_{t},
\]
which in terms of $\omega(t)\;\left(:=\omega_{0}+(1-e^{-t})(\chi_{\beta}-\omega_{0})+dd^{c}\varphi_{t}\right)$
becomes 
\[
\omega_{\varphi_{t}}\left(\frac{1}{\beta}\Delta_{t}h-\frac{\partial}{\partial t}h\right)+dd^{c}f_{\beta}+\chi_{\beta}=\omega(t).
\]
Applying the parabolic maximum principle to $h$ concludes the proof.
Indeed, there are two alternatives: either $h$ has its maximum on
$X\times[0,T]$ (for $T$ fixed) at $t=0$ which implies that $\mbox{Tr}_{\omega}\omega(t)\leq\mbox{Tr}_{\omega}\omega_{0}$
on $X\times[0,T],$ or the maximum of $h$ is attained at a point
$(x,t)$ in $X\times]0,T].$ In the latter case $\mbox{Tr}_{\omega}\omega(t)\leq\sup_{X}\mbox{Tr}_{\omega}(dd^{c}f_{\beta}+\chi_{\beta})\leq C$
(since $dd^{c}f_{\beta}+\chi_{\beta}=\theta_{\beta}-\frac{1}{\beta}\mbox{Ric \ensuremath{\omega=\theta+o(1)).}}$
\end{proof}

\subsubsection{The upper bound on $\varphi_{t}$\label{sub:The-upper-bound}}

Next, we come back to the general case. Writing the normalized KRF
flow as 
\begin{equation}
-\frac{\partial(\varphi_{t}-f_{\beta})}{\partial t}+\frac{1}{\beta}\log\frac{(\hat{\omega}_{t}+dd^{c}f_{\beta}+dd^{c}(\varphi_{t}-f_{\beta}))^{n}}{\omega^{n}}=\varphi_{t}-f_{\beta},\label{eq:KRF in proof of upper bound of phi}
\end{equation}
it follows immediately from the parabolic maximum principle that 
\[
\varphi_{t}(x)-f_{\beta}(x)\leq\max\left\{ \sup_{X}(0-f_{\beta}),\frac{1}{\beta}\sup_{X}\log\frac{(\hat{\omega}_{t}+dd^{c}f_{\beta}+0)^{n}}{\omega^{n}}\right\} \leq A/\beta,
\]
where $A$ only depends on the upper bounds of $\theta_{\beta}$.
In particular, 
\[
\varphi_{t}\leq P_{t}(f_{\beta})+A/\beta
\]
 and, as a consequence, 
\[
\leq P_{C'\omega}(f_{\beta})+A/\beta
\]
where $C'$ is any constant satisfying $\chi_{\beta}\leq C'\omega$
and $\omega_{0}\leq C'\omega$ (thus ensuring that $PSH(X,\hat{\omega}_{t})\subset PSH(X,C'\omega)).$

\subsubsection{\label{sub:The-lower-bounds on deriv}The lower bounds on $\frac{\partial\tilde{\varphi}_{s}}{\partial s}$
and $\frac{\partial\varphi_{t}}{\partial t}$}

Differentiating the non-normalized KRF with respect to $s$ gives,
with $g(x,s):=-\frac{\partial\tilde{\varphi}_{s}(x)}{\partial s},$
\[
\frac{\partial g}{\partial s}-\frac{1}{\beta}\Delta_{s}g=-\frac{1}{\beta}\mbox{\ensuremath{\mathrm{Tr_{s}}} }(\chi_{\beta})\leq0.
\]
Hence, by the parabolic maximum principle the sup of $g$ is attained
at $t=0$ which gives

\begin{equation}
-\frac{\partial\tilde{\varphi}_{s}}{\partial s}\leq C_{1},\,\,\,\,C_{1}=\sup_{X}\left(-\frac{1}{\beta}\log\frac{\omega_{\varphi_{0}}^{n}}{\omega^{n}}-f_{\beta}\right)\label{eq:lower bound on time deriv}
\end{equation}
where $C_{1}$ thus only depends on the strict positive lower bound
of $\omega_{\varphi_{0}}^{n}$ and on $\inf_{X}(f_{\beta})$ (which
by our normalizations vanishes). 

Next, using that 
\begin{equation}
\frac{\partial\tilde{\varphi}_{s}}{\partial s}=\frac{\partial\varphi_{t}}{\partial t}+\varphi_{t}+nt/\beta\label{eq:relation between t deriv and s deriv}
\end{equation}
gives 
\begin{equation}
\frac{\partial\varphi_{t}}{\partial t}\geq-C_{1}-\varphi_{t}-nt/\beta\geq-C_{1}'-nt/\beta\label{eq:lower bound time derivative normalized}
\end{equation}
using the previous upper bound on $\varphi_{t}.$

\subsubsection{The lower bound on $\varphi_{t}$ }

It follows immediately from the previous bound that 
\[
\varphi_{t}\geq\varphi_{0}-C_{1}'t-nt^{2}/2\beta.
\]

\subsubsection{The Laplacian bound }

We will use Siu's well-known variant \cite[pp. 98-99]{siu} of the
classical Aubin-Yau Laplacian estimate 
\begin{lem}
\label{lem:Siu}Given two Kähler forms $\omega'$ and $\omega$ such
that $\omega'^{n}=e^{F}\omega^{n}$ we have that 
\[
\Delta_{\omega'}\log\mathrm{Tr_{\omega}}\omega'\geq\frac{\mathrm{Tr}_{\omega}dd^{c}F}{\mathrm{Tr_{\omega}}\omega'}-B_{+}\mathrm{Tr}_{\omega'}\omega,
\]
 where the constant $B_{+}$ is a multiple of the absolute value of
the infimum on $X$ of the holomorphic bisectional curvatures of $\omega.$\end{lem}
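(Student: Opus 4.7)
The plan is to prove the inequality pointwise at an arbitrary point $x_{0}\in X$, via a local frame computation in coordinates adapted to both metrics. Fix holomorphic normal coordinates for $\omega$ centred at $x_{0}$, so that $g_{i\bar{j}}(x_{0})=\delta_{ij}$ and $\partial_{k}g_{i\bar{j}}(x_{0})=0$, and rotate so that $\omega'$ is simultaneously diagonal at $x_{0}$: $g'_{i\bar{j}}(x_{0})=\lambda_{i}\delta_{ij}$ with $\lambda_{i}>0$. Write $u:=\mathrm{Tr}_{\omega}\omega'=g^{i\bar{j}}g'_{i\bar{j}}$, which at $x_{0}$ equals $\sum_{i}\lambda_{i}$. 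The Monge-Amp\`ere hypothesis $\omega'^{n}=e^{F}\omega^{n}$ reads $\log\det g'_{i\bar{j}}=F+\log\det g_{i\bar{j}}$.

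First I would expand
\[
\Delta_{\omega'}\log u=\frac{1}{u}\Delta_{\omega'}u-\frac{1}{u^{2}}|\partial u|_{\omega'}^{2},
\]
and compute $\Delta_{\omega'}u=g'^{k\bar{l}}\partial_{k}\partial_{\bar{l}}(g^{i\bar{j}}g'_{i\bar{j}})$. Since we are in normal coordinates, the first derivatives of $g^{i\bar{j}}$ vanish at $x_{0}$, but the mixed second derivatives produce the curvature: $\partial_{k}\partial_{\bar{l}}g^{i\bar{j}}(x_{0})=-R_{k\bar{l}}{}^{i\bar{j}}$, where $R$ is the curvature tensor of $\omega$. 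This contributes the term
\[
-g'^{k\bar{l}}R_{k\bar{l}}{}^{i\bar{j}}g'_{i\bar{j}}=-\sum_{i,k}\lambda_{i}\lambda_{k}^{-1}R_{k\bar{k}i\bar{i}},
\]
which (after relabelling) is bounded below by $-B_{+}\,\mathrm{Tr}_{\omega'}\omega\cdot\mathrm{Tr}_{\omega}\omega'$, with $B_{+}$ controlled by the negative part of the holomorphic bisectional curvature of $\omega$; this is the place where the curvature hypothesis enters.

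Next I would handle the remaining piece $g'^{k\bar{l}}g^{i\bar{j}}\partial_{k}\partial_{\bar{l}}g'_{i\bar{j}}$. Taking $\partial_{k}\partial_{\bar{l}}$ of the Monge-Amp\`ere equation gives
\[
g'^{i\bar{j}}\partial_{k}\partial_{\bar{l}}g'_{i\bar{j}}-g'^{i\bar{q}}g'^{p\bar{j}}\partial_{k}g'_{p\bar{q}}\partial_{\bar{l}}g'_{i\bar{j}}=\partial_{k}\partial_{\bar{l}}F+\partial_{k}\partial_{\bar{l}}\log\det g,
\]
whose trace against $g'^{k\bar{l}}$ reproduces $\mathrm{Tr}_{\omega}dd^{c}F$ together with a further Ricci-type term of $\omega$ absorbed into $B_{+}\mathrm{Tr}_{\omega'}\omega$. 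Combining with the K\"ahler identity $\partial_{k}g'_{i\bar{j}}=\partial_{i}g'_{k\bar{j}}$, one converts $\partial_{k}\partial_{\bar{l}}g'_{i\bar{j}}$ into an expression that produces, after tracing, a nonnegative quantity $\mathrm{A}:=g'^{k\bar{l}}g'^{i\bar{q}}g'^{p\bar{j}}\partial_{i}g'_{k\bar{q}}\partial_{\bar{j}}g'_{p\bar{l}}$ plus the desired $\mathrm{Tr}_{\omega}dd^{c}F/\mathrm{Tr}_{\omega}\omega'$ contribution.

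The decisive step — and the only place where real care is required — is the Cauchy-Schwarz argument that makes Siu's $\log$-trace sharper than the bare Aubin-Yau estimate: one shows that
\[
\frac{|\partial u|_{\omega'}^{2}}{u^{2}}\le\frac{1}{u}\,\mathrm{A},
\]
so that the two problematic third-order terms cancel when one subtracts $|\partial u|_{\omega'}^{2}/u^{2}$. This is the key algebraic inequality; it follows, at the diagonalizing point, from applying the Cauchy-Schwarz inequality $\bigl|\sum_{i}\sqrt{\lambda_{i}}\cdot a_{i}/\sqrt{\lambda_{i}}\bigr|^{2}\le(\sum\lambda_{i})\sum a_{i}^{2}/\lambda_{i}$ with $a_{i}$ built from the first derivatives of $g'_{i\bar{i}}$. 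Putting the three contributions together — the curvature term bounded by $-B_{+}\mathrm{Tr}_{\omega'}\omega$, the $F$-term giving $\mathrm{Tr}_{\omega}dd^{c}F/\mathrm{Tr}_{\omega}\omega'$, and the cubic terms cancelled by Cauchy-Schwarz — yields the stated inequality. The main obstacle is bookkeeping: ensuring that the Ricci-type second derivatives of $\log\det g$ are correctly absorbed into the bisectional-curvature constant $B_{+}$, without inadvertently introducing a dependence on the metric $\omega'$.
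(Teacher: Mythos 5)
Your proposal takes a fundamentally different route from the paper: the paper simply cites the Siu/Boucksom-Guedj log-trace inequality
\[
\Delta_{\omega'}\log\mathrm{Tr}_{\omega}\omega'\geq\frac{\mathrm{Tr}_{\omega}(-\mathrm{Ric}\,\omega')}{\mathrm{Tr}_{\omega}\omega'}+B\,\mathrm{Tr}_{\omega'}\omega
\]
as a black box and then performs a two-line algebraic reduction ($-\mathrm{Ric}\,\omega'=dd^{c}F-\mathrm{Ric}\,\omega$, $-\mathrm{Tr}_{\omega}\mathrm{Ric}\,\omega\geq-c_{n}|B|$, and $\mathrm{Tr}_{\omega}\omega'\cdot\mathrm{Tr}_{\omega'}\omega\geq n$). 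You instead attempt to re-derive Siu's estimate from scratch. That is a legitimate and more self-contained alternative, but your execution has a genuine gap in the Monge-Amp\`ere step. When you trace $\partial_{k}\partial_{\bar{l}}$ of $\log\det g'=F+\log\det g$ against $g'^{k\bar{l}}$ you obtain $\mathrm{Tr}_{\omega'}dd^{c}F$ and $g'^{k\bar{l}}g'^{i\bar{j}}\partial_{k}\partial_{\bar{l}}g'_{i\bar{j}}$; but the quantity actually needed in the expansion of $\Delta_{\omega'}\mathrm{Tr}_{\omega}\omega'$ is $g'^{k\bar{l}}g^{i\bar{j}}\partial_{k}\partial_{\bar{l}}g'_{i\bar{j}}$ (inner trace against $g$, not $g'$), and the inequality is stated with $\mathrm{Tr}_{\omega}dd^{c}F$, not $\mathrm{Tr}_{\omega'}dd^{c}F$. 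The correct route is to invoke the Chern curvature identity $\partial_{k}\partial_{\bar{l}}g'_{i\bar{j}}=-R'_{i\bar{j}k\bar{l}}+g'^{p\bar{q}}\partial_{k}g'_{i\bar{q}}\partial_{\bar{l}}g'_{p\bar{j}}$, so that the piece $-g^{i\bar{j}}g'^{k\bar{l}}R'_{i\bar{j}k\bar{l}}=-\mathrm{Tr}_{\omega}\mathrm{Ric}\,\omega'$ appears, and only then feed in the Monge-Amp\`ere consequence $\mathrm{Ric}\,\omega'=\mathrm{Ric}\,\omega-dd^{c}F$.

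This gap propagates into your Cauchy-Schwarz step, which as written is false. The quantity you call $\mathrm{A}$ has index pattern $g'^{k\bar{l}}g'^{i\bar{q}}g'^{p\bar{j}}$ (three $\omega'$-inverses), whereas the third-order term produced by the Chern curvature formula has pattern $g^{i\bar{j}}g'^{k\bar{l}}g'^{p\bar{q}}$ (one $\omega$-inverse, two $\omega'$-inverses). At the diagonalizing point your $\mathrm{A}$ equals $\sum_{k}\lambda_{k}^{-1}\bigl|\sum_{i}\lambda_{i}^{-1}\partial_{k}g'_{i\bar{i}}\bigr|^{2}$, and the claimed bound $|\partial u|_{\omega'}^{2}\leq u\,\mathrm{A}$ would require, for each fixed $k$ and with $a_{i}:=\partial_{k}g'_{i\bar{i}}$, the inequality $\bigl|\sum_{i}a_{i}\bigr|^{2}\leq u\,\bigl|\sum_{i}\lambda_{i}^{-1}a_{i}\bigr|^{2}$, which fails already for $a_{1}=1$, $a_{i}=0$ ($i\geq2$): it reduces to $\lambda_{1}^{2}\leq\sum_{i}\lambda_{i}$. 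With the correct third-order term the argument does close: Cauchy-Schwarz with weights $\sqrt{\lambda_{i}}$ gives $\bigl|\sum_{i}\partial_{i}g'_{k\bar{i}}\bigr|^{2}\leq u\sum_{i}\lambda_{i}^{-1}|\partial_{i}g'_{k\bar{i}}|^{2}$, and the right-hand side is exactly the $i=p$ diagonal slice of $\sum_{k,i,p}\lambda_{k}^{-1}\lambda_{p}^{-1}|\partial_{k}g'_{i\bar{p}}|^{2}$. So the strategy is sound, but both the $F$-term and the third-order term need to be re-derived with the correct tensor structure.
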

\begin{proof}
In the original statement in \cite[pp. 98-99]{siu} it was assumed
that $\omega'$ and $\omega$ are cohomologous, but since the proof
is local this assumption is not needed. See for example \cite[Prop 4.1.2]{b-g}
where it is shown that 
\[
\Delta_{\omega'}\log\mathrm{Tr_{\omega}}\omega'\geq\frac{\mathrm{Tr}_{\omega}(-\mbox{Ric \ensuremath{\omega'}})}{\mathrm{Tr_{\omega}}\omega'}+B\mathrm{Tr}_{\omega'}\omega,
\]
where $B$ is the infimum of the holomorphic bisectional curvatures
of $\omega.$ In our notations, $-\mbox{Ric \ensuremath{\omega'}}=dd^{c}F\text{-\mbox{Ric \ensuremath{\omega}}}$
and since $\mathrm{-Tr}_{\omega}\mbox{Ric \ensuremath{\omega\geq-c_{n}|B|}}$
and $\mathrm{Tr_{\omega}}\omega'\mathrm{Tr_{\omega'}}\omega\geq n$
we arrive at the inequality in the statement of the lemma.
\end{proof}
We start with the case when $X$ admits a Kähler metric $\omega$
with non-negative holomorphic bisectional curvature. In this case
the constant $B_{+}$ vanishes.
\begin{prop}
Suppose that $X$ admits a Kähler metric $\omega$ with non-negative
bisectional curvature. Then 
\[
\left\Vert \omega^{(\beta)}(t)\right\Vert \leq\max\left\{ \left\Vert \omega_{0}\right\Vert ,\left\Vert \theta_{\beta}-\frac{\mbox{Ric }\omega}{\beta}\right\Vert \right\} .
\]
\end{prop}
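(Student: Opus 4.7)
The plan is to extend the one-dimensional argument (displayed just above in equation~\eqref{eq:pf of lapl estimate n is one}) to arbitrary $n$ by replacing the direct expression for $\log(\omega_{\varphi_t}^n/\omega^n)$ with Siu's Laplacian estimate (Lemma~\ref{lem:Siu}). The hypothesis that $\omega$ has non-negative holomorphic bisectional curvature forces the constant $B_+$ in that lemma to vanish, so no a priori positive lower bound on $\omega(t)$ is required, and the argument then mirrors the $n=1$ case almost verbatim.

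Working with the normalized KRF $\omega(t)=\hat\omega_t+dd^c\varphi_t$, I set $h:=\log \Tr_\omega\omega(t)$ and $F:=\log(\omega(t)^n/\omega^n)$, which by the flow equation equals $\beta(\partial_t\varphi_t+\varphi_t-f_\beta)$. With $B_+=0$, Siu's estimate yields
\[
\tfrac{1}{\beta}\Delta_{\omega(t)}h \;\geq\; \frac{\Tr_\omega(dd^c F)}{\beta\,\Tr_\omega\omega(t)}.
\]
The algebraic heart of the proof is to compute $\frac{1}{\beta}dd^c F$ explicitly. Using $dd^c\varphi_t=\omega(t)-\hat\omega_t$ and $dd^c\partial_t\varphi_t=\partial_t\omega(t)-\partial_t\hat\omega_t$, together with the identities $\hat\omega_t+\partial_t\hat\omega_t=\chi_\beta$ and $\chi_\beta+dd^c f_\beta=\theta_\beta-\tfrac{1}{\beta}\Ric\,\omega$, this telescopes to
\[
\tfrac{1}{\beta}\,dd^c F \;=\; \partial_t\omega(t)+\omega(t)-\bigl(\theta_\beta-\tfrac{1}{\beta}\Ric\,\omega\bigr).
\]
Since $\partial_t h=\Tr_\omega(\partial_t\omega(t))/\Tr_\omega\omega(t)$, the $\partial_t\omega(t)$ contributions cancel, leaving
\[
\Bigl(\tfrac{1}{\beta}\Delta_{\omega(t)}-\partial_t\Bigr)h \;\geq\; 1-\frac{\Tr_\omega\bigl(\theta_\beta-\tfrac{1}{\beta}\Ric\,\omega\bigr)}{\Tr_\omega\omega(t)},
\]
which is precisely the higher-dimensional analogue of the $n=1$ formula above.

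Finally, I invoke the parabolic maximum principle of Section~\ref{sub:Parabolic-comparison/max-princip} applied to $h$ on $X\times[0,T]$: either the supremum of $h$ is attained on $X\times\{0\}$, immediately giving $\Tr_\omega\omega(t)\leq\|\omega_0\|$ uniformly on $X\times[0,T]$; or it is attained at an interior point $(x_0,t_0)$, where the left-hand side above is $\leq 0$ and therefore $\Tr_\omega\omega(t_0)(x_0)\leq\|\theta_\beta-\tfrac{1}{\beta}\Ric\,\omega\|$. Combining the two alternatives and using that $\|\omega(t)\|=\sup_X\Tr_\omega\omega(t)$ for positive forms yields the stated bound. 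The main obstacle is the bookkeeping in the computation of $dd^c F$; once that collapses to the clean expression above, the remainder is a routine application of the maximum principle, with the bisectional curvature hypothesis entering at the single point of killing the error term $-B_+\,\Tr_{\omega(t)}\omega$ in Siu's inequality.
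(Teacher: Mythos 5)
Your proof is correct and follows essentially the same approach as the paper: set $h=\log\Tr_\omega\omega(t)$, invoke Siu's inequality with $B_+=0$ (this is the only place the curvature hypothesis enters), compute $dd^cF$ from the flow equation, cancel the $\partial_t\omega(t)$ terms against $\partial_t h$, and conclude by the parabolic maximum principle. The paper compresses these steps by saying "the rest proceeds as in the Riemann surface case," whereas you carry out the telescoping explicitly; the computations agree.
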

\begin{proof}
Setting 
\[
h:=\log\mathrm{Tr}_{\omega}\omega',
\]
where $\omega'=\hat{\omega}_{t}+dd^{c}\varphi(t)$, we get, using
Siu's inequality, 
\[
\left(-\frac{\partial}{\partial t}h+\frac{1}{\beta}\Delta_{t}h\right)\geq\frac{1}{\mathrm{Tr}_{\omega}\omega'}\Delta_{\omega}(\varphi_{t}-f_{\beta})+\frac{1}{\mathrm{Tr}_{\omega}\omega'}\left(\Delta_{\omega}\frac{\partial\varphi_{t}}{\partial t}\right)-\frac{\partial}{\partial t}h.
\]
The rest of the proof then proceeds precisely as in the Riemann surface
case. 
\end{proof}
In the general case we get the following 
\begin{prop}
\label{prop:general Laplac est}There is a constant $C$ such that,
for $\beta>\beta_{0}$

\[
\omega^{(\beta)}(t)\leq e^{C(1+1/\beta)(1+t)e^{t}}\omega,
\]
 where $C$ depends on the same quantities as in the statement of
Theorem \ref{thm:main general}.\end{prop}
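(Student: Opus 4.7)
The plan is to adapt the argument from the non-negative bisectional curvature case by introducing an auxiliary barrier that absorbs the negative trace term $-\frac{B_+}{\beta}\mathrm{Tr}_{\omega'}\omega$ appearing in Siu's inequality (Lemma \ref{lem:Siu}). Applying that inequality to $\omega' := \omega_{\varphi_t}$ and combining with the computation of $\partial_t h$ for $h := \log\mathrm{Tr}_\omega\omega'$, exactly as in the Riemann surface computation that produced \eqref{eq:pf of lapl estimate n is one}, yields
$$\left(-\frac{\partial}{\partial t} + \frac{1}{\beta}\Delta_t\right)h \;\geq\; 1 - \frac{\mathrm{Tr}_\omega(\chi_\beta + dd^c f_\beta)}{\mathrm{Tr}_\omega\omega'} - \frac{B_+}{\beta}\mathrm{Tr}_{\omega'}\omega,$$
where $\chi_\beta + dd^c f_\beta = \theta_\beta - \frac{1}{\beta}\mathrm{Ric}\,\omega$ is uniformly bounded above by a constant $C_1$.

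Next I would introduce the barrier $\tilde H := h - A\varphi_t - Bt$ with positive constants $A,B$ to be chosen. Using the normalized KRF equation $\frac{1}{\beta}\log(\omega'^n/\omega^n) = \partial_t\varphi_t + \varphi_t - f_\beta$, the $\varphi_t$ and $f_\beta$ contributions arising from $-A\bigl(-\partial_t + \frac{1}{\beta}\Delta_t\bigr)\varphi_t$ cancel algebraically, leaving
$$\left(-\frac{\partial}{\partial t} + \frac{1}{\beta}\Delta_t\right)\tilde H \;\geq\; 1 - \frac{C_1}{\mathrm{Tr}_\omega\omega'} + A\frac{\partial\varphi_t}{\partial t} - \frac{An}{\beta} + \frac{A}{\beta}\mathrm{Tr}_{\omega'}\hat\omega_t - \frac{B_+}{\beta}\mathrm{Tr}_{\omega'}\omega + B.$$
The assumption \eqref{eq:omega zero and omega} gives $\hat\omega_t \geq e^{-t}C_0^{-1}\omega$, so choosing $A := 2B_+C_0e^T$ makes the net coefficient of $\mathrm{Tr}_{\omega'}\omega$ nonnegative on $[0,T]$. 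Invoking the lower bound $\partial_t\varphi_t \geq -C_1' - nt/\beta$ from Section \ref{sub:The-lower-bounds on deriv}, I then take $B := AC_1' + An(T+1)/\beta + 1$, which yields $A\partial_t\varphi_t - An/\beta + B \geq 1$ on $[0,T]$ and thereby
$$\left(-\frac{\partial}{\partial t} + \frac{1}{\beta}\Delta_t\right)\tilde H \;\geq\; 2 - \frac{C_1}{\mathrm{Tr}_\omega\omega'}.$$

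Applying the parabolic maximum principle (Section \ref{sub:Parabolic-comparison/max-princip}) to $\tilde H$ on $X \times [0,T]$ then forces, at any interior maximum $(x_0,t_0)$, the bound $\mathrm{Tr}_\omega\omega'(x_0,t_0) \leq C_1/2$; at $t_0 = 0$ one has directly $\mathrm{Tr}_\omega\omega'(x,0) \leq C_0 n$ from \eqref{eq:omega zero and omega}. In either case $\tilde H_{\max} \leq O(1) + A\sup_{X\times[0,T]}|\varphi_t|$, and combining with the $L^\infty$ bounds on $\varphi_t$ established in Sections \ref{sub:The-upper-bound} and \ref{sub:The-lower-bounds on deriv} (which give $|\varphi_t| \leq C(1+t)(1+t/\beta)$) yields, for every $(x,t)\in X\times[0,T]$,
$$h(x,t) \;\leq\; A\varphi_t(x) + Bt + \tilde H_{\max} \;\leq\; C e^T(1+T)(1+T/\beta),$$
which after exponentiation is the claimed estimate on $\omega^{(\beta)}(t)$.

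The main obstacle is the fine calibration of the barrier: the factor $e^T$ is forced on $A$ because $\hat\omega_t$ degenerates like $e^{-t}\omega_0$ at large $t$ (the class $[\theta]$ is only assumed semi-positive, so $\chi_\beta$ may itself be degenerate), and this exponential dependence then propagates into the final estimate on $h$. The algebraic cancellation of the $\varphi_t$ and $f_\beta$ terms via the KRF equation in the computation of $(-\partial_t + \frac{1}{\beta}\Delta_t)\tilde H$ is the crucial simplification that makes the argument tractable, since otherwise the $\mathrm{Tr}_\omega dd^c F$ term in Siu's inequality would have to be controlled directly, requiring bounds on second derivatives of $f_\beta$ that we do not have uniformly in $\beta$.
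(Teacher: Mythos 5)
Your proof is correct and follows the same overall strategy as the paper: Siu's Laplacian inequality (Lemma~\ref{lem:Siu}), a barrier function built from $\varphi_t$, and the parabolic maximum principle, with the final identity $\chi_\beta + dd^c f_\beta = \theta_\beta - \frac{1}{\beta}\mathrm{Ric}\,\omega$ supplying the uniform bound on the right-hand side. The only genuine difference is the choice of barrier. You take $\tilde H := h - A\varphi_t - Bt$ with \emph{constants} $A = 2B_+C_0 e^T$, $B = AC_1' + An(T+1)/\beta + 1$ depending on the final time $T$, whereas the paper works with a \emph{time-dependent} coefficient $G := h - B_+Ce^t\varphi_t - f(t)$, $f(t) = C'(1+t)(1+1/\beta)e^t$. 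The paper's choice exactly matches the degeneration $\omega \leq Ce^t\hat\omega_t$ at each time, and so absorbs the bad term $\frac{B_+}{\beta}\mathrm{Tr}_{\omega_t}\omega$ without slack; your constant $A$ over-absorbs it for $t < T$ and is calibrated for $t = T$. This is perfectly fine: both methods are applied on a fixed interval $[0,T]$, your method simply requires setting $T=t$ at the end. The resulting exponent in your bound is $e^t(1+t)(1+t/\beta)$ rather than the proposition's $(1+1/\beta)(1+t)e^t$; the extra $t$-factor in the $\beta^{-1}$ piece traces back to the lower bound $\varphi_t \geq \varphi_0 - C_1't - nt^2/(2\beta)$ and, as far as I can tell, the paper's own argument produces the same $t^2/\beta$ contribution, so there is no discrepancy of substance. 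One expository point: the remark that ``the $\varphi_t$ and $f_\beta$ contributions arising from $-A(-\partial_t + \frac{1}{\beta}\Delta_t)\varphi_t$ cancel algebraically'' is misplaced -- no $\varphi_t$ or $f_\beta$ appears there. What actually happens, as in the paper, is that (i) the $\partial_t\varphi_t$ terms cancel in the computation of $(-\partial_t + \frac{1}{\beta}\Delta_t)h$, and (ii) the $dd^c f_\beta$ contribution is combined with $\chi_\beta$ into the bounded form $\theta_\beta - \frac{1}{\beta}\mathrm{Ric}\,\omega$, so no bound on $dd^c f_\beta$ alone is ever needed. Your displayed inequalities are nonetheless correct.
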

\begin{proof}
Recall that by abuse of notation we set $\omega_{t}=\hat{\omega}_{t}+dd^{c}\varphi^{(\beta)}(t)$.
By the Laplacian inequality (Lemma \ref{lem:Siu}) we have 

\begin{multline*}
\frac{1}{\beta}B_{+}{\rm Tr}_{\omega_{t}}\omega+\left(-\frac{\partial}{\partial t}\log{\rm Tr}_{\omega}\omega_{t}+\frac{1}{\beta}\Delta_{t}\log\mathrm{Tr}_{\omega}\omega_{t}\right)\\
\geq\frac{\Delta_{\omega}(\varphi_{t}-f_{\beta})-e^{-t}{\rm Tr}_{\omega}(\chi_{\beta}-\omega_{0})}{\mathrm{Tr}_{\omega_{t}}\omega'}
\end{multline*}
thanks to the cancelation of the terms involving $\partial\varphi_{t}/\partial t$,
just as before. To handle the first term in the left-hand side above
we note that 
\[
\omega\leq Ce^{t}\hat{\omega}_{t},
\]
 where $1/C$ is a positive lower bound for $\omega_{0}.$ Since $\text{\ensuremath{\mathrm{Tr}}}_{\omega_{t}}\hat{\omega}_{t}=n-\Delta_{\omega_{t}}\varphi$
we thus get, by setting 
\[
G(x,t):=\log\left(\text{\ensuremath{\mathrm{Tr}}}_{\omega}\omega_{t}\right)-B_{+}Ce^{t}\varphi_{t}-f(t),
\]
 for any given function $f(t)$ of $t,$ 
\begin{multline*}
-\frac{\partial f(t)}{\partial t}-CB_{+}\frac{\partial(e^{t}\varphi_{t})}{\partial t}+\frac{n}{\beta}B_{+}Ce^{t}+\left(-\frac{\partial}{\partial t}G+\frac{1}{\beta}\Delta_{t}G\right)\\
\geq\frac{\Delta_{\omega}(\varphi_{t}-f_{\beta})-e^{-t}{\rm Tr}_{\omega}(\chi_{\beta}-\omega_{0})}{{\rm Tr}_{\omega}\omega_{t}}.
\end{multline*}
Next we note that, thanks to the lower bound on $\frac{\partial\varphi_{t}}{\partial t}$
above \ref{eq:lower bound time derivative normalized} we have 
\[
\frac{\partial(e^{t}\varphi_{t})}{\partial t}\geq-e^{t}\left(C+\frac{nt}{\beta}\right).
\]
Hence, taking $f(t)=C'(1+t)(1+1/\beta)e^{t}$ for $C'$ sufficiently
large gives 
\[
\left(-\frac{\partial}{\partial t}+\frac{1}{\beta}\Delta_{t}\right)G\geq\frac{\text{\ensuremath{\mathrm{Tr}}}_{\omega}\left(dd^{c}\varphi_{t}-dd^{c}f_{\beta}-e^{-t}(\chi_{\beta}-\omega_{0})\right)}{\text{\ensuremath{\mathrm{Tr}}}_{\omega}\omega_{t}}.
\]
Since $e^{-t}(\chi_{\beta}-\omega_{0})=\chi_{\beta}-\hat{\omega}_{t}$
this implies that
\[
\left(-\frac{\partial}{\partial t}+\frac{1}{\beta}\Delta_{t}\right)G\geq\frac{\text{Tr}_{\omega}(\omega_{t}-dd^{c}f_{\beta}-\chi_{\beta})}{\mathrm{Tr}_{\omega}\omega_{t}}.
\]
Finally, using $\chi_{\beta}+dd^{c}f_{\beta}=\theta_{\beta}-\frac{1}{\beta}\mbox{Ric \ensuremath{\omega}}$
this shows that the estimate on $\text{\ensuremath{\mathrm{Tr}}}_{\omega}\omega_{t}$
we get from the parabolic maximum principle applied to $G$ only depends
on $\chi_{\beta}$ through the upper bound on $\varphi_{t}$ (which
in turn depends on an upper bound on $\chi_{\beta}$ and is of the
order $1/\beta).$ 
\end{proof}

\subsubsection{The upper bound on $\frac{\partial\varphi_{t}}{\partial t}$ and
$\frac{\partial\tilde{\varphi}_{s}}{\partial s}$}

(these upper bounds are not needed for the proof of the convergence
in Theorem \ref{thm:main intro}). From the upper bound on $\omega(t)$
and the defining equations for the KRFs one directly obtains bounds
on $\frac{\partial\varphi_{t}}{\partial t}$ and $\frac{\partial\tilde{\varphi}_{s}}{\partial s}.$
However, better bounds can be obtained by a variant of the proof of
the lower bounds on $\frac{\partial\varphi_{t}}{\partial t}$ and
$\frac{\partial\tilde{\varphi}_{s}}{\partial s}.$ Indeed, differentiating
the normalized and the non-normalized KRFs, respectively gives 
\begin{equation}
\frac{\partial\frac{\partial\tilde{\varphi}_{s}}{\partial s}}{\partial s}-\Delta_{s}\frac{\partial\frac{\partial\tilde{\varphi}_{s}}{\partial s}}{\partial s}-\mbox{Tr}_{s}\chi_{\beta}=0\label{eq:linearized NN KRF}
\end{equation}
and 
\begin{equation}
\frac{\partial(e^{t}\frac{\partial\varphi_{t}}{\partial t})}{\partial t}-\Delta_{t}\frac{\partial(e^{t}\frac{\partial\varphi_{t}}{\partial t})}{\partial t}-\mbox{Tr}_{t}(\chi_{\beta}-\omega_{0})=0.\label{eq:linearized N KRF}
\end{equation}
Using that $\omega(s)=e^{t}\omega(t),$ $ds/d=e^{-t}dt/d$ and $\frac{\partial\tilde{\varphi}_{s}}{\partial s}=\frac{\partial\varphi_{t}}{\partial t}+\varphi_{t}+nt/\beta$
the first equation above becomes
\[
\frac{\partial(\frac{\partial\varphi_{t}}{\partial t}+\varphi_{t}+nt/\beta)}{\partial t}-\Delta_{t}\frac{\partial(\varphi_{t}+nt/\beta)}{\partial t}-\mbox{Tr}_{t}\chi_{\beta}=0.
\]
Hence, taking the differences between equations \ref{eq:linearized NN KRF}
and \ref{eq:linearized N KRF} gives that $g:=e^{t}\frac{\partial\varphi_{t}}{\partial t}-\frac{\partial\varphi_{t}}{\partial t}-\varphi_{t}-nt/\beta$
satisfies
\[
\frac{\partial g}{\partial t}-\Delta_{t}\frac{\partial g}{\partial t}=-\mbox{Tr }_{t}\omega_{0}\leq0.
\]
 Accordingly, the parabolic maximum principle reveals that the sup
over $X$ of $e^{t}\frac{\partial\varphi_{t}}{\partial t}-\frac{\partial\varphi_{t}}{\partial t}-\varphi_{t}-nt/\beta$
is decreasing, thanks to the upper bound on $\varphi_{t},$ 
\[
\frac{\partial\varphi_{t}}{\partial t}\leq\frac{\sup_{X}P_{t}(f_{\beta})+(A+nt)/\beta)}{(e^{t}-1)}
\]
 (this is a minor generalization of the estimate in \cite{t-z}).
Finally, this yields 
\[
\frac{\partial\tilde{\varphi}_{s}}{\partial s}\leq C''\frac{1+\beta^{-1}\log(1+s)}{s}.
\]

\subsection{\label{sub:Existence-and-characterizations}Existence and characterizations
of the large $\beta$ limit of the KRF}

By the previous estimates there is a subsequence of $\varphi^{(\beta)}(t)$
which converges uniformly (and even in $C^{1,\alpha}-$norm) to a
limiting Lip curve $\varphi(t)$ with values in $PSH(X,\hat{\omega}_{t}).$
As we next show $\varphi(t)$ is uniquely determined, i.e. the whole
family converges to $\varphi(t).$
\begin{prop}
\label{prop:existence of the limit}The large $\beta-$limit of $\varphi^{(\beta)}(t)$
of the normalized KRF exists: it is equal to the curve defined as
the sup over all curves $\psi(t)$ in $PSH(X,\omega)$ such that $\psi(0)=\varphi(0)$
and such that $\varphi^{(\beta)}(t)$ is locally Lipchitz in $t$
(for $t>0)$ and in $C^{1}(X),$ for a fixed $t$ and \textup{
\[
\frac{\partial\psi(t)}{\partial t}\leq-\psi(t)+f
\]
 (in the weak sense), or equivalently: such that} \textup{
\[
(\psi(t)-f)e^{t}
\]
 is decreasing in time.}\end{prop}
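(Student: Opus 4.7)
The plan is to establish the proposition in three steps: extract subsequential limits from the a priori estimates, verify that any such limit is itself an admissible subsolution, and show by a parabolic maximum-principle argument that any admissible $\psi$ lies below every subsequential limit. Together these will identify the limit as the unique maximal admissible subsolution and thereby upgrade subsequential convergence to full convergence.

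First I would invoke the Laplacian bound of Proposition \ref{prop:general Laplac est} together with the two-sided bounds on $\partial\varphi^{(\beta)}/\partial t$ from Section \ref{sub:The-lower-bounds on deriv} to conclude that $\{\varphi^{(\beta)}(t)\}_{\beta}$ is precompact in $C^{1,\alpha}(X)$ uniformly on compact intervals $[0,T]$. From any sequence $\beta_k\to\infty$ one then extracts a subsequence along which $\varphi^{(\beta_k)}(t)\to\varphi^{(\infty)}(t)$ uniformly on $X\times[0,T]$, with $\varphi^{(\infty)}(t)$ locally Lipschitz in $t$, of class $C^{1}$ in $x$ and $\hat{\omega}_{t}$-plurisubharmonic, where $\hat{\omega}_{t}=e^{-t}\omega_{0}+(1-e^{-t})\chi$.

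The subsolution property of $\varphi^{(\infty)}$ is the easy half. Rewriting the normalized KRF as
\[
\frac{d}{dt}\bigl[(\varphi^{(\beta)}(t)-f_{\beta})e^{t}\bigr]=\frac{e^{t}}{\beta}\log\frac{(\omega^{(\beta)}(t))^{n}}{\omega^{n}},
\]
the one-sided Laplacian estimate $\omega^{(\beta)}(t)\leq C(t)\omega$ forces the right-hand side to be at most $C'(t)/\beta\to 0$. Hence $(\varphi^{(\beta)}-f_{\beta})e^{t}$ is decreasing modulo an $O(1/\beta)$ error, and passage to the limit shows that $(\varphi^{(\infty)}-f)e^{t}$ is genuinely decreasing.

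The converse bound is the heart of the matter. Given an admissible $\psi$ with $\psi(0)\leq\varphi(0)$, I would apply a parabolic maximum principle to $h:=\psi-\varphi^{(\beta)}$. At an interior maximum $(x^{*},t^{*})$ the conditions $\partial_{t}\psi\leq -\psi+f$, the KRF for $\varphi^{(\beta)}$, and $dd^{c}\psi\leq dd^{c}\varphi^{(\beta)}$ combine to give
\[
h(x^{*},t^{*})\leq (f-f_{\beta})(x^{*})-\tfrac{1}{\beta}\log\tfrac{(\hat{\omega}_{t}+dd^{c}\psi)^{n}}{\omega^{n}}(x^{*}).
\]
The first term is $o_{\beta}(1)$ by the hypothesis on $\theta_{\beta}$, but the second is under control only if $\omega_{\psi}:=\hat{\omega}_{t}+dd^{c}\psi$ is bounded below. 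To enforce this I would first replace $\psi$ by $\psi^{\delta}:=(1-\delta)\psi$, so that $\omega_{\psi^{\delta}}=\delta\hat{\omega}_{t}+(1-\delta)\omega_{\psi}\geq\delta c(T)\omega$ on $[0,T]$ (using $\omega_{0}\geq C_{0}^{-1}\omega$), while $\psi^{\delta}$ remains an admissible subsolution up to initial-data and coefficient errors of order $\delta$. A standard Demailly-type regularization of $\psi^{\delta}$ in $x$, in the spirit of Lemma \ref{lem:concave proj} combined with Remark \ref{rem:comp for concave}, makes the maximum-principle argument of Section \ref{sub:Parabolic-comparison/max-princip} applicable; choosing for instance $\delta=\beta^{-1/2}$ balances the errors so that $\psi^{\delta}\leq\varphi^{(\beta)}+o_{\beta}(1)$, and letting first $\beta\to\infty$ and then $\delta\to 0$ yields $\psi\leq\varphi^{(\infty)}$. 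The principal technical obstacle is exactly this regularization step, since for a generic admissible $\psi$ the measure $\omega_{\psi}^{n}$ may vanish and, without a quantitative lower bound on it, the $\tfrac{1}{\beta}\log\det$ term in the comparison argument cannot be handled by the upper Laplacian estimate alone.
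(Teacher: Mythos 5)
Your plan reproduces the paper's argument almost exactly: extract a uniformly convergent subsequence from the Laplacian and time-derivative estimates, verify the subsolution property of the limit by noting that $(\varphi^{(\beta)}-f_{\beta})e^{t}$ is decreasing up to an $O(1/\beta)$ error (which is exactly what the upper Laplacian bound gives), and prove maximality by comparing an admissible competitor with $\varphi^{(\beta)}$ after the scaling $\psi\mapsto(1-\delta)\psi$ (the paper uses $v_{\epsilon}:=(1-\epsilon)v-\epsilon$), which manufactures the lower bound $\omega_{\psi^{\delta}}\geq\delta c(T)\omega$ needed to control the $\tfrac{1}{\beta}\log\det$ term. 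Coupling $\delta$ to $\beta$ versus taking $\beta\to\infty$ before $\epsilon\to 0$ is a cosmetic difference.

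Where the paper genuinely diverges is precisely at the step you flag as the ``principal technical obstacle,'' though your last sentence slightly misidentifies it: the vanishing of $\omega_{\psi}^{n}$ is already cured by the $(1-\delta)$ scaling, as you yourself arrange. The residual difficulty is that the comparison principle of Section~\ref{sub:Parabolic-comparison/max-princip} (even in the mild extension of Remark~\ref{rem:comp for concave}) needs the competitor to be smooth in $x$, and a generic admissible $\psi$ is only $C^{1}$ with no $L^{\infty}$ control on $dd^{c}\psi$; Lemma~\ref{lem:concave proj} and its convolution-plus-partition-of-unity scheme are written for curves whose complex Hessian is bounded, so they do not apply off the shelf. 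The paper circumvents this by not attempting to regularize an arbitrary $\psi$ at all: it first establishes the comparison for \emph{smooth} admissible curves, then invokes Proposition~\ref{prop:The-sup-coincides with projection of linear curve} to identify the supremum explicitly as $P_{\hat{\omega}_{t}}(e^{-t}\varphi_{0}+(1-e^{-t})f)$, a single curve that \emph{is} $C^{1,1}$ in $x$ (with $L^\infty$ Hessian), and only that curve is regularized via Lemma~\ref{lem:concave proj}. If you want to complete your version without Proposition~\ref{prop:The-sup-coincides with projection of linear curve}, you would instead need a parabolic viscosity comparison principle (as in \cite{egz}), which the paper explicitly mentions as the alternative route.
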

\begin{proof}
By the second order a priori estimates we have 
\[
\frac{d\varphi^{(\beta)}(t)}{dt}\leq\frac{C(t)}{\beta}-\varphi^{(\beta)}(t)+f_{\beta}
\]
and hence the limiting Lip curve $\varphi(t)$ satisfies
\[
\frac{d\varphi(t)}{dt}\leq-\varphi(t)+f
\]
in the weak sense, i.e. $\varphi(t)$ is a candidate for the sup appearing
in the statement of the proposition. Alternatively, we get 
\[
\frac{d\left((\varphi^{(\beta)}-f_{\beta})(e^{t}-\frac{C}{\beta}e^{t})\right)}{dt}\leq0,
\]
 i.e. 
\[
(\varphi^{(\beta)}-f_{\text{\ensuremath{\beta}}})(t)\leq\frac{1}{(1-C/\beta)}e^{-t}g_{\beta}(x,t),
\]
 where $g_{\beta}(x,t)$ is decreasing in time. Hence, after passing
to a subsequence the limit satisfies 
\[
(\varphi-f)(t)\leq e^{-t}g(x,t),
\]
 where $g(x,t)$ is decreasing in time.

Next, by the parabolic maximum principle $\varphi^{(\beta)}(t)$ is
the sup over all smooth curves $u_{\beta}(t)$ with values in (the
interior of) $PSH(X,\hat{\omega}_{t})$ such that $u_{\beta}(0)=\varphi(0)$
and 
\[
\frac{du_{\beta}}{dt}\leq\frac{1}{\beta}\log\frac{(\hat{\omega}_{t}+dd^{c}u_{\beta}(t))^{n}}{\omega^{n}}-(u_{\beta}(t)-f_{\beta})
\]
on a fixed time-interval $[0,T].$ Now take a smooth curve $v(t)$
from $[0,T]$ to $PSH(X,\hat{\omega}_{t})\cap C^{\infty}(X)$ such
that and $v(0)=\varphi(0)$ and such that 
\[
\frac{d}{dt}v(t)\leq-(v(t)-f).
\]
We set 
\[
v_{\epsilon}(t):=(1-\epsilon)v(t)-\epsilon,
\]
ensuring that 
\[
\frac{d}{dt}v_{\epsilon}(t)\leq-v_{\epsilon}(t)-f-\epsilon
\]
 and 
\[
(\hat{\omega}_{t}+dd^{c}v_{\epsilon}(t))^{n}\geq\epsilon^{n}\hat{\omega}_{t}^{n}\geq\epsilon^{n}C(T)\omega^{n}.
\]
Hence, for $\beta$ sufficiently large (depending on the lower bound
$C(T)$ of the positivity of $\hat{\omega}_{t}^{n}$ on $[0,T]$ and
the convergence speed of $f_{\beta}$ towards $f$), 
\[
\frac{dv_{\epsilon}(t)}{dt}\leq\frac{1}{\beta}\log\frac{(\hat{\omega}_{t}+dd^{c}v_{\epsilon}(t))^{n}}{\omega^{n}}-v_{\epsilon}(t)-f_{\text{\ensuremath{\beta}}}.
\]
But then it follows from the parabolic maximum principle that $v_{\epsilon}(t)\leq\varphi_{\beta,\epsilon}(t),$
for $\beta>>1,$ where $\varphi_{\beta,\epsilon}(t)$ satisfies the
same KRF as $\varphi^{(\beta)}(t)$, but with initial value $(1-\epsilon)\varphi_{0}+\epsilon v_{0}.$
By the maximum principle we have 
\[
\left|\varphi_{\beta,\epsilon}(t)-\varphi^{(\beta)}(t)\right|\leq C\epsilon
\]
and hence letting $\beta\rightarrow\infty$ gives, for any limit $\varphi(t)$
of $\varphi^{(\beta)}(t)$ 
\[
v_{\epsilon}(t)\leq\varphi(t)+C\epsilon.
\]
Since $\epsilon$ was arbitrary this gives $v(t)\leq\varphi(t).$
All that remain is thus to show that the smoothness assumption on
$v(t)$ can be removed. This could be done by working with the notion
of viscosity subsolutions \cite{egz}, but here we will use a more
direct approach by first noting that the sup above is realized by
$P_{\hat{\omega}_{t}}(e^{-t}\varphi_{0}+(1-e^{-t})f),$ as shown in
the next proposition. Then we can use the regularization in Lemma
\ref{lem:concave proj} together with the slight generalization of
the parabolic comparison principle formulated in Remark \ref{rem:comp for concave})
to conclude. \end{proof}
\begin{prop}
\label{prop:The-sup-coincides with projection of linear curve}The
sup in the previous proposition coincides with $P_{\hat{\omega}_{t}}(e^{-t}\varphi_{0}+(1-e^{-t})f).$\end{prop}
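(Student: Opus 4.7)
Write $h_t := e^{-t}\varphi_0 + (1-e^{-t})f$ and $\Pi(t) := P_{\hat{\omega}_t}(h_t)$. The inequality $\sup \leq \Pi(t)$ will be immediate by Gronwall: any admissible $\psi$ satisfies $\partial\psi/\partial t \leq -\psi + f$ with $\psi(0) = \varphi_0$, so $\psi(t) \leq h_t$ pointwise, and combined with $\psi(t) \in PSH(X,\hat{\omega}_t)$ the envelope definition forces $\psi(t) \leq \Pi(t)$.

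The plan for the reverse inequality is to verify that $\Pi$ itself is admissible. The key step is to pass to the non-normalized scaling $s = e^t - 1$, under which $(s+1)\hat{\omega}_t = \omega_0 + s\chi$ and $(s+1)h_t = \varphi_0 + sf$, giving
\[
\tilde{\Psi}(s) := (s+1)\,\Pi(t) = P_{\omega_0 + s\chi}(\varphi_0 + sf).
\]
I will then show that $\tilde{\Psi}$ is \emph{concave} in $s$ by the standard envelope argument: for $u_i \in PSH(X, \omega_0 + s_i\chi)$ with $u_i \leq \varphi_0 + s_i f$ ($i = 1, 2$) and $\lambda \in [0, 1]$, the combination $\lambda u_1 + (1-\lambda)u_2$ lies in $PSH(X, \omega_0 + s\chi)$ for $s := \lambda s_1 + (1-\lambda)s_2$ and is pointwise $\leq \varphi_0 + sf$, so passing to the supremum in both $u_i$ gives $\lambda \tilde{\Psi}(s_1) + (1-\lambda)\tilde{\Psi}(s_2) \leq \tilde{\Psi}(s)$.

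Next, since $\tilde{\Psi}(s) \leq \varphi_0 + sf$ by construction and $\tilde{\Psi}(0) = \varphi_0$ (as $\varphi_0 \in PSH(X, \omega_0)$), the right derivative at the origin satisfies $\tilde{\Psi}'_+(0) \leq f$. Concavity then forces the difference quotients $(\tilde{\Psi}(s_2) - \tilde{\Psi}(s_1))/(s_2 - s_1) \leq f$ for all $0 \leq s_1 < s_2$, i.e.\ $\tilde{\Psi}(s) - sf$ is non-increasing in $s$. Unwinding the rescaling, this reads $(\Pi(t) - f)e^t$ is non-increasing in $t$, which is exactly the ODE inequality $\partial\Pi/\partial t \leq -\Pi + f$ in the weak sense appearing in Proposition \ref{prop:existence of the limit}.

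Finally, I will check the remaining constraints: $\Pi(0) = P_{\omega_0}(\varphi_0) = \varphi_0$; $\Pi$ is Lipschitz in $t$ (by concavity of $\tilde{\Psi}$ combined with the smooth change of variables $s = e^t - 1$); $\Pi(t) \in PSH(X,\hat{\omega}_t)$ by construction; and $\Pi$ is $C^{1,\alpha}$ in $x$ thanks to the $L^\infty$ bound on $dd^c\Pi$ recalled in Section \ref{sub:The-projection-operator}. These facts together show $\Pi$ meets all constraints of the supremum in Proposition \ref{prop:existence of the limit}, so $\sup \geq \Pi$, completing the identification. The main delicate point is the weak reading of the ODE inequality, which is precisely what is addressed by the generalized comparison principle of Remark \ref{rem:comp for concave} used in conjunction with the regularization of Lemma \ref{lem:concave proj}.
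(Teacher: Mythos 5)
Your proof is correct and is essentially the same argument as the paper's, just unfolded: you establish the concavity of $s\mapsto\tilde\Psi(s)=P_{\omega_0+s\chi}(\varphi_0+sf)$ explicitly and then combine it with the anchor $\tilde\Psi(0)=\varphi_0$ and the upper bound $\tilde\Psi(s)\leq\varphi_0+sf$, while the paper compresses the same content into the single extrapolation inequality $P_{t+s}((1-\lambda)\varphi_0+\lambda\psi_t)\leq(1-\lambda)\varphi_0+\lambda P_t(\psi_t)$ with $\lambda=(t+s)/t>1$ (which is precisely the three-point concavity of $\tilde\Psi$ at $0$, $t$, $t+s$). The surrounding bookkeeping you add about $\Pi$ being a valid candidate for the supremum is correct, though the paper leaves most of it implicit.
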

\begin{proof}
It will be convenient to use the equivalent ``non-normalized setting''
which means that we replace the convex combination above with $\varphi_{0}+tf$
and have to prove that $a(t):=P_{t}(\varphi_{0}+tf)-tf$, where $P_{t}=P_{\omega+t\theta}$,
is decreasing, i.e. that $a(t+s)-a(t)\leq0$ for any fixed $t,s\geq0$
(compare Remark \ref{rem:scaling of linear evolution}). To this end
we rewrite the difference above as 
\[
P_{t+s}(\varphi_{0}+tf+sf)-P_{t}(\varphi_{0}+tf)-sf=P_{t+s}((1-\lambda)\varphi_{0}+\lambda\psi_{t})-P_{t}(\psi_{t})-sf,
\]
 where 
\[
\psi_{t}:=\varphi_{0}+tf,\,\,\,\lambda:=(t+s)/t.
\]
In particular, $\lambda\geq1$ and hence it follows from the very
definition of $P$ (as an upper envelope wrt a convex set) that
\[
P_{t+s}((1-\lambda)\varphi_{0}+\lambda\psi_{t})\leq(1-\lambda)\varphi_{0}+\lambda P_{t}(\psi_{t}),
\]
 which gives that $a(t+s)-a(t)$ can be estimated from above by 
\[
(1-\lambda)\varphi_{0}+\lambda P_{t}(\psi_{t})-P_{t}(\psi_{t})-sf=(\lambda-1)tf-sf=0
\]
as desired (a similar direct proof can be given for the normalized
KRF, but using instead $\lambda=(1-e^{-(t+s)})/(1-e^{-t})$). \end{proof}
\begin{rem}
It is possible to prove the uniform large $\beta$-convergence of
the flows $\varphi^{(\beta)}(t)$ directly without the Laplacian estimate
and without going through the characterization in terms of curves
appearing in \ref{prop:existence of the limit}. Indeed, an upper
bound of the form $\varphi^{(\beta)}(t)\leq P_{t}(\varphi_{0}+tf_{\beta})+C_{t}/\beta$
can be proved (in the non-normalized setting) by applying the parabolic
maximum principle. Indeed, for some uniform constant $C$ it can be
shown that the function 
\[
\varphi_{0}+tf_{\beta}+\frac{Ct+nt\log(t+1)}{\beta}
\]
is a super-solution of the parabolic complex Monge-Ampère equation.
The lower bound is then proved as before using regularization and
the parabolic comparison principle. Alternatively, the lower bound
can also be proved directly without regularization using the monotonicity
property (Proposition \ref{prop:The-sup-coincides with projection of linear curve})
and the viscosity comparison principle to get
\[
\varphi^{(\beta)}(t)\geq(1-\delta)P_{t}(\varphi_{0}+tf_{\beta})-\frac{C_{t}(1-\log(1-\delta))}{\beta}-(1-\delta)C_{t},\ \delta\in(0,1).
\]
More generally, the uniform convergence holds even if $f$ is merely
continuous (see Section \ref{sub:An-extension-of}). 
\end{rem}

\section{\label{sec:Large-time-asymptotics}Large time asymptotics of the
flows}

In order to study the joint large $t$ and large $\beta-$limit of
the non-normalized Kähler-Ricci flows $\omega^{(\beta)}(t)$ introduced
in the previous section we consider, as usual, the normalized Kähler
forms $\omega^{(\beta)}(t)/(t+1)$ (which have uniformly bounded volume)
evolving according to the normalized Kähler-Ricci flow \ref{eq:normalized krf in setup}.
Our first observation is that the following double limit always exists:

\begin{equation}
\lim_{t\rightarrow\infty}\lim_{\beta\rightarrow\infty}\omega^{(\beta)}(t)/(t+1)=P(\theta),\label{eq:double scaling limit}
\end{equation}
for any initial Kähler metric $\omega_{0}$ (where the large $t-$limit
holds in the weak topology of currents). This follows immediately
from Theorem \ref{thm:main general} combined with the following
\begin{lem}
\label{lem:conv of envelopes}Assume that $\chi\geq0$ and set $\hat{\omega}_{t}:=e^{-t}\omega_{0}+(1-e^{-t})\chi$.
Then, for any given smooth functions $\varphi_{0}$ and $f$ on $X$,
\[
P_{\hat{\omega}_{t}}(e^{-t}\varphi_{0}+(1-e^{-t})f)\rightarrow P_{\chi}f
\]
 as $t\rightarrow\infty,$ in the $L^{1}$-topology. In particular,
if $[\theta]\geq0$ then $P(e^{-t}\omega_{0}+(1-e^{-t})\theta)\rightarrow P(\theta)$
in the weak topology of currents.\end{lem}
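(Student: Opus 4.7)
The plan is to sandwich the envelope
\[
u_t := P_{\hat\omega_t}(e^{-t}\varphi_0 + (1-e^{-t})f)
\]
between two explicit families, both of which converge to $P_\chi f$ in $L^1$. I write $\epsilon := e^{-t}$ throughout, so the claim becomes $u_t \to P_\chi f$ in $L^1(X)$ as $\epsilon \to 0^+$.

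For the lower bound I would test the explicit candidate $v_t := (1-\epsilon)P_\chi f - C\epsilon$, where $C$ is any constant with $C \geq -\inf_X \varphi_0$. The $\hat\omega_t$-plurisubharmonicity is immediate from
\[
\hat\omega_t + dd^c v_t = \epsilon\,\omega_0 + (1-\epsilon)\bigl(\chi + dd^c P_\chi f\bigr) \geq 0
\]
(using $\chi \geq 0$ and $P_\chi f \in \psh(X,\chi)$); and $v_t \leq \epsilon\varphi_0 + (1-\epsilon)f$ follows from $P_\chi f \leq f$ together with $-C \leq \varphi_0$. Hence $v_t$ competes in the envelope defining $u_t$, and $u_t \geq (1-\epsilon)P_\chi f - C\epsilon$.

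For the upper bound I rescale the envelope itself: set $w_t := (u_t - \epsilon\varphi_0)/(1-\epsilon)$ and pick $M$ large enough that $\omega_0 + dd^c\varphi_0 \leq M\omega$, which is possible since $\varphi_0$ and $\omega_0$ are smooth. With $\delta(t) := \epsilon M/(1-\epsilon) \to 0$, the inequality $\hat\omega_t + dd^c u_t \geq 0$ rearranges into
\[
\bigl(\chi + \delta(t)\omega\bigr) + dd^c w_t \geq \frac{\epsilon}{1-\epsilon}\bigl(M\omega - \omega_0 - dd^c\varphi_0\bigr) \geq 0,
\]
so $w_t \in \psh(X, \chi + \delta(t)\omega)$; moreover $w_t \leq f$ is immediate from $u_t \leq \epsilon\varphi_0 + (1-\epsilon)f$. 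Therefore $w_t \leq P_{\chi + \delta(t)\omega}(f)$, i.e.\ $u_t \leq (1-\epsilon)P_{\chi+\delta(t)\omega}(f) + \epsilon\varphi_0$.

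The two bounds reduce the lemma to showing continuity of the envelope under positive perturbation of the class, $P_{\chi + \delta\omega}(f) \to P_\chi(f)$ in $L^1$ as $\delta \to 0^+$. Monotonicity ($\psh(X,\chi) \subset \psh(X,\chi+\delta\omega)$) gives $P_{\chi + \delta\omega}(f) \geq P_\chi f$. For the reverse, the family is uniformly bounded in $L^\infty$ (since $f$ is bounded and the reference classes remain in a compact family of nef classes), so any $L^1$ subsequential limit $v$ of $P_{\chi + \delta_k\omega}(f)$ satisfies $\chi + dd^c v \geq 0$ by weak closure of positive currents and $v \leq f$ almost everywhere; hence, after USC regularization, $v \leq P_\chi f$, and uniqueness of the limit yields convergence of the entire family. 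Combining with the two bounds, $u_t \to P_\chi f$ in $L^1$, and the current-level statement is the special case $\varphi_0 = 0$ with $f$ a potential of $\theta$ relative to $\chi$. I expect this final continuity step to be the main technical point: the other inequalities are bookkeeping of $dd^c$-positivity, while the continuity of the envelope under small positive perturbations of the reference form is the genuine pluripotential-theoretic content, and it relies crucially on the assumption $\chi \geq 0$.
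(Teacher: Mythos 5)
Your proof is correct, and the one place it genuinely departs from the paper's is the upper bound. The lower bound is essentially the paper's: there one inserts $(1-\delta)P_\chi f$ as a competitor once $t$ is large enough that $(1-e^{-t})\chi \geq (1-\delta)\chi$, then sends $t\to\infty$, then $\delta\to 0$; you fuse the two parameters into $\epsilon = e^{-t}$ and observe that, by keeping the $\epsilon\omega_0$ term rather than discarding it, the shifted candidate $(1-\epsilon)P_\chi f - C\epsilon$ competes for every $t$, giving the same conclusion in one pass. For the upper bound the paper uses pure soft compactness: the family $\psi_t$ lies in a fixed $PSH(X,C\omega)$ with uniform $L^\infty$ bounds, so any $L^1$ limit point is $\chi$-psh (weak closedness of positive currents) and $\leq f$, hence $\leq P_\chi f$ --- and that is the entire step. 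You instead rescale $u_t$ into a $(\chi+\delta(t)\omega)$-psh competitor for $f$, obtain the explicit barrier $u_t \leq (1-\epsilon)P_{\chi+\delta(t)\omega}(f) + \epsilon\varphi_0$, and then reduce the lemma to the auxiliary continuity $P_{\chi+\delta\omega}(f)\to P_\chi f$ as $\delta\to 0^+$, a statement the paper never needs. Your continuity proof is fine (and, since $P_{\chi+\delta\omega}(f)$ is monotone decreasing as $\delta\downarrow 0$, you could replace the subsequence/weak-closure step with the elementary fact that a decreasing limit of $\eta$-psh functions bounded below, with $\eta$ decreasing to $\chi$, is $\chi$-psh). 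The trade-off: the paper's route to the upper bound is shorter and avoids the auxiliary lemma; yours, being a two-sided explicit sandwich, would convert any modulus of continuity of $\delta\mapsto P_{\chi+\delta\omega}(f)$ at $\delta=0$ into a quantitative convergence rate, which the compactness argument cannot see.
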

\begin{proof}
Set $\psi_{t}:=P_{\hat{\omega}_{t}}(e^{-t}\varphi_{0}+(1-e^{-t})f)=:P_{\hat{\omega}_{t}}(f(t)).$
Since $\psi_{t}\in PSH(X,C\omega)$ for $C$ sufficiently large the
family $\psi_{t}$ is relatively compact in the $L^{1}$-topology.
We denote by $\psi_{\infty}$ a given limit point of $\psi_{t},$
which clearly is in $PSH(X,\chi).$ Moreover, $\psi_{t}\leq f(t)$
implies $\psi_{\infty}\leq f$ and hence $\psi_{\infty}\leq P_{\chi}f.$
To prove the converse we set $\psi:=P_{\chi}f$ and fix $\delta>0.$
Observe that $dd^{c}(1-\delta)\psi+\hat{\omega}_{t}\geq(1-\delta)dd^{c}\psi+(1-e^{-t})\chi\geq(1-\delta)(dd^{c}\psi+\chi)\geq0,$
for $t>>1.$ Hence, since $\psi$ is bounded we get $(1-\delta)\psi\leq P_{\hat{\omega}_{t}}(f+C\delta)\leq P_{\hat{\omega}_{t}}(f_{t})+C\delta+C'e^{-t}.$
Hence, letting first $t\rightarrow\infty$ gives $(1-\delta)\psi\leq\psi_{\infty}+C\delta.$
Finally, letting $\delta\rightarrow0$ concludes the proof.
\end{proof}
In the following two sections we will look closer at the situation
appearing in the two extreme cases, where $\frac{\text{1}}{\beta}c_{1}(K_{X})+[\theta_{\beta}]$
is positive and trivial, respectively. Then we will make some comments
on the intermediate cases and the relations to previous results in
complex geometry concerning the case when $\beta$ is fixed.

\subsection{\label{sub:The-case-when convex envelopes}The case when $c_{1}(K_{X})/\beta+[\theta_{\beta}]=[\omega_{0}]:$
a dynamic construction of  envelopes }

In this section we will consider the situation when the normalized
KRF preserves the initial cohomology class. Given a volume form $dV$
on $X$ and a smooth function $f$ on $X,$ setting $\theta=dd^{c}f+\omega$
and 
\[
\theta_{\beta}=\theta+\frac{1}{\beta}\mbox{Ric }dV
\]
 for a fixed choice of Kähler metric $\omega\in[\theta]$ the normalized
KRF in $[\theta]$ on the level of Kähler potentials becomes

\begin{equation}
\frac{\partial\varphi^{(\beta)}(t)}{\partial t}=\frac{1}{\beta}\log\frac{(\omega+dd^{c}\varphi^{(\beta)}(t))^{n}}{dV}-\varphi^{(\beta)}(t)+f\label{eq:ma flow for envelope}
\end{equation}
(hence $f_{\beta}=f+\frac{1}{\beta}\log(dV/\omega^{n})$ and the reference
Kähler metric on $X$ and $\chi_{\beta}=\chi$ in $[\theta]$ are
both taken as $\omega$ in this setting). 
\begin{thm}
\label{thm:envelope from dynamics}Let $(X,\omega)$ be a Kähler manifold
and fix a volume form $dV$ on $X.$ Given a smooth function $f$
we denote by $\varphi^{(\beta)}(x,t)$ the solution of the evolution
equation \ref{eq:ma flow for envelope}\textup{ with initial data
$\varphi_{0}$ and set $\varphi_{t}^{(\infty)}:=P_{\omega}(e^{-t}\varphi_{0}+(1-e^{-t})f).$
Then 
\begin{equation}
\sup_{X}\left|\varphi_{t}^{(\beta)}-\varphi_{t}^{(\infty)}\right|\leq C\frac{\log\beta}{\beta}\label{eq:rate of conv in theorem envelope from dyn}
\end{equation}
}

\textup{and there is a constant $C$ such that 
\begin{equation}
\left|\frac{\partial\varphi}{\partial t}\right|\leq Ce^{-t},\,\,\,|dd^{c}\varphi_{t}|_{\omega_{0}}\leq C\label{eq:bounds in theorem env from dyn}
\end{equation}
}\end{thm}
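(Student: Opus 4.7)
The plan is to combine two uniform a priori estimates, on $|\partial_t\varphi^{(\beta)}|$ and on $|dd^c\varphi^{(\beta)}|_\omega$, with a matching pair of sub- and super-solution barriers anchored at the envelope $u_t := \varphi_t^{(\infty)} = P_\omega(e^{-t}\varphi_0 + (1-e^{-t})f)$. The decisive structural simplification here is that the reference form $\hat\omega_t \equiv \omega$ is time-independent (since $[\theta] = [\omega_0] = [\chi]$), which both produces a very clean heat-type equation for $\partial_t\varphi^{(\beta)}$ and removes the factor $e^t$ responsible for the $t$-growth in the general Laplacian estimate of Proposition \ref{prop:general Laplac est}.

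First I would establish $|\partial_t\varphi^{(\beta)}| \leq Ce^{-t}$. Differentiating \eqref{eq:ma flow for envelope} in $t$ and using $\partial_t\log\omega_\varphi^n = \Delta_{\omega_\varphi}(\partial_t\varphi)$ yields
\begin{equation*}
\partial_t(\partial_t\varphi) = \frac{1}{\beta}\Delta_{t}(\partial_t\varphi) - \partial_t\varphi,
\end{equation*}
so that $h := e^t\partial_t\varphi$ satisfies the pure heat equation $\partial_t h = \beta^{-1}\Delta_t h$; the parabolic maximum principle then gives $\sup_X|h(t)| \leq \sup_X|h(0)|$ with $h(0)$ bounded uniformly in $\beta$ in terms of $\varphi_0,f,dV$. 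Next I would obtain a uniform Laplacian bound by feeding Siu's inequality (Lemma \ref{lem:Siu}) into the parabolic operator applied to $G := \log\Tr_\omega\omega_{\varphi_t} - A\varphi_t$: using $\log(\omega_\varphi^n/\omega^n) = \beta(\partial_t\varphi + \varphi - f) + O(1)$, the same cancellation of the $\Delta_\omega(\partial_t\varphi)/T$ contributions as in the proof of Proposition \ref{prop:general Laplac est}, and the choice $A > B_+$ to absorb the curvature term, one obtains at a maximum of $G$ an inequality of the shape $0 \leq -1 + C/T - A\partial_t\varphi + O(1/\beta)$. The bound on $\partial_t\varphi$ just established, together with the $C^0$ bound of Section \ref{sub:The-upper-bound}, then forces $\Tr_\omega\omega_{\varphi_t} \leq C$ uniformly in $t$ and $\beta$. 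I expect this Laplacian step to be the main technical obstacle: the cancellation is delicate, the order in which the a priori estimates are invoked matters, and one must genuinely exploit that $\hat\omega_t$ is constant in $t$ to prevent the bound from degenerating as $t\to\infty$.

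With both a priori estimates in hand, the upper bound $\varphi^{(\beta)}(t) \leq u_t + C/\beta$ is essentially free: from $\omega_\varphi^n/dV \leq C$ the flow forces $\partial_t\varphi + \varphi \leq f + C/\beta$, and multiplying by $e^t$ and integrating yields $\varphi^{(\beta)}(t) \leq e^{-t}\varphi_0 + (1-e^{-t})f + C/\beta$ pointwise, after which $\omega$-plurisubharmonicity of $\varphi^{(\beta)}(t) - C/\beta$ allows the right-hand side to be replaced by its $\omega$-psh envelope $u_t$. For the matching lower bound, I would construct a subsolution of the Monge--Amp\`ere flow by the shift-and-scale
\begin{equation*}
v_\epsilon := (1-\epsilon)u_t - C\epsilon,
\end{equation*}
with $C$ chosen large depending only on $\|f\|_\infty$ and $\inf\varphi_0$. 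Then $\omega + dd^c v_\epsilon \geq \epsilon\omega$, whence $\beta^{-1}\log(\omega_{v_\epsilon}^n/dV) \geq \beta^{-1}(n\log\epsilon - C')$, while the monotonicity of $(u_t - f)e^t$ provided by Proposition \ref{prop:The-sup-coincides with projection of linear curve} gives $\partial_t v_\epsilon + v_\epsilon - f \leq -\epsilon(f+C) \leq 0$. The subsolution inequality therefore reduces to $-\epsilon(f+C) \leq \beta^{-1}(n\log\epsilon - C')$, which is satisfied once $\epsilon = A\log\beta/\beta$ with $A$ sufficiently large; this is precisely where the rate $\log\beta/\beta$ enters. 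After regularizing $u_t$ via Lemma \ref{lem:concave proj} into a smooth, strictly $\omega$-psh surrogate, the parabolic comparison principle in its quasi-concave form (Remark \ref{rem:comp for concave}) yields $v_\epsilon \leq \varphi^{(\beta)}$, and hence $\varphi^{(\beta)}(t) \geq u_t - C\log\beta/\beta$, completing the proof.
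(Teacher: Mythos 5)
Your argument tracks the paper's proof step for step: the paper also derives the $Ce^{-t}$ bound on $\partial_t\varphi$ from the heat equation for $g:=e^{t}\partial_t\varphi$, obtains the Laplacian bound by feeding the derivative bound into the test function of Proposition \ref{prop:general Laplac est} (with the $e^{t}$ factor dropped because $\hat\omega_t\equiv\omega$), gets the upper bound from $\omega_\varphi^n\leq C\omega^n$, and gets the lower bound from the shift-and-scale subsolution of Proposition \ref{prop:existence of the limit} combined with the regularization of Lemma \ref{lem:concave proj} and the quasi-concave comparison principle of Remark \ref{rem:comp for concave}. The one place where you are actually more careful than the text is the choice of $\epsilon$ in the subsolution barrier: the paper writes $\epsilon=C/\beta$, but the constraint $\epsilon\gtrsim -\beta^{-1}\log\epsilon$ coming from the $\beta^{-1}\log\det$ term forces $\epsilon\gtrsim\log\beta/\beta$, which is exactly your choice $\epsilon=A\log\beta/\beta$ and is what produces the stated rate $C\log\beta/\beta$.
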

\begin{proof}
Note that $g:=e^{t}\frac{\partial\varphi_{t}}{\partial t}$ satisfies
\[
\frac{\partial g}{\partial t}-\Delta_{t}g=0
\]
 and hence, by the parabolic maximum principle, $|g(x,t)|\leq\sup_{X}|g(x,0):=C,$
i.e $|\frac{\partial\varphi_{t}^{(\beta)}}{\partial t}|\leq Ce^{-t}.$
But then we can (for $t$ large) employ the function $f(t)=-Ce^{-t}$
in the proof of the Laplacian estimate in Prop \ref{prop:general Laplac est}
(since the cohomological term vanishes), which implies the estimate
on $dd^{c}\varphi_{t}.$ Next, the rate of convergence in \ref{eq:rate of conv in theorem envelope from dyn}
is proved by tracing through the proof of Prop \ref{prop:existence of the limit}.
Indeed, first the upper bound on $\varphi_{t}^{(\beta)}$ follows
from the uniform upper uniform bound of $dd^{c}\varphi_{t}$ in formula
\ref{eq:bounds in theorem env from dyn}, giving an error term of
the order $1/\beta.$ As for the lower bound it is obtained by taking
$\epsilon=C/\beta$ in the proof of Prop \ref{prop:existence of the limit}
and using that, since $\chi>0,$ the constant $C(T)$ can be taken
to be independent of $T$ (more precisely, one first fixes $x\in X$
and take $v(t)$ such that $v(x,t)\geq\varphi_{t}^{(\infty)}-\delta$
and finally let $\delta\rightarrow0).$
\end{proof}
In particular, by \ref{eq:rate of conv in theorem envelope from dyn}
\[
\sup_{X}\left|\varphi_{t}^{(\beta)}-P_{\omega}(f)\right|\leq C\left(\frac{\log\beta}{\beta}+e^{-t}\right)
\]
and hence the envelope $P_{\omega}(f)$ can be constructed from the
joint large $\beta$ and large $t-$limit of the Monge-Ampère flow
\ref{eq:ma flow for envelope}: 
\[
P_{\omega}(f):=\lim_{t\rightarrow\infty}\varphi_{t}^{(\beta_{t})}
\]
in the $C^{0}(X)-$norm for any family of $t-$dependent $\beta_{t}$
such that $\beta_{t}\rightarrow\infty$ as $t\rightarrow\infty.$
Interpreting $\beta_{t}$ as the ``inverse temperature'' this construction
is thus analogous to the method of simulated annealing algorithms
used in numerics to find nearly optimal global minima of a given energy
type function by cooling down a thermodynamical system (and decreasing
the corresponding free energy). The analogy can be made more precise
using the gradient flow picture in Section \ref{sec:The-gradient-flow}
where the energy functional in question is the pluricomplex energy
introduced in \cite{bbgz}. It would be interesting to see is numerically
useful in concrete situations, for example by adapting the numerical
implementations for the Kähler-Ricci flow on a toric manifold introduced
in \cite{d-h-h-k} (concerning a finite $\beta)$.

It may be illuminating to compare the dynamic construction of the
envelope $P_{\omega}(f)$ above with the dynamic PDE construction
of the\emph{ convex} envelope of a given smooth function $f$ on $\R^{n}$
introduced in \cite{ve}: 

\[
\frac{\partial\psi(t)}{\partial t}=\sqrt{1+|\partial_{x}\psi(t)|^{2}}\min\{0,\lambda_{1}(\partial_{x}^{2}\psi(t))\}\,\,\,\,\,\psi(0)=f,
\]
i.e. the graph of the solution $\psi_{t}$ evolves in the normal direction
at each point, with the speed $\min\{0,\lambda_{1}(\partial_{x}^{2}\psi(t))\}$
(expressed in terms of the first eigenvalue of the real Hessian $\partial_{x}^{2}\psi(t)));$
here $\psi(t)$ is a solution in the viscosity sense. A variant of
the latter construction, obtained by removing the first factor in
the right-hand side of the evolution equation above, was studied in
\cite{g-c} using stochastic calculus, where exponential convergence
was established with a uniform control bound on $\partial_{x}^{2}\psi(t)),$
which is thus analogous to the result in Theorem \ref{thm:envelope from dynamics}
above. Our approach can also be applied to convex envelopes by imposing
invariance in the imaginary directions (as in Section \ref{sub:Relation-to-the-krf}).
But the main difference in our setting is that we start with an arbitrary
convex function $\psi(0)$ and the dependence on $f$ instead appears
in the evolution equation itself. Moreover, the large parameter $\beta$
appears as a regularization parameter ensuring that the solution remains
smooth for positive times.

\subsection{\label{sub:The trivial case}The case when the class \textmd{$\frac{\text{1}}{\beta}c_{1}(K_{X})+[\theta_{\beta}]$}
is trivial}

Next we specialize to the case when $\frac{\text{1}}{\beta}c_{1}(K_{X})+[\theta_{\beta}]$
is trivial, which is the one relevant for the applications to Hele-Shaw
type flows and Hamilton-Jacobi equation (in the latter case $K_{X}$
is even trivial). Equivalently, this means that the non-normalized
KRF preserves the initial cohomology class. In particular, letting
$\beta\rightarrow\infty$ reveals that $[\theta]$ is trivial and
hence we can write 
\[
\theta=dd^{c}f,\,\,\,\,\inf_{X}f=0
\]
for a unique function $f$ and then take 
\[
\theta_{\beta}:=dd^{c}f_{\beta}+\frac{1}{\beta}\mbox{Ric }\omega,\,\,\,\,f_{\beta}:=f.
\]
 for a fixed Kähler form $\omega,$ i.e. by imposing the equation
\ref{eq:defining eq for f beta in terms of theta}. 

In this setting, the normalized flow always tends to zero as $t\rightarrow\infty$
(as the volume of the class does). But, by the seminal result in \cite{ca},
the non-normalized KRF flow converges to a Kähler form $\omega_{\beta}:$
\begin{prop}
\label{prop:cao}For a fixed $\beta>0$ the non-normalized Kähler-Ricci
flow $\omega^{(\beta)}(t)$ emanating from any given form $\omega_{0}$
converges (in the $C^{\infty}-$topology), as $t\rightarrow\infty,$
to the unique solution $\omega_{\beta}\in[\omega_{0}]$ of the Calabi-Yau
equation 
\begin{equation}
\frac{1}{V_{0}}\omega_{\beta}^{n}=\frac{e^{-\beta f}\omega^{n}}{\int_{X}e^{-\beta f}\omega^{n}},\label{eq:cy eq}
\end{equation}
where $V_{0}$ is the volume of $\omega_{0}.$ More precisely, under
the normalizations above the convergence holds on the level of Kähler
potentials. \end{prop}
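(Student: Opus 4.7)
The plan is to reduce the problem to Cao's classical analysis of a pure complex Monge-Amp\`ere flow on a Calabi-Yau background. Since the class $\frac{1}{\beta}c_1(K_X)+[\theta_\beta]$ is trivial and $\chi_\beta\geq 0$, on a compact K\"ahler manifold one must have $\chi_\beta=0$, so the non-normalized KRF preserves $[\omega_0]$. By Yau's theorem the Calabi-Yau equation \ref{eq:cy eq} admits a unique solution $\omega_\beta=\omega_0+dd^c\varphi_\beta\in[\omega_0]$, normalized by $\sup_X\varphi_\beta=0$. Setting $a_\beta:=-\tfrac{1}{\beta}\log\bigl(\int_X e^{-\beta f}\omega^n/V_0\bigr)$ yields the pointwise identity $\tfrac{1}{\beta}\log(\omega_\beta^n/\omega^n)+f\equiv a_\beta$, so the substitution
\[
\tilde\varphi(x,s)=\varphi_\beta(x)+a_\beta s+c_0+u(x,s)
\]
(with $c_0$ a constant to be fixed at the end) converts the non-normalized Monge-Amp\`ere flow into the source-free equation
\[
\frac{\partial u}{\partial s}=\frac{1}{\beta}\log\frac{(\omega_\beta+dd^c u)^n}{\omega_\beta^n},\qquad \omega_\beta+dd^c u\geq 0.
\]

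Next I would establish uniform-in-$s$ a priori estimates for $u$. The parabolic maximum principle (the constants $\sup_X u(0)$ and $\inf_X u(0)$ being stationary super- and sub-solutions) gives $\|u(\cdot,s)\|_{C^0(X)}\leq C$. Differentiating the equation yields $\partial_s(\partial_s u)=\tfrac{1}{\beta}\Delta_{\omega_\beta+dd^c u(s)}(\partial_s u)$, so $\sup_X|\partial_s u|$ is non-increasing in $s$. The Laplacian estimate of Proposition \ref{prop:general Laplac est}, rerun with $\omega_\beta$ as background metric and with the cohomological drift terms absent (since $\chi_\beta=0$), provides $\omega_\beta+dd^c u\leq C\omega_\beta$ uniformly in $s$; a matching positive lower bound $\omega_\beta+dd^c u\geq c\omega_\beta$ follows from the equation combined with the $C^0$ bound on $\partial_s u$. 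Parabolic Evans-Krylov and Schauder theory then upgrade these to uniform $C^{k,\alpha}$ estimates.

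To conclude, I would exploit that $\theta:=\partial_s u$ satisfies the \emph{linear} heat equation $\partial_s\theta=\tfrac{1}{\beta}\Delta_s\theta$ for the family $\omega_s:=\omega_\beta+dd^c u(s)$, which is uniformly equivalent to $\omega_\beta$, so $-\Delta_s$ has a uniform spectral gap $\lambda_0>0$ on $\omega_s^n$-mean-zero functions. Differentiating $J(s):=\int_X\theta^2\,\omega_s^n$, integrating by parts, and absorbing the time-dependence of $\omega_s^n$ into lower-order terms controlled by the standing bounds, one obtains $J'(s)\leq-\tfrac{2\lambda_0}{\beta}J(s)+\text{l.o.t.}$, hence exponential $L^2$-decay of $\partial_s u$. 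Interpolation against the uniform $C^{k,\alpha}$ estimates upgrades this to exponential decay in every $C^k$; integrating $\partial_s u$ over $[s,\infty)$ then shows $u(\cdot,s)$ converges in $C^\infty$ to a limit $u_\infty$, which must be constant by Yau's uniqueness applied to $(\omega_\beta+dd^c u_\infty)^n=\omega_\beta^n$, and is arranged to be $0$ by the choice of $c_0$. Translating back gives $\omega^{(\beta)}(s)\to\omega_\beta$ in $C^\infty$.

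The hardest step is making the Laplacian estimate \emph{uniform as $s\to\infty$}: this is precisely what keeps the metrics $\omega_s$ comparable to $\omega_\beta$, without which both the spectral gap and the higher-order regularity would collapse. The smoothness and strict positivity of Yau's $\omega_\beta$ are what make this uniform bound accessible; once it is secured the rest of the argument is a fairly standard heat-equation analysis.
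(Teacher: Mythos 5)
The paper itself gives no proof of this proposition: it invokes Cao's theorem \cite{ca} directly. Your proposal reconstructs Cao's argument from scratch by translating the flow by the static Yau solution $\omega_\beta$, which is exactly the reduction implicit in the paper's citation, so the overall approach is the same. The reduction is carried out correctly: after the substitution $\tilde\varphi(s)=\varphi_\beta+a_\beta s+c_0+u(s)$ the flow becomes $\partial_s u=\tfrac1\beta\log\bigl((\omega_\beta+dd^cu)^n/\omega_\beta^n\bigr)$; the $C^0$ bound on $u$, the monotone bound on $\sup_X|\partial_s u|$ from the linear heat equation, and the resulting two-sided uniform equivalence $c\,\omega_\beta\le\omega_\beta+dd^cu\le C\omega_\beta$ are all sound, and the Evans--Krylov--Schauder upgrade is standard.

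The weak link is the $L^2$-energy argument for exponential decay. Writing $\theta:=\partial_s u$ and $J(s):=\int_X\theta^2\,\omega_s^n$, a direct computation (using $\partial_s\theta=\tfrac1\beta\Delta_s\theta$ and $\partial_s(\omega_s^n)=(\Delta_s\theta)\,\omega_s^n$) gives
\[
J'(s)=-\frac{2}{\beta}\int_X(1+\beta\theta)\,|\nabla_s\theta|^2\,\omega_s^n ,
\]
so the term you call ``lower order'' is $-2\int_X\theta|\nabla_s\theta|^2\omega_s^n$, of \emph{exactly the same order} as the principal Dirichlet integral, with a sign determined by $\theta$. Since $\beta\theta=\log(\omega_s^n/\omega_\beta^n)$ need not be $>-1$ pointwise, the prefactor $1+\beta\theta$ can change sign and the absorption fails. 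In addition, the uniform spectral gap yields a Poincar\'e bound on $\int(\theta-\bar\theta_s)^2$, not on $J(s)$ itself, so one must separately argue $\bar\theta_s\to0$. The standard fix (and Cao's own route) is to use the Krylov--Safonov/Harnack inequality for the uniformly parabolic linear heat equation satisfied by $\theta$: together with the monotonicity of $\sup_X\theta$ and $\inf_X\theta$ this gives geometric decay of $\operatorname{osc}_X\theta$, and the uniform $C^0$ bound on $u$ then forces $\inf_X\theta\le0\le\sup_X\theta$ for all $s$, whence $\sup_X|\theta|\le\operatorname{osc}_X\theta\to0$ exponentially. With that replacement the rest of your argument goes through.

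Finally, note that with $\inf_X f=0$ one generally has $a_\beta\neq0$, so the non-normalized potential $\tilde\varphi(s)$ diverges linearly in $s$; the ``convergence on the level of K\"ahler potentials'' must be read for $\tilde\varphi(s)-a_\beta s$ (equivalently for the normalized potentials $\varphi(t)$), which your $c_0$-tracking correctly reflects but is worth stating explicitly.
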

\begin{rem}
By definition the volume form of the limiting Kähler metric is the
Boltzmann-Gibbs measure associated to the Hamiltonian function $f,$
at inverse temperature $\beta,$ which gives a hint of the statistical
mechanical interpretation of the large $\beta-$limit (see Section
\ref{sec:The-gradient-flow} for further hints). 
\end{rem}
It should be stressed that by the estimates in \cite{ca} one has
in this setting that 
\begin{equation}
\omega^{(\beta)}(t)\leq C_{\beta}\label{eq:bound in cy krf}
\end{equation}
\label{eq:upper bound on omega cao} independently of $t,$ which
seemingly improves on the bounds in Theorem \ref{thm:main general}
and Theorem \ref{thm:sharp bounds} for large $t$ (the proof uses
a different application of the Laplacian estimate, along the lines
of Yau's original argument, which needs a two-sided bound on the potential).
But the point of the estimates in Theorem \ref{thm:sharp bounds},
where one gets a linear growth in $t$ is to get a multiplicative
constant that is independent of $\beta$ (at least when $t\rightarrow\infty).$
In fact, for a generic $f,$ it is impossible to get a constant $C_{\beta}$
in formula \ref{eq:bound in cy krf} which is independent of $\beta.$
Indeed, unless $f$ vanishes identically the Gibbs measure in the
right-hand side of the Calabi-Yau equation \ref{eq:cy eq}  blows
up as $\beta\rightarrow\infty,$ concentrating on the subset of $X$
where $f$ attains its absolute minimum $(=0$ with our normalizations).
Hence, for a generic $f$ any limit point of $\omega^{(\beta)}(\infty)$
is a sum of Dirac measures. Accordingly, the convergence in the previous
proposition, motivates (by formally interchanging the large $t$ and
large $\beta-$limits) the following 
\begin{prop}
\label{prop:conv of p theta volume form in cy case} Let $f$ be a
smooth function on $X$ and $\omega_{0}$ a Kähler form on $X.$ Then
any limit point of the family $(P(\omega_{0}+tdd^{c}f))^{n}$ (in
the weak topology) is supported in the closed set $F$ where $f$
attains its absolute minimum. In particular, 
\begin{itemize}
\item if $f$ admits a unique absolute minimum $x_{0}$ then 
\[
\lim_{t\rightarrow\infty}((P(\omega_{0}+tdd^{c}f))^{n}=V_{0}\delta_{x_{0}}
\]
weakly. Hence, the corresponding non-normalized Kähler-Ricci flows
$\omega^{(\beta)}(t$) emanating from $\omega_{0}$ satisfy 
\[
\lim_{t\to+\infty}\lim_{\beta\text{\ensuremath{\to}+\ensuremath{\infty}}}\omega^{(\beta)}(t)^{n}=V_{0}\delta_{x_{0}}
\]
 in the weak topology.
\item In general, under the normalization $\inf_{X}f=0,$ 
\[
\lim_{t\to+\infty}P_{\omega}(\varphi_{0}+tf)=P_{(\omega,F)}(\varphi_{0})
\]
(increasing pointwise) for any initial continuous $\omega$-psh function
$\varphi_{0}$. In particular, if $F$ is not pluripolar, then \textup{
\[
\lim_{t\rightarrow\infty}\lim_{\beta\rightarrow\infty}\omega^{(\beta)}(t)=\omega_{\infty}
\]
}in the weak topology, where $\omega_{\infty}$ is the positive current
defined by $\omega_{0}+dd^{c}P_{(\omega_{0},F)}(0).$
\end{itemize}
\end{prop}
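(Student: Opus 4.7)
The plan is to establish first the key pointwise limit $P_\omega(\varphi_0+tf)\nearrow P_{(\omega,F)}(\varphi_0)$ stated in the second bullet, and then to deduce both bullets and the opening support statement from it together with a concentration argument for the measures $\mu_t:=(P(\omega_0+tdd^c f))^n$.

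\textbf{The envelope limit.} The envelope is monotone in $t$ since $f\geq 0$, so its pointwise limit $\psi_\infty$ exists. The upper bound $\psi_\infty\leq P_{(\omega,F)}(\varphi_0)$ is immediate from $\varphi_0+tf=\varphi_0$ on $F$. For the lower bound, I would take a bounded $u\in\psh(X,\omega)$ with $u\leq\varphi_0$ on $F$ and $\epsilon>0$, and show $u-\epsilon\leq\varphi_0+tf$ everywhere on $X$ for $t$ large: upper semicontinuity of $u-\varphi_0$ together with the constraint on $F$ yield an open neighborhood $V$ of $F$ on which $u-\epsilon\leq\varphi_0-\epsilon/2\leq\varphi_0+tf$, and on the compact $X\setminus V$ one has $f\geq\delta>0$ with $u-\varphi_0$ bounded above, so the inequality also holds there once $t$ is large enough. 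Letting $\epsilon\to 0$ and taking the supremum over $u$ then gives $\psi_\infty\geq P_{(\omega,F)}(\varphi_0)$.

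\textbf{Support in $F$ and the first bullet.} Using the static identity $\mu_t=1_{C_t}(\omega_0+tdd^cf)^n$ with $C_t=\{P_{\omega_0}(tf)=tf\}$ and the monotonicity of $C_t$ in $t$ (the remark after Lemma \ref{lem:concave proj}), it suffices to prove that $C_t$ is eventually contained in every open neighborhood of $F$. The rescaled envelope $v_t:=P_{\omega_0}(tf)/t$ is $(\omega_0/t)$-psh, takes values in $[0,f]$, and any $L^1$-subsequential limit of it is (ordinary) psh on the compact K\"ahler manifold $X$ with values in $[0,\inf f]=\{0\}$, hence identically zero; so $v_t\to 0$ in $L^1$. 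Applying the sub-mean inequality to $v_t+\rho/t$ on coordinate balls (with $\rho$ a local potential of $\omega_0$) yields, for $r$ small and uniform via a Lebesgue number argument,
\[
v_t(x)\leq\frac{\|v_t\|_{L^1}}{|B(x,r)|}+\frac{\osc_{B(x,r)}\rho}{t}
\]
uniformly in $x\in X$, so both summands tend to $0$ as $t\to\infty$ and $\sup_X v_t\to 0$. On $C_t$ we have $f=v_t$, hence $C_t\subset\{f<\epsilon\}$ for $t$ large, establishing the support statement. When $F=\{x_0\}$ this forces any weak limit of $\mu_t$ to be of the form $c\delta_{x_0}$, and mass conservation $\int d\mu_t=V_0$ fixes $c=V_0$; the iterated limit $\lim_t\lim_\beta\omega^{(\beta)}(t)^n=V_0\delta_{x_0}$ then follows by combining Theorem \ref{thm:main general} with the Bedford--Taylor continuity of the Monge--Amp\`ere operator along uniformly bounded $\omega$-psh potentials.

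\textbf{The second bullet and the main obstacle.} The envelope limit with $\varphi_0=0$ gives $P_{\omega_0}(tf)\nearrow P_{(\omega_0,F)}(0)$ pointwise; when $F$ is non-pluripolar, Lemma \ref{lem:proj of plus infity} provides an $L^\infty$ upper bound on this limit, monotone convergence upgrades the pointwise limit to $L^1$-convergence, and so $dd^c P_{\omega_0}(tf)\to dd^c P_{(\omega_0,F)}(0)$ weakly as currents; Theorem \ref{thm:main general} then interchanges the order of limits to give the desired $\omega_\infty$. The hardest step is the uniform-in-$x$ decay of $v_t$ above: for an arbitrary sequence of $\omega_0$-psh functions converging to zero in $L^1$ one cannot rule out bounded pointwise spikes via H\"ormander-type estimates alone, and the decisive feature here is that $v_t$ is $(\omega_0/t)$-psh rather than just $\omega_0$-psh, so asymptotically it behaves like an ordinary psh function on the compact K\"ahler manifold---a fact that the sub-mean estimate above quantifies with an error of order $1/t$ coming from the $\omega_0$-correction.
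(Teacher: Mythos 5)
Your proof is correct and follows the same overall strategy as the paper's: establish that the rescaled envelope $v_t:=P_{\omega_0}(tf)/t$ tends to $0$ uniformly on $X$, deduce that the coincidence set $C_t$ eventually lies in $\{f<\epsilon\}$, and then handle the two bullets via a monotone-envelope argument and concentration plus mass conservation, respectively. The only genuine divergence is in how the uniform decay of $v_t$ is obtained: the paper invokes its Lemma \ref{lem:conv of envelopes} to get $L^1$-convergence of the envelopes to $P_\chi f=0$ and then appeals to Hartogs' lemma for uniform control of suprema, whereas you give a self-contained argument (compactness of $\omega_0$-psh families, constancy of the psh limit on a compact K\"ahler manifold, sandwiching in $[0,\inf f]$) followed by a sub-mean estimate exploiting the $(\omega_0/t)$-psh structure. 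Your route is slightly more elementary and hands-on, and it yields a quantitative $O(1/t)$ error; the paper's route is shorter because the $L^1$-convergence lemma has already been established and is reused. One caveat on your closing paragraph: the worry that ``one cannot rule out bounded pointwise spikes via H\"ormander-type estimates alone'' for an $\omega_0$-psh family converging to $0$ in $L^1$ is unfounded---Hartogs' lemma does precisely that, with no need for H\"ormander estimates or for the extra $(\omega_0/t)$-psh structure; the rescaling makes your quantitative estimate cleaner but is not ``decisive'' for the qualitative conclusion. This misjudgment does not affect the correctness of what you wrote.
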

\begin{proof}
First observe that, under the normalization $f(x_{0})=0$ and $f>0$
we have $P_{\chi}(f)=0,$ as in this case $\chi=0$ and all psh functions
on a compact manifold $X$ are constants. Next, by Lemma \ref{lem:conv of envelopes}
$P_{\hat{\omega}_{t}}((1-e^{-t})f):=P_{t}(f_{t})\rightarrow P_{\chi}f$
in the $L^{1}-$topology. Since $P_{t}(f_{t})$ is in $PSH(X,C\omega)$
for $C$ sufficiently large it follows from basic properties of psh
functions that $\sup_{X}P_{t}(f_{t})\rightarrow\sup_{X}P_{\chi}(f)=0.$
Fixing $\epsilon>0$ this means that for $t\geq t_{\epsilon}$, $\sup_{X}P_{t}(f_{t})\leq\epsilon/2$
and hence the non-coincidence sets $\Omega_{t}$ satisfy $\{f>\epsilon\}\subset\Omega_{t}$
for $t\geq t_{\epsilon}.$ In particular, $(\hat{\omega}_{t}+dd^{c}P_{\hat{\omega}_{t}}((1-e^{-t})f))$
and hence its non-normalized version $(\omega_{0}+dd^{c}P_{\omega_{0}}(tf))^{n}$
is supported in $\{f\leq\epsilon\}$ for $t\geq t_{\epsilon},$ which
concludes the proof of the first statement. The first point then follows
immediately. 

To prove the second point we may assume that $\inf_{X}f=0$, hence
the family $P_{\omega}(\varphi_{0}+tf)$ is increasing in $t$. By
assumption $P_{\omega}(\varphi_{0}+tf)\leq\varphi_{0}+tf=\varphi_{0}$
on $F$ and hence $P_{\omega}(\varphi_{0}+tf)\leq P_{(\omega,F)}(\varphi_{0}).$
To prove the reversed inequality we fix $\varepsilon>0$ and $u\in PSH(X,\omega)$
such that $u\leq\varphi_{0}$ on $F$. Since the sets $\left\{ f\leq c\right\} $
decrease to the compact set $F$ as $c\downarrow0$ and $\varphi_{0}$
is continuous, there exists $c>0$ small enough such that $\left\{ f\leq c\right\} \subset\left\{ v-\varepsilon<\varphi_{0}\right\} $.
Now, for $t>c^{-1}\sup_{X}(v-\varphi_{0})$ we have $v-\varepsilon\leq\varphi_{0}+tf$,
giving that the limit $\varphi_{\infty}$ of the increasing family
$P_{\omega}(\varphi_{0}+tf)$ is greater than $v-\varepsilon$. As
$v$ and $\varepsilon$ were chosen arbitrarily the conclusion follows.
\footnote{The same result holds even when $F$ is non-pluripolar and $\varphi_{0}$
is unbounded (using the domination principle in finite energy classes
due to Dinew \cite{BL12}), but we are not going further into this
here. }.
\end{proof}

\subsection{\label{sub:Comparison-with-convergence}Comparison with convergence
properties for a finite $\beta$ and canonical metrics}

\subsubsection{The big case}

Let us start by considering the case when $\theta=0.$ Then, up to
a scaling, we may as well also assume that $\beta=1.$ When $K_{X}$
is nef and big, which equivalently means that $K_{X}$ is semi-positive
(by the base point freeness theorem) and with non-zero volume, $K_{X}^{n}>0,$
it is well-known that the normalized Kähler-Ricci flow emanating from
any given Kähler metric $\omega_{0}$ on $X$ converges, weakly in
the sense of currents, to the unique (possible singular) Kähler-Einstein
metric (or rather current) $\omega_{KE}$ on $X$ \cite{ts,t-z}.
This fact implies the following
\begin{prop}
\label{prop:no linear bd} Assume that $K_{X}$ is nef and big, but
not ample. Then it is not possible to have an upper bound of the form
$\omega^{(\beta)}(t)\leq C_{\beta}t$ along the non-normalized Kähler-Ricci
flow, for $t$ large.\end{prop}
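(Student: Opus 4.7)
The plan is to argue by contradiction, combining the cited convergence of the normalized KRF to the singular K\"ahler-Einstein current with standard regularity theory for the complex Monge-Amp\`ere equation.

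Suppose for contradiction that $\omega^{(\beta)}(t) \leq C_\beta t\, \omega$ for all $t\geq t_0$ (with respect to a fixed K\"ahler reference form $\omega$ on $X$). Under the scaling recalled in \ref{eq:scaling of k=0000E4hler forms}, namely $\tilde\omega(s)/(s+1)=\omega(\tau)$ with $e^\tau=s+1$, this assumption translates to the uniform bound $\omega(\tau)\leq C_\beta \omega$ along the normalized KRF for all $\tau\geq \tau_0$. By the cited theorem of Tsuji and Tian-Zhang, $\omega(\tau)$ converges weakly in the sense of currents to the singular K\"ahler-Einstein current $\omega_{KE}$ representing $c_1(K_X)/\beta$. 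Since the weak limit of a family of positive currents dominated by a fixed positive current inherits the same domination, the first step yields $\omega_{KE}\leq C_\beta \omega$ as an $L^\infty$ closed positive $(1,1)$-current on $X$.

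The next step is to produce a matching lower bound. Writing $\omega_{KE}=\chi+dd^c\phi_\infty$ with $\chi$ a fixed smooth representative of $c_1(K_X)/\beta$ and $\phi_\infty\in L^\infty(X)$ the Monge-Amp\`ere potential provided by the flow's convergence, the fixed-point equation $-\tfrac{1}{\beta}\Ric(\omega_{KE})=\omega_{KE}-\theta_\beta$ rewrites as the Monge-Amp\`ere equation
\[
\omega_{KE}^n=e^{\beta\phi_\infty+F}\omega^n,
\]
for a fixed smooth function $F$ determined by $\chi$. Boundedness of $\phi_\infty$ then forces the right-hand side to be equivalent, both above \emph{and} below, to a smooth positive volume form. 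Combined with the upper bound $\omega_{KE}\leq C_\beta\omega$, the arithmetic--geometric mean inequality applied to the eigenvalues of $\omega_{KE}$ relative to $\omega$, well defined almost everywhere because $\omega_{KE}\in L^\infty$, produces a two-sided bound $c\omega\leq \omega_{KE}\leq C_\beta \omega$ almost everywhere.

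With this two-sided bound, the Monge-Amp\`ere equation for $\phi_\infty$ becomes uniformly elliptic with smooth right-hand side. The Evans-Krylov $C^{2,\alpha}$-estimate (the Laplacian bound needed as input being exactly $\Delta_\omega \phi_\infty \leq nC_\beta$), followed by Schauder bootstrapping using the smooth dependence of the right-hand side on $\phi_\infty$, upgrades $\phi_\infty$ to a smooth function. Consequently $\omega_{KE}$ is a smooth K\"ahler form representing $c_1(K_X)/\beta$, which forces $K_X$ to be ample, contradicting the hypothesis.

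The main obstacle is the AM--GM step producing the a.e. lower bound $\omega_{KE}\geq c\omega$: one must be careful that $\omega_{KE}^n$, a priori the Bedford-Taylor Monge-Amp\`ere measure of $\phi_\infty$, coincides almost everywhere with the pointwise determinant $\det_\omega(\omega_{KE})\,\omega^n$. This identification is standard for $\omega_{KE}\in L^\infty$, but the bookkeeping between the measure-theoretic Bedford-Taylor wedge product and the a.e.\ linear-algebraic product is the most delicate point of the argument.
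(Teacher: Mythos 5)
Your argument is correct and follows essentially the same route as the paper's: translate the linear bound on the non-normalized flow into a uniform bound on the normalized flow, pass to the weak limit to conclude $\omega_{KE}\leq C_\beta\omega$, pair this with the volume lower bound $\omega_{KE}^n\geq\delta\omega^n$ coming from boundedness of the Monge-Amp\`ere potential to get a two-sided bound $c\omega\leq\omega_{KE}\leq C\omega$, and conclude that $K_X$ must be ample. The paper finishes by citing either the Nakai--Moishezon criterion or ``a direct regularization argument''; your Evans--Krylov/Schauder bootstrap is exactly one implementation of the latter (and one can shortcut it: from $\omega_{KE}\geq c\omega$ with bounded potential, Richberg/Demailly regularization already produces a smooth K\"ahler form in $c_1(K_X)$ without invoking elliptic regularity). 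The subtlety you flag about identifying the Bedford--Taylor Monge-Amp\`ere measure with the a.e.\ pointwise determinant is genuine but standard for currents with $L^\infty$ coefficients (the potential is then in $W^{2,p}$ for all $p$), and the paper leaves it implicit.
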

\begin{proof}
Fixing a semi-positive form $\chi$ in $c_{1}(K_{X})$ and representing
$\omega_{KE}=\chi+dd^{c}\varphi_{KE}$ the potential $\varphi_{KE}$
may be characterized as the unique continuous solution in $PSH(X,\chi)$
to the equation 
\[
(\chi+dd^{c}\varphi)^{n}=e^{\varphi}dV_{\chi}
\]
(in the sense of pluripotential theory) where $dV_{\chi}$ is the
normalized volume form determined by $\chi$ (i.e. $\mbox{Ric \ensuremath{dV_{\chi}=\chi)}.}$
In particular, if $K_{X}$ is not positive (i.e. not ample) then $\omega_{KE}$
is not a bounded current. Indeed, assuming to get a contradiction
that $\omega_{KE}\leq C\omega_{0}$ the previous equation gives that
$\omega_{KE}^{n}\geq\delta\omega_{0}^{n}$ for some positive constant
$\delta$. But this means that, up to enlarging the constant $C$
we get $\omega_{0}/C\leq\omega_{KE}\leq C\omega_{0}$ which forces
$K_{X}$ to be ample (for example, by the Nakai-Moishezon criterion
or by a direct regularization argument).
\end{proof}
More generally, essentially the same arguments apply to any smooth
twisting form $\theta$ and parameter $\beta$ as long as $K_{X}/\beta+[\theta]$
is nef and big.

\subsubsection{The non-big case }

Again we start with the case when $\theta=0$ with $K_{X}$ nef, but
now not big. Assuming that the abundance conjecture holds, i.e. that
$K_{X}$ is semi-ample it was shown in \cite{s-t-1} that the normalized
Kähler-Ricci flow, emanating from any given Kähler metric $\omega_{0},$
on $X$ converges, weakly in the sense of currents, to a canonical
current $\omega_{X}$ on $X$ defined as follows: by the semi-ampleness
assumption there exists a holomorphic map $F$ from $X$ to a variety
$Y$ such that $K_{X}=F^{*}A$ where $A$ is an ample line bundle
on $Y.$ In case $Y$ is zero-dimensional the limit $\omega_{X}$
vanishes identically (as in Section \ref{sub:The trivial case}).
Otherwise, denoting by $\kappa$ the dimension of $Y$ (which equals
the Kodaira dimension of $X)$, picking a Kähler form $\omega_{A}$
in $c_{1}(A)$ and taking $\chi:=F^{*}\omega_{A},$ the limiting current
$\omega_{X}$ obtained in \cite{s-t-1} can be realized as $F^{*}(\omega_{A}+dd^{c}\psi)$
where $\psi$ is the unique continuous solution in $PSH(Y,\omega_{A})$
of the equation 
\[
(\omega_{A}+dd^{c}\psi)^{n}=e^{\psi}F_{*}(dV_{\chi}).
\]
 Next, we make some heuristic remarks about the connection to the
double limit in formula \ref{eq:double scaling limit}. We assume
that $K_{X}$ is semi-ample and fix a smooth form $\theta$ in $c_{1}(K_{X}),$
a Kähler metric $\omega$ on $X$ and define $\theta_{\beta}$ and
$f$ by 
\[
\theta_{\beta}:=\theta-\frac{1}{\beta}\mbox{Ric \ensuremath{\omega}},\,\,\,\theta=dd^{c}f+\chi.
\]
In particular, $c_{1}(K_{X})/\beta+[\theta_{\beta}]=c_{1}(K_{X})$
for all $\beta.$ In the light of the result in \cite{s-t-1} one
would expect that the corresponding twisted normalized KRF $\omega^{(\beta)}(t)$
converges, as $t\rightarrow\infty,$ to the current $F^{*}(\omega_{A}+dd^{c}\psi_{\beta})$,
where $\psi_{\beta}$ is the unique continuous solution in $PSH(X,\chi)$
of the equation 
\[
(\omega_{A}+dd^{c}\psi_{\beta})^{\kappa}=e^{\beta\psi_{\beta}}F_{*}(e^{-\beta f}\omega^{n})
\]
We will make the hypothesis that this is the case. It can be shown
that as $\beta\rightarrow\infty$ there exist a (mildly singular)
volume form $\mu_{Y}$ on $Y$  such that 
\[
F_{*}(e^{-\beta f}dV_{\chi})=e^{-\beta(\bar{f}+o(1))}\mu_{Y},
\]
where $\bar{f}(y):=\inf_{F^{-1}(\{y\})}f$ (using that the push forward
$F_{*}$ amounts to integration along the fibers of $F$ which thus
picks out the infimum of $f$ over the fibers as $\beta\rightarrow\infty$;
the error term $o(1)$ is uniform away from the branching locus of
the map $F).$ But then a variant of Theorem \ref{thm:envelope from dynamics}
(see \cite{berm5}) shows that $\psi_{\beta}\rightarrow\psi_{\infty}:=P_{\omega_{A}}(\bar{f})$
and hence, under the hypothetical convergence above,
\[
\lim_{t\rightarrow\infty}\lim_{\beta\rightarrow\infty}\omega^{(\beta)}(t)=\chi+dd^{c}P_{F^{*}\omega_{A}}(F^{*}\bar{f}).
\]
Finally, since $K_{X}=F^{*}A$ we have $PSH(X,\chi)=F^{*}PSH(Y,\omega_{A}),$
forcing $P_{F^{*}\omega_{A}}(F^{*}\bar{f})=P_{\chi}(f),$ i.e. the
rhs above is equal to the current obtained by interchanging the limits
in the lhs (as in Lemma \ref{lem:conv of envelopes}), i.e. the two
limits may be interchanged under the hypothesis above.

\section{\label{sec:Relations-to-viscosity}Applications to Hamilton-Jacobi
equations and shocks }

\subsection{Background}

Let $H$ be a smooth function on $\R^{n}.$ The corresponding \emph{Hamilton-Jacobi
equation} (with \emph{Hamiltonian} $H)$ is the following evolution
equation

\begin{equation}
\frac{\partial\psi_{t}(y)}{\partial t}+H(\nabla\psi_{t}(y))=0,\,\,\,\,\psi_{|t=0}=\psi_{0}\label{eq:hj eq}
\end{equation}
for a function $\psi(x,t)$ on $\R^{n}\times[0,\infty[.$ It is a
classical fact that, even if the initial function $\psi_{0}$ is smooth
a solution $\psi_{t}$ typically develops shock singularities at a
finite time $T_{*},$ i.e. it ceases to be differentiable in the space-variable
(due to the crossing of characteristics). In order to get a solution
defined for any positive time the notion of \emph{viscosity solution
}was introduced in \cite{c-l,c-e-l}. The momentary \emph{shock locus}
$S_{t}$ of such a solution $\psi_{t}$ is defined by
\[
S_{t}:=\{x:\,\,\psi_{t}\,\mbox{ is\  not\,\ differentiable\,\ at\,\ensuremath{x}\}}.
\]
When $H$ is convex the classical \emph{Hopf-Lax formula} provides
an explicit envelope expression for a viscosity solution of the Cauchy
problem for the HJ-equation \ref{eq:hj eq} with any given smooth
initial data $\psi_{0}$ (which, for example, appears naturally in
optimal control problems): 
\[
\psi_{t}(y)=\inf_{x\in\R^{n}}\psi_{0}(x)+tH^{*}\left(\frac{x-y}{t}\right),
\]
expressed in terms of the Legendre transform: 
\[
g^{*}(y):=\sup_{x\in\R^{n}}x\cdot y-g(x).
\]
On the other hand, in the case when $H$ is non-convex, but the initial
data $\psi_{0}$ is assumed convex, the \emph{second Hopf formula}
\cite{b-e,l-r} provides a viscosity solution which may be represented
as 
\begin{equation}
\psi_{t}=(\psi_{0}^{*}+tH)^{*}.\label{eq:second hopf f}
\end{equation}
This was shown in \cite{b-e} using the theory of differential games
and in \cite{c-l} by a more direct approach. In particular the viscosity
solution $\psi_{t}$ above remains convex for all positive times and
as a consequence its shock locus $S_{t}$ is a codimension one hypersurface
with singularities (or empty, as is the case for small $t).$ 

We recall that the viscosity terminology can be traced back to the
fact that viscosity solutions may often be realized as limits of smooth
solutions $\psi^{(\beta)}$ of the following perturbed (viscous) HJ-equations
(where the constant $\beta^{-1}$ plays the role of the viscosity
constant in fluid and gas dynamics): 
\begin{equation}
\frac{\partial\psi_{t}(y)}{\partial t}+H(\nabla\psi_{t}(y))=\frac{1}{\beta}\Delta\psi_{t}(u)\label{eq:pert hj equ with linear v}
\end{equation}
 as $\beta\rightarrow\infty.$ For example, the following result holds:
\begin{thm}
\label{thm:.(vanishing-viscosity-limit).}\cite[Theorem 3.1]{c-l,c-e-l}.
(Vanishing viscosity limit). Assume that $\psi_{t}^{(\beta)}$ are
smooth solutions to the previous equation and that a subsequence converges
uniformly to $\psi_{t}.$ Then $\psi_{t}$ is a viscosity solution
to the HJ-equation (\ref{eq:hj eq}). 
\end{thm}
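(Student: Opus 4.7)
The plan is to verify directly, using smooth test functions, that the uniform limit $\psi_t$ satisfies the sub- and super-solution inequalities defining a viscosity solution of \eqref{eq:hj eq}. Recall that $\psi_t$ is a viscosity subsolution if, whenever $\phi\in C^\infty(\R^n\times(0,\infty))$ is such that $\psi_t-\phi$ attains a local maximum at some $(x_0,t_0)$, one has $\partial_t\phi(x_0,t_0) + H(\nabla\phi(x_0,t_0)) \leq 0$; the supersolution condition is the reversed inequality at local minima. I would establish the subsolution property and note that the supersolution case is symmetric.

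For the subsolution property, my first step would be to reduce to the case of a strict local maximum by replacing $\phi$ with $\phi(x,t) + \varepsilon(|x-x_0|^2+(t-t_0)^2)$ and eventually sending $\varepsilon\to 0$. Fixing a small closed parabolic neighborhood $\bar{U}$ of $(x_0,t_0)$ on which $(x_0,t_0)$ is the unique maximizer of $\psi-\phi$, the uniform convergence of the subsequence $\psi^{(\beta_k)}\to\psi$ on $\bar{U}$ forces any sequence of maximizers $(x_k,t_k)$ of $\psi^{(\beta_k)}-\phi$ on $\bar{U}$ to converge to $(x_0,t_0)$ and eventually lie in the interior of $\bar{U}$. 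At each such interior maximizer, classical calculus applied to the smooth difference $\psi^{(\beta_k)}-\phi$ yields $\partial_t\psi^{(\beta_k)} = \partial_t\phi$, $\nabla\psi^{(\beta_k)} = \nabla\phi$ and $\Delta\psi^{(\beta_k)} \leq \Delta\phi$. Substituting into the viscous equation \eqref{eq:pert hj equ with linear v} satisfied by $\psi^{(\beta_k)}$ would give
\[
\partial_t\phi(x_k,t_k) + H(\nabla\phi(x_k,t_k)) = \frac{1}{\beta_k}\Delta\psi^{(\beta_k)}(x_k,t_k) \leq \frac{1}{\beta_k}\Delta\phi(x_k,t_k),
\]
and letting $k\to\infty$, using continuity of $H$ and of the derivatives of $\phi$ together with boundedness of $\Delta\phi$ near $(x_0,t_0)$, would produce the desired subsolution inequality at $(x_0,t_0)$.

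The supersolution property is obtained by the symmetric argument at a strict local minimum of $\psi-\phi$, where the corresponding minimizers $(x_k,t_k)$ of $\psi^{(\beta_k)}-\phi$ satisfy instead $\Delta\psi^{(\beta_k)}\geq\Delta\phi$, yielding $\partial_t\phi + H(\nabla\phi) \geq 0$ at $(x_0,t_0)$ in the limit. The one delicate point is ensuring that the near-extremizers $(x_k,t_k)$ actually converge to $(x_0,t_0)$; this is exactly what the strict-extremum reduction via the quadratic perturbation is designed to guarantee, and it is the standard technical trick of the theory. Beyond this, the argument is a straightforward combination of uniform convergence with the sign of the Laplacian at interior smooth extrema, and no cancellation or fine structure of $H$ is needed.
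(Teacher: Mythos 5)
This is a theorem the paper cites from Crandall--Lions/Crandall--Evans--Lions and does not itself prove, so there is no paper argument to compare against; your task was effectively to supply the classical proof, and you have done so correctly. Your argument is exactly the standard vanishing-viscosity argument: perturb the test function to force a strict extremum, use uniform convergence on a small compact parabolic cylinder to locate near-extremizers of $\psi^{(\beta_k)}-\phi$ converging to $(x_0,t_0)$, read off the first-order conditions and the sign of $\Delta(\psi^{(\beta_k)}-\phi)$ at those interior extrema, plug into \eqref{eq:pert hj equ with linear v}, and pass to the limit using that $\frac{1}{\beta_k}\Delta\phi$ is bounded. You also correctly observe (implicitly, by planning to send $\varepsilon\to 0$) that the quadratic penalization does not alter $\partial_t\phi$ or $\nabla\phi$ at the base point $(x_0,t_0)$, so in fact the inequality you obtain for $\phi_\varepsilon$ is already the desired inequality for $\phi$ and the final limit $\varepsilon\to 0$ is harmless. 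The one hypothesis you should make explicit is that the uniform convergence is at least locally uniform in $(x,t)$ on the relevant compact set $\bar U$ (which the theorem's hypothesis supplies), since this is what forces the maximizers $(x_k,t_k)$ to converge to the unique strict maximizer $(x_0,t_0)$; with that noted, the proof is complete and matches the original one in the cited references.
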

In particular, under suitable growth assumptions, ensuring that the
viscosity solution $\psi_{t}$ is uniquely determined, the whole family
converges to $\psi_{t}.$ Note however that, in general, $\Delta\psi_{t}^{(\beta)}$
will not be uniformly bounded (even locally), as this would entail
that the limit $\psi_{t}$ is differentiable on $\R^{n}.$

Next, we make the observation that in the case when the initial data
$\psi_{0}$ above is taken to be $\frac{|y|^{2}}{2}$ the second Hopf
formula is equivalent to the Hopf-Lax formula for the convex Hamiltonian
$\frac{|x|^{2}}{2}:$
\begin{lem}
\label{lem:hopf-lax and 2 hopf} Let $\Phi_{0}$ be a given function
on $\R^{n}$ and denote by $\Phi_{t}$ the Hopf-Lax viscosity solution
to the HJ-equation with convex Hamiltonian $|x|^{2}/2$ and initial
data $\Phi_{0}.$ Then 
\[
\psi_{t}(y):=\left(-\Phi_{t}(y)t+\frac{|y|^{2}}{2}\right)
\]
gives the viscosity solution to the HJ-equation with non-convex Hamiltonian
$H:=\Phi_{0}$ and initial data $\psi_{0}(y):=\frac{|y|^{2}}{2}$
provided by the second Hopf formula (and conversely). In particular,
the shock loci of $\Phi_{t}$ and $\psi_{t}$ coincide. \end{lem}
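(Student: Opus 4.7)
The plan is to verify the lemma by a direct Legendre-transform computation, using that the quadratic Hamiltonian $|x|^2/2$ is self-dual under the Legendre transform, so that the Hopf--Lax formula reduces to an infimal convolution which, after completing the square, turns into the expression given by the second Hopf formula.

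First I would write out both sides explicitly. On the Hopf--Lax side, since $H(x) = |x|^2/2$ is convex and $H^* = H$, the Hopf--Lax viscosity solution with initial data $\Phi_0$ is
\[
\Phi_t(y) = \inf_{x \in \R^n}\left[\Phi_0(x) + \frac{|x-y|^2}{2t}\right].
\]
On the second Hopf side, since $\psi_0(y) = |y|^2/2$ satisfies $\psi_0^* = \psi_0$, the second Hopf formula with Hamiltonian $H = \Phi_0$ gives
\[
\psi_t(y) = \sup_{x \in \R^n}\left[x\cdot y - \frac{|x|^2}{2} - t\Phi_0(x)\right].
\]

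The key step is then to complete the square in the Hopf--Lax expression. Computing
\[
-t\Phi_t(y) + \frac{|y|^2}{2}
  = \sup_{x \in \R^n}\left[-t\Phi_0(x) - \frac{|x-y|^2}{2}\right] + \frac{|y|^2}{2}
  = \sup_{x \in \R^n}\left[x\cdot y - \frac{|x|^2}{2} - t\Phi_0(x)\right],
\]
which is precisely $\psi_t(y)$. This establishes the identity $\psi_t(y) = -t\Phi_t(y) + |y|^2/2$, and the converse follows by solving for $\Phi_t$ in the same relation. Since by the cited references the Hopf--Lax formula yields the viscosity solution of the HJ equation with convex Hamiltonian $|x|^2/2$ and the second Hopf formula yields the viscosity solution of the HJ equation with convex initial data $|y|^2/2$ and Hamiltonian $\Phi_0$, the identity gives the claimed correspondence.

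For the statement about shock loci, it suffices to note that $y \mapsto |y|^2/2$ is smooth on $\R^n$ and $t > 0$, so the affine-in-$\Phi_t$ transformation $\Phi_t \mapsto -t\Phi_t + |y|^2/2$ preserves the non-differentiability locus in $y$. Hence $S_t(\psi_t) = S_t(\Phi_t)$. There is no real obstacle in the argument; the only thing to be careful about is the sign bookkeeping and the fact that the infimum in Hopf--Lax turns into a supremum after multiplication by $-t$, which is exactly what allows the match with the Legendre-type expression in the second Hopf formula.
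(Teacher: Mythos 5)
Your proof is correct and follows essentially the same approach as the paper, which simply says the lemma "follows immediately from comparing the Hopf-Lax formula and the second Hopf formula"; you have spelled out that comparison by using $H^* = H$ and $\psi_0^* = \psi_0$ for the quadratic, completing the square, and noting that multiplying by $-t < 0$ converts the infimum into the supremum of the second Hopf formula. The observation that $\Phi_t \mapsto -t\Phi_t + |y|^2/2$ preserves the non-differentiability locus in $y$ for $t>0$ also correctly handles the shock-locus claim.
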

\begin{proof}
This follows immediately from comparing the Hopf-Lax formula and the
second Hopf formula. 
\end{proof}
The previous lemma is consistent (as it must) with the fact that when
$\psi_{0}(y)=\frac{|y|^{2}}{2}$ the Hamiltonian $H$ can, by the
definition of the HJ-equation, be recovered as minus the derivative
at $t=0$ of the corresponding viscosity solution $\psi_{t}.$

\subsubsection{The adhesion model in cosmology }

The convex case where $H(x)=|x|^{2}/2$ is ubiquitous in mathematical
physics and appears, in particular, in the adhesion model for the
formation of the large-scale structure in the early universe (known
as the ``cosmic web'') where $\Phi_{0}$ is proportional to the
gravitational potential of the initial fluctuations of the density
field and the shock region $S_{t}$ corresponds to emerging regions
of localized mass concentration (the adhesion model is an extension
of the seminal Zel'dovich approximation beyond $t\geq T_{*}$) \cite{g-m-s,v-d-f-n,hsw}.
The corresponding singularities of $S_{t}$ and their metamorphosis
as $t$ evolves have been classified in dimensions $n\leq3$, for
generic initial data, using the catostropy theory of Lagrangian singularities
initiated by Arnold \cite{a-s-z,g-m-s,bo,hsw,k-p-s-m}. In this setting
the Legendre transform $\phi_{t}:=\psi_{t}^{*}$ of the corresponding
function $\psi_{t}$ appearing in the previous lemma is given by 
\[
\phi_{t}(x)=x+t\Phi_{0}(x)
\]
and the corresponding map
\begin{equation}
x\mapsto\nabla_{x}\phi_{t}(x)\label{eq:Zeldov map}
\end{equation}
 describes, in the Zeldovich approximation, the displacement of a
particle with initial coordinate $x$ to the position $y$ a time
$t$ (in the physics literature the initial coordinate space $x$
is called the Lagrangian space and the position space $y$ at time
$t$ is called the Euler space; accordingly $\phi_{t}$ is often called
the Lagrangian potential). The map above is injective precisely for
$t<T_{*}.$ In the next section we will show that the adhesion model
can be realized as the zero-temperature limit of the twisted Kähler-Ricci
flow (using Lemma \ref{lem:hopf-lax and 2 hopf}). 
\begin{rem}
\label{rem:burger} When $H(x)=|x|^{2}/2$ the vector field $v_{t}(y):=\nabla u_{t}(y)$
determined by a solution $u_{t}$ of the corresponding HJ-equation
satisfies Burger's equation:
\[
\frac{\partial v_{t}(y)}{\partial t}+\frac{1}{2}\nabla\left|v_{t}(y)\right|^{2}=0,
\]
 which is the prototype of a hyperbolic conservation law \cite{se}
and non-linear wave phenomena \cite{g-m-s}. 
\end{rem}

\subsection{\label{sub:Relation-to-the-krf} Relation to the Kähler-Ricci flow
and Theorem \ref{thm:main general}}

The relation between the Hamilton-Jacobi equation and the Kähler-Ricci
flow, which does not seem to have been noted before, arises when the
linear viscosity term in the perturbed HJ-equation \ref{eq:pert hj equ with linear v}
is replaced by the following non-linear one:

\begin{equation}
\frac{\partial\psi_{t}(y)}{\partial t}+H(\nabla\psi_{t}(y))=\frac{1}{\beta}\log(\partial^{2}\psi_{t}(y))\label{eq:hj equation with nonlinear visc}
\end{equation}
for $\psi_{0}$ strictly convex (for example, $\psi_{0}(y)=|y|^{2}/2,$
as in the adhesion model above). One virtue of the latter evolution
equation is that, as will be shown below, the smooth solution $\psi_{t}^{(\beta)}$
remains convex (and even strictly so) for positive times.

We will consider the case when the Hamiltonian $H$ is periodic, i.e.
invariant under the action of a lattice $\Lambda$ on $\R^{n}$ by
translations\footnote{It seems likely that the general case could be studied by extending
our results to (appropriate) non-compact manifolds $X$ or by approximation.}. Without loss of generality we may and will assume that a fundamental
domain for $\Lambda$ has unit volume. Since there are no non-constant
periodic convex functions on $\R^{n}$ the natural condition on the
initial function $\psi_{0}$ is that it is in the class of all convex
functions $u$ which are \emph{quasi-periodic} in the sense that $\psi(y)-|y|^{2}/2$
is $\Lambda-$periodic on $\R^{n}.$ We denote by $\mathcal{C}_{\Lambda}$
the the space of all quasi-periodic convex functions on $\R^{n}.$
The point is that for any $\psi\in\mathcal{C}_{\Lambda}$ the Hessian
$\partial^{2}\psi$ is periodic and the gradient map $\partial\psi$
is $\Lambda-$equivariant and hence all terms appearing in the equation
\ref{eq:hj equation with nonlinear visc} are $\Lambda-$periodic. 
\begin{lem}
\label{lem:the space of convex} Equip the space $\mathcal{C}_{\Lambda}$
with the sup-norm. Then the Legendre transform $\phi\mapsto\psi:=\phi^{*}$
induces an isometry on $\mathcal{C}_{\Lambda}$ and for any quasi-periodic
function $f$ 
\[
\sup_{\phi\leq f}\{\phi\}=f^{**},
\]
 where the sup, that we shall denote by $Pf,$ can be taken either
over all convex functions $\phi$ or over all quasi-periodic convex
functions. Moreover, the subspace of all $\phi$ in $\mathcal{C}_{\Lambda}$
such that $\sup_{x\in\R}(\phi(x)-|x|^{2}/2)=0$ is compact. \end{lem}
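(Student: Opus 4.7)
First, to show the Legendre transform is an isometry on $\mathcal{C}_\Lambda$, I would write $\phi(y) = |y|^2/2 + g(y)$ with $g$ being $\Lambda$-periodic and compute
\[
\phi^*(x) = \frac{|x|^2}{2} - \inf_{y\in \R^n}\Bigl(\frac{|x-y|^2}{2} + g(y)\Bigr).
\]
Substituting $y \mapsto y + \lambda$ in the infimum and using periodicity shows that this infimum is $\Lambda$-periodic in $x$, so $\phi^* - |\cdot|^2/2$ is $\Lambda$-periodic and $\phi^* \in \mathcal{C}_\Lambda$. The order-reversal inequality $\phi_1 \leq \phi_2 + c \Longrightarrow \phi_2^* \leq \phi_1^* + c$ gives $\sup|\phi_1^* - \phi_2^*| \leq \sup|\phi_1 - \phi_2|$, and the reverse inequality follows by applying the same bound to $\phi_1^*, \phi_2^*$ together with involution $\phi^{**} = \phi$.

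Next, to prove $Pf = f^{**}$ for quasi-periodic $f$, I would first record the translation rule $f(y+\lambda) = f(y) + y\cdot\lambda + |\lambda|^2/2$ for $\lambda \in \Lambda$ and, by the same change of variable $y \mapsto y + \lambda$ in $\sup_y(x\cdot y - f(y))$, verify that $f^*$ satisfies the identical rule; iterating, so does $f^{**}$. Hence $f^{**}$ is convex, quasi-periodic, and $\leq f$, so it lies in $\mathcal{C}_\Lambda$. The standard identification $f^{**} = \sup\{\phi \leq f : \phi \text{ convex on } \R^n\}$ then dominates the a priori smaller supremum $Pf$ over $\mathcal{C}_\Lambda$, while $f^{**}$ is itself a valid competitor for $Pf$; equality follows.

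For the compactness claim, let $\mathcal{N}$ be the normalized subspace and write $g := \phi - |\cdot|^2/2$, so the sup-distance on $\mathcal{C}_\Lambda$ coincides with the sup-distance on the $\Lambda$-periodic functions $g$. By Arzel\`a--Ascoli on a fundamental domain, it suffices to establish uniform bounds and equicontinuity for the $g$'s in $\mathcal{N}$. The upper bound $g \leq 0$ is built into the normalization. For a uniform lower bound, choose $y_0$ with $g(y_0) = 0$; periodicity gives $\phi(y_0 + \lambda) = |y_0 + \lambda|^2/2$ for every $\lambda \in \Lambda$, and the subgradient inequalities at $y_0$ applied to all lattice translates force any $p \in \partial\phi(y_0)$ to satisfy $|(p - y_0)\cdot\lambda| \leq |\lambda|^2/2$ for all $\lambda \in \Lambda$, bounding $|p - y_0|$ by a constant $C_\Lambda$ depending only on the lattice. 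This produces a linear lower bound on $\phi$ near $y_0$, and periodicity extends it to a uniform lower bound on $g$. Equicontinuity then follows from the standard fact that a convex function uniformly bounded on a ball is Lipschitz on any slightly smaller ball with controlled constant; closedness of $\mathcal{N}$ is immediate since uniform limits preserve convexity, periodicity, and the normalization $\sup g = 0$.

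The main obstacle I anticipate is the uniform lower bound on $g$ in the compactness step: convexity of $\phi$ naturally produces upper bounds on $g$ via chord inequalities, while lower bounds require subgradient control, and the lattice $\Lambda$ must enter essentially through the quasi-periodicity identity in order to constrain the subgradient of $\phi$ at a maximum point of $g$.
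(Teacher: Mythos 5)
Your proof is correct and follows the same overall skeleton as the paper (order-reversal of the Legendre transform for the isometry, identification of the biconjugate with the convex envelope, Arzel\`a--Ascoli for compactness), but it is substantially more self-contained at two points, and at both points this is an improvement over the very terse argument in the paper. First, for $Pf=f^{**}$ the paper argues that $P'f$ (the sup over all convex functions below $f$) is quasi-periodic ``by the extremal property,'' whereas you show directly that the Legendre transform preserves quasi-periodicity by verifying the translation identity $f^{*}(x+\lambda)=f^{*}(x)+x\cdot\lambda+|\lambda|^{2}/2$; this is a cleaner and more verifiable route to the same conclusion, and it also furnishes the verification (omitted in the paper) that the Legendre transform really maps $\mathcal{C}_{\Lambda}$ to itself, which is needed for the isometry statement. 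Second, and more significantly, for the compactness the paper simply cites a licentiate thesis \cite{hu} for the uniform Lipschitz bound on $\phi-|\cdot|^{2}/2$, while you supply the full argument: normalize $\sup g=0$ with the maximum attained at $y_{0}$ by continuity and periodicity, use the subgradient inequality at $y_{0}$ against all lattice translates to get $|(p-y_{0})\cdot\lambda|\le|\lambda|^{2}/2$ for $p\in\partial\phi(y_{0})$ and hence $|p-y_{0}|\le C_{\Lambda}$, propagate this to a uniform lower bound on $g$ on a fundamental domain, and then invoke the local Lipschitz estimate for convex functions bounded on a ball. This is a genuine replacement of a citation by a proof. One small point worth being explicit about: after bounding $|p-y_{0}|$ you need $y_{0}$ itself to lie in a bounded set (a fixed fundamental domain), which periodicity lets you arrange, so that the resulting linear lower bound yields a bound on $g$ independent of $\phi$; you gesture at this but it is worth saying outright.
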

\begin{proof}
The isometry property follows directly from the relation $(\phi+c)^{*}=\phi^{*}-c.$
Next, if $P'f$ denotes the sup over all convex $\phi$ below $f$
then, by the extremal property, the function $P'f-\frac{|x|^{2}}{2}$
has to be $\Lambda-$periodic, as $f-\frac{|x|^{2}}{2}$ is, i.e.
$P'f$ is quasi-periodic, as desired. Finally, it is well-known that
if $f$ is convex then $P'f=f^{**}.$ 

The compactness is a consequence of the Arzela-Ascoli theorem and
the fact that if $\phi$ is in $\mathcal{C}_{\Lambda}$ then the periodic
function $(\phi(x)-|x|^{2}/2)$ is $L-$Lipschitz for a constant $L$
only depending on the diameter of a fundamental domain of $\Lambda;$
see \cite[Lemma 3.14]{hu} where further properties of the space $\mathcal{C}_{\Lambda}$
are also established. 
\end{proof}
In this setting Theorem \ref{thm:main general} admits the following
dual formulation:
\begin{thm}
\label{thm:conv towards hopf on torus} Consider the perturbed HJ-equation
\ref{eq:hj equation with nonlinear visc} with $\Lambda-$periodic
smooth Hamiltonian $H$ and strictly convex and quasi-periodic initial
data $\psi_{0}.$ Denote by $\psi^{(\beta)}$ the unique solution
of the corresponding Cauchy problem such that $\psi_{t}^{(\beta)}$
is quasi-periodic and strictly convex. Then $\psi_{t}^{(\beta)}$
converges, as $t\rightarrow\infty,$ uniformly in space, to $\psi_{t}$
given by the second Hopf formula \ref{eq:second hopf f}, which is
the\textup{ unique viscosity solution of the HJ-equation \ref{eq:hj eq}
with initial data $\psi_{0}.$} Moreover, $\psi_{t}^{(\beta)}$ is
strictly convex for any $t>0,$ uniformly in $\beta:$ 
\[
\left\Vert \partial^{2}\psi_{t}^{(\beta)}\right\Vert \geq\frac{1}{t+1}\min\{\left\Vert \partial^{2}\psi_{0}\right\Vert ,\left\Vert \partial^{2}H\right\Vert ),
\]
 in terms of the trace norm defined wrt the Euclidean metric (i.e
the sup on $\R^{n}$ of the point-wise $L^{1}-$norm). 
\end{thm}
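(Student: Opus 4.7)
The strategy is to realize the perturbed Hamilton--Jacobi equation \ref{eq:hj equation with nonlinear visc} as a torus-invariant instance of the twisted K\"ahler--Ricci flow on the abelian variety $X:=\CC^{n}/(\Lambda+i\ZZ^{n})$, and then transfer the conclusions of Theorems \ref{thm:main general} and \ref{thm:sharp bounds} back to $\RR^{n}$ via Legendre duality. Given strictly convex quasi-periodic $\psi_{0}$, set $\phi_{0}:=\psi_{0}^{*}\in\mathcal{C}_{\Lambda}$ (Legendre is an isometry on $\mathcal{C}_{\Lambda}$ by Lemma \ref{lem:the space of convex}). Since $\nabla\phi_{0}$ is $\Lambda$-equivariant, $\phi_{0}-|x|^{2}/2$ is $\Lambda$-periodic, so $\omega_{0}:=dd^{c}\phi_{0}$ descends to a K\"ahler form on $X$ invariant along the imaginary directions; similarly the $\Lambda$-periodic Hamiltonian $H$ lifts to $f:=H$ on $X$, defining the twisting form $\theta:=dd^{c}f$. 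Taking the reference $\omega$ on $X$ to be the flat metric ($\mathrm{Ric}\,\omega=0$), one has $\chi_{\beta}=0$ and $f_{\beta}=H$, so the setup of Section \ref{sub:A-priori-estimates} is in force.

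The key calculation is that, on torus-invariant potentials, the non-normalized twisted KRF is the Legendre dual of \ref{eq:hj equation with nonlinear visc}. Writing $\psi(\cdot,t):=\phi(\cdot,t)^{*}$, standard Legendre calculus gives at corresponding points $y=\nabla\phi(x,t)$ the identities $\partial^{2}\psi(y,t)=(\partial^{2}\phi(x,t))^{-1}$ and $\partial_{t}\psi(y,t)=-\partial_{t}\phi(x,t)$, hence $\log\det\partial^{2}\psi=-\log\det\partial^{2}\phi$. For imaginary-invariant $\phi$ the Monge--Amp\`ere ratio $(\omega_{0}+dd^{c}\phi)^{n}/\omega^{n}$ equals $\det\partial^{2}\phi$ up to a dimensional constant absorbed by the normalization of $f$, so the non-normalized KRF
\[
\partial_{t}\phi=\frac{1}{\beta}\log\frac{(\omega_{0}+dd^{c}\phi)^{n}}{\omega^{n}}+f
\]
transforms into
\[
\partial_{t}\psi+H(\nabla\psi)=\frac{1}{\beta}\log\det\partial^{2}\psi,
\]
exactly \ref{eq:hj equation with nonlinear visc}. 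Uniqueness of the strictly convex quasi-periodic $\psi^{(\beta)}$ is then equivalent to uniqueness of the smooth KRF on $X$, which is classical.

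Applying Theorem \ref{thm:main general} gives uniform $C^{1,\alpha}$ convergence of $\phi^{(\beta)}(t)$ to $P(\omega_{0}+t\theta)$; on the level of absolute potentials in $\mathcal{C}_{\Lambda}$ this is $P(\phi_{0}+tH)$, which by Lemma \ref{lem:the space of convex} equals the convex envelope $(\phi_{0}+tH)^{**}$. Since Legendre transform is continuous in the sup norm on $\mathcal{C}_{\Lambda}$,
\[
\psi_{t}^{(\beta)}=(\phi_{t}^{(\beta)})^{*}\;\longrightarrow\;\bigl((\phi_{0}+tH)^{**}\bigr)^{*}=(\phi_{0}+tH)^{*}=(\psi_{0}^{*}+tH)^{*},
\]
which is the second Hopf formula \ref{eq:second hopf f}. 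That this limit is a viscosity solution of \ref{eq:hj eq} follows from Theorem \ref{thm:.(vanishing-viscosity-limit).} adapted to the nonlinear viscosity term $\beta^{-1}\log\det\partial^{2}\psi$ (the max-principle argument is identical, using monotonicity of $\log\det$ and evaluation at a smooth touching point), and uniqueness within the quasi-periodic class follows from the standard viscosity comparison principle on the torus.

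For the strict-convexity bound I would invoke Theorem \ref{thm:sharp bounds}: the flat $\omega$ has vanishing, hence non-negative, holomorphic bisectional curvature, so
\[
\left\Vert \omega^{(\beta)}(t)\right\Vert \le(t+1)\max\bigl\{\left\Vert \omega_{0}\right\Vert ,\left\Vert \theta\right\Vert \bigr\}.
\]
For torus-invariant potentials this is an upper bound on the largest eigenvalue of $\partial^{2}\phi^{(\beta)}_{t}$, and $\partial^{2}\psi^{(\beta)}_{t}=(\partial^{2}\phi^{(\beta)}_{t})^{-1}$ converts it into the asserted lower bound on the smallest eigenvalue of $\partial^{2}\psi^{(\beta)}_{t}$. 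The main obstacle is really the preparatory first step: verifying rigorously that torus symmetry is preserved along the flow (by uniqueness for the parabolic complex Monge--Amp\`ere equation) and that the $\omega$-psh envelope on $X$ coincides, under restriction to imaginary-invariant objects, with the convex envelope in $\mathcal{C}_{\Lambda}$; granted these, the statement is a clean transfer of Theorems \ref{thm:main general} and \ref{thm:sharp bounds} through Legendre duality.
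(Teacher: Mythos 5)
Your proposal follows essentially the same route as the paper: pass to the abelian variety $X=\CC^{n}/(\Lambda+i\ZZ^{n})$, identify torus-invariant $\omega$-psh potentials with convex functions in $\mathcal{C}_{\Lambda}$, verify via Legendre calculus that the twisted non-normalized KRF corresponds to the perturbed Hamilton--Jacobi equation, then transport the conclusion of Theorem \ref{thm:main general} (and Theorem \ref{thm:sharp bounds} for the convexity bound, using the flat metric's non-negative bisectional curvature) back through the Legendre isometry of Lemma \ref{lem:the space of convex}, with uniqueness handled by the standard comparison argument on the torus. The only cosmetic difference is that you sketch how the second Hopf formula is itself a viscosity solution by adapting the vanishing-viscosity argument, whereas the paper simply cites \cite{b-e,c-l} for that fact and only remarks that such an adaptation would be possible.
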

To make the connection to the complex geometric setting we let $X$
be the abelian variety $X:=\C^{n}/(\Lambda+i\Z^{n})$ and consider
the the following holomorphic $T-$action on $X:$
\[
([x+iy],[a])\mapsto[x+iy+a],
\]
 where $T$ denotes the real $n-$torus $T:=\R^{n}/\Z^{n}$ and $\pi(z):=[z]$
denotes the corresponding quotient map. Let $\omega$ be the standard
flat Kähler metric on $X$ induced from the Euclidean metric $\omega_{0}$
on $\C^{n}$ normalized so that $\omega_{0}=dd^{c}|x|^{2}/2$ and
fix a closed $T-$invariant $(1,1)-$ form $\theta$ which is exact,
i.e. 
\[
\theta=dd^{c}f
\]
 for a $T-$invariant function $f$ on $X$ (uniquely determined up
to an additive constant). Now we can identify $T-$invariant elements
in $PSH(X,\omega)$ with convex functions $\phi(x)$ on $\R^{n}$
in the space $\mathcal{C}_{\Lambda}$ (by setting $\phi:=|x|^{2}/2+\pi^{*}\varphi$
and using that $dd^{c}(|x|^{2}/2+\pi^{*}\varphi)=\omega_{0}+dd^{c}\pi^{*}\varphi\geq0).$
Accordingly, the non-normalized KRF in the class $[\omega]$ with
twisting form $\theta$ thus gets identified with the following parabolic
equation on $\R^{n}:$ 

\begin{equation}
\frac{\partial\phi_{t}^{(\beta)}(x)}{\partial t}=\frac{1}{\beta}\log(\partial^{2}\phi_{t}^{(\beta)}(x))+H(x),\label{eq:kfr in convex setting}
\end{equation}
where $H$ is the $\Lambda-$periodic function on $\R^{n}$ corresponding
to $f$ and $\phi_{t}^{(\beta)}\in\mathcal{C_{\Lambda}}.$ More precisely,
$\phi_{t}^{(\beta)}$ is smooth and strictly convex. The key observation
now is that setting 
\[
\psi_{t}^{(\beta)}(y):=\phi_{t}^{(\beta)*}(y)
\]
gives a solution in $\mathcal{C}_{\Lambda}$ to the perturbed HJ-equation
\ref{eq:hj equation with nonlinear visc}. Indeed, this follows from
the following well-known properties of the (involutive) Legendre transform
between smooth and strictly convex functions (say with quadratic growth
at infinity):
\begin{equation}
\partial^{2}\phi(x)=(\partial^{2}\psi(y))^{-1},\,\,\,\frac{\partial(\phi+tv)(x)}{\partial t}_{|t=0}=-v(\partial_{y}\psi(y)),\,\,\,\,y:=\partial_{x}\phi(x)\label{eq:legendre relations}
\end{equation}
(see for example the appendix in \cite{b-b1} for a proof of the latter
formula). Now, by Theorem \ref{thm:main general} and the previous
lemma 
\[
\lim_{\beta\rightarrow\infty}\phi_{t}^{(\beta)}=P_{\Lambda}(\phi_{0}+tH)=(\phi_{0}+tH)^{**}
\]
in $\mathcal{C}_{\Lambda}.$ Since the Legendre transform is an isometry
on $\mathcal{C}_{\Lambda}$ and in particular continuous this equivalently
means that $\lim_{\beta\rightarrow\infty}\psi_{t}^{(\beta)}=(\phi_{0}+tH)^{*},$
which coincides with the viscosity solution of the HJ-equation provided
by the second Hopf formula. Finally, the proof of the previous theorem
is concluded by noting that the uniqueness of viscosity solutions
in $\mathcal{C}_{\Lambda}$ follows from the standard uniqueness argument
\cite{c-l,c-e-l}, using that for any two functions in $\mathcal{C_{\Lambda}}$
the difference $u-v$ is continuos and attains its maximum and minimum
(since it is periodic). 

In fact, in this way Theorem \ref{thm:main general} could be used
to give an alternative proof of the fact that the second Hopf formula
defines a viscosity solution to the HJ-equation \ref{eq:hj eq}, by
adapting the proof of Theorem \ref{thm:.(vanishing-viscosity-limit).}
to the present non-linear setting. But we will not go further into
this here.
\begin{rem}
\label{rem:r-z} Convex envelopes of the form $\psi_{t}:=(\psi_{0}^{*}+tH)^{*}(=\phi_{t}^{*})$
and the corresponding sets $X(t)$ also appear in a different Kähler-geometric
setting in \cite{r-wn1,rzIII}, where it is shown that $\psi_{t}$
defines a torus invariant (weak) Kähler geodesic precisely on $[0,T_{*}[$
(what we call $T_{*}$ is called the ``convex life span'' in \cite{r-wn1,rzIII}).
By definition, such a Kähler geodesic $\phi_{t}$ is characterized
by the homogeneous Monge-Ampère equation $MA(\phi)=0$ on the product
$X\times]0,T[.$ The relation to ($C^{1}-$smooth) solutions of Hamilton-Jacobi
equations was also pointed out in Section 6 in \cite{rzIII}. In the
light of the results in \cite{r-wn1,rzIII} it seems notable that
in our setting $\phi_{t}$ has a natural complex geometric interpretation
also for $t>T_{*}$ (namely, as a limiting Kähler-Ricci flow).
\end{rem}

\subsubsection{\label{rem:conv duality} Remarks on convex duality in the present
setting }

By a well-known duality principle in convex analysis differentiability
of a convex functions $\psi$ corresponds, loosely speaking, to strict
convexity of its Legendre transform $\phi:=\psi^{*}.$ To make this
precise we will assume that both $\phi$ and $\psi$ are defined on
all of $\R^{n}$ and have super-linear growth (which is the case when
any, and hence both, of the functions are in $\mathcal{C}_{\Lambda}).$
This ensures that the sub gradient maps $\partial\phi$ and $\partial\psi$
are both surjective. We recall that a convex function $\phi$ is differentiable
at $x$ iff the subgradient $(\partial\phi)(x)$ is single valued
and then we will write $(\partial\phi)(x)=(\nabla\phi)(x).$ The starting
point for the duality in question is the following fact (which follows
directly from the definitions): 
\[
x\in\partial\psi(y)\iff y\in\partial\phi(x)\iff x\cdot y=\phi(x)+\psi(y)
\]
In our setting $\phi:=\phi_{t}$ (for a fixed time $t)$ is $C^{1,1}-$smooth,
i.e. $\partial\phi(=\nabla\phi)$ defines a surjective Lipchitz map
$\R^{n}\rightarrow\R^{n}.$ As a consequence, a point $y$ is in the
shock locus $S_{t}$ of $\psi_{t}$ iff $y\in\partial\phi_{t}(U),$
for an open set $U$ where the Lipchitz map $\partial\phi_{t}$ is
not injective (which can be interpreted as a local strict convexity
of $\phi_{t}$). Let now $X_{t}$ be the support of the Monge-Ampère
measure $\det(\partial^{2}\phi_{t})dx$ and denote by $\Omega_{t}$
its complement. For simplicity we assume that the locus where $\phi_{t}$
is in $C_{loc}^{2}$ is dense in $\R^{n}$ (which presumably holds
for a generic $H$ using the arguments in \cite{a-s-z,bo}). In that
case the continuous map $\partial\phi_{t}$ maps the interior of $X_{t}$
injectively to $\mathbb{R}^{n}-S_{\psi_{t}}$ and $\overline{\Omega}_{t}$
non-injectively to $S_{\psi_{t}}$ (since a $C^{2}-$convex function
$u$ has an invertible gradient iff $\det(\partial^{2}u)>0$). Conversely,
$\nabla\psi_{t}$ maps $\R-S_{\psi_{t}}$ to $X_{t}.$ See for example
\cite[Fig 5]{v-d-f-n} for an illustration of this duality. 

It may also be illuminating to consider the case when $\psi$ is piece-wise
affine (which, as we will show in the next section, happens when $t=\infty).$
Then $(\nabla\phi)(\mathbb{R}^{n}-S_{\phi})$ is contained in the
$0-$dimensional stratum $S_{\psi}^{(0)}$ of $S_{\psi}$ (i.e. in
the vertex set). Indeed, if $y_{0}:=(\nabla\phi)(x_{0})$ is not in
$S_{\psi}^{(0)}$ then there is an open affine segment $L$ passing
through $y_{0}$ along which $\psi$ is affine. One then gets a contradiction
to the the differentiability of $\phi$ at $x_{0}$ by noting that
$L\subset\partial\phi(x_{0}).$ Indeed, since $x_{0}\in\partial\psi(y_{0})$
one gets $\psi(y)=\psi(y_{0})+x_{0}\cdot(y-y_{0})$ along $L.$ But
this means that $x_{0}\cdot y=\phi(x_{0})+\psi(y)$ and hence $y\in\partial\phi(x_{0}).$ 

In fact, this argument also shows that $\phi$ is piecewise affine
iff its Legendre transform $\psi$ is. Indeed, if $\psi$ is piecewise
affine then by the growth assumptions the sup defining $\phi$ is
always attained. Hence, for any $x\in\R^{n}-S_{\phi}$ we have that
$\phi=(\chi_{S_{\psi}^{(0)}}\psi)^{*}.$ Since the rhs is also a convex
function and the complement of $\R^{n}-S_{\phi}$ is a null set it
then follows that $\phi=(\chi_{S_{\psi}^{(0)}}\psi)^{*}$ everywhere,
showing that $\phi$ is also piece-wise affine, as desired.

\subsection{The large time limit and Delaunay/Voronoi tessellations }

Next, we specialize the large time convergence result in Prop \ref{prop:conv of p theta volume form in cy case}
to the present setting, showing, in particular, that the Hessian of
the limiting solution vanishes almost everywhere:
\begin{thm}
\label{thm:conv towards Vor} Denote by $F_{\Lambda}$ the closed
set in $\R^{n}$ where the $\Lambda-$periodic Hamiltonian $H$ attains
its minimum, normalized to be $0$ and assume that $F_{\Lambda}$
is discrete. Then, for any given initial data $\psi_{0}$ in the space
$\mathcal{C}_{\Lambda}$ the unique viscosity solution $\psi_{t}$
in $\mathcal{C}_{\Lambda}$ of the corresponding Hamilton-Jacobi equation
converges uniformly to the following convex piecewise affine function:
\begin{equation}
\psi_{\infty}(y):=\sup_{x\in F_{\Lambda}}x\cdot y-\psi_{0}^{*}(x).\label{eq:limit of hj}
\end{equation}
Equivalently, the large $\beta-$limit $\phi_{t}$ of the Kähler-Ricci
flow \ref{eq:kfr in convex setting} converges to the convex piecewise
affine function $\phi_{\infty}(x)$ whose graph is the convex hull
of the discrete graph of the function $\phi_{0}$ restricted to $F_{\Lambda}.$\end{thm}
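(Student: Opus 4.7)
The plan is to reduce the theorem to Proposition \ref{prop:conv of p theta volume form in cy case} via the torus-invariant dictionary from Section \ref{sub:Relation-to-the-krf}, and then to translate the limit back through Legendre duality. By that dictionary combined with the second Hopf formula, the statement about $\psi_t$ is equivalent, via $\psi_t = \phi_t^{*}$ and the isometry in Lemma \ref{lem:the space of convex}, to showing that the large-$\beta$ limit $\phi_t = P_\Lambda(\phi_0 + tH)$ of the flow \ref{eq:kfr in convex setting} (with $\phi_0 := \psi_0^{*}$) converges to the piecewise affine $\phi_\infty$ whose graph is the convex hull of the discrete graph of $\phi_0|_{F_\Lambda}$.

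For this convergence I would apply the second bullet of Proposition \ref{prop:conv of p theta volume form in cy case} on the torus $X = \C^n/(\Lambda + i\Z^n)$, with twisting potential $f$ the $T$-invariant function corresponding to $H$ (normalized so $\inf_X f = 0$) and initial potential $\varphi_0 = \phi_0 - |x|^2/2$. Restricting to $T$-invariant data, which is preserved by both the flow and by $P_\omega$, yields that $\phi_t$ increases pointwise as $t \to \infty$ to
\[
\phi_\infty(x) := \sup\{\phi(x) : \phi \in \mathcal{C}_\Lambda,\ \phi \leq \phi_0 \text{ on } F_\Lambda\}.
\]
Crucially, the proof of that bullet does not require $F$ to be non-pluripolar in $X$, which is fortunate since $F_\Lambda$ projects to a finite, hence pluripolar, subset of the torus.

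To identify $\phi_\infty$ with the stated piecewise affine function, I would use the quasi-periodicity relation $\phi(x+\lambda) = \phi(x) + x\cdot\lambda + |\lambda|^2/2$ satisfied by every $\phi \in \mathcal{C}_\Lambda$, together with the $\Lambda$-invariance of $F_\Lambda$, to argue that the \emph{unrestricted} convex envelope on $\R^n$ of the function equal to $\phi_0$ on $F_\Lambda$ and to $+\infty$ elsewhere is automatically in $\mathcal{C}_\Lambda$, hence coincides with $\phi_\infty$. Its epigraph is the locally finite polyhedron obtained as the convex hull of the collection of upward vertical rays $\{(x,\phi_0(x)) + \{0\} \times \R_{\geq 0} : x \in F_\Lambda\}$, so its lower boundary is the graph of a convex piecewise affine function, as claimed. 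Passing to Legendre transforms and using $\phi_0 = \psi_0^{*}$ then gives $\psi_\infty(y) = \sup_{x \in F_\Lambda}(x \cdot y - \psi_0^{*}(x))$.

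Finally, the pointwise monotone convergence $\phi_t \uparrow \phi_\infty$ to the continuous limit $\phi_\infty$ is upgraded to uniform convergence on $\R^n$ by Dini's theorem applied to the $\Lambda$-periodic functions $\phi_t - |x|^2/2$ on a compact fundamental domain, and then the isometry in Lemma \ref{lem:the space of convex} transfers this to uniform convergence $\psi_t \to \psi_\infty$. The main subtlety I anticipate is the identification of the torus-side envelope with the unrestricted convex hull of the discrete graph: one must verify that the quasi-periodicity constraint built into $\mathcal{C}_\Lambda$ does not strictly shrink the envelope, which relies essentially on the $\Lambda$-equivariance of both $\phi_0$ and $F_\Lambda$.
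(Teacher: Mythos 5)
Your overall strategy---invoke the second bullet of Proposition \ref{prop:conv of p theta volume form in cy case} on the torus, reduce to the $T$-invariant envelope, identify it with the unrestricted convex hull of the discrete graph of $\phi_0|_{F_\Lambda}$ via the quasi-periodicity relation, and pass to Legendre transforms---is precisely the paper's route, and the subtlety you flag (that the $\mathcal{C}_\Lambda$ constraint does not strictly shrink the envelope) is indeed the crux.

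However, your remark that ``$F_\Lambda$ projects to a finite, hence pluripolar, subset of the torus'' is a genuine misunderstanding of the complex-geometric setup, and it is not merely harmless wording. The set $F\subset X=\C^n/(\Lambda+i\Z^n)$ on which the twisting potential $f$ vanishes is \emph{not} the finite image of $F_\Lambda$ under $\pi\colon\R^n\to X$: since $f$ is $T$-invariant (constant along the imaginary directions), its zero set is the $T$-saturation of $\pi(F_\Lambda)$, a finite union of real $n$-dimensional tori inside $X$. Such a set is \emph{non-pluripolar}, by the classical fact that $\R^n$ is non-pluripolar in $\C^n$; this is exactly what the paper records. Moreover this cannot be sidestepped in the way you suggest: if $F$ really were pluripolar, then $P_{(\omega,F)}(\varphi_0)\equiv+\infty$, and the very Proposition you are citing would produce a family $\phi_t$ diverging to $+\infty$ rather than a finite piecewise affine limit---the theorem's conclusion would be false. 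In the paper, non-pluripolarity of $F$ is what guarantees finiteness of the envelope $\phi_\infty$, with the compactness of $\mathcal{C}_\Lambda$ (Lemma \ref{lem:the space of convex}) quoted only as an alternative route to the same bound. Your Dini argument for upgrading pointwise to uniform convergence is fine, but you should drop the framing of your argument as a workaround for a non-pluripolarity ``problem'' that does not actually exist.
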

\begin{proof}
By the second point in Prop \ref{prop:conv of p theta volume form in cy case}
\[
\phi_{\infty}(x):=\sup_{\phi\in\mathcal{C}_{\Lambda}}\left\{ \phi(x):\,\,\phi\leq\phi_{0}\mbox{\,\ on\,\ensuremath{F_{\Lambda}}}\right\} .
\]
Indeed, recall that the limit in Prop \ref{prop:conv of p theta volume form in cy case}
is the supremum over all $\omega$-psh functions lying below $\chi_{F_{\Lambda}}\phi_{0}$.
But as $F_{\Lambda}$ is non-pluripolar (which follows from the classical
fact in pluripotential theory that $\mathbb{R}^{n}$ is non-pluripolar
in $\mathbb{C}^{n}$), the function $\phi_{\infty}$ is convex bounded
in $\mathbb{R}^{n}$. This together with the maximality property yields
that $\phi_{\infty}$ is $T$-invariant and hence the corresponding
function in $\mathcal{C}_{\Lambda}$ equals the supremum taken over
$\mathcal{C}_{\Lambda}$ as above. Alternatively, the boundedness
can also be seen directly in the present setting using the compactness
property in Lemma \ref{lem:the space of convex}. Writing this as
$\phi_{\infty}=P(\chi_{F_{\Lambda}}\phi_{0}),$ where $\chi_{F_{\Lambda}}=0$
on $F_{\Lambda}$ and $+\infty$ on the complement of $F_{\Lambda}$
(compare Lemma \ref{lem:the space of convex}) reveals that the previous
sup coincides with the relaxed sup $\phi'$ obtained by simply requiring
that $\phi$ be convex (but not quasi-periodic), i.e. the graph of
$\phi_{\infty}(x)$ is the convex hull of the discrete graph of the
function $\phi_{0}$ restricted to $F_{\Lambda},$ as desired. By
Lemma \ref{lem:the space of convex} this means that $\phi_{\infty}=P(\chi_{F_{\Lambda}}\phi_{0})=((\chi_{F_{\Lambda}}\phi_{0})^{*})^{*}$
and hence $\phi_{\infty}^{*}=(\chi_{F_{\Lambda}}\phi_{0})^{*}.$ Moreover,
since the Legendre transform is a continuos operator on $\mathcal{C}_{\Lambda}$
it follows from the second Hopf formula that $\psi_{\infty}:=\lim_{t\rightarrow\infty}\psi_{t}=\phi_{\infty}^{*},$
which proves formula \ref{eq:limit of hj}. As a consequence $\psi_{\infty}(y)$
is locally the max of a finite number of affine functions (indeed,
since $F_{\Lambda}$ is locally finite and $\phi$ has quadratic growth
the sup defining $(\chi_{F_{\Lambda}}\phi_{0})^{*}(y)$ can, locally
wrt $y$, be taken over finitely points in $F_{\Lambda}$). Hence,
$\psi:=\psi_{\infty}$ is piecewise affine and hence so is $\phi_{\infty}$
(compare Remark \ref{rem:conv duality}).
\end{proof}
In particular, if $\psi_{0}(y)=|y|^{2}/2,$ then we can complete the
square and rewrite 
\[
\psi_{\infty}(y)=\frac{1}{2}|y|^{2}-\inf_{x\in F_{\Lambda}}\frac{1}{2}|x-y|^{2}.
\]
Accordingly the non-differentiability $S_{\psi_{\infty}}$ locus of
$\psi_{\infty}$ coincides with the subset of all points $y$ in $\R^{n}$
where the corresponding minimum is non-unique (compare Remark \ref{rem:conv duality}).
The latter set is the honeycomb like connected ($n-1)-$dimensional
piecewise linear manifold obtained as the union of the boundaries
of the open sets $\{O_{y}\}_{y\in F_{\Lambda}}$ consisting of points
in $\R^{n}$ for which $y$ is the unique closest point in $F_{\Lambda}.$
In the computational geometry literature the sets $O_{y}$ are called
Voronoi cells (attached to the point set $F_{\Lambda})$ and the corresponding
tessellation of $\R^{n}$ by convex polytopes is called the \emph{Voronoi
tessellation} (or Voronoi diagram) \cite{ok}. Similarly, the non-differentiability
locus $S_{\phi_{\infty}}$ of $\phi_{\infty}$ is the ($n-1)-$dimensional
stratum in the \emph{Delaunay tessellation} of $\R^{n}$ whose $0-$dimensional
stratum is given by the point set $F_{\Lambda}.$ The Delaunay tessellation
can be defined as the dual tessellation of the Voronoi tessellation,
in a suitable sense. For example, when $n=2$ this simply means that
$S_{\phi_{\infty}}$ is obtained by connecting any two points in $F_{\Lambda}$
which are neighbors in the corresponding Voronoi tessellation by a
segment \cite{ok}. 
\begin{rem}
Under suitable generality assumptions it is well-known that the corresponding
Delaunay tessellation consists of simplices giving a triangulation
of $F$ with remarkable optimality properties \cite{ok}.
\end{rem}
The previous proposition give a rigorous mathematical justification
of the Voronoi tessellations appearing in numerical simulations in
cosmology, which use periodic boundary conditions \cite{k-p-s-m,hsw,hsw2}:
for large times Voronoi polytopes form around points where $H$ has
its absolute minimum (the Voronoi polytopes in question are called
voids in the cosmology literature, since the mass in the universe
is localized on the shock locus $S_{\psi_{\infty}}$ between voids).
The dual Delaunay tessellation is also frequently used for the numerics
\cite{k-p-s-m,hsw,hsw2}. 
\begin{rem}
\label{rem:tropical} When $H$ has a unique minimum $x_{m}$ (modulo
$\Lambda),$ the corresponding convex piecewise affine function $\psi_{\infty}$
appears naturally in tropical geometry as a tropical theta function
with characteristics (in the case when $x_{m}$ and $\Lambda$ are
defined over the integers). The tropical subvariety defined by its
non-differentiability locus is called the tropical theta divisor and
seems to first have appeared in complex geometry in the compactification
of the moduli space of abelian varieties (see \cite{m-z} and references
therein). 
\end{rem}

\section{\label{sec:Application-to-Hele-Shaw}Application to Hele-Shaw type
flows}

\subsection{Background}

The Hele-Shaw flow was originally introduced in fluid mechanics in
the end of the 19th century to model the expansion of an incompressible
fluid of high viscosity (for example oil) injected at a constant rate
in another fluid of low viscosity (such as water) in a two dimensional
geometry. Nowadays the Hele-Shaw flow, also called\emph{ Laplacian
growth,} is ubiquitous in engineering, as well as in mathematical
physics where it appears in various areas ranging from diffusion limited
aggregation (DLA) to integrable systems (the dispersionless limit
of the Toda lattice hierarchy), random matrix theory and quantum gravity;
see \cite{va,g-v} and references therein.

To explain the general geometric setup, introduced in \cite{hs},
we let $X$ be a compact Riemann surface and fix a point $p$ (the
injection point) together with an area form $\omega_{0}$ of total
area one (whose density models the inverse permeability of the medium).
The classical situation appears when $X$ is the Riemann sphere and
$p$ is the point at infinity so that $X-\{p\}$ may be identified
with the complex plane $\C.$ A family of increasing domains $\Omega^{(\lambda)}$
with time parameter $\lambda\in[0,1]$ is said to be a classical solution
to the Hele-Shaw flow corresponding to $(p,\omega_{0})$ if $\Omega^{(0)}=\emptyset$
and the closure of $\Omega^{(\lambda)}$ is diffeomorphic to the unit-disc
in $\C$ for $\lambda>0,$ the point $p$ is contained in the interior
of $\Omega^{(\lambda)},$ the area grows linearly: 
\[
\int_{\Omega^{(\lambda)}}\omega_{0}=\lambda
\]
and the velocity of the boundary $\partial\Omega^{(\lambda)}$ equals
minus the gradient (wrt $\omega_{0})$ of the Green function $g_{p}$
for $\Omega^{(\lambda)}$ with a logarithmic pole at $p$ (i.e. Darcy's
law holds). Such a solution exists for $\lambda$ sufficiently small
(see \cite{hs} for the case when $\omega_{0}$ is real analytic and
\cite{r-wn2} for the general case). However, typically the boundary
of the expanding domains $\Omega^{(\lambda)}$ develop a singularity
for some time $\lambda<1$ and then changes its topology so that the
notion of a classical solution breaks down. Still, there is a well-known
notion of weak solution of the Hele-Shaw flow, defined in terms of
subharmonic envelopes (obstacles) and which exists for any $\lambda\in[0,1]$
(where $\Omega^{(1)}=X);$ see \cite{hs} and references therein.
In our notations the envelopes in question may be defined as 
\begin{equation}
\phi_{\lambda}:=\sup_{\phi\in PSH(X,\omega_{0})}\left\{ \phi:\,\phi\leq0,\,\,\,\phi\leq\lambda\log|z-p|^{2}+O(1)\right\} ,\label{eq:hs envelope}
\end{equation}
 which, for $\lambda$ fixed, is thus a restrained version of the
envelope $P_{\omega_{0}}(0)$ defined in Section \ref{sub:The-projection-operator},
where one imposes a logarithmic singularity of order $\lambda$ at
the given point $p.$ The \emph{weak Hele-Shaw flow }is then defined
as the evolution of the corresponding increasing non-coincidence sets:
\[
\Omega^{(\lambda)}:=\left\{ \phi_{\lambda}<0\right\} \subset X,
\]
(which thus is empty for $\lambda=0,$ as it should).  We will write
\[
X^{(\lambda)}:=X-\Omega^{(\lambda)}
\]
for the corresponding decreasing ``water domains''. When $\omega_{0}$
is real analytic it follows from the results in \cite{hs,sa} (applied
to the pull-back of $\omega_{0}$ to the universal covering $\tilde{X}$
of $X)$ that the boundary of $\Omega^{(\lambda)}$ is a piecewise
real analytic curve having a finite number of cusp and double points
(if moreover $\omega_{0}$ has negative Ricci curvature then the lifted
Hele-Shaw on $\tilde{X}$ exists for any $t>0).$
\begin{example}
\label{ex:HSF on riemann sphere}The classical situation in fluid
mechanics appears when $X$ is the Riemann sphere and $p$ is the
point at infinity, so that $X-\{p\}$ may be identified with the complex
plane $\C.$ Writing $\omega_{0}=dd^{c}\Phi_{0}$ in $\C$ (where
the condition $\Phi_{0}$ has logarithmic growth, since $\int\omega_{0}=1),$
the function $\phi_{\lambda}$ may be identified with the subharmonic
function $\Phi_{\lambda}:=\Phi_{0}+\phi_{\lambda}$ with the property
that $\Phi_{\lambda}=(1-\lambda)\log|z|^{2}+O(1)$ as $z\rightarrow\infty.$
Accordingly, $X^{(\lambda)}$ may, for $\lambda>0,$ be identified
with a decreasing family of compact domains in $\C.$ 
\end{example}

\subsection{A canonical regularization of the Hele-Shaw flow using the Kähler-Ricci
flows}

To make the link to the present setting of Kähler-Ricci flows we set

\begin{equation}
\theta=\omega_{0}-\delta_{p},\label{eq:theta in hs flow text}
\end{equation}
where $\delta_{p}$ denotes the Dirac measure at $p,$ which defines
a trivial cohomology class (this is thus a singular version of the
setting in Section \ref{sub:The trivial case} ). The corresponding
Kähler-Ricci flows will be defined as follows: first fixing a Kähler
form $\omega$ on $X$ we set 
\[
\theta_{\beta}:=\theta+\frac{1}{\beta}\mbox{Ric }\omega,
\]
 for a fixed Kähler form $\omega,$ i.e. by imposing the equation
\ref{eq:defining eq for f beta in terms of theta}. Moreover, we will
use $\omega_{0}$ as the initial data in the corresponding Kähler-Ricci
flows. We then get the following theorem saying that the corresponding
Kähler-Ricci flows concentrate, as $\beta\rightarrow\infty,$ precisely
on the complement $X^{(\lambda)}$ of $\Omega^{(\lambda)}$ (i.e.
on the ``water domain'') up to a time reparametrization:
\begin{thm}
\label{thm:krf conv to hsf on riemann s}Consider the non-normalized
Kähler-Ricci flow $\omega^{\beta}(t)$ with twisting current $\theta_{\beta}$
as above and initial condition $\omega_{0}.$ Then 
\begin{equation}
\lim_{\beta\rightarrow\infty}\omega^{(\beta)}(t)=1_{X-\Omega^{(\lambda(t))}}(t+1)\omega_{0},\label{eq:conv in thm krf to hsf on riemm}
\end{equation}
 weakly on $X,$ where $\Omega^{\lambda}$ is the weak Hele-Shaw flow
corresponding to $(p,\omega_{0})$ and $\lambda(t)=t/(t+1).$ Moreover,
\[
\sup_{X}\frac{\omega^{(\beta)}(t)}{\omega}\leq(t+1)\sup_{X}\frac{\omega_{0}}{\omega}.
\]
\end{thm}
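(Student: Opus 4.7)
The plan combines two ingredients: an explicit identification of the positive current $P(\omega_0+t\theta)$ with the Hele-Shaw formula, and an approximation argument extending Theorem~\ref{thm:main general} to the singular twisting $\theta=\omega_0-\delta_p$.

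Writing $\theta=dd^cf$ with $f=-\log|z-p|^2+O(1)$ near $p$ (so that $tf\to+\infty$ there), the key static statement is the identity
\[
P_{\omega_0}(tf)=tf+(1+t)\phi_{\lambda(t)},\qquad \lambda(t):=t/(t+1),
\]
where the apparent singularity on the right is cancelled because the $-t\log|z-p|^2$ pole of $tf$ exactly balances the Lelong singularity of order $(1+t)\lambda(t)=t$ of $(1+t)\phi_{\lambda(t)}$. I would prove this via the bijection $u\leftrightarrow\psi:=(u-tf)/(1+t)$ between candidates for the two envelopes, based on the elementary identity
\[
\omega_0+dd^c\psi=\tfrac{1}{1+t}\bigl((\omega_0+dd^cu)+t\delta_p\bigr),
\]
which shows $u\in PSH(X,\omega_0)$ with $u\le tf$ if and only if $\psi\in PSH(X,\omega_0)$ with $\psi\le 0$ and Lelong number $\ge\lambda(t)$ at $p$---exactly the conditions defining a candidate for $\phi_{\lambda(t)}$ in \eqref{eq:hs envelope}. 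Combined with the standard obstacle-problem characterization $\omega_0+dd^c\phi_\lambda=1_{X-\Omega^{(\lambda)}}\omega_0+\lambda\delta_p$ and the cancellation $(1+t)\lambda(t)=t,$ this yields
\[
\omega_0+dd^cP_{\omega_0}(tf)=(1+t)\bigl(\omega_0+dd^c\phi_{\lambda(t)}\bigr)-t\delta_p=(1+t)\,1_{X-\Omega^{(\lambda(t))}}\omega_0,
\]
identifying $P(\omega_0+t\theta)$ with the right-hand side of \eqref{eq:conv in thm krf to hsf on riemm}.

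For the convergence, since $\theta$ is singular I would regularize $\delta_p$ by smooth positive forms $\delta_p^{(\epsilon)}\to\delta_p$ of unit mass, giving smooth twistings $\theta^{(\epsilon)}=\omega_0-\delta_p^{(\epsilon)}\le\omega_0$ in the trivial cohomology class. For each fixed $\epsilon,$ Theorem~\ref{thm:main general} applies to the resulting smooth flow and yields $\omega^{(\beta,\epsilon)}(t)\to P(\omega_0+t\theta^{(\epsilon)})$ as $\beta\to\infty,$ while a pluripotential continuity argument for envelopes gives $P(\omega_0+t\theta^{(\epsilon)})\to P(\omega_0+t\theta)$ as $\epsilon\to 0$. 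Defining $\omega^{(\beta)}(t)$ as the weak $\epsilon\to 0$ limit of $\omega^{(\beta,\epsilon)}(t)$---well-defined thanks to the uniform bound below---and exchanging the two limits via a diagonal argument then proves \eqref{eq:conv in thm krf to hsf on riemm}. The Laplacian bound transcribes the Riemann-surface estimate of Section~\ref{sub:A-priori-estimates} applied to the smooth flow: identity \eqref{eq:pf of lapl estimate n is one} reduces (with $\chi_\beta=0$ and $dd^cf_\beta=\theta^{(\epsilon)}$) to $\omega_{\varphi_t}(\tfrac{1}{\beta}\Delta_t h-\partial_t h)+\theta^{(\epsilon)}=\omega(t)$ in the normalized frame, and the parabolic max principle applied to $h:=\log(\omega(t)/\omega)$ forces $\omega(t)\le\theta^{(\epsilon)}\le\omega_0$ at any interior maximum of $h;$ hence $\sup_{X\times[0,T]}h\le\sup_X\log(\omega_0/\omega)$ uniformly in $\beta,\epsilon,T,$ and unwinding the scaling $\tilde\omega(s)=(s+1)\omega(\log(s+1))$ yields the claimed $(t+1)\sup_X(\omega_0/\omega)$ bound, which also passes to the limit $\epsilon\to 0$.

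The principal difficulty is the pluripotential stability step $P(\omega_0+t\theta^{(\epsilon)})\to P(\omega_0+t\theta),$ since the standard $C^0$-continuity of envelopes requires bounded data whereas here $f^{(\epsilon)}\to f$ with $f(p)=+\infty$. The bijective reduction above is the key mechanism: it transfers this question to the stability of the Hele-Shaw envelope $\phi_\lambda$ under smoothing of $\delta_p,$ which is controlled by the classical pluripotential theory of obstacle problems for the complex Monge-Amp\`ere operator.
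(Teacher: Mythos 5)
Your proof follows essentially the same two-step route the paper takes: first the explicit identification $P_{\omega_0}(tf)=tf+(1+t)\phi_{\lambda(t)},$ which is precisely Lemma~\ref{lem:The two envelopes are the same} (your bijection $u\mapsto(u-tf)/(1+t)$ is the composition of the paper's bijection $\phi\mapsto\phi+\lambda f$ with the rescaling by $(1-\lambda)$, and your Siu-decomposition justification is the same as the paper's appeal to extension of psh functions over the pluripolar set $\{p\}$); then an approximation $\theta^{(\epsilon)}\to\theta$ by smooth forms followed by Theorem~\ref{thm:main general}, as in Theorem~\ref{thm:singular version of main theorem}. The trace bound is derived correctly from the one-dimensional Laplacian estimate by the same cancellation and maximum-principle argument as in Theorem~\ref{thm:sharp bounds}.

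The one place you are vague is the stability of the envelopes under regularization, which is where the paper's argument is sharper: rather than a separate ``pluripotential continuity'' step, Theorem~\ref{thm:singular version of main theorem} observes that the constants in Theorem~\ref{thm:main general} depend only on $\sup_X\mathrm{Tr}_\omega\theta_\epsilon$ (uniformly bounded since $\theta_\epsilon\leq\omega_0$) and on the upper bound $P_{\hat{\omega}_t}(f_\epsilon)\leq C_0,$ which is uniform by Lemma~\ref{lem:proj of plus infity}; the $\epsilon\to 0$ limit then comes for free from the uniform $C^{1,\alpha}$ compactness built into the theorem, rather than requiring a separate diagonal argument. Also, your assertion that the bijective reduction ``transfers'' the stability question to the Hele-Shaw side does not quite go through: for a smooth $\delta_p^{(\epsilon)}$ the constraint on the dual side is not a Lelong-number condition but a pointwise lower bound $\omega_0+dd^c\psi\geq\lambda\delta_p^{(\epsilon)},$ which is not the same obstacle problem. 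Neither issue is fatal, but the paper's uniform-constants argument is the cleaner (and the intended) mechanism.
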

\begin{rem}
If one instead let $\omega^{\beta}(t)$ denote the corresponding \emph{normalized}
Kähler-Ricci flow, which has total area $e^{-t}(=1-\lambda),$ then
the corresponding limiting measure is given by $1_{X-\Omega^{(\lambda(t))}}\omega_{0}$
and the last estimate above holds without the factor $(t+1).$ In
particular, in the canonical case, where $\omega$ is taken as $\omega_{0},$
setting $\eta_{t}:=\omega_{0}-\omega^{(\beta)}(t)$ then yields a
family of semi-positive forms of increasing area $1-e^{-t}$ concentrating
on the ``oil-domains'' $\Omega_{t}.$ 
\end{rem}
To prove the previous theorem we first need to make the link between
the envelopes \ref{eq:hs envelope} and the ones appearing in our
setting. To this end we introduce, as before, the potential $f$ of
$\theta$ (wrt the reference semi-positive form $\chi=0$ in $[\theta],$
satisfying
\[
\theta=dd^{c}f,
\]
which defines a lsc function $f:\,X\rightarrow]0,\infty]$ which is
smooth on $X-\{p\}$ and such that $-f$ has a logarithmic singularity
of order one at $p.$ 
\begin{lem}
\label{lem:The two envelopes are the same }The following holds 
\[
\phi_{\lambda}:=(1-\lambda)P_{\omega_{0}}\left(\frac{\lambda}{(1-\lambda)}f\right)-\lambda f,
\]
Equivalently, setting $t=\lambda/(1-\lambda)$ (i.e. $\lambda:=t/(t+1))$
gives 
\[
\Omega^{(\lambda)}:=\left\{ P_{\omega_{0}}(tf)<tf\right\} :=\Omega_{t}.
\]
\end{lem}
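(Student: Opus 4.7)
The plan is to establish a linear bijection, mediated by $f$, between the two families of functions over which the supremums are taken, and then note that the resulting bijection is order preserving so that the sups transform into each other.

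More precisely, I would set up the substitution
\[
\phi \longleftrightarrow v, \qquad \phi=(1-\lambda)v-\lambda f, \qquad v=\frac{\phi+\lambda f}{1-\lambda},
\]
and verify that $\phi$ ranges over the admissible class in the Hele-Shaw envelope \ref{eq:hs envelope} if and only if $v$ ranges over the admissible class in $P_{\omega_0}\!\bigl(\tfrac{\lambda}{1-\lambda}f\bigr)$, i.e.\ $v\in PSH(X,\omega_0)$ with $v\leq \tfrac{\lambda}{1-\lambda}f$. Using $dd^c f=\omega_0-\delta_p$, one has the key identity of currents
\[
\omega_0+dd^c\phi \;=\; (1-\lambda)(\omega_0+dd^c v) + \lambda\,\delta_p,
\]
which handles the positivity part: $v\in PSH(X,\omega_0)$ is equivalent to $\omega_0+dd^c\phi\ge\lambda\delta_p$. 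The bound $\phi\le0$ on $X$ is clearly equivalent to $(1-\lambda)v\le\lambda f$, i.e.\ $v\le\tfrac{\lambda}{1-\lambda}f$.

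The main obstacle, which I would handle carefully, is the equivalence between the log-singularity condition $\phi\le\lambda\log|z-p|^2+O(1)$ at $p$ and the Lelong/mass condition $\omega_0+dd^c\phi\ge\lambda\delta_p$ appearing above. One direction is the classical fact that $\phi\le\lambda\log|z-p|^2+O(1)$ forces the Lelong number of $\omega_0+dd^c\phi$ at $p$ to be $\ge\lambda$ and hence that current dominates $\lambda\delta_p$ in the sense of positive measures on $X$ (since $n=1$, Lelong number equals point mass of $dd^c\phi$). Conversely, given $v\in PSH(X,\omega_0)$, the function $v$ is bounded above near $p$ (here I invoke Lemma \ref{lem:proj of plus infity} applied on the non-pluripolar complement of $\{p\}$ to conclude that $P_{\omega_0}(\tfrac{\lambda}{1-\lambda}f)$ is bounded above, and any competitor $v$ is $\le$ this bound), so combining with $-\lambda f=\lambda\log|z-p|^2+O(1)$ near $p$ yields $\phi=(1-\lambda)v-\lambda f\le\lambda\log|z-p|^2+O(1)$.

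Having established this bijection, the sup defining $\phi_\lambda$ in \ref{eq:hs envelope} equals
\[
\sup\bigl\{(1-\lambda)v-\lambda f \;:\; v\in PSH(X,\omega_0),\; v\le\tfrac{\lambda}{1-\lambda}f\bigr\} \;=\; (1-\lambda)P_{\omega_0}\!\Bigl(\tfrac{\lambda}{1-\lambda}f\Bigr)-\lambda f,
\]
where in pulling the constant $(1-\lambda)>0$ and the term $-\lambda f$ out of the sup one uses that both operations preserve order. This is the first displayed formula. For the equivalent statement on the non-coincidence sets, with $t=\lambda/(1-\lambda)$ the inequality $\phi_\lambda<0$ becomes $(1-\lambda)P_{\omega_0}(tf)<\lambda f=(1-\lambda)tf$, i.e.\ $P_{\omega_0}(tf)<tf$, yielding $\Omega^{(\lambda)}=\Omega_t$ exactly as claimed.
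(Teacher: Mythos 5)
Your proof is correct and follows essentially the same route as the paper: both arguments rest on the change of competitor $\phi\leftrightarrow\phi+\lambda f$ (the paper phrases this as the identity $\phi_\lambda=P_{\omega_0(1-\lambda)}(\lambda f)-\lambda f$, which is just your substitution $v=(\phi+\lambda f)/(1-\lambda)$ before the final rescaling), combined with the fact that the log-pole condition on $\phi$ is exactly what makes $\phi+\lambda f$ bounded above and hence extendable as a quasi-psh function across the pluripolar set $\{p\}$. The paper cites this removable-singularity fact directly, whereas you argue via Lelong numbers and point masses, which in dimension one amounts to the same classical statement; note also that in your converse direction the appeal to Lemma \ref{lem:proj of plus infity} is superfluous, since any $v\in PSH(X,\omega_0)$ on compact $X$ is automatically bounded above by upper semicontinuity.
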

\begin{proof}
By a simple scaling argument it will be enough to prove that
\[
\phi_{\lambda}=P_{\omega_{0}(1-\lambda)}(\lambda f)-\lambda f.
\]
 But the latter identity follows immediately from the fact that a
given function $\phi\in PSH(X,\omega_{0})$ has a logarithmic pole
of order at least $\lambda$ at a point $p,$ i.e. it satisfies 
\[
\phi+\lambda f\leq C
\]
on $X$ iff the $\omega_{0}(1-\lambda)$-psh function $\phi+\lambda f$
on $X-\{p\}$ extends to a unique $\omega_{0}(1-\lambda)-$psh function
on all of $X$ (as follows from the basic local fact that a psh function
has a unique psh extension over an analytic subvariety, or more generally
over a pluripolar subset). 
\end{proof}
Finally, we need to extend Theorem \ref{thm:main general} to the
present setting. To this end we first recall that, by \cite[Theorem 3.2]{s-t},
there is, for $\beta$ fixed, a notion of weak Kähler-Ricci flows
on $X$ which applies to any twisting current $\theta$ which is smooth
away from a (suitable) divisor $D$ in $X.$ In particular, the result
applies to any current $\theta$ of the form 
\[
\theta=\theta_{0}-[E],
\]
 where $\theta_{0}$ is smooth and $[E]$ denotes the current of integration
along an effective divisor, i.e. 

\[
D=-E:=-\sum_{i}c_{i}E_{i}
\]
for $c_{i}>0$ and $E_{i}$ are irreducible hypersurfaces in $X.$
The result in \cite[Theorem 3.2]{s-t} yields a unique flow $\omega^{(\beta)}(t)$
of currents in $[\omega_{0}+t\theta]$ which are smooth on $X-D$
and such that the corresponding Kähler potentials are in $L^{\infty}(X)$
(as shown in \cite[Section 4.2]{egz} this flow coincides with the
unique viscosity solution constructed in \cite[Section 4.2]{egz}). 
\begin{thm}
\label{thm:singular version of main theorem}Let $\theta$ be a current
of the form $\theta=\theta_{0}-[E],$ with $\theta_{0}$ smooth and
$E$ an effective divisor. Then the conclusion in Theorem \ref{thm:main general}
still applies and the constant $C$ only depends on upper bounds on
$\theta_{0}$ (and the oscillation of its potential) and on the divisor
$E.$ Moreover, the sharp bounds in Theorem \ref{thm:sharp bounds}
still hold with $\theta$ replaced by $\theta_{0}.$ \end{thm}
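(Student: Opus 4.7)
The plan is to pass from the smooth theory to the singular setting via regularization, approximating the divisorial part $[E]$ from above by smooth semi-positive forms, applying Theorem \ref{thm:main general} with uniform constants, and identifying the limiting flow with the weak Kähler--Ricci flow constructed in \cite[Theorem 3.2]{s-t}. Concretely, fix a Hermitian metric $h$ on $\mathcal{O}_X(E)$ and a defining section $s$ so that $[E] = \chi_E + dd^c \log |s|_h^2$ with $\chi_E$ the Chern form, and set
\begin{equation*}
[E]_\varepsilon := \chi_E + dd^c \log(|s|_h^2 + \varepsilon), \qquad \varepsilon \searrow 0,
\end{equation*}
a family of smooth $(1,1)$-forms that decreases to $[E]$ and which can be arranged to be semi-positive after adjusting $\chi_E$. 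Define the smooth twisting $\theta_{\beta,\varepsilon} := \theta_0 - [E]_\varepsilon + o(1)$ together with its potential $f_{\beta,\varepsilon}$, which decreases to $f_\beta$; for $\beta$ large the semi-positivity condition \ref{eq:semi-positivity cond in text} holds and Theorem \ref{thm:main general} produces smooth flows $\omega^{(\beta,\varepsilon)}(t)$.

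The next step is to verify that every constant appearing in Section \ref{sub:A-priori-estimates} is uniform in $\varepsilon$. The upper bound on $\varphi^{(\beta,\varepsilon)}_t$ from Section \ref{sub:The-upper-bound} is controlled by $P_{C'\omega}(f_{\beta,\varepsilon}) \leq P_{C'\omega}(f_\beta)$ (by monotonicity of the envelope), and the latter is finite by Lemma \ref{lem:proj of plus infity} applied to the lsc function $f_\beta$, which is locally bounded on the complement of the analytic subvariety $E$; the resulting bound depends only on $\theta_0$ and the geometry of $E$ (through the choice of a coordinate ball in $X - E$). The lower bound on the time derivative depends only on $\omega_0$ and $\inf_X f_{\beta,\varepsilon}=0$. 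The delicate ingredient is the Laplacian estimate of Proposition \ref{prop:general Laplac est}: tracing through the proof, the twisting enters only via
\[
\mathrm{Tr}_\omega(dd^c f_{\beta,\varepsilon} + \chi_\beta) \;=\; \mathrm{Tr}_\omega\bigl(\theta_0 - [E]_\varepsilon - \tfrac{1}{\beta}\Ric\omega\bigr) + o(1),
\]
and since $[E]_\varepsilon \geq 0$ its trace is non-negative, so this quantity is dominated by $\sup_X \mathrm{Tr}_\omega \theta_0 + O(1/\beta)$, uniform in $\varepsilon$.

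With uniform $C^0$ and Laplacian bounds in place I would pass to the limit $\varepsilon \to 0$. By standard stability for parabolic complex Monge--Ampère equations (see \cite[Sec.~4.2]{egz} and \cite[Thm.~3.2]{s-t}) the flows $\omega^{(\beta,\varepsilon)}(t)$ converge to the unique weak KRF $\omega^{(\beta)}(t)$ with $L^\infty$ potentials, smoothly on compact subsets of $X - E$, and the uniform estimates pass to the limit. For the large-$\beta$ assertion, I would then combine Proposition \ref{prop:existence of the limit} and Proposition \ref{prop:The-sup-coincides with projection of linear curve} at the level of each $\varepsilon$ to get convergence to $P_{\omega_0 + t\theta_\varepsilon}(0)$, and invoke the fact that $P_{\omega_0+t\theta_\varepsilon}(0)$ decreases to $P_{\omega_0+t\theta}(0)$ as $\varepsilon \searrow 0$ (a direct consequence of the definition of the envelope together with the monotone approximation $f_{\beta,\varepsilon}\searrow f_\beta$). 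For the sharp bound of Theorem \ref{thm:sharp bounds}, the maximum principle applied at an interior maximum of $h = \log\mathrm{Tr}_\omega \omega^{(\beta,\varepsilon)}(t)$ gives a bound by $\mathrm{Tr}_\omega(\theta_0 - [E]_\varepsilon - \tfrac{1}{\beta}\Ric\omega) \leq \|\theta_0 - \tfrac{1}{\beta}\Ric\omega\|$ since $[E]_\varepsilon \geq 0$, so the sharp bound follows uniformly with $\theta_0$ in place of $\theta$.

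The main obstacle will be ensuring that the stability passage to the $\varepsilon \to 0$ limit is compatible with both the Laplacian bound (which a priori degenerates near $E$) and the envelope characterization of the $\beta\to\infty$ limit. I expect this to go through cleanly because the weak KRF of \cite{s-t} is characterized by smoothness away from $E$ plus $L^\infty$ regularity of its potential, exactly the two properties preserved by our uniform estimates; and because the envelope $P$ is continuous under the monotone approximation $\theta_\varepsilon \searrow \theta$, reducing the identification of the limit to the already proved smooth case.
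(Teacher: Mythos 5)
Your proposal takes essentially the same route as the paper: both regularize the twisting current by replacing $\log\|s_E\|^2$ with $\log(\|s_E\|^2+\epsilon)$, apply Theorem~\ref{thm:main general} to the smooth approximants $\theta_\epsilon$, control the constant uniformly via $P(f)\leq\sup_X f_0+P(f_E)$ together with Lemma~\ref{lem:proj of plus infity}, and pass to the limit using the weak KRF of \cite{s-t}. One small correction and one caveat are worth recording. First, the potentials satisfy $f_{\beta,\varepsilon}\nearrow f_\beta$ (not $\searrow$), since $-\log(\|s_E\|^2+\varepsilon)$ increases to $-\log\|s_E\|^2$ as $\varepsilon\searrow 0$; the inequality $P_{C'\omega}(f_{\beta,\varepsilon})\leq P_{C'\omega}(f_\beta)$ that you actually use is correct, but the direction of monotonicity should be fixed. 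Second, the semi-positivity $[E]_\varepsilon\geq 0$ that you invoke (and the possibility to ``arrange'' it by adjusting $\chi_E$) requires $\mathcal{O}_X(E)$ to carry a metric with non-negative Chern form, i.e.\ to be semi-positive, which is not automatic for a general effective divisor; without it one only has $[E]_\varepsilon\geq -C\omega$ uniformly, which still gives the stated dependence of the constant on $E$ (and hence the first conclusion of the theorem), but the sharp bound of Theorem~\ref{thm:sharp bounds} with $\theta$ replaced by $\theta_0$ does use the non-negativity of $\mathrm{Tr}_\omega[E]_\varepsilon$. In the Hele-Shaw application this is harmless since $E$ is a point on a curve, but it should be made explicit in the general statement.
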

\begin{proof}
We recall that the weak KRF defined in \cite[Theorem 3.2]{s-t} is
constructed by approximating $\theta$ with a suitable sequence $\theta_{\epsilon}$
of smooth forms. In the present setting this can be done so that $\theta_{\epsilon}\leq C\omega$
and $\theta_{\epsilon}$ converges to $\theta$ in $\mathcal{C}_{loc}^{\infty}(X-E).$
Indeed, decomposing $f=f_{0}+f_{E}$ in terms of potentials for $\theta_{0}$
and $-[E],$ respectively, we have that up to a smooth function $f$
can be written as $-\log\left\Vert s_{E}\right\Vert ^{2},$ where
$s_{E}$ is a holomorphic section of the line bundle $\mathcal{O}(E)$
cutting out $E$ and $\left\Vert \cdot\right\Vert $ is a fixed smooth
Hermitian metric on $\mathcal{O}(E).$ Then the form $\theta_{\epsilon}$
is simply obtained by replacing $\log\left\Vert s_{E}\right\Vert ^{2}$
with $\log(\left\Vert s_{E}\right\Vert ^{2}+\epsilon).$ The proof
of the theorem then follows immediately from Theorem \ref{thm:main general}
applied to $\theta_{\epsilon}$ by noting that that $P(f)\leq\sup_{X}f_{0}+P(f_{E}),$
where the second term thus only depends on the divisor $E,$ as desired
(and is finite, by Lemma \ref{lem:proj of plus infity}).\end{proof}
\begin{example}
\label{ex:krf on sphere}Coming back to the classical setting when
$E$ is the point $p$ and $X-\{p\}=\C$ considered in the previous
example, the density $\rho^{(\beta)}(t)$ wrt Lebesgue measure on
$\C$ of the Kähler form $\omega^{(\beta)}(t)$ on $X-\{p\}$ is a
solution of the following logarithmic diffusion equation for the smooth
and strictly positive probability densities $\rho(t)$ on $\C$ 
\[
\frac{\partial\rho(t)}{\partial t}=\frac{1}{\pi\beta}\partial_{\bar{z}}\frac{\partial_{z}\rho(t)}{\rho(t)}+\rho_{0}+O(\frac{1}{\beta}),\,\,\,\,\rho(0)=\rho_{0},
\]
where the last term is equal to $\frac{1}{\beta}\Delta\log\rho_{0}(t)$
(but it could be removed at the expense of slightly worse estimates
in $t$ and $\beta).$ The equivalence between Ricci flow on Riemann
surfaces and logarithmic diffusion is well-known \cite{v-e-r}, but
as far as we know the limit $\beta\rightarrow\infty$ has not been
investigated before.
\end{example}

\subsection{Monge-Ampère growth}

There is also a natural higher dimensional generalization of the Hele-Shaw
flow/Laplacian growth on a compact Kähler manifold $(X,\omega_{0})$
where the higher dimensional viscous ``fluid'' is injected along
a given effective divisor $E$ on $X.$ Indeed, one simply defines
$\phi_{\lambda}$ as before, but imposing a singularity of order $\lambda$
along $E$ (i.e. $z-p$ is in formula \ref{eq:hs envelope} replaced
by a local defining equation for $E).$ Then one obtains a sequence
of increasing domains $\Omega_{\lambda}$ as before for which the
name \emph{Monge-Ampère growth} was proposed in \cite{berm1b}. The
terminology is motivated by the fact that $\Omega_{\lambda}$ can
be characterized as the solution of a free boundary problem for the
complex Monge-Ampère operator on $(X,\omega_{0})$ with singular obstacle
$\lambda f$ (see Remark\ref{rem:free}), where $f$ is defined by
\[
\theta=\omega_{0}-[E],\,\,\,\,\theta=dd^{c}f,
\]
 as before. By the recent results in \cite{rwn3}, for $\lambda$
sufficiently small, $\Omega_{\lambda}$ is diffeomorphic to a ball
(and admits a regular foliation, transversal to $E,$ by holomorphic
discs along which $\phi_{\lambda}$ is $\omega_{0}-$harmonic). 

Now, by Theorem \ref{thm:singular version of main theorem}, the volume
forms $\omega_{\beta}^{n}(t)$ of the Kähler-Ricci flows with twisting
form $\theta$ as above concentrate on $X^{(\lambda)}(:=X-\Omega^{(\lambda(t))}):$
\[
\lim_{\beta\rightarrow\infty}\omega_{\beta}^{n}(t)=\frac{1_{X^{(\lambda)}}\omega_{0}^{n}}{\int_{X^{(\lambda)}}\omega_{0}^{n}}
\]
with uniform upper bounds on the normalized Kähler forms $\omega_{\beta}(t)/(t+1)$
on $X-E,$ as before (in this setting $\int_{X^{(\lambda)}}\omega_{0}^{n}=[\omega_{0}-\lambda(t)E]^{n}).$ 
\begin{example}
In the case when $X=\P^{n}$ equipped with a Kähler form $\omega_{0}$
of unit volume and $E$ is the hyperplane at infinity the corresponding
sets $X(t)$ yield, for $t>0,$ a decreasing family of compact domains
in $\C^{n}$ of volume $1/(t+1)^{n}.$ \end{example}
\begin{rem}
\label{rem:hsf gives weak geod} As shown in \cite{r-wn1} performing
a Legendre transform of $\phi_{\lambda}$ with respect to $\lambda$
produces a weak geodesic ray $\hat{\phi}_{\tau}$ in the space of
Kähler metrics (compare Remark \ref{rem:r-z}). Moreover, topology
change in the corresponding Hele-Shaw flow $\Omega^{(\lambda)}$ corresponds
(in a certain sense) to singularities of the geodesic $\hat{\phi}_{\tau}$
\cite{rwn4,rwn5}. In a nutshell, this stems from the the fact (shown
in \cite{r-wn1}) that $\Omega^{(\lambda)}=\{h<\lambda\}$ where $h(x):=\frac{d\hat{\phi}_{\tau}}{d\tau}|_{\tau=0^{+}}.$ 
\end{rem}

\section{The case of twisting currents with merely continuous potentials \label{sub:An-extension-of}}

Without loss of generality we may and will in this section, assume
that $\varphi_{0}=0.$ As will be next explained the weak convergence
in Theorem \ref{thm:main intro} can be extended to any twisting form
(or rather current) with continuous potentials. 

To illustrate this we start with the case $n=1$ and assume that $\frac{\text{1}}{\beta}c_{1}(K_{X})+[\theta_{\beta}]$
is trivial, i.e. that the non-normalized KRF preserves the initial
cohomology class. To simplify the notation we will drop the subscript
$\beta$ in the notation $f_{\beta}$ for the corresponding twisting
potential. 
\begin{prop}
Assume that $n=1$ and $f$ is Hölder continuous. Then there is a
unique solution $\varphi^{(\beta)}(t)$ to the corresponding non-normalized
KRF which is in $C^{2,\alpha}(X)$ for some $\alpha>0.$\end{prop}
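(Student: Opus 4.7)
The plan is to approximate $f$ by smooth functions, pass to the limit in the associated smooth flows, and extract uniform $C^{2,\alpha}$ bounds in space along the way. The crucial simplification in the one-dimensional setting is that the complex Monge-Amp\`ere operator reduces to the Laplacian, so the flow is semilinear rather than fully nonlinear.

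Fix smooth mollifications $f_k\to f$ uniformly on $X$ with $\|f_k\|_{C^\alpha(X)} \leq C\|f\|_{C^\alpha(X)}$, and let $\tilde\varphi_k(t)$ denote the smooth non-normalized KRF with twisting potential $f_k$ and initial value $0$. Applying the parabolic comparison principle to the Monge-Amp\`ere flow yields $\sup_X |\tilde\varphi_k(t) - \tilde\varphi_{k'}(t)| \leq t\sup_X|f_k - f_{k'}|$, so $\tilde\varphi_k$ converges uniformly on every $[0,T]\times X$ to a continuous limit $\tilde\varphi$. Moreover, the estimates of Section \ref{sub:A-priori-estimates} (which depend only on $\sup_X|f_k|$ and on structural constants) give uniform $C^0$ bounds for $\tilde\varphi_k$ and for $w_k := \partial_t\tilde\varphi_k$. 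Writing the flow equation as $\omega_0 + dd^c\tilde\varphi_k = e^{\beta(w_k - f_k)}\omega$, these $C^0$ bounds immediately upgrade to a uniform equivalence $c^{-1}\omega \leq \omega_0 + dd^c\tilde\varphi_k \leq c\,\omega$ on $[t_0, T]\times X$ for every $t_0 > 0$.

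Differentiating the flow in $t$ (and using that $\chi_\beta = 0$ here), the time derivative $w_k$ satisfies the linear heat equation
\[
\frac{\partial w_k}{\partial t} = \frac{1}{\beta}\Delta_{\omega_k(t)} w_k
\]
with respect to the time-dependent metric $\omega_k(t) := \omega_0 + dd^c\tilde\varphi_k(t)$. By the previous step this equation is uniformly parabolic with bounded measurable coefficients, so parabolic De Giorgi-Nash-Moser theory yields uniform interior H\"older estimates $\|w_k(t,\cdot)\|_{C^{\alpha'}(X)} \leq C(t_0,T)$ for some $\alpha' > 0$ and all $t\in[t_0,T]$. Rewriting the flow as the semilinear Poisson equation
\[
\Delta_\omega\tilde\varphi_k = e^{\beta(w_k - f_k)} - \mathrm{Tr}_\omega\omega_0,
\]
the right-hand side is uniformly bounded in $C^{\min(\alpha,\alpha')}(X)$, and elliptic Schauder theory yields uniform $C^{2,\alpha''}$ bounds on $\tilde\varphi_k(t)$ with $\alpha'' := \min(\alpha,\alpha')$.

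Passing to a subsequential limit via Arzel\`a-Ascoli, the continuous limit $\tilde\varphi(t)$ belongs to $C^{2,\alpha''}(X)$ and solves the KRF in the classical sense on $X\times(0,\infty)$; uniqueness within the class of $C^{2,\alpha}$ solutions follows from the parabolic comparison principle of Section \ref{sub:Parabolic-comparison/max-princip}. The main technical obstacle is the H\"older estimate on $w_k$: one must apply De Giorgi-Nash-Moser to a linear heat equation whose coefficient metric $\omega_k(t)$ is controlled only in $L^\infty$ (uniformly equivalent to $\omega$, but not smoothly bounded, since $f$ is merely H\"older). This is precisely the generality those estimates were designed for, and it is also why $n=1$ is essential: in higher dimensions the flow is genuinely fully nonlinear, and Evans-Krylov type machinery rather than linear De Giorgi-Nash-Moser would be required, together with more work to get the Hessian into the range where concave second-order estimates apply.
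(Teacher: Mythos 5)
Your proof is correct and takes essentially the same route as the paper: bound $\partial_t\tilde\varphi$ in $C^0$ via the linear heat equation and the maximum principle, deduce a two-sided bound on the evolving metric from the flow equation (the step that uses $n=1$), then apply an interior H\"older estimate for parabolic equations with merely bounded measurable coefficients to $\partial_t\tilde\varphi$, and finally elliptic Schauder on the rewritten flow equation. The only cosmetic differences are that the paper first assumes $f$ smooth and approximates at the end whereas you approximate from the start, and that it cites Krylov--Safonov where you cite De~Giorgi--Nash--Moser (both apply here, since in local coordinates the linearized equation reads $\rho(t,x)\,\partial_t w = \beta^{-1}\Delta_0 w$ with $\rho$ bounded above and below).
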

\begin{proof}
In the following $\beta$ will be fixed and we will not pay attention
to the dependence on $\beta.$ First assume that $f$ is smooth. Differentiating
the non-normalized KRF wrt $t$ reveals that $d\varphi^{(\beta)}(t)/dt$
evolves by the heat equation for the metric $\omega_{\beta}(t)$ and
hence, by the parabolic maximum principle, $|d\varphi^{(\beta)}(t)/dt|\leq C,$
where the constant only depends on $\sup_{X}|f|.$ The defining equation
for the KRF then gives that $C'^{-1}\leq\omega_{\beta}(t)\leq C'$
for a positive constant $C'$ only depending on $\sup_{X}|f|.$ But
then applying the parabolic Krylov-Safonov Hölder estimate to the
heat equation wrt $\omega_{\beta}(t)$ gives that there exists a Hölder
exponent $\alpha'$ such that $\left\Vert d\varphi^{(\beta)}(t)/dt\right\Vert _{C^{\alpha'}}\leq C''.$
Using again the defining equation for $\varphi^{(\beta)}(t)$ we deduce
that, $1+\Delta_{\omega}\varphi^{(\beta)}(t)=e^{\beta g_{\beta}(t)},$
where the Hölder norm of $g_{\beta}(t)$ is under control, for some
Hölder exponent. But then the proof is concluded by invoking the classical
Schauder estimates for the Laplacian $\Delta_{\omega}$ and approximating
$f$ with smooth functions (note that the limit of the approximate
solutions is unique, by the comparison principle).
\end{proof}
Given a twisting potential $f$ we denote by $P_{t}^{(\beta)}f$ the
solution of the corresponding KRF at time $t$ and set $P_{t}f:=P_{\omega_{0}}(tf).$
\selectlanguage{english}%
\begin{lem}
\label{lem:The-operator-P beta}\foreignlanguage{american}{The operator
$P_{t}^{(\beta)}$ is increasing, i.e. if $f\leq g,$ then $P_{t}^{(\beta)}f\leq P_{t}^{(\beta)}g.$
Moreover, $P_{t}^{(\beta)}(f+c)=P_{t}^{(\beta)}(f)+ct$ for any $c\in\R$
and hence
\begin{equation}
\left\Vert P_{t}^{(\beta)}f-P_{t}^{(\beta)}g\right\Vert _{L^{\infty}(X)}\leq t\left\Vert f-g\right\Vert _{L^{\infty}(X)},\label{eq:contr prop of P beta}
\end{equation}
and similarly for the operator $P_{t}.$}\end{lem}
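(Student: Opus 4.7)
The plan is to establish the three claimed properties of $P_t^{(\beta)}$ in the order they are stated — monotonicity, the translation identity $P_t^{(\beta)}(f+c)=P_t^{(\beta)}(f)+ct$, and the $L^\infty$-contraction — with the last being a formal consequence of the first two. The analogous statements for $P_t$ reduce to direct inspection of the envelope definition \eqref{eq:def of psh env}.

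For the monotonicity of $P_t^{(\beta)}$, I would invoke the parabolic comparison principle from Section \ref{sub:Parabolic-comparison/max-princip}. Writing the non-normalized KRF in potential form (with initial datum $0$) as $\partial_s \tilde\varphi = \mathcal{D}_f(\tilde\varphi)$ with
\[
\mathcal{D}_f(u) := \frac{1}{\beta}\log\frac{(\omega_0+dd^c u)^n}{\omega^n}+f,
\]
we see that $\mathcal{D}_f$ is increasing in $dd^c u$, so the comparison principle applies (the extra lower-order term $f$, which is independent of $u$, cancels when comparing two solutions of the $f$-equation, and the proof in Section \ref{sub:Parabolic-comparison/max-princip} only uses increasingness in $dd^c u$). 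If $f\leq g$, then the solution $\tilde\varphi_g$ satisfies $\partial_s\tilde\varphi_g - \mathcal{D}_f(\tilde\varphi_g) = g-f \geq 0$, so it is a supersolution for the flow driven by $f$, while $\tilde\varphi_f$ is a solution. Since both curves start at $0$, comparison gives $\tilde\varphi_f(t)\leq\tilde\varphi_g(t)$, i.e.\ $P_t^{(\beta)}f\leq P_t^{(\beta)}g$.

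For the translation identity I would observe that the map $\tilde\varphi\mapsto\tilde\varphi+ct$ intertwines the $f$- and $(f+c)$-flows. Indeed, since $dd^c$ annihilates the additive term $ct$,
\[
\partial_s(\tilde\varphi_f(s)+cs) = \mathcal{D}_f(\tilde\varphi_f(s))+c = \mathcal{D}_{f+c}(\tilde\varphi_f(s)+cs),
\]
with initial value $0$. Uniqueness of the KRF (comparison applied in both directions) then gives $P_t^{(\beta)}(f+c)=P_t^{(\beta)}f+ct$. The contraction estimate is then immediate: setting $M:=\|f-g\|_{L^\infty(X)}$, one has $g-M\leq f\leq g+M$, and combining monotonicity with the translation identity yields
\[
P_t^{(\beta)}g-tM \;\leq\; P_t^{(\beta)}f \;\leq\; P_t^{(\beta)}g+tM,
\]
which is \eqref{eq:contr prop of P beta}.

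For the static operator $P_t f=P_{\omega_0}(tf)$, monotonicity is built into the envelope definition \eqref{eq:def of psh env} (enlarging the obstacle enlarges the set of admissible competitors), and the translation identity follows from the bijection $u\leftrightarrow u-tc$, which preserves $\omega_0$-plurisubharmonicity and sends the constraint $u\leq t(f+c)$ to $u-tc\leq tf$. The contraction then follows from these two properties exactly as above. The only real point of care throughout is the invocation of comparison: since the twisting potential $f$ enters as an $u$-independent inhomogeneous term rather than fitting literally the form $a(t,x)u+F_t(dd^c u)$ of the proposition, one either treats $f$ as part of $F_t$ (the proof uses only monotonicity in $dd^c u$) or argues directly with the difference of two solutions, in which the $f$-terms cancel — a minor obstacle but worth flagging.
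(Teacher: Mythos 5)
Your proposal is correct and follows essentially the same route as the paper, which disposes of the lemma in a single sentence (``the increasing property follows directly from the comparison principle and the scaling property from the very definitions of the flows''); you have simply spelled out that sentence. Your technical remark about the inhomogeneous term $f$ not literally fitting the form $a(t,x)u + F_t(dd^c u)$ of the comparison proposition is well taken but harmless, for exactly the reason you give: the proof there compares the operator at the same point $x$, so any $u$-independent, $x$-dependent summand cancels in $\mathcal{D}u - \mathcal{D}v$.
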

\selectlanguage{american}%
\begin{proof}
The increasing property follows directly from the comparison principle
and the scaling property from the very definitions of the flows.\end{proof}
\begin{thm}
Let $X$ be a Riemann surface endowed with the twisting current $\theta=dd^{c}f,$
where $f$ is Hölder continuous. Then the corresponding non-normalized
KRFs $\omega_{\beta}(t)$ defines a family of Hölder continuous Kähler
metrics satisfying the weak convergence in Theorem \ref{thm:main intro},
as $\beta\rightarrow\infty$ (more precisely, the convergence holds
in $C^{0}(X))$ on the level of Kähler potentials).\end{thm}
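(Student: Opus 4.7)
The strategy is a three-step approximation. First, I would approximate the Hölder continuous $f$ by a sequence of smooth functions $f_{j}$ (for instance via local convolution glued by a partition of unity) such that $\|f-f_{j}\|_{L^{\infty}(X)}\to 0$ and the Hölder norms $\|f_{j}\|_{C^{\alpha}(X)}$ remain uniformly bounded. For each fixed $j$ the previous proposition supplies a $C^{2,\alpha}$ solution $\varphi_{j}^{(\beta)}(t):=P_{t}^{(\beta)}f_{j}$ of the corresponding non-normalized KRF; an inspection of its proof shows that the constants arising in the parabolic Krylov--Safonov Hölder bound and in the ensuing Schauder estimate depend on $f_{j}$ only through an upper bound on $\|f_{j}\|_{C^{\alpha}}$, and are therefore uniform in $j$ (for $\beta$ fixed).

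Second, I would invoke the contraction property from Lemma \ref{lem:The-operator-P beta},
\[
\|P_{t}^{(\beta)}f_{j}-P_{t}^{(\beta)}f_{k}\|_{L^{\infty}(X)}\leq t\,\|f_{j}-f_{k}\|_{L^{\infty}(X)},
\]
which shows that $(\varphi_{j}^{(\beta)}(t))_{j}$ is Cauchy in $C^{0}(X)$, uniformly in $\beta$. Similarly $(P_{t}f_{j})_{j}$ is Cauchy. One then defines $\varphi^{(\beta)}(t)$ as the $j\to\infty$ limit; the uniform-in-$j$ $C^{2,\alpha}$ estimate passes to the limit via Arzelà--Ascoli, so $\varphi^{(\beta)}(t)\in C^{2,\alpha'}(X)$ for some $\alpha'\in(0,\alpha)$ and $\omega^{(\beta)}(t)=\omega_{0}+dd^{c}\varphi^{(\beta)}(t)$ is a Hölder continuous Kähler form. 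By the parabolic comparison principle applied to the approximating smooth problems this $\varphi^{(\beta)}(t)$ is the unique solution of the non-normalized KRF with twisting potential $f$.

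Third, I would close the convergence as $\beta\to\infty$ by a three-term triangle inequality: writing
\[
\|\varphi^{(\beta)}(t)-P_{t}f\|_{L^{\infty}}\leq\|\varphi^{(\beta)}(t)-P_{t}^{(\beta)}f_{j}\|_{L^{\infty}}+\|P_{t}^{(\beta)}f_{j}-P_{t}f_{j}\|_{L^{\infty}}+\|P_{t}f_{j}-P_{t}f\|_{L^{\infty}},
\]
the first and last terms are each bounded by $t\|f-f_{j}\|_{L^{\infty}}$, thanks to the contraction property for $P_{t}^{(\beta)}$ and for $P_{t}$ respectively, while the middle term tends to $0$ as $\beta\to\infty$ by Theorem \ref{thm:main general} applied to the smooth datum $f_{j}$. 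Letting first $\beta\to\infty$ and then $j\to\infty$ yields the desired $C^{0}$ convergence $\varphi^{(\beta)}(t)\to P_{\omega_{0}}(tf)$ on the level of Kähler potentials, from which the weak convergence $\omega^{(\beta)}(t)\to P(\omega_{0}+t\theta)$ follows by continuity of $dd^{c}$ from $C^{0}(X)$ into the space of currents.

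The main technical obstacle lies in the first step: verifying that the $C^{2,\alpha}$ estimates furnished by the previous proposition are genuinely uniform in the approximation index $j$, i.e.\ that one can track the dependence of the parabolic Krylov--Safonov and Schauder constants on the twisting data so that they are controlled solely by $\|f_{j}\|_{C^{\alpha}}$, and not by higher norms of $f_{j}$ (which blow up as $j\to\infty$). Once this uniformity is secured, the rest of the argument is essentially a soft application of contraction plus the smooth case already established.
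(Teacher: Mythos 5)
Your Step 3 is precisely the paper's proof of the theorem: approximate $f$ by smooth $f_{\epsilon}$ with $\|f_{\epsilon}-f\|_{\infty}\le\epsilon$, apply the contraction bound from Lemma~\ref{lem:The-operator-P beta} to write
\[
\left\Vert P_{t}^{(\beta)}f-P_{t}f\right\Vert _{L^{\infty}(X)}\leq\left\Vert P_{t}^{(\beta)}f_{\epsilon}-P_{t}f_{\epsilon}\right\Vert _{L^{\infty}(X)}+2\epsilon t,
\]
then let $\beta\to\infty$ using Theorem~\ref{thm:main general} for the smooth datum, then $\epsilon\to 0$. Your three-term split is just an expanded form of this inequality, and the remark that $dd^c$ is continuous from $C^{0}(X)$ to currents correctly yields the weak convergence of metrics. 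So the approach is the same.

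Your Steps 1--2, however, are devoted to re-deriving the preceding proposition in the paper (existence and $C^{2,\alpha}$ regularity of the flow for Hölder continuous $f$, for each fixed $\beta$), which is the ``previous proposition'' and need only be cited. What you flag as ``the main technical obstacle''---tracking that the Krylov--Safonov and Schauder constants depend on $f_j$ only through $\|f_j\|_{C^{\alpha}}$ (and $\sup|f_j|$) and not on higher norms---is indeed what makes that proposition's approximation argument work, and your account of how the $L^{\infty}$ bound on $\partial\varphi/\partial t$ depends only on $\sup|f|$, feeding Krylov--Safonov constants depending only on $\sup|f|$ and $\beta$, with the $C^{\alpha}$ norm of $f$ entering only at the final Schauder step, matches the paper's proof of that proposition. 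But once that proposition is in hand, the theorem itself requires none of this: its proof is purely the soft contraction-plus-triangle-inequality argument, uniform in $\beta$, which is where your Step 3 lands. In short: correct, same route, with some redundant scaffolding.
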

\begin{proof}
In the following $t$ will be fixed once and for all. Let $f_{\epsilon}$
be a family of smooth functions such that $\left\Vert f_{\epsilon}-f\right\Vert _{\infty}\leq\epsilon.$
By the previous lemma 
\[
\left\Vert P_{t}^{(\beta)}f-P_{t}f\right\Vert _{L^{\infty}(X)}\leq\left\Vert P_{t}^{(\beta)}f_{\epsilon}-P_{t}f_{\epsilon}\right\Vert _{L^{\infty}(X)}+2\epsilon t.
\]
Hence, letting first $\beta\rightarrow\infty$ (using Theorem \ref{thm:main general})
and then $\epsilon\rightarrow0$ concludes the proof. 

Of course, even if $\omega_{\beta}(t)$ is bounded for a fixed $\beta$
the limiting current $\omega_{\infty}(t)$ will, in general, not be
bounded unless $f$ has a bounded Laplacian. The previous theorem
also holds when $f$ is assumed to be merely continuous, but then
the corresponding evolution equations have to be interpreted in a
generalized sense. More generally, when $f$ is continuous and the
dimension $n$ of $X$ is arbitrary the corresponding KRFs are well-defined
in the sense of viscosity solutions and satisfy the comparison principle,
by \cite{egz}. Accordingly, the $C^{0}-$convergence in the previous
theorem still holds. However, even if $f$ is Hölder continuous it
does not seem to follow, in general, from existing regularity theory
that $\omega_{\beta}(t)$ is even bounded, for $\beta$ fixed. 
\end{proof}

\subsection{An outlook on random twistings\label{rem:random potential}}

Hölder continuous potentials $f$ appear naturally when $f$ is taken
to be an appropriate random Gaussian function. For example, in the
setting described in Section \ref{sub:Relation-to-the-krf}, when
$n=1$ and $X=\R/\Z+i\R/\Z$ and the potential $f$ is assumed invariant
along the imaginary direction, we can identify the potential $f$
with a $1-$periodic function $f(x)$ on $\R$ and expand $f(x)$
in a Fourier series:

\[
f(x)=\sum_{k\in\Z}A_{k}\cos(2\pi kx)+B_{k}\sin(2\pi kx).
\]
Taking the coefficients $A_{k}$ and $B_{k}$ to be independent Gaussian
random numbers with mean zero and variance proportional to $k^{-3-2h},$
for a given number $h\in[-1,1],$ it is well-known that $f$ is almost
surely in the Hölder class $C^{1,h}.$ Indeed, the derivative $f'$
is a Brownian fractional bridge, whose sample paths are well-known
to be almost surely in $C^{h}$ (recall that a Brownian bridge is
defined as a Brownian motion $B$ conditioned by $B(0)=B(1)$ and
similarly in the fractional case, with $h=1/2$ corresponding to ordinary
Brownian motion). The corresponding limiting convex envelopes $\phi_{t}(x)$
have been studied extensively in the mathematical physics literature
in the setting of Burger's equation and the adhesion model, where
$f'$ represents the random initial velocity function (compare Section
\ref{sec:Relations-to-viscosity}). According to a conjecture in \cite{saf},
for any fixed positive time $t,$ the support $X_{t}$ of the distribution
second derivative of the corresponding random function $\phi_{t}(x)$
on $\R$ is almost surely of Hausdorff dimension $h$ when $h\in[0,1]$
and $0$ when $h\in[-1,0]$ (which, when $h=1$ is consistent with
the uniform bound in Theorem \ref{thm:main intro} and formula \ref{eq:point wise formula for MA of env in text}
which, in this real setting, holds as long as $f\in C^{1,1}).$ See
\cite{gi} for the case when $h=-1/2$ and \cite{sin} for a proof
of the conjecture in the case $h=1/2$ in a non-periodic setting.

In view of the connections to the Kähler-Ricci flow and the Hele-Shaw
flow exhibited in Sections \ref{sec:Relations-to-viscosity} and \ref{sec:Application-to-Hele-Shaw}
it would be interesting to extend this picture to any complex manifold,
or at least to Riemann surfaces. For example, in the latter case one
would, at least heuristically, get conformally invariant processes
of random metrics $\omega_{\beta}(t)$ by taking $f$ to be a Gaussian
free field on $X.$ Heuristically, this means that $f$ is taken as
random function in the corresponding Dirichlet Hilbert space $H^{1}(X)/\R.$
However, the situation is complicated by the fact that, almost surely,
$f$ only exists as a distribution in a certain Banach completion
of $H^{1}(X)/\R.$ \cite{sh}. On the other hand the formal random
measure appearing in the static version of the non-normalized KRF,
i.e. in the Laplace equation 
\[
\omega_{0}+dd^{c}\varphi_{\beta}(t)=e^{-\beta f}\omega_{0}
\]
appears as the Liouville measure of quantum gravity and has been rigorously
defined, for $\beta\in]0,2[$ in \cite{d-s} using a regularization
procedure. But as far as we know the corresponding stochastic parabolic
problem has not been investigated.

\section{\label{sec:The-gradient-flow}The gradient flow picture (an outlook)}

In this section we introduce a complementary point of view on the
convergence result in Theorem \ref{thm:main intro}, which in particular
leads to a gradient flow type realization of the limiting flows. We
also indicate the relations to stochastic interacting particle system
and the thermodynamical formalism introduced in \cite{berm4}. A more
complete picture will appear in a separate publication.

Let $X$ be a compact complex manifold endowed with a Kähler class
$T\in H^{2}(X,\R)$ and denote by $\mathcal{K}(X,T)$ the space of
all Kähler metrics $\omega$ in $T.$ Up to a trivial scaling we may
and will assume that $T^{n}=1.$ Fixing a reference Kähler metric
$\omega_{0}$ in $T$ we will identify $\mathcal{K}(X,T)$ with the
corresponding space $\mathcal{H}(X,\omega_{0})/\R$ of Kähler potentials
(modulo constants). Occasionally, it will also be convenient to identify
a Kähler metric with its normalized volume form, using the Calabi-Yau
map 
\begin{equation}
\omega\mapsto\mu:=\omega^{n},\,\,\,\mathcal{K}(X,T)\rightarrow\mathcal{P}(X)\label{eq:c-y iso}
\end{equation}
which induces an isomorphism between the space $\mathcal{K}(X,T)$
of all Kähler metrics $\omega$ in $T$ and the subspace $\mathcal{P}^{\infty}(X)$
of all volume forms in the space $\mathcal{P}(X)$ of all probability
measures on $X$ \cite{y}.

In this section we will focus on the case when the cohomology class
is not moving under the corresponding\emph{ normalized} KRF (as in
Section \ref{sub:The-case-when convex envelopes}), which equivalently
means that 
\[
T=\frac{1}{\beta}c_{1}(K_{X})+[\theta_{\beta}].
\]
(however, see Section \ref{sub:The-nonnormalized-setting} for the
non-normalized setting). As before we denote by $f$ the potential
of $\theta:$ 
\[
\theta=\omega_{0}+dd^{c}f
\]
 Occasionally it will be convenient to pass between Kähler potentials
$\varphi$ relative to $\omega_{0}$ and Kähler potentials $u$ relative
to $\theta$ by setting 
\[
u:=\varphi-f
\]
 ensuring that $\omega_{\varphi}=\theta_{u}.$

\subsection{The twisted Kähler-Ricci flow as a gradient flow}

We recall that the gradient flow of a smooth function $F$ on a Riemannian
manifold $Y$ is the flow defined by 
\[
\frac{dy(t)}{dt}=-(\nabla F)(y(t)),\,\,\,\,\,y(0)=y_{0}
\]
 where $\nabla$ denotes the gradient wrt the given Riemannian metric
on $Y.$ In our infinite dimensional setting we equip the space $\mathcal{K}(X,T)$
with the Riemannian metric defined as follows: 
\begin{equation}
\left\langle u,u\right\rangle _{\varphi}:=n\int_{X}du\wedge d^{c}u\wedge\omega_{\varphi}^{n-1},\label{eq:def of Dirichlet type metric}
\end{equation}
where the tangent space of $\mathcal{K}(X,T)$ at $\varphi$ has been
identified with $C^{\infty}(X)/\R$ in the usual way (i.e. using the
standard affine structure). In other words, $\left\langle u,u\right\rangle _{\varphi}$
is the $L^{2}-$norm of the gradient of $u$ wrt the Kähler metric
$\omega_{\varphi}.$

Next, we recall that the $\theta-$twisted (and $\beta-$normalized)
version of Mabuchi's K-energy functional on $\mathcal{H}(X,\omega_{0})/\R$
is defined by specifying its differential, viewed as a measure valued
operator: 
\[
-(\delta M_{\theta}^{(\beta)})_{|\varphi}=\left(\frac{1}{\beta}\mbox{Ric \ensuremath{\omega_{\varphi}-\theta}}\right)\wedge\omega_{\varphi}^{n-1}-C\omega^{n},
\]
 where $C$ is the cohomological constant ensuring that rhs above
integrates to zero.
\begin{prop}
\label{prop:krf as gradient-flow}The gradient flow of the twisted
K-energy $M_{\theta_{\beta}}^{(\beta)}$ on $\mathcal{K}(X,T)$ coincides
with the normalized KRF with twisting form $\theta_{\beta}.$ \end{prop}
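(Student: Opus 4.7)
The plan is to verify the claim as a direct computation on K\"ahler potentials. Identify a tangent vector at $\varphi\in\mathcal{K}(X,T)$ with a smooth function $u$ on $X$ modulo constants; since $-(\delta M_{\theta_\beta}^{(\beta)})_\varphi$ has zero total integral (by the choice of the cohomological normalizing constant $C$), it pairs unambiguously with such tangent vectors, and the proposition reduces to the pointwise identity $G_\varphi(\dot\varphi,u) = -dM_{\theta_\beta}^{(\beta)}(u)$ for every $u\in C^\infty(X)/\mathbb{R}$, where $\dot\varphi$ is the time derivative of the K\"ahler potential along the normalized twisted KRF.

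I would carry out two computations. First, put the normalized KRF in potential form: with the choice $\chi_\beta = \omega_0$ (valid since $T = [\omega_0] = \frac{1}{\beta}c_1(K_X) + [\theta_\beta]$) and using the defining equation for $f_\beta$ together with the Monge--Amp\`ere form of the normalized flow, one obtains
\[
dd^c \dot\varphi \;=\; -\frac{1}{\beta}\Ric\,\omega_\varphi \;-\; \omega_\varphi \;+\; \theta_\beta.
\]
Second, integrate by parts in the Dirichlet-type metric: $\omega_\varphi^{n-1}$ is $d$-closed, so the identity $d(u\,d^c \dot\varphi \wedge \omega_\varphi^{n-1}) = du\wedge d^c\dot\varphi\wedge\omega_\varphi^{n-1} + u\,dd^c\dot\varphi\wedge\omega_\varphi^{n-1}$ together with Stokes gives
\[
G_\varphi(\dot\varphi,u) \;=\; -n\int_X u\,dd^c\dot\varphi\wedge\omega_\varphi^{n-1}.
\]
Substituting the evolution equation for $\dot\varphi$ produces a top-form expression that splits into three pieces: one involving $\Ric\,\omega_\varphi\wedge\omega_\varphi^{n-1}$, one involving $\theta_\beta\wedge\omega_\varphi^{n-1}$, and the term $-n\,\omega_\varphi^n$ arising from the $-\omega_\varphi$ in the KRF. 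Comparing this with $-dM_{\theta_\beta}^{(\beta)}(u) = \int_X u\bigl[(\frac{1}{\beta}\Ric\,\omega_\varphi - \theta_\beta)\wedge\omega_\varphi^{n-1} - C\omega^n\bigr]$ reduces the proposition to matching the coefficients of these top forms, which in turn is fixed by the cohomological identity determining $C$.

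The main technical obstacle is the bookkeeping of the several implicit normalizations involved, namely the factor $n$ in the Dirichlet metric, the role of the cohomological constant $C$, and the additive-constant ambiguity inherent to $\mathcal{K}(X,T) = \mathcal{H}/\mathbb{R}$. Both $\mathrm{grad}\,M_{\theta_\beta}^{(\beta)}$ and $\dot\varphi$ are only intrinsically defined modulo a time-dependent additive constant, so the equality is cleanest on the projective quotient, or equivalently by restricting test functions $u$ to have mean zero against a fixed reference measure. Once this normalization is fixed, the substitution above yields the desired equality between $G_\varphi(\dot\varphi,u)$ and $-dM_{\theta_\beta}^{(\beta)}(u)$, identifying the gradient flow of $M_{\theta_\beta}^{(\beta)}$ with the normalized twisted KRF.
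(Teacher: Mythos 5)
Your approach is a genuinely different route from the paper's. The paper's proof passes through the thermodynamical formalism of \cite{berm4}: it identifies $M_{\theta}^{(\beta)}(\varphi)$ with a free energy functional $F_{\beta}(\mu) = E_{\theta}(\mu) + \beta^{-1}H_{\mu_0}(\mu)$ on $\mathcal{P}(X)$, proves the general chain-rule identity $(\nabla M)_{|\varphi} = -(\delta F)_{|\mu}$ relating the Dirichlet gradient to the differential on measures, and then simply plugs in the known formulas $(\delta E_\theta)_{|\mu} = -(\varphi - f)$ and $(\delta H_{\mu_0})_{|\mu} = \log(\mu/\mu_0)$. That factorization never touches the explicit $(n,n)$-form expression for $(\delta M_\theta^{(\beta)})_{|\varphi}$; it trades it for differentials of $E_\theta$ and $H$ that are clean by construction. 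Your proposal instead works entirely at the level of potentials: integrate by parts in the Dirichlet metric to get $G_\varphi(\dot\varphi,u) = -n\int_X u\,dd^c\dot\varphi\wedge\omega_\varphi^{n-1}$, substitute the flow equation $dd^c\dot\varphi = -\beta^{-1}\Ric\omega_\varphi - \omega_\varphi + \theta_\beta$, and compare with the displayed formula for $-(\delta M_{\theta_\beta}^{(\beta)})_{|\varphi}$. That route is more elementary and avoids citing \cite{berm4}, which is a real advantage, but it is also more exposed to the exact normalization of the K-energy differential.

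And this is precisely where your argument has a gap. After substitution your left-hand side reads
\[
G_\varphi(\dot\varphi,u) = n\int_X u\Bigl(\tfrac{1}{\beta}\Ric\omega_\varphi - \theta_\beta\Bigr)\wedge\omega_\varphi^{n-1} + n\int_X u\,\omega_\varphi^n,
\]
whereas the paper's stated formula gives $-dM_{\theta_\beta}^{(\beta)}(u) = \int_X u\bigl[(\beta^{-1}\Ric\omega_\varphi - \theta_\beta)\wedge\omega_\varphi^{n-1} - C\omega^n\bigr]$. These differ by a multiplicative factor of $n$ on the Ricci/twisting term and by $n\omega_\varphi^n$ versus $-C\omega^n$ in the zero-mass correction, and neither discrepancy can be absorbed into the choice of the cohomological constant $C$: $C$ is a scalar multiplying a fixed reference form $\omega^n$ and cannot supply a factor of $n$ on a $\varphi$-dependent measure, nor replace $\omega^n$ by $\omega_\varphi^n$. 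The resolution is that the displayed formula for $(\delta M_\theta^{(\beta)})_{|\varphi}$ in the paper is an informal shorthand; the correct version compatible with the Dirichlet metric $\langle u,u\rangle_\varphi = n\int du\wedge d^c u\wedge\omega_\varphi^{n-1}$ is
\[
-(\delta M_{\theta_\beta}^{(\beta)})_{|\varphi} = n\Bigl(\tfrac{1}{\beta}\Ric\omega_\varphi - \theta_\beta\Bigr)\wedge\omega_\varphi^{n-1} + n\,\omega_\varphi^n,
\]
which indeed has zero total mass since $[\beta^{-1}\Ric\omega_\varphi - \theta_\beta]\cdot T^{n-1} = -T^n$. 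Your proof goes through if you use this corrected expression (or derive $(\delta M)_{|\varphi}$ directly from the chain rule as in the paper's first display), but as written, ``matching the coefficients, which in turn is fixed by the cohomological identity determining $C$'' is not correct and leaves the main computation unverified.
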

\begin{proof}
It will be convenient to use the ``thermodynamical formalism'' \cite{berm4}
in order to identify $M_{\theta}^{(\beta)}$ with a free energy type
functional $F_{\beta}$ on $\mathcal{P}(X):$

\[
M_{\theta}^{(\beta)}(\varphi)=F_{\beta}(\mu),\,\,\,\,\mu=\omega_{\varphi}^{n}
\]
 where 
\begin{equation}
F_{\beta}(\mu)=E_{\theta}(\mu)+\frac{1}{\beta}H_{\mu_{0}}(\mu),\label{eq:free energy}
\end{equation}
and where $E_{\theta}(\mu)$ is the pluricomplex energy of $\mu$
relative to $\theta$ and $H_{\mu_{0}}(\mu)$ is the entropy of $\mu$
relative to a certain fixed normalized volume form $\mu_{0}$ determined
by $\theta$ and $\omega_{0}$ (see \cite{berm4}).\footnote{From a thermodynamical point of view $F_{\beta}$ is the Gibbs free
energy at inverse temperature $\beta$ for a system with internal
energy $E.$} Next, we make the following general observation: if $M(\varphi)=F(\mu)$
then the following relation holds 
\begin{equation}
(\nabla M)_{|\varphi}=-(\delta F)_{|\mu},\label{eq:gradient of functional wrt Dirichlet metric}
\end{equation}
between the gradient $(\nabla M)_{|\varphi}\in C^{\infty}(X)$ of
$M$ at $\varphi$ wrt the Dirichlet metric and the differential $(\delta F_{|\mu})$
at $\mu,$ identified with a function on $X$ (using the standard
integration pairing between functions and measures). Indeed, if $\mu(t)$
is a curve such that $\mu(0)=\mu$ then, by the very definition of
$\delta F_{|\mu},$ we have 
\[
\int_{X}\delta F_{|\mu}\frac{d\mu(t)}{dt}_{|t=0}:=\frac{dF(\mu(t))}{dt}_{|t=0}.
\]
In particular, if $\mu(t)=\omega_{\varphi(t)}^{n}$ then setting $u:=\delta F_{|\mu}$
gives 
\[
\frac{dM(\varphi(t))}{dt}\Big|_{t=0}=\frac{dF(\mu(t))}{dt}\Big|_{t=0}=n\int_{X}u\,dd^{c}\left(\frac{d\varphi(t)}{dt}\right)\wedge\omega_{\varphi(0)}^{n-1}=
\]
\[
=-n\int_{X}du\wedge d^{c}\left(\frac{d\varphi(t)}{dt}\Big|_{t=0}\right)\wedge\omega_{\varphi(0)}^{n-1},
\]
which, by definition, equals $-\left\langle u,\frac{d\varphi(t)}{dt}\Big|_{t=0}\right\rangle _{\varphi},$
proving formula \ref{eq:gradient of functional wrt Dirichlet metric}.
Finally, as shown in \cite{berm4} we have 
\[
(\delta E_{\theta})_{|\mu}=-(\varphi-f),\,\,\,(\delta H_{\mu_{0}})_{|\mu}=\log\left(\frac{\mu}{\mu_{0}}\right)
\]
and hence the gradient flow of $M_{\theta}^{(\beta)}(\varphi)$ is
given by 
\[
\frac{d\varphi(t)}{dt}=\frac{1}{\beta}\log\left(\frac{\omega_{\varphi(t)}^{n}}{\mu_{0}}\right)-(\varphi(t)-f),
\]
 as desired (modulo constants). \end{proof}
\begin{rem}
The metric \ref{eq:def of Dirichlet type metric} seems to first have
appeared in \cite{cal}, where it is attributed to Calabi and called
Calabi's gradient metric (not to be confused with another metric usually
referred to as the Calabi metric obtained by replacing the gradient
of $u$ by the Laplacian of $u).$ The metric \ref{eq:def of Dirichlet type metric}
was further studied in \cite{c-k,cpz} where it is called the Dirichlet
metric. See also \cite{fi} where the metric \ref{eq:def of Dirichlet type metric}
appears from a symplecto-geometric point of view. In Section \ref{sub:Relations-to-the}
below we will give a new interpretation of the metric \ref{eq:def of Dirichlet type metric},
motivated by probabilistic considerations. The relation between the
KRF and gradient flows wrt the Dirichlet metric first appeared in
\cite{c-z} (in the non-twisted setting). 
\end{rem}

\subsection{The zero-temperature limit $\beta\rightarrow\infty$}

We denote by $\varphi_{t}$ the following curve of functions in $PSH(X,\omega_{0}):$
\[
\varphi(t):=P_{\omega_{0}}\left(e^{-t}\varphi_{0}+(1-e^{-t})f\right)
\]
and by $\mu(t)$ the corresponding curve of probability measures on
$X.$ Using that the latter curve arises as the large $\beta-$limit
of the corresponding normalized Kähler-Ricci flows (by Theorem \ref{thm:main general})
Proposition \ref{prop:krf as gradient-flow} implies that $E_{\theta}(\mu(t))$
is decreasing. But, in fact, a direct argument reveals that it even
strictly decreasing away from a minimizer: 
\begin{prop}
The pluricomplex energy $E_{\theta}(\mu(t))$ is decreasing wrt $t$,
and strictly decreasing unless $\mu(t)$ reaches a minimizer. Moreover,
$E_{\theta}(\mu(t))$ converges to the infimum of $E_{\theta}$ over
$\mathcal{P}(X)$ and $\mu(t)$ converges to the corresponding minimizer.\end{prop}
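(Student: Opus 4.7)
The plan is to (i) obtain monotonicity of $E_\theta(\mu(t))$ by passing to the large-$\beta$ limit in the gradient flow inequality for the twisted Kähler-Ricci flow, (ii) upgrade to strict monotonicity via a direct dissipation computation, and (iii) deduce convergence from the explicit envelope representation of $\varphi(t)$. For monotonicity, Proposition \ref{prop:krf as gradient-flow} identifies the normalized KRF with the gradient flow of $M_\theta^{(\beta)}$ in the Dirichlet metric, whence $M_\theta^{(\beta)}(\varphi^{(\beta)}(t_2)) \leq M_\theta^{(\beta)}(\varphi^{(\beta)}(t_1))$ for $t_1 \leq t_2$. Decomposing $M_\theta^{(\beta)} = E_\theta + \beta^{-1} H_{\mu_0}$ as in (\ref{eq:free energy}) and using the uniform Laplacian bound from Proposition \ref{prop:general Laplac est} to control $\beta^{-1} H_{\mu_0}(\omega^{(\beta)}(t)^n) = O(1/\beta)$ on every finite interval, combined with the uniform convergence $\varphi^{(\beta)}(t) \to \varphi(t)$ from Theorem \ref{thm:main general} and continuity of $E_\theta$ under uniform convergence of potentials with bounded complex Hessian, passing to the limit yields $E_\theta(\mu(t_2)) \leq E_\theta(\mu(t_1))$.

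For strict decrease, I would compute $\frac{d}{dt} E_\theta(\mu(t))$ directly from $(\delta E_\theta)_{|\mu} = -(\varphi - f)$. The chain rule and $\dot\mu = n\, dd^c \dot\varphi \wedge \omega_\varphi^{n-1}$, together with an integration by parts, give an expression of the form $-n \int \dot\varphi\, dd^c u \wedge \omega_\varphi^{n-1}$ where $u(t) := \varphi(t) - f$. The key input is that $\dot\varphi(t) = -u(t)$ in the appropriate sense integrated against $\mu(t)$; this follows from differentiating the orthogonality $\int(\varphi(t) - \psi(t))\, d\mu(t) \equiv 0$ (valid since $\varphi(t) \leq \psi(t) := e^{-t}\varphi_0 + (1-e^{-t})f$ with $\mu(t)$ supported on the contact set) and using $\dot\psi = f - \psi$. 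Substituting and exploiting $dd^c u = \omega_\varphi - \theta$ reduces the dissipation rate to the Dirichlet-type norm $-n \int du \wedge d^c u \wedge \omega_\varphi^{n-1}$, which is $\leq 0$ and vanishes exactly when $u$ is constant on each connected component of $\operatorname{supp}(\mu(t))$---precisely the Euler-Lagrange condition for critical points of $E_\theta$ on $\mathcal{P}(X)$ from \cite{bbgz}.

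For convergence, the sup-norm contractivity $\|P_{\omega_0}(h_1) - P_{\omega_0}(h_2)\|_\infty \leq \|h_1 - h_2\|_\infty$ (the large-$\beta$ analogue of Lemma \ref{lem:The-operator-P beta}) together with $e^{-t}\varphi_0 + (1-e^{-t})f \to f$ uniformly yields $\varphi(t) \to \varphi_\infty := P_{\omega_0}(f)$ in $L^\infty$. Since $\varphi_\infty$ has $L^\infty$-Laplacian by \cite{b-d,berm5}, Bedford-Taylor continuity gives the weak convergence $\mu(t) \to \mu_\infty := \omega_{\varphi_\infty}^n$. The envelope contact property forces $\varphi_\infty = f$ on $\operatorname{supp}(\mu_\infty)$, so the Euler-Lagrange condition holds and $\mu_\infty$ is the unique minimizer of $E_\theta$ (identified with $(P\theta)^n$ from \cite{bbgz}). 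Continuity of $E_\theta$ along the uniformly bounded family $\varphi(t) \in \mathcal{E}^\infty(X,\omega_0)$ combined with the monotonicity established above then forces $E_\theta(\mu(t)) \to \inf_{\mathcal{P}(X)} E_\theta$.

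The main obstacle will be the rigorous justification of the pointwise identity $\dot\varphi = -u$ against $\mu(t)$, since $\varphi(t)$ is merely Lipschitz in $t$ and $C^{1,\alpha}$ (not $C^2$) in $x$, so both $\dot\varphi$ and $\dot\mu$ live only as distributions. I would handle this via the off-diagonal perturbation $\eta(s,s') := \int (\varphi(s') - \psi(s'))\, d\mu(s)$: since $\eta \leq 0$ and $\eta(s,s) = 0$, the diagonal is a maximum, and the vanishing of $\partial_{s'} \eta|_{s=s'}$ delivers $\int \dot\varphi\, d\mu = \int (f - \psi)\, d\mu = -\int u\, d\mu$ in the integrated sense needed. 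Combined with the regularization scheme of Lemma \ref{lem:concave proj} to approximate $\varphi(t)$ by smooth concave curves, this secures the integrations by parts underlying the dissipation identity and closes the direct argument.
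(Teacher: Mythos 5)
Your large-$\beta$ limit argument for monotonicity coincides with the paper's first proof, and your convergence step (envelope contraction $\varphi(t)\to P_{\omega_0}(f)$, Bedford--Taylor continuity, and the identification of $(P\theta)^n$ as the unique minimizer via \cite{bbgz}) is sound. The genuine gap is in the dissipation computation on which your strict monotonicity claim rests.

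You wish to reduce $\frac{d}{dt}E_\theta(\mu(t)) = -n\int_X \dot\varphi\, dd^c u\wedge\omega_\varphi^{n-1}$ to $-n\int_X du\wedge d^c u\wedge\omega_\varphi^{n-1}$ by substituting $\dot\varphi = -u$. But the off-diagonal orthogonality argument you outline delivers this substitution only against the Monge--Amp\`ere measure: it gives $\int_X(\dot\varphi + u)\,d\mu(t) = 0$ with $\mu(t)=\omega_\varphi^n$, whereas the dissipation identity needs $\int_X(\dot\varphi + u)\,dd^c u\wedge\omega_\varphi^{n-1} = 0$. Since $dd^c u\wedge\omega_\varphi^{n-1} = \omega_\varphi^n - \theta\wedge\omega_\varphi^{n-1}$, the discrepancy is $\int_X(\dot\varphi + u)\,\theta\wedge\omega_\varphi^{n-1}$, and this measure is \emph{not} concentrated on the contact set: on the non-coincidence set $\Omega_t$ one has $\omega_\varphi^n = 0$ while $\theta\wedge\omega_\varphi^{n-1}$ need not vanish. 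Thus the substitution fails, and with it your characterization of when the dissipation vanishes. The paper avoids the issue entirely by staying at the level of finite differences in $t$: using the concavity inequality $\AM(\varphi)-\AM(\psi)\leq\int(\varphi-\psi)\MA(\psi)$ and the functional $I(u,v)=\int(u-v)(\MA(v)-\MA(u))\geq 0$ of \cite{BBEGZ}, one gets
\[
E_\theta(\varphi_{t+s}) - E_\theta(\varphi_t)\leq I(\varphi_{t+s},\varphi_t) + \int_X(f-\varphi_t)\bigl(\MA(\varphi_{t+s})-\MA(\varphi_t)\bigr),
\]
and then the monotonicity from Proposition \ref{prop:The-sup-coincides with projection of linear curve}, in the form $f - \lambda\varphi_{t+s}+(\lambda-1)\varphi_t\geq 0$ with $\lambda = e^s/(e^s-1)$, combined with the fact that this expression vanishes precisely on the coincidence set of $\varphi_{t+s}$ (where $\MA(\varphi_{t+s})$ lives), bounds the last integral by $-\lambda I(\varphi_{t+s},\varphi_t)$. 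One concludes $E_\theta(\varphi_{t+s})-E_\theta(\varphi_t)\leq -\frac{1}{e^s-1}\,I(\varphi_{t+s},\varphi_t)$, from which strict decrease and stationarity at the minimizer follow from the non-degeneracy of $I$ and a domination-principle lemma, without ever differentiating the envelope in $t$. I recommend replacing your dissipation-rate argument with this finite-difference one.
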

\begin{proof}
First of all observe that, by Prop \ref{prop:krf as gradient-flow}
$M_{\theta}^{(\beta)}$ is decreasing along $\varphi^{(\beta)}(t)$
for $\beta$ fixed. But, by the uniform bound on the Laplacians we
have that $H(\mu^{(\beta)}(t))\leq C$ and $E_{\theta}(\mu_{t})\rightarrow E_{\theta}(\mu)$
as $\beta\rightarrow\infty$ and hence $E_{\theta}(\mu(t))$ is also
decreasing, as desired. Alternatively a direct proof using envelopes
can be given as follows, which also includes strict monotonicity.
Recall that $\mu(t)=(\omega_{0}+dd^{c}\varphi_{t})^{n}$ and $E_{\theta}$
(acting on the level of potentials) is defined by 
\[
E_{\theta}(\varphi)=E(\varphi)-\int_{X}\varphi\MA(\varphi)+\int_{X}f\MA(\varphi),
\]
where $E=\AM$ is the Aubin-Mabuchi energy. We denote $\varphi_{t}=P_{\omega}(e^{-t}\varphi_{0}+(1-e^{-t})f)$
and we assume for simplicity that $\varphi_{0}=0$. Fix $t\geq0,s>0$.
By basic properties of the Aubin-Mabuchi functional we have 
\[
\AM(\varphi)-\AM(\psi)\leq\int_{X}(\varphi-\psi)\MA(\psi).
\]
We will use the $I$ functional in \cite{BBEGZ} : $I(u,v)=\int_{X}(u-v)(\MA(v)-\MA(u))\geq0$.
Using the formula of $E_{\theta}$ and the inequality above we can
write 
\begin{eqnarray}
E_{\theta}(\varphi_{t+s})-E_{\theta}(\varphi_{t})\leq I(\varphi_{t+s},\varphi_{t})+\int_{X}(f-\varphi_{t})(\MA(\varphi_{t+s}-\MA(\varphi_{t})).\label{eq:monotone of E theta 0}
\end{eqnarray}
We claim that 
\[
\int_{X}(f-\varphi_{t})(\MA_{\omega}(\varphi_{t+s})-\MA_{\omega}(\varphi_{t}))\leq-\lambda I(\varphi_{t+s},\varphi_{t}),
\]
where $\lambda=\frac{e^{s}}{e^{s}-1}$. Indeed, let $\Omega_{t}:=\{\varphi_{t}<(1-e^{-t})f\}$
be the non-coincidence set. By the monotonicity result we know that
$\Omega_{t}\subset\Omega_{t+s}$. It suffices to prove that 
\begin{equation}
\int_{X}(f-\lambda\varphi_{t+s}+(\lambda-1)\varphi_{t}))(\MA_{\omega}(\varphi_{t+s})-\MA_{\omega}(\varphi_{t}))\leq0.\label{eq:monotone of E theta 1}
\end{equation}
The integrand is non-negative thanks to Proposition \ref{prop:The-sup-coincides with projection of linear curve},
and it vanishes out side $\Omega_{t+s}$. As $\MA_{\omega}(\varphi_{t+s})$
vanishes in $\Omega_{t+s}$, the inequality (\ref{eq:monotone of E theta 1})
follows. Now (\ref{eq:monotone of E theta 0}) and (\ref{eq:monotone of E theta 1})
give that 
\[
E_{\theta}(\varphi_{t+s})-E_{\theta}(\varphi_{t})\leq\frac{-1}{e^{s}-1}I(\varphi_{t+s,}\varphi_{t}).
\]
Finally, if $E_{\theta}(\varphi_{t+s})=E_{\theta}(\varphi_{t})$ then
we must have $\varphi_{t+s}=\varphi_{t}$, as $I$ is non-degenerate.
The next lemma shows that the flow is stationary from $t$. In fact,
if $\{f=0\}$ has Lebesgue measure zero then $E_{\theta}$ is strictly
decreasing.\end{proof}
\begin{lem}
Denote by $\varphi_{t}=P_{\omega}(e^{-t}\varphi_{0}+(1-e^{-t})f)$.
If $\varphi_{t}=\varphi_{s}$ for some $0\leq t<s$ then $\varphi_{t+h}=\varphi_{t}$
for all $h\geq0$.\end{lem}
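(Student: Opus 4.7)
The plan is to combine two monotonicity properties of the curve $\tau \mapsto \varphi_\tau$ to pin down $\varphi_t = P_\omega(f)$ from the hypothesis, and then use the same two monotonicities to propagate this identification to all later times.

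The first monotonicity is the sub-solution property of the limit curve already established in Proposition \ref{prop:existence of the limit}: $\tau \mapsto (\varphi_\tau - f)e^\tau$ is pointwise decreasing. The second is a direct consequence of the concavity of the projection operator $P_\omega$ in its obstacle. Indeed, setting $h_\tau := e^{-\tau}\varphi_0 + (1-e^{-\tau})f$, one checks algebraically that
\[
h_s = e^{-(s-t)} h_t + (1-e^{-(s-t)}) f, \qquad s > t,
\]
so concavity of $P_\omega$ (as the supremum over the convex set $\{u \in \psh(X,\omega) : u \leq g\}$) gives
\[
\varphi_s \geq e^{-(s-t)} \varphi_t + (1-e^{-(s-t)}) P_\omega(f),
\]
which rearranges to the statement that $\tau \mapsto e^\tau (\varphi_\tau - P_\omega(f))$ is pointwise increasing.

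Feeding the equality $\varphi_s = \varphi_t$ into these two monotonicities pointwise on $X$: the first gives $(e^s - e^t)(\varphi_t - f) \leq 0$, hence $\varphi_t \leq f$; combined with $\varphi_t \in \psh(X,\omega)$ and the defining property of the envelope, this forces $\varphi_t \leq P_\omega(f)$. The second gives $(e^s - e^t)(\varphi_t - P_\omega(f)) \geq 0$, hence $\varphi_t \geq P_\omega(f)$. Therefore $\varphi_t = P_\omega(f)$.

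For any $r \geq t$ the first monotonicity applied between $t$ and $r$ now yields $(\varphi_r - f)e^r \leq (\varphi_t - f)e^t = (P_\omega(f) - f)e^t \leq 0$, so $\varphi_r \leq f$ and hence $\varphi_r \leq P_\omega(f)$; the second gives $e^r(\varphi_r - P_\omega(f)) \geq e^t(\varphi_t - P_\omega(f)) = 0$, so $\varphi_r \geq P_\omega(f)$. Thus $\varphi_r = P_\omega(f) = \varphi_t$, as required. The only non-elementary input is the sub-solution characterization invoked from Proposition \ref{prop:existence of the limit}; everything else uses only concavity of $P_\omega$ and the defining property of the envelope.
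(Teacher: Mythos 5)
Your proof is correct, and it takes a genuinely different route from the paper's. The paper's argument assumes $\varphi_{0}=0$, passes through the concentration of $\MA(\varphi_{t})$ on the coincidence set, and invokes the domination principle twice: first to conclude $\varphi_{t}\geq0$ (hence $f\geq0$), and then again, using the one-sided monotonicity on the contact set, to obtain $\varphi_{t}\geq\varphi_{t+h}$. Your argument replaces the pluripotential-theoretic input (domination principle, structure of $\MA(\varphi_{t})$) with a second, complementary monotonicity: from the algebraic identity $h_{s}=e^{-(s-t)}h_{t}+(1-e^{-(s-t)})f$ and concavity of the envelope operator $P_{\omega}$, you deduce that $e^{\tau}(\varphi_{\tau}-P_{\omega}f)$ is pointwise increasing. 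Sandwiching between the two monotonicities then pins $\varphi_{t}=P_{\omega}f$ and propagates it to all later times purely pointwise, with no measure-theoretic input. Two remarks: (i) your argument handles general $\varphi_{0}$ directly, whereas the paper's "$\varphi_{0}=0$" simplification needs a small adaptation (the contact-set computation then gives $\varphi_{0}=f$ rather than $f=0$ on the support of $\MA(\varphi_{t})$); (ii) the decreasing property of $(\varphi_{\tau}-f)e^{\tau}$ that you cite from Proposition \ref{prop:existence of the limit} is actually established in Proposition \ref{prop:The-sup-coincides with projection of linear curve}, a harmless misattribution. Both approaches are valid; yours is more elementary and self-contained, while the paper's exposes why the flow stalls (the obstacle becomes inactive where the Monge--Amp\`ere mass sits).
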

\begin{proof}
Again, for simplicity we assume that $\varphi_{0}=0$. As $\MA(\varphi_{t})=\MA(\varphi_{t+s})$,
the measure is concentrated on the set $\{\varphi_{t}=(1-e^{-t})f=(1-e^{-t-s})f\}$,
which equals $\{\varphi_{t}=f=0\}$. Now, as $\MA(\varphi_{t})(\varphi_{t}<0)=0$
the domination principle gives that $\varphi_{t}\geq0$. It follows
that $f\geq0$, and hence $\varphi_{t}$ is increasing in $t$. But
again we have that $\MA(\varphi_{t})$ vanishes on $\{\varphi_{t}<\varphi_{t+h}\}$.
To see this we note that the measure is supported on the coincidence
set and on this set, by the monotonicity property (Proposition \ref{prop:The-sup-coincides with projection of linear curve}),
$\varphi_{t+h}\leq e^{-s}\varphi_{t}+(1-e^{-s})f=0$. Thus the domination
principle again yields $\varphi_{t}\geq\varphi_{t+h}$, hence equality
holds. 
\end{proof}
In the light of the previous results one would expect that the curve
$\mu{}_{t}$ arises as a gradient flow of $E_{\theta}$ (in the sense
of metric spaces \cite{a-g-s}) when $\mathcal{P}(X)$ is equipped
with the metric induced by the metric \ref{eq:def of Dirichlet type metric}
(under the Calabi-Yau isomorphism). However, in order to make this
precise one has to deal with several technical problems related to
the geometry of the metric completion of the space $\mathcal{K}(X,T)$
equipped with the Dirichlet metric above, that we leave for the future\footnote{Unfortunately, when $n>1,$ the corresponding metric geometry appears
to be more complicated - from the point of view of gradient flows
- than the case of the Mabuchi-Semmes-Donaldson metric on $\mathcal{K}(X,T)$
whose metric completion has non-positive sectional curvature and where
the corresponding gradient flow of the K-energy functional yields
a weak version of the Calabi flow \cite{str,bdl}.}. Here we just formulate a precise result in the case when $n=1$
where the metric \ref{eq:def of Dirichlet type metric} coincides
with the classical Dirichlet norm on the Riemann surface $X.$ To
this end we denote by $H^{1}(X)/\R$ the Sobolev quotient space obtained
by completing the Dirichlet norm on $C^{\infty}(X)/\R.$ Since $H^{1}(X)/\R$
is a Hilbert space there is a classical notion of gradient flows of
lsc convex functionals on $H^{1}(X)/\R$ which we briefly recall.
Given a lsc convex function $F$ on a Hilbert space $H:$ a curve
$v(t),$ which is absolutely continuous as a map from $]0,\infty[$
to $H,$ is said to be the gradient flow of $F$ emanating from $v_{0}$
if $v(t)\rightarrow v_{0}$ as $t\rightarrow0$ and for almost any
$t$ 
\begin{equation}
\frac{dv(t)}{dt}\in\partial H_{|v(t)},\label{eq:subgradient relation}
\end{equation}
where the rhs above denotes the subgradient of $H$ at $v(t).$ There
are also other equivalent definitions. For example, the differential
inclusion \ref{eq:subgradient relation} may be replaced by the the
following Evolutionary Variational Inequalities (EVI): for any given
$w\in H$ 
\[
\frac{d}{dt}\frac{1}{2}\left\Vert v(t)-w\right\Vert ^{2}+F(v(t))-F(w)\leq0.
\]
In turn, this is equivalent to $v(t)$ arising as a limit of a Minimizing
Movement, i.e. a variational form of the backward Euler discretization
scheme. The virtue of the latter two characterizations is that they
can be formulated when the Hilbert space $H$ is replaced by a general
metric space (in particular, the corresponding weak gradient flows
always exist when $F$ is a lsc convex function on a complete metric
space with non-positive sectional curvature; see \cite{a-g-s} and
references therein). 
\begin{thm}
\label{thm:hilb gradient flow in normalized setting}Let $X$ be a
Riemann surface endowed with a smooth two-form $\theta.$ Equip the
Sobolev space $H^{1}(X)/\R$ with the classical Dirichlet metric and
consider the following lower semi-convex functional $F$ on $H^{1}(X)/\R$\textup{:
}\textup{\emph{
\[
F(u):=\frac{1}{2}\int_{X}du\wedge d^{c}u
\]
for $u$ in the convex subset $C_{\theta}$ of $H^{1}(X)/\R$ defined
by the condition $dd^{c}u+\theta\geq0$ and let $F=\infty$ on the
complement of $C_{\theta}.$ Denote by $u(t)$ the solution of the
gradient flow of $F$ emanating from a given element $u_{0}\in C_{\theta}\cap C^{\infty}(X).$
Then $dd^{c}u(t)+\theta$ coincides with the curve $\mu(t)$ of probability
measures on $X$ defined by the envelope construction above.}}\end{thm}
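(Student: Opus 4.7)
The plan is to exploit two simplifications that occur in complex dimension one. First, since $\omega_{\varphi}^{n-1}=1$ for $n=1$, the Calabi-Dirichlet metric \ref{eq:def of Dirichlet type metric} on $\mathcal{K}(X,T)$ is independent of the reference point $\varphi$ and coincides with the standard Dirichlet inner product making $H^{1}(X)/\mathbb{R}$ a Hilbert space. Second, writing $u=\varphi-f$ and expanding the definition $E_{\theta}(\varphi)=E(\varphi)-\int\varphi\,\omega_{\varphi}+\int f\,\omega_{\varphi}$ via integration by parts shows that $E_{\theta}(u)=F(u)+c_{f}$ on $C_{\theta}$, with $c_{f}$ a constant depending only on $f$. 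Hence Proposition \ref{prop:krf as gradient-flow} asserts precisely that the normalized twisted KRF is the Hilbert space gradient flow of the $1$-convex lsc functional $F_{\beta}:=F+\frac{1}{\beta}H_{\mu_{0}}+c_{f}$ on $H^{1}(X)/\mathbb{R}$ (declared $+\infty$ off the absolutely continuous sector of $C_{\theta}$).

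With this setup in place I would argue in one of two ways. The abstract route is to show that $F_{\beta}$ Mosco-converges on $H^{1}(X)/\mathbb{R}$ to $F+\iota_{C_{\theta}}+c_{f}$ as $\beta\to\infty$. The $\Gamma$-liminf follows from weak lower-semicontinuity of $F$, weak-$H^{1}$ closedness of $C_{\theta}$ (positivity of $dd^{c}u+\theta$ passes to weak limits), and non-negativity of $H_{\mu_{0}}$; for recovery sequences one smooths $\mu_{u}$ so that $\frac{1}{\beta}H_{\mu_{0}}(\mu_{u_{\beta}})\to 0$ while $u_{\beta}\to u$ in $H^{1}$. Mosco-convergence of $1$-convex lsc functionals on a fixed Hilbert space forces convergence of the corresponding gradient flows (Brezis, Attouch), and combined with Theorem \ref{thm:main general} this identifies the envelope curve $\varphi_{*}(t)-f$ with the gradient flow of $F+\iota_{C_{\theta}}$.

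The direct route is to verify the Evolutionary Variational Inequality $\langle\dot u_{*}+u_{*},\,u_{*}-v\rangle_{D}\leq\tfrac{1}{2}\|v-u_{*}\|_{D}^{2}$ for every $v\in C_{\theta}$, which characterizes the gradient flow among absolutely continuous $H^{1}$-curves. Integration by parts rewrites the left hand side as $-\int(\dot u_{*}+u_{*})\,d(\mu_{*}-\mu_{v})$, with $\mu_{*}:=\theta+dd^{c}u_{*}$ and $\mu_{v}:=\theta+dd^{c}v\geq 0$. The inequality then reduces to two facts about the envelope: globally $\dot u_{*}+u_{*}\leq 0$, equivalent to the monotonicity of $(\varphi_{*}-f)e^{t}$ in $t$ established in Proposition \ref{prop:The-sup-coincides with projection of linear curve}; and the obstacle-problem identity $u_{*}(t,x)=e^{-t}u_{0}(x)$ on $\mathrm{supp}\,\mu_{*}(t)$, yielding $\dot u_{*}+u_{*}=0$ there. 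Since the coincidence sets $K_{t}$ are decreasing in $t$, each $x\in K_{t}$ stays in $K_{s}$ for $s<t$, so this pointwise identity holds in the left-derivative sense used by Brezis's theory; the two facts together give the EVI with RHS $\leq 0$, which is stronger than needed.

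The main obstacle, in either route, is the justification of time-regularity. For the direct route this means showing that the Brezis weak time-derivative of $u_{*}$ exists a.e.\ as an element of $H^{1}(X)/\mathbb{R}$ and agrees with the natural left-derivative of the envelope on the coincidence set; for the abstract route the difficulty shifts to constructing a recovery sequence in which simultaneously $u_{\beta}\to u$ in $H^{1}$ and $H_{\mu_{0}}(\mu_{u_{\beta}})$ grows slower than $\beta$, which near the boundary of $C_{\theta}$ (where $\mu_{u}$ may be singular) requires a smoothing adapted to the pluripotential structure.
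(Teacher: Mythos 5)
Your proposal is sound, and the two routes you lay out bracket the paper's own argument in an instructive way. The paper's main argument is exactly your abstract route: it defines $F_{\beta}$ as the free-energy functional, argues convexity and lower semicontinuity, and then invokes ``well-known stability results'' to conclude that the gradient flows of $F_{\beta}$ converge in $H^{1}(X)/\R$ to the gradient flow of $F$; combined with Theorem \ref{thm:main general}, this identifies the latter with the envelope curve. You have simply made the vague ``stability results'' precise as Mosco-convergence plus Attouch's theorem (one small remark: Attouch's theorem does not need the uniform $1$-convexity you invoke; convex lsc suffices, with $1$-convexity only buying a quantitative rate). Your identification $E_{\theta}(\mu)=F(u)+c_{f}$ on $C_{\theta}$ is correct and makes transparent why the gradient term in Proposition \ref{prop:krf as gradient-flow} is simply $u$; the paper derives the same fact from $(\delta E_{\theta})_{|\mu}=-u$ without writing out the identification. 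The gap you share with the paper is the tacit appeal to uniqueness of weak Hilbert-space gradient flows in order to identify the smooth KRF solution of Proposition \ref{prop:krf as gradient-flow} with the Brezis flow of $F_{\beta}$.

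Your direct route, however, is a genuinely different argument from the paper's alternative, and arguably cleaner. The paper's alternative proof passes the EVI satisfied by $u^{(\beta)}$ for $F_{\beta}$ to the limit $\beta\to\infty$ and so still routes through the KRF and Theorem \ref{thm:main general}. You instead verify the $1$-convex EVI $\langle\dot u_{*}+u_{*},u_{*}-v\rangle_{D}\le 0$ directly for the envelope curve: after integration by parts the left side is $-\int(\dot u_{*}+u_{*})\,d(\mu_{*}-\mu_{v})$, the $\mu_{*}$-term vanishes because $u_{*}(t,\cdot)=e^{-t}u_{0}$ on the coincidence set (which carries $\mu_{*}$ and is decreasing, so the left time-derivative is legitimate there), and the $\mu_{v}$-term has the right sign because $\dot u_{*}+u_{*}\le 0$, i.e.\ $(\varphi_{*}-f)e^{t}$ is decreasing by Proposition \ref{prop:The-sup-coincides with projection of linear curve}. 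This is a purely pluripotential-theoretic proof that never touches the K\"ahler--Ricci flow, which the paper's alternative does not achieve. You are also right to flag the remaining technical burden: one must verify that $t\mapsto u_{*}(t)$ is an absolutely continuous $H^{1}(X)/\R$-valued curve so that the EVI is meaningful; this is supplied (for $n=1$) by the uniform $C^{1,\alpha}$ and time-Lipschitz bounds in Theorem \ref{thm:main general}, together with $C^{1,\alpha}\hookrightarrow H^{1}$, so it is not a real obstruction, but the paper's proof passes over it in silence and you were right to name it.
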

\begin{proof}
Fix a normalized volume form $dV$ on $X$ and let $F_{\beta}$ be
the corresponding free energy functional (formula \ref{eq:free energy})
on $H^{1}(X)/\R:$ $F_{\beta}(u)=F(u)+H_{dV}(dd^{c}u(t)+\theta)/\beta.$
This is a convex lsc functional and hence its gradient flow $u^{(\beta)}(t)$
emanating from the given element $u_{0}$ is well-defined. It then
follows from well-known stability results that $u^{(\beta)}(t)\rightarrow u(t)$
in $H^{1}(X)/\R.$  But, by the uniqueness of weak gradient flows
in Hilbert spaces, $dd^{c}u_{\beta}(t)+\theta$ coincides with the
curve of Kähler forms defined by the KRF (compare Proposition \ref{prop:krf as gradient-flow})
and hence the proof is concluded by invoking Theorem \ref{thm:main general}.
Alternatively, a direct proof can be given as follows: by Proposition
\ref{prop:krf as gradient-flow} and the convexity of $F_{\beta}$
the curve $u^{(\beta)}(t)$ satisfies the Evolutionary Variational
Inequalities wrt $F_{\beta}.$ Then, passing to the limit and using
Theorem \ref{thm:main general}, reveals that $u(t)$ also satisfies
the latter inequalities wrt $F$, which, as recalled above, is equivalent
to $u(t)$ being the gradient flow of $F.$ 
\end{proof}
Note that one virtue of the EVI formulation of the gradient flow in
the previous theorem (used in the end of the proof) is that the gradient
flow is intrinsically defined on the convex subset $C_{\theta}$ (which
is a Euclidean complete metric space, but not a Hilbert space). 
\begin{rem}
The solution of the gradient flow in the previous theorem can be given
by the following description purely in terms of the geometry of the
Hilbert space $H:=H^{1}(X)/\R.$ Let $F$ be half the squared Hilbert
space norm $h^{2}/2$ on $H$ restricted to a given compact convex
subset $C$ (which does not contain the origin) and then extended
by $\infty$ to all of $H-C,$ i.e. $F=h^{2}/2+\chi_{C},$ where $\chi_{C}$
is the indicator function of $C.$ Given an initial point in $C$
the gradient flow of $h^{2}/2$ is an affine curve $v(t)$ which leaves
the space $C$ after a finite time. However, replacing $v(t)$ with
$P(v(t)),$ where $P(v)$ is the projection of $v$ onto $C,$ gives
the weak gradient flow of $F,$ which does stay in $C.$ To be more
precise: $P(v)$ is the point in $C$ which is closest to $v$ wrt
the metric defined by the Hilbert norm (which is uniquely determined
by standard Hilbert space theory). 
\end{rem}

\subsection{\label{sub:The-nonnormalized-setting} The non-normalized KRF as
a gradient flow}

Next we briefly consider the setting when the cohomology class $T$
is preserved by the non-normalized KRF, i.e. $\frac{\text{1}}{\beta}c_{1}(K_{X})+[\theta_{\beta}]\in H^{1,1}(X,\R)$
is trivial (as in Section \ref{sub:The trivial case}). Then  $\theta=dd^{c}f$
for a smooth function $f$ on $X.$ Introducing the functional 
\[
\mathcal{F}(\mu):=\int f\mu
\]
on $\mathcal{P}(X)$ whose differential at $\mu$ may be identified
with the function $f,$ all the results above still apply with $E_{\theta}(\mu)$
replaced by $\mathcal{F}(\mu).$ In particular, as we next explain
this leads to gradient flow representations of the Hele-Shaw flow,
as well as Hamilton-Jacobi equations, which appear to be new.

\subsubsection{The Hele-Shaw flow}

In particular, we have the following result where $H^{-1}(X)$ denotes
the Hilbert space of all signed measures on $X$ with finite (logarithmic)
energy equipped with the Dirichlet norm. 
\begin{thm}
Let $(X,\omega)$ be a Riemann surface with a normalized area form
$\omega$ and $p$ a given point on $X.$ Denote by $\Omega(t)$ the
corresponding weak Hele-Shaw flow of increasing domains in $X,$ injected
at $p.$ Then the corresponding family 
\[
\mu(t):=1_{X-\Omega^{(\lambda(t))}}(t+1)\omega_{0}
\]
of probability measures on $X$ is the gradient flow of the lsc convex
functional $\tilde{\mathcal{F}}$ on the Hilbert space $H^{-1}(X)$
obtained by extending $\mathcal{F}$ by infinity from $\mathcal{P}(X)\cap H^{-1}(X).$
In particular, $\mathcal{F}(\mu(t))$ is strictly decreasing along
the flow.\end{thm}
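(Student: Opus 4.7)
The plan is to parallel the argument used in Theorem \ref{thm:hilb gradient flow in normalized setting}, but in the non-normalized setting and with the linear functional $\mathcal{F}$ in place of the quadratic Dirichlet functional. When $n=1$ the Calabi--Yau map $\omega_\varphi\mapsto\omega_\varphi$ is the identity, the Dirichlet metric \eqref{eq:def of Dirichlet type metric} reduces to the standard $H^1(X)/\R$-seminorm $\int du\wedge d^c u$, and its transfer to signed measures through $\mu\leftrightarrow\varphi$ is precisely the $H^{-1}(X)$-inner product. Under these identifications, a gradient flow on potentials corresponds to an $H^{-1}$-gradient flow on measures.

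First I would regularize the singular twisting current $\theta=\omega_0-\delta_p$ by smooth approximants $\theta^{(\epsilon)}$ as in the proof of Theorem \ref{thm:singular version of main theorem}. For each fixed $\beta>0$ and $\epsilon>0$, the non-normalized analogue of Proposition \ref{prop:krf as gradient-flow}, indicated at the beginning of Section \ref{sub:The-nonnormalized-setting}, identifies the smoothed non-normalized twisted KRF with the $H^{-1}(X)$-gradient flow of the strictly convex, coercive free energy
\[
\mathcal{F}_\beta^{(\epsilon)}(\mu):=\int f^{(\epsilon)}\,d\mu+\tfrac{1}{\beta}H_{\mu_0}(\mu),
\]
extended by $+\infty$ off $\mathcal{P}(X)\cap H^{-1}(X)$. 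Passing to $\epsilon\to 0$ with $\beta$ fixed, using the stability under regularization furnished by Theorem \ref{thm:singular version of main theorem}, produces the gradient flow of $\mathcal{F}_\beta:=\mathcal{F}+\tfrac{1}{\beta}H_{\mu_0}$ associated with the singular $\theta$.

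Second, I would send $\beta\to\infty$. The term $H_{\mu_0}/\beta$ vanishes pointwise, while entropy is already $+\infty$ outside $\mathcal{P}(X)\cap H^{-1}(X)$, so that $\mathcal{F}_\beta$ Mosco-converges in $H^{-1}(X)$ to $\tilde{\mathcal{F}}$. The classical Brezis--Attouch stability theorem for gradient flows of convex l.s.c.\ functionals in Hilbert spaces then gives convergence of the associated gradient flows in $C_{\mathrm{loc}}([0,\infty);H^{-1}(X))$ to the gradient flow of $\tilde{\mathcal{F}}$. Combined with the pointwise identification of the limit provided by Theorem \ref{thm:krf conv to hsf on riemann s}, namely $\omega^{(\beta)}(t)\rightharpoonup\mu(t):=1_{X-\Omega^{(\lambda(t))}}(t+1)\omega_0$, this exhibits $\mu(t)$ as the gradient flow of $\tilde{\mathcal{F}}$. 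The strict decrease of $\mathcal{F}(\mu(t))$ then follows from the standard dissipation identity $\tfrac{d}{dt}\tilde{\mathcal{F}}(\mu(t))=-\|\dot\mu(t)\|_{H^{-1}}^{2}$, which is strictly negative wherever the flow is not stationary.

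The main obstacle is reconciling the abstract $H^{-1}$-limit produced by Brezis--Attouch with the explicit weak limit $\mu(t)$ from Theorem \ref{thm:krf conv to hsf on riemann s}: one must upgrade weak convergence of $\omega^{(\beta)}(t)$ to convergence in $H^{-1}(X)$. The uniform $C^{1,\alpha}$ bounds on the potentials $\varphi^{(\beta)}(t)$ from the Laplacian estimates of Theorem \ref{thm:main general}, extended to the present singular setting by Theorem \ref{thm:singular version of main theorem}, give uniform $H^1$ control away from $p$; combined with the fact that a single point has zero logarithmic capacity, this should yield the required $H^{-1}$-compactness. A more direct alternative, bypassing the Brezis--Attouch machinery, would be to verify the Evolutionary Variational Inequality for $\tilde{\mathcal{F}}$ directly along $\mu(t)$, using the envelope characterization \eqref{eq:hs envelope} to identify explicitly the $H^{-1}$-subgradient of $\tilde{\mathcal{F}}$ at $\mu(t)$.
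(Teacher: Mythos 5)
Your proposal follows essentially the same route as the paper, which proves the statement by appealing to Theorem \ref{thm:hilb gradient flow in normalized setting} together with the observation that the linear functional $\mathcal{F}$ (with $f$ merely lsc rather than smooth) is still convex and lower semicontinuous on $H^{-1}(X)$, so the Hilbert space gradient flow theory applies verbatim. Where you diverge is in inserting an additional $\epsilon$-regularization of the singular twist $\theta=\omega_0-\delta_p$ before sending $\beta\to\infty$; the paper avoids this double limit entirely by letting the KRF itself (smooth for each finite $\beta$) be the only regularization and invoking the Brezis theory directly for the lsc convex functional $\tilde{\mathcal{F}}$. Your regularization step is not wrong, but it is superfluous, and it forces you to worry about commuting the two limits $\epsilon\to 0$ and $\beta\to\infty$. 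Likewise, the gap you flag at the end --- upgrading the weak-$*$ convergence from Theorem \ref{thm:krf conv to hsf on riemann s} to $H^{-1}(X)$-convergence --- is already covered by the $C^{1,\alpha}$ estimates on potentials furnished by Theorem \ref{thm:singular version of main theorem} (the singleton $\{p\}$ being pluripolar), which give much more than $H^{-1}$-compactness; so the identification of the abstract Hilbert space gradient flow limit with $\mu(t)$ goes through by uniqueness without extra effort. Your closing remark about verifying the EVI directly and reading off the subgradient from the envelope characterization \ref{eq:hs envelope} matches the alternative argument given in the proof of Theorem \ref{thm:hilb gradient flow in normalized setting}, so on that point you and the authors coincide.
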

\begin{proof}
This is proved precisely in Theorem \ref{thm:hilb gradient flow in normalized setting}.
Even if $f$ is not smooth in this setting, the general results about
gradient flows in Hilbert spaces still apply as $f$ is lsc and the
corresponding functional is convex.
\end{proof}

\subsubsection{Hamilton-Jacobi equations}

Next we turn to the setting of Hamilton-Jacobi equations, using the
notation in Section \ref{sec:Relations-to-viscosity}. We will denote
by $\mathcal{C}_{\Lambda}^{+}$ the subspace of all smooth and strictly
convex functions $\psi$ in $\mathcal{C}_{\Lambda}$ and by $\mbox{Ent}(\mu|\nu)$
the entropy of a measure $\mu$ relative to another measure $\nu.$
We equip the space $\mathcal{C}_{\Lambda}^{+}$ with the Riemannian
metric induced from the Dirichlet type metric \ref{eq:def of Dirichlet type metric}
under the Legendre transform.
\begin{prop}
\label{prop:The-perturbed-Hamilton-Jacob as gradient}The perturbed
Hamilton-Jacobi equation \ref{eq:hj equation with nonlinear visc}
with initial data in $\mathcal{C}_{\Lambda}^{+}$ is the gradient
flow of the following functional on the Riemannian manifold $\mathcal{C}_{\Lambda}^{+}$\emph{:}
\[
\mathscr{F}_{\beta}(\psi):=\frac{1}{\beta}\mbox{Ent}(dy|MA(\psi))+\mathscr{E}_{H}(\psi),\,\,\,\mathscr{E}_{H}(\psi):=\int_{\R^{n}/\Lambda}H(\nabla\psi(y))dy,
\]
where $\nabla\psi$ denotes the $L^{\infty}-$Brenier gradient map.\end{prop}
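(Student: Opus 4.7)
The plan is to transfer the gradient flow description of the twisted Kähler--Ricci flow, established (in the non-normalized regime) in Section \ref{sub:The-nonnormalized-setting}, to the Hamilton--Jacobi side via the Legendre duality of Section \ref{sub:Relation-to-the-krf}. Since $X=\C^{n}/(\Lambda+i\Z^{n})$ is an abelian variety $K_{X}$ is trivial, and since $\theta=dd^{c}f$ is exact the cohomology class $T=[\omega_{0}]$ is preserved by the non-normalized KRF; by Section \ref{sub:The-nonnormalized-setting} the latter is then the gradient flow, wrt the Dirichlet-type metric \ref{eq:def of Dirichlet type metric}, of
\[
G_{\beta}(\varphi)\;:=\;\mathcal{F}(\omega_{\varphi}^{n})\,+\,\tfrac{1}{\beta}\,H_{\mu_{0}}(\omega_{\varphi}^{n}),\qquad \mathcal{F}(\mu)=\int_{X}f\,d\mu,
\]
on $\mathcal{K}(X,T)$, for a suitable reference volume form $\mu_{0}$ on $X$.

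The first step is to recall the duality of Section \ref{sub:Relation-to-the-krf}: $T$-invariant elements of $\mathcal{K}(X,T)$ correspond via $\phi(x)=|x|^{2}/2+\pi^{*}\varphi$ to elements of $\mathcal{C}_{\Lambda}$, and in these real coordinates the KRF is the parabolic equation \ref{eq:kfr in convex setting}; the Legendre relations \ref{eq:legendre relations} convert this precisely into the perturbed HJ equation \ref{eq:hj equation with nonlinear visc} for $\psi=\phi^{*}$. By definition the Riemannian metric on $\mathcal{C}_{\Lambda}^{+}$ is the pullback under $\phi\mapsto\phi^{*}$ of the $T$-invariant restriction of \ref{eq:def of Dirichlet type metric}, so this map is a Riemannian isometry between the $T$-invariant subspace of $\mathcal{K}(X,T)$ and $\mathcal{C}_{\Lambda}^{+}$. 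Isometries intertwine gradient flows, so it remains to check that $G_{\beta}$ pushes forward under Legendre to $\mathscr{F}_{\beta}$ (up to an additive constant, which is immaterial).

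The functional identification rests on the Brenier-type fact that $(\nabla\phi)_{*}\MA(\phi)=dy$, with $dy$ Lebesgue on $\R^{n}/\Lambda$; equivalently $\MA(\phi)=(\nabla\psi)_{*}dy$ for $\psi=\phi^{*}$. This follows from $\nabla\phi$ being a Lipschitz diffeomorphism of $\R^{n}/\Lambda$ with Jacobian $\det(\partial^{2}\phi)$. Since $\omega_{\varphi}^{n}\propto\MA(\phi)\,dx$, the change of variables $x=\nabla\psi(y)$ gives
\[
\mathcal{F}(\omega_{\varphi}^{n})\;\propto\;\int_{\R^{n}/\Lambda}H(x)\,\MA(\phi)\,dx\;=\;\int_{\R^{n}/\Lambda}H(\nabla\psi(y))\,dy\;=\;\mathscr{E}_{H}(\psi).
\]
Choosing $\mu_{0}$ proportional to $dx$ and using the Legendre identity $\det\partial^{2}\phi(x)=(\det\partial^{2}\psi(y))^{-1}$ at $y=\nabla\phi(x)$ with the same change of variables yields
\[
H_{\mu_{0}}(\omega_{\varphi}^{n})\;=\;\int\log\det(\partial^{2}\phi)\,\MA(\phi)\,dx\;=\;-\!\int\log\det(\partial^{2}\psi)\,dy\;=\;\mbox{Ent}(dy\,|\,\MA(\psi)).
\]
Combining these produces the identification $G_{\beta}=\mathscr{F}_{\beta}$ up to constants.

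The main obstacle is the bookkeeping of normalization constants from three sources: the factor from integrating over the $T$-fibers in \ref{eq:def of Dirichlet type metric}, the proportionality constant between $\omega_{\varphi}^{n}$ and $\MA(\phi)\,dx$, and the precise choice of $\mu_{0}$; these have to be coordinated to match the normalization of $\mathscr{F}_{\beta}$ exactly (though since gradient flows are insensitive to additive constants and respect overall rescalings of the metric, only the identification up to these ambiguities is strictly needed). A secondary point is to verify that the perturbed HJ flow stays in $\mathcal{C}_{\Lambda}^{+}$ so that $\log\det(\partial^{2}\psi)$ is classically defined for all $t\geq 0$; this follows from the uniform strict-convexity estimate in Theorem \ref{thm:conv towards hopf on torus}.
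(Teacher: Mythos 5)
Your proof is correct and takes essentially the same route as the paper: pass through the Legendre isometry between the $T$-invariant part of $\mathcal{K}(X,T)$ and $\mathcal{C}_{\Lambda}^{+}$, and use the change of variables $y=\nabla\phi(x)$ together with $\partial^{2}\phi=(\partial^{2}\psi)^{-1}$ to convert $\mathcal{F}(MA(\phi))$ and $\mathrm{Ent}(MA(\phi)|dx)$ into $\mathscr{E}_{H}(\psi)$ and $\mathrm{Ent}(dy|MA(\psi))$, then invoke the non-normalized gradient flow picture. The paper records exactly these two identities and cites the gradient flow fact; your write-up just spells out the Jacobian computations more explicitly.
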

\begin{proof}
As shown in Section \ref{sub:Relation-to-the-krf} the solution $\psi_{t}^{(\beta)}$
is the Legendre transform of the corresponding twisted Kähler-Ricci
flow $\phi_{t}^{(\beta)}$ in $\mathcal{C}_{\Lambda}^{+}.$ Moreover,
using formula \ref{eq:legendre relations} gives 
\[
\mbox{Ent}(MA(\phi)|dx)=\mbox{Ent}(dy|MA(\psi)),\,\,\,\,\int_{\R^{n}/\Lambda}MA(\phi)H=\int_{\R^{n}/\Lambda}H(\nabla\psi(y))dy
\]
and hence the result follows from the fact that the twisted KRF is
the gradient flow wrt the Dirichlet type metric of the functional
$\mbox{Ent}(MA(\phi)|dx)/\beta+\mathcal{F}(MA(\phi)).$\end{proof}
\begin{cor}
The functional $\mathscr{E}_{H}$ is decreasing along the viscosity
solution of the Hamilton-Jacobi equation with Hamiltonian $H$ and
initial data in $\mathcal{C}_{\Lambda}.$ 
\end{cor}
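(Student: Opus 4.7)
The plan is to deduce the corollary from the monotonicity of the linear functional $\mathcal{F}(\mu) := \int_{X} f\,d\mu$ along the zero-temperature limit of the \emph{non-normalized} twisted K\"ahler-Ricci flow, combined with the convex/Legendre duality established in Section \ref{sub:Relation-to-the-krf}. Concretely, I would first translate $\mathscr{E}_{H}(\psi)$ into a complex-geometric quantity on the abelian variety $X = \mathbb{C}^n/(\Lambda + i\Z^n)$: for $\psi \in \mathcal{C}_\Lambda^+$ with Legendre transform $\phi := \psi^* \in \mathcal{C}_\Lambda^+$, a change of variables $y = \nabla\phi(x)$ together with the second relation in \ref{eq:legendre relations} yields
\[
\mathscr{E}_{H}(\psi) = \int_{\R^n/\Lambda} H(\nabla\psi(y))\,dy = \int_{\R^n/\Lambda} H(x)\,\MA(\phi) = \mathcal{F}(\MA(\phi)),
\]
where $H$ is identified with the $T$-invariant potential $f$ of $\theta$ as in Section \ref{sub:Relation-to-the-krf}. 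The formula extends to general convex $\phi$ (viewing $\nabla\psi$ as the $L^{\infty}$-Brenier gradient), since $\MA(\phi)$ is concentrated on the set where $\phi$ is sufficiently regular for the change-of-variables to make sense.

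Next I would invoke the correspondence from Section \ref{sub:Relation-to-the-krf}: the viscosity solution $\psi_t$ given by the second Hopf formula is the Legendre transform of $\phi_t = P_\omega(\phi_0 + tf)$, which is precisely the large-$\beta$ limit of the non-normalized twisted K\"ahler-Ricci flow in the trivial class (Section \ref{sub:The trivial case}). Hence $\mathscr{E}_{H}(\psi_t) = \mathcal{F}(\mu_t)$ with $\mu_t := \MA(\phi_t)$.

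The decisive step is to show $t \mapsto \mathcal{F}(\mu_t)$ is decreasing. This is the non-normalized analogue of Proposition \ref{prop:krf as gradient-flow}'s monotonicity statement (for $E_\theta$), explicitly noted in Section \ref{sub:The-nonnormalized-setting} to hold verbatim with $E_\theta$ replaced by $\mathcal{F}$. The envelope-based argument from the proof of that proposition simplifies considerably here since $\mathcal{F}$ is linear in $\mu$: using the monotonicity of the non-coincidence sets $\Omega_t$ (Remark after Lemma \ref{lem:concave proj}) and the fact that $\MA(\phi_t)$ is supported on the coincidence set where $\phi_t = \phi_0 + tf$, the difference $\mathcal{F}(\mu_{t+s}) - \mathcal{F}(\mu_t)$ can be rewritten in terms of $\phi_{t+s} - \phi_t$ and the $I$-functional from \cite{BBEGZ}, yielding the required sign.

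The main obstacle is purely a regularity issue: beyond the shock-formation time $T_*$, $\phi_t$ is not smooth on $\Omega_t$ and $\psi_t$ is non-differentiable on the shock locus, so the Brenier map $\nabla\psi_t$ is only defined almost everywhere. I would handle this by first establishing all identities for smooth $\phi^{(\beta)}_t$ (where Legendre duality is classical), then passing to the limit $\beta \to \infty$ using the uniform $C^{1,\alpha}$ convergence of K\"ahler potentials from Theorem \ref{thm:main general}, which descends under Legendre transform to uniform convergence $\psi_t^{(\beta)} \to \psi_t$ in $\mathcal{C}_\Lambda$ by Lemma \ref{lem:the space of convex}; the weak continuity of $\mu \mapsto \mathcal{F}(\mu)$ (since $H$ is continuous and $X$ compact) then passes the monotonicity to the limit.
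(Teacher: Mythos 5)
Your proposal is correct and follows the route the paper implicitly intends (the corollary is stated without proof immediately after Proposition \ref{prop:The-perturbed-Hamilton-Jacob as gradient}): translate $\mathscr{E}_H(\psi)$ into $\mathcal{F}(\MA(\phi))$ via the Legendre relations \ref{eq:legendre relations} exactly as in the proof of that proposition, identify the viscosity solution $\psi_t$ with the Legendre transform of the limiting envelope $\phi_t=P_\omega(\phi_0+tf)$ as in Section \ref{sub:Relation-to-the-krf}, invoke the monotonicity of $\mathcal{F}(\mu_t)$ asserted in Section \ref{sub:The-nonnormalized-setting}, and pass to the limit $\beta\to\infty$ using the uniform $C^{1,\alpha}$ convergence of Theorem \ref{thm:main general}. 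Equivalently and perhaps more directly: $\mathscr{F}_\beta(\psi^{(\beta)}_t)=\frac{1}{\beta}\mbox{Ent}(dy|\MA(\psi^{(\beta)}_t))+\mathscr{E}_H(\psi^{(\beta)}_t)$ is decreasing because $\psi_t^{(\beta)}$ is its gradient flow, the entropy term is $O(1/\beta)$ uniformly on finite time intervals by the Laplacian bound, and the uniform convergence then transfers monotonicity to $\mathscr{E}_H(\psi_t)$; this is the content of your final limiting paragraph.

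One caveat worth flagging: the "envelope-based argument yielding the required sign'' that you sketch for the monotonicity of $t\mapsto\mathcal{F}(\mu_t)$ in the non-normalized setting is not actually carried out, and it is less immediate than you suggest. The paper's envelope proof of monotonicity of $E_\theta$ relies crucially on cancellations between the Aubin-Mabuchi term, $-\int\varphi\,\MA(\varphi)$, and $\int f\,\MA(\varphi)$, packaged via the $I$-functional; the bare linear functional $\mathcal{F}=\int f\,\MA(\cdot)$ alone, against the non-normalized envelopes $P_{\omega_0}(\varphi_0+tf)$, does not decompose into manifestly-signed $I$-functional terms in the same way, and a naive integration by parts gives $-n\int df\wedge d^c\dot\phi_t\wedge\omega_{\phi_t}^{n-1}$ whose sign is not evident. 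The gradient-flow route (which Section \ref{sub:The-nonnormalized-setting} implicitly uses and which you already cite) is the safe one, so this gap is harmless to the overall argument — but the envelope-based alternative should either be dropped or spelled out, not asserted.
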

Specializing to the one-dimensional case we arrive at the following
\begin{thm}
Denote by $\mu(t):=\partial^{2}\psi_{t}$ the curve in the space of
probability measures on $S^{1}$ defined by the distributional second
derivative of the unique viscosity solution $\psi_{t}$ of the Hamilton-Jacobi
equation with Hamiltonian $H.$ Then $\mu(t)$ is the gradient flow
of the functional corresponding to $\mathscr{E}_{H}$ on the space
$\mathcal{P}(S^{1})$ equipped with the Wasserstein $L^{2}-$metric.
In particular, $\mathscr{E}_{H}$ is strictly decreasing at $\psi_{t_{0}}$
unless $\psi_{t_{0}}$ is a minimizer of $\mathscr{E}_{H}$ (or equivalently:
$\mu(t_{0})$ is supported in the set where $H$ attains its absolute
minimum). \end{thm}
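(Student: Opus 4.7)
The plan is to combine Proposition~\ref{prop:The-perturbed-Hamilton-Jacob as gradient} with a metric identification valid in dimension one, and then to take $\beta\to\infty$ using Theorem~\ref{thm:conv towards hopf on torus}.

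First I would verify that, in dimension one and under the correspondence $\psi\mapsto\mu:=\psi''$, the Dirichlet-type metric \eqref{eq:def of Dirichlet type metric} (pulled back to $\mathcal{C}_\Lambda^+$ via Legendre duality, as in Proposition~\ref{prop:The-perturbed-Hamilton-Jacob as gradient}) coincides, up to a universal constant, with the Wasserstein $L^2$ metric on $\mathcal{P}(S^1)$. For $n=1$ the form \eqref{eq:def of Dirichlet type metric} reduces to the ordinary Dirichlet norm $\int (u')^{2}\,dx$, independent of the basepoint; pulling back via Legendre and changing variables $y=\phi'(x)$ (so $dx=\mu(y)\,dy$ and $\phi''(x)=1/\mu(y)$) transforms this into $\int_{S^{1}}(\delta\psi'(y))^{2}/\mu(y)\,dy$, which for a Benamou-Brenier-type variation $\delta\mu=-\partial_{y}(\mu G')$ (and the gauge $\delta\psi'(0)=0$) equals $\int(G')^{2}\,\mu\,dy$, the squared Wasserstein norm. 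Combined with the direct computation that a smooth $\psi_t$ solving \eqref{eq:hj eq} induces the continuity equation $\partial_t\mu_t+\partial_y(H'(\psi_t')\mu_t)=0$, and the calculation $\delta\mathscr{E}_H/\delta\mu(z)=\int_z^{1} H'(\psi'(y))\,dy$ giving $H'(\psi_t')=-\partial_y(\delta\mathscr{E}_H/\delta\mu_t)$, this identifies the regularized curves $\mu_t^{(\beta)}:=\partial^{2}\psi_t^{(\beta)}$ as the Wasserstein gradient flow of $\mathscr{F}_\beta=\mathscr{E}_H+\beta^{-1}\mathrm{Ent}(\,\cdot\,|\,dy)$ for every $\beta>0$.

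Next I would pass to the limit $\beta\to\infty$ via the Evolutionary Variational Inequality (EVI) characterization of Wasserstein gradient flows: for each competitor $\nu\in\mathcal{P}(S^1)$,
\begin{equation*}
\tfrac{d}{dt}\tfrac{1}{2}W_2^2(\mu_t^{(\beta)},\nu)+\mathscr{F}_\beta(\mu_t^{(\beta)})\le\mathscr{F}_\beta(\nu).
\end{equation*}
By Theorem~\ref{thm:conv towards hopf on torus}, $\psi_t^{(\beta)}\to\psi_t$ uniformly in $y$ for every $t$, so $\mu_t^{(\beta)}\to\mu_t$ narrowly and hence in $W_2$ on the compact circle $S^1$. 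Taking $\liminf_{\beta\to\infty}$ in the EVI and invoking the $\Gamma$-convergence $\mathscr{F}_\beta\to\mathscr{E}_H$ (the relative entropy is non-negative and $\Gamma$-vanishes with obvious recovery sequences on $\mathcal{P}(S^1)$) yields the EVI for $\mathscr{E}_H$, whence the uniqueness part of the Ambrosio-Gigli-Savar\'e theory identifies $\mu_t$ as the Wasserstein gradient flow of $\mathscr{E}_H$. Strict monotonicity at $t_0$, unless $\mu_{t_0}$ minimizes $\mathscr{E}_H$, then follows from the built-in dissipation $\tfrac{d}{dt}\mathscr{E}_H(\mu_t)=-\|\partial_y(\delta\mathscr{E}_H/\delta\mu_t)\|_{L^2(\mu_t)}^2$, which vanishes precisely when $H'(\psi_{t_0}')\equiv 0$ $\mu_{t_0}$-a.e.; this is in turn equivalent, via the minimization analysis underlying Theorem~\ref{thm:conv towards Vor}, to $\mu_{t_0}$ being supported on the minimum set of $H$.

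The main obstacle is stability of the gradient-flow structure as $\beta\to\infty$: although Theorem~\ref{thm:conv towards hopf on torus} provides uniform convergence of potentials, the entropy $\beta^{-1}\mathrm{Ent}(\mu_t^{(\beta)}\,|\,dy)$ need not remain uniformly bounded, since $\mu_t^{(\beta)}$ develops concentrations near the shock locus of the limiting $\psi_t$. The remedy is to work with the EVI formulation rather than with pointwise energy identities, together with standard $\Gamma$-convergence plus equi-coercivity for the relative entropy on the compact space $\mathcal{P}(S^1)$. A further minor subtlety is the gauge in the correspondence $\mu\leftrightarrow\psi$ on the circle, which is fixed by the initial datum $\psi_0$ and is preserved by both the regularized flow and the viscosity Hamilton-Jacobi flow.
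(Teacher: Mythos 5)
Your proposal is essentially the same as the paper's, but with substantially more of the details filled in (which is useful, since the paper's own proof at this point is almost a one-liner deferring to Theorem~\ref{thm:hilb gradient flow in normalized setting}, Proposition~\ref{prop:The-perturbed-Hamilton-Jacob as gradient}, and a metric identification that is announced for a separate publication). The paper's route is exactly: (i) Proposition~\ref{prop:The-perturbed-Hamilton-Jacob as gradient} identifies the regularized flow as a gradient flow for $\mathscr{F}_\beta$ in the Dirichlet-type metric; (ii) the Dirichlet metric corresponds under Legendre transform to the Wasserstein $L^2$ metric; (iii) one passes to the limit $\beta\to\infty$ via the EVI stability argument used in the proof of Theorem~\ref{thm:hilb gradient flow in normalized setting}. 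Steps (i)--(iii) are precisely your steps, and your direct change-of-variables verification of (ii) (Dirichlet norm $\int(u')^2dx$ pulling back to $\int(\delta\psi')^2/\mu\,dy$ and then to $\int(G')^2\mu\,dy$) is a legitimate way to produce the "well-known'' identification the paper merely invokes. Your observation that one should work with the EVI formulation rather than pointwise energy identities to avoid the entropy blow-up near shocks is exactly the device the paper uses (the alternative argument in the proof of Theorem~\ref{thm:hilb gradient flow in normalized setting}). One place where you diverge from the paper and where a small gap remains: for the strict-monotonicity claim you invoke the dissipation identity $\frac{d}{dt}\mathscr{E}_H(\mu_t)=-\|\partial_y(\delta\mathscr{E}_H/\delta\mu_t)\|^2_{L^2(\mu_t)}$, whose vanishing gives $H'(\psi'_{t_0})\equiv 0$ $\mu_{t_0}$-a.e., i.e.\ that $\psi'_{t_0}$ maps $\mathrm{supp}\,\mu_{t_0}$ into the \emph{critical} set of $H$, not a priori into the absolute-minimum set; the passage from critical to minimal requires either exploiting the displacement-convexity (indeed affinity) of $\mathscr{E}_H$ along Wasserstein geodesics, or the envelope-theoretic argument the paper develops in the proposition preceding Theorem~\ref{thm:hilb gradient flow in normalized setting}, and a bare reference to Theorem~\ref{thm:conv towards Vor} does not by itself close this. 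Aside from this last step needing tightening, your argument matches the paper's.
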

\begin{proof}
This is shown as in the proof of Theorem \ref{thm:hilb gradient flow in normalized setting}
using Prop \ref{prop:The-perturbed-Hamilton-Jacob as gradient} and
the observation that the Wasserstein $L^{2}-$metric corresponds under
the Legendre transform to the Dirichlet metric on the Legendre transform
side. This is well-known in the case of $\R$ and the proof in the
$S^{1}-$case can, for example, be obtained using the transformation
properties of the Otto metric (the proof will appear elsewhere). 
\end{proof}

\subsection{\label{sub:Relations-to-the} Relations to the Otto metric and stochastic
gradient flows}

Given a Riemannian manifold $(X,g)$ the Otto metric \cite{ot} is
defined on the space $\mathcal{P}^{\infty}(X)$ of all volume forms
$\mu$ in $\mathcal{P}(X)$ as follows. First note that a vector field
$V$ on $X$ induces a tangent vector on $\mathcal{P}^{\infty}(X):$
\begin{equation}
\frac{d\mu_{t}}{dt}_{|t=0}:=\frac{d}{dt}_{|t=0}((F_{t}^{V})_{*}\mu),\label{eq:action of vector field}
\end{equation}
 where $F_{t}^{V}$ denotes the one-parameter group of diffeomorphisms
of $X$ defined by the flow of $V.$ Now the Otto metric may be defined
by 
\begin{equation}
\left\langle \frac{d\mu_{t}}{dt}_{|t=0},\frac{d\mu_{t}}{dt}_{|t=0}\right\rangle _{\mu}:=\inf_{V}\int_{X}g(V,V)\mu,\label{eq:def of otto metric}
\end{equation}
 where the infimum runs over all vector fields $V$ satisfying the
equation \ref{eq:action of vector field}. In physical terms, considering
a gas of particles on $X$ distributed according to the measure $\mu,$
the norm above is the minimal kinetic energy needed to produce the
rate of change $\frac{d\mu_{t}}{dt}$ of $\mu;$ see \cite[Section 2]{ot}.

As explained in \cite{ot} the Otto metric on $\mathcal{P}^{\infty}(X)$
is, at least formally, the Riemannian metric underlying the Wasserstein
$L^{2}-$metric $d_{2}$ on $\mathcal{P}(X),$ induced by $g,$ which
may be expressed as follows on $\mathcal{P}^{\infty}(X):$ 
\[
d_{2}(\mu,\nu)^{2}:=\inf_{S}\int_{X}d_{g}(x,T(x))^{2}\mu(x),\,\,\,S_{*}\mu=\nu
\]
expressed in terms of the distance function $d_{g}$ on $X\times X$
determined by the given Riemannian metric $g,$ i.e. $d_{2}(\mu,\nu)^{2}$
is the minimal cost to transport $\mu$ to $\nu$ (the general formula
on $\mathcal{P}(X)$ employs transport plans rather than transport
maps $S$). \footnote{The argument in \cite{ot} uses that the Otto metric can be identified
with the quotient metric on $DIFF(X)/SDIFF(X,dV)$ (defined wrt to
the non-invariant $L^{2}-$metric on $DIFF(X)$ induced from $g$
under which the group of volume preserving diffeomorphisms $SDIFF(X,dV)$
acts from the right by isometries) under the submersion $S\mapsto S_{*}dV_{g}.$
A different argument, motivated by numerical applications, is given
in \cite{b-br}.} 

Now, one can envisage a generalization of the Otto metric where $g$
is allowed to depend on $\mu.$ In particular, if $X$ is a Kähler
manifold with a given Kähler class $T$ then we may simply take $g_{\mu}$
to be the unique Kähler metric in $T$ furnished by the Calabi-Yau
isomorphism, i.e. the metric $g_{\mu}$ in $T$ with volume $\mu.$ 
\begin{prop}
Let $X$ be a Kähler manifold endowed with a Kähler class $T.$ Then
the corresponding Otto type metric (obtained by replacing $g$ with
$g_{\mu}$ in formula \ref{eq:def of otto metric}) coincides with
the Dirichlet type metric defined by formula \ref{eq:def of Dirichlet type metric}
above (up to the multiplicative constant $n!).$ \end{prop}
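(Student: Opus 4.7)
The plan is to reduce the minimization defining the Otto-type metric to the standard Hodge/Helmholtz argument on a Kähler manifold, and then exploit the Kähler identity relating the complex operator $dd^c(\cdot)\wedge\omega^{n-1}$ to the Laplace--Beltrami operator. First I would identify the tangent vectors: a smooth curve $\varphi_t$ in $\mathcal{K}(X,T)$ with initial velocity $u := (d\varphi_t/dt)|_{t=0}$ induces, via the Calabi--Yau map $\varphi \mapsto \omega_\varphi^n$, a curve of volume forms $\mu_t$ with initial velocity
$$\left.\frac{d\mu_t}{dt}\right|_{t=0} = n\, dd^c u \wedge \omega_\varphi^{n-1}.$$
Hence, in the definition of the Otto metric the admissible $V$ are exactly those satisfying $-d(i_V \mu) = n\, dd^c u \wedge \omega_\varphi^{n-1}$.

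For the minimization step I would invoke the standard $L^2(\mu; g_\mu)$-orthogonal decomposition of vector fields into $g_\mu$-gradients and $\mu$-divergence-free fields. Only the gradient part affects $L_V \mu$, and the two components are orthogonal in the inner product defining the Otto norm, so the infimum in \ref{eq:def of otto metric} is attained at a unique $V = \nabla^{g_\mu}\phi$, modulo constants. Writing $g := g_\mu$ and using that $\mu = \omega_\varphi^n$ and the Riemannian volume $dV_g = \omega_\varphi^n/n!$ differ by a constant factor, the constraint $-L_V \mu = n\,dd^cu\wedge\omega^{n-1}$ becomes $-\Delta_g \phi \cdot \mu = n\, dd^c u \wedge \omega_\varphi^{n-1}$. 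The Kähler identity
$$n\, dd^c w \wedge \omega^{n-1} = (\Delta_\omega w)\, \omega^n,$$
where $\Delta_\omega$ is a fixed (convention-dependent) constant multiple of $\Delta_g$, then identifies $\phi$ with $u$ up to constants and an overall normalization.

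Finally I would evaluate both norms. By integration by parts,
$$\int_X g_\mu(V,V)\,\mu = -\int_X \phi\,\Delta_g\phi\,\mu,$$
and substituting $\phi = c_0 u$ for the appropriate constant $c_0$ together with $\mu = n!\,dV_g$ yields an expression proportional to $n!\int_X(-u\,\Delta_\omega u)\,\omega^n$. A further integration by parts rewrites this as $n!\cdot n\int_X du\wedge d^c u \wedge \omega^{n-1}$, matching the Dirichlet-type expression \ref{eq:def of Dirichlet type metric} up to the factor $n!$ claimed in the proposition.

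The hard part is purely bookkeeping: tracking the numerical constants coming from the convention $dd^c = \frac{i}{2\pi}\partial\bar\partial$, the normalization $dV_g = \omega^n/n!$, and the ratio between $\Delta_\omega$ and the Laplace--Beltrami $\Delta_g$, and checking that they collapse to exactly $n!$. Structurally, however, the proof reduces to the statement that the optimal transport velocity at $\mu = \omega_\varphi^n$ realizing the infinitesimal variation $\dot\varphi = u$ is the $g_\mu$-gradient of $-u$ (up to normalization), which is the same gradient that implicitly underlies the Dirichlet formula \ref{eq:def of Dirichlet type metric}.
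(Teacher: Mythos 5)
Your argument follows essentially the same route as the paper's: invoke the Hodge/Helmholtz decomposition to show the minimizer in the Otto formula is a $g_\mu$-gradient $V=\nabla^{g_\mu}\phi$, then use the K\"ahler identity $n\,dd^c w\wedge\omega^{n-1}\propto(\Delta_{g}w)\,\omega^n$ to match $\phi$ with the K\"ahler potential velocity $u$, and finally compare the resulting quadratic forms. The paper phrases the middle step in density form (writing $\mu=\rho\,dV_{g_\mu}$, noting $\rho$ is constant in the K\"ahler setting, and reading off the continuity equation), while you work directly with $-d(i_V\mu)=n\,dd^c u\wedge\omega^{n-1}$; this is a superficial difference, and both identify the optimal $V$ as $-\nabla u$ up to normalization before reducing the Otto integral to the Dirichlet form. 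The factor $n!$ indeed comes from $\mu=\omega_\varphi^n=n!\,dV_{g_\mu}$, as you indicate.
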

\begin{proof}
First recall that, by Hodge theory, the infimum in formula \ref{eq:def of otto metric}
is attained precisely for $V$ of the form 
\[
V=\nabla v,\,\,\,v\in C^{\infty}(X),
\]
where $\nabla$ denotes the gradient wrt $g_{\mu}$ (where $v$ is
uniquely determined mod $\R).$ Moreover, writing $\mu=\rho dV_{g}$
the element $v\in C^{\infty}(X)/\R$ may be characterized as the unique
solution to the following continuity equation 
\[
\frac{d\rho_{t}}{dt}_{|t=0}=-\nabla\cdot(\rho\nabla v).
\]
In the Kähler setting above $\rho=1$ and hence the previous equation
is equivalent to 
\[
\frac{d\mu_{t}}{dt}_{|t=0}=-\frac{dd^{c}v\wedge\omega_{u}^{n-1}}{(n-1)!}.
\]
Accordingly, writing $\mu_{t}=\omega_{u_{t}}^{n}$ for some curve
$u_{t}$ in $\mathcal{H}$ reveals that 
\[
v=-\frac{du_{t}}{dt}_{|t=0}
\]
which concludes the proof. 
\end{proof}
A remarkable property of the Otto metric (defined wrt a fixed back-ground
metric $g$ on $X)$ is that the gradient flow of the relative entropy
$H_{dV_{g}}$ is precisely the heat (diffusion) equation. More generally,
if $G$ is a functional on $\mathcal{P}^{\infty}(X)$ then the corresponding
gradient flow is given by 
\begin{equation}
\frac{\partial\rho_{t}}{\partial t}=\nabla\cdot(\rho V_{t}),\,\,\,\,\,\,V_{t}=\nabla(\delta G){}_{|(\rho_{t}dx)},\label{eq:formal gradient flow}
\end{equation}
where $\delta G_{|\mu}$ denotes, as before, the differential of $G$
at $\mu,$ identified with a function on $X.$ In particular, if $G$
is a free energy type functional of the form \ref{eq:free energy}
then the corresponding gradient flow is the following  drift diffusion
equation (non-linear Fokker-Planck equation): 
\[
\frac{\partial\rho_{t}}{\partial t}=\frac{1}{\beta}\Delta\rho_{t}+\nabla\cdot(\rho_{t}V[\rho_{t}]),
\]
 where $V[\rho_{t}]$ is the vector field given by 
\[
V[\rho_{t}]=\nabla(\delta E)_{|(\rho_{t}dx)}.
\]
Such drift diffusion equations can often be realized as large $N-$limits
of stochastic gradient flows on the $N-$particle space $X^{N}$ of
the form 

\begin{equation}
dx_{i}(t)=-\nabla_{x_{i}}E^{(N)}(x_{1},x_{2},....,x_{N})dt+\sqrt{\frac{2}{\beta}}dB_{i}(t),\label{eq:sde general intro}
\end{equation}
where $B_{i}$ denotes $N$ independent Brownian motions on the Riemann
manifold $(X,g)$ and $E^{(N)}$ is a suitable symmetric ``microscopic''
version of $E$ (in statistical mechanical terms this expresses the
non-equilibrium dynamics of $N$ diffusing particles on $(X,g),$
at inverse temperature $\beta,$ interacting by the energy $E^{(N)}).$
This is the starting point for the stochastic dynamics approach to
the construction of Kähler-Einstein metrics introduced in \cite{b-onn}.
In particular, this leads to a new dynamic construction of (twisted)
Kähler-Einstein metrics \cite{b-l}. However, one geometric draw back
of this approach is that it requires the choice of a back-ground metric
on $X$ and hence the corresponding evolution equation is not canonical
(even if its large $t-$limit is). This motivates using the generalized
Otto type metric which amounts to coupling the back-ground metric
$g_{t}$ at time $t$ to the measure $\mu_{t}.$ As will be explained
elsewhere the latter road leads to a new microscopic stochastic approach
to the Kähler-Ricci flow where the individual particles $x_{1},...,x_{N}$
perform coupled Brownian motions defined with respect to a changing
metric which depends on the location of the whole configuration of
particles.

\end{document}